\newtheorem{theorem}{Theorem}
\newtheorem*{nonumtheorem}{Theorem}
\numberwithin{theorem}{subsection}
\theoremstyle{plain}
\newtheorem*{acknowledgement}{Acknowledgement}
\newtheorem{proposition}[theorem]{Proposition}
\newtheorem{corollary}[theorem]{Corollary}
\newtheorem{definition}[theorem]{Definition}
\newtheorem{example}[theorem]{Example}
\newtheorem*{nonnumdef}{Definition}
\newtheorem{lemma}[theorem]{Lemma}
\theoremstyle{remark}
\newtheorem{remark}[theorem]{Remark}
\newtheorem*{conventions}{Conventions}
\newtheorem*{outline}{Outline}
\numberwithin{equation}{section}
\begin{document}
\title[Hochschild coniveau]{Hochschild coniveau spectral sequence\linebreak and the Beilinson residue}
\author{Oliver Braunling\qquad Jesse Wolfson}
\address{Department of Mathematics, Universit\"{a}t Freiburg, Germany}
\email{oliver.braeunling@math.uni-freiburg.de}
\address{Department of Mathematics, University of Chicago, USA}
\email{wolfson@math.uchicago.edu}
\thanks{O.B.\ was supported by DFG GK 1821 \textquotedblleft Cohomological
Methods in Geometry\textquotedblright . J.W.\ was partially supported by an
NSF Post-doctoral Research Fellowship under Grant No.\ DMS-1400349. Both
authors thank the University of Chicago RTG for funding a research visit
under NSF Grant No. DMS-1344997.}

\begin{abstract}
We develop the Hochschild analogue of the coniveau spectral sequence and the
Gersten complex. Since Hochschild homology does not have d\'{e}vissage or
$\mathbf{A}^{1}$-invariance, this is a little different from the $K$-theory
story. In fact, the rows of our spectral sequence look a lot like the Cousin
complexes in \emph{Residues \&\ Duality}. Note that these are for coherent
cohomology. We prove that they agree by an `HKR isomorphism with
supports'.\smallskip\newline Using the close ties of Hochschild homology to
Lie algebra homology, this gives residue maps in Lie homology, which we show
to agree with those \`{a} la Tate--Beilinson.

\end{abstract}
\maketitle

The coniveau spectral sequence and the Gersten complex originally arose in
algebraic $K$-theory. But if one replaces in its construction $K$-theory by
Hochschild homology, everything goes through. Still, it appears that this
analogue has not really been studied much so far (if at all). We discuss it in
this paper, building on the work of B. Keller \cite{MR1647519} and P. Balmer
\cite{MR2439430}.

Let $X/k$ be a Noetherian scheme. Write $HH^{x}$ for the Hochschild spectrum
with support in a point $x$. Our Hochschild--Cousin complex will take the form%
\[
\cdots\longrightarrow\coprod_{x\in X^{0}}HH_{-q}^{x}(\mathcal{O}%
_{X,x})\longrightarrow\coprod_{x\in X^{1}}HH_{-q-1}^{x}(\mathcal{O}%
_{X,x})\longrightarrow\cdots\longrightarrow\coprod_{x\in X^{n}}HH_{-q-n}%
^{x}(\mathcal{O}_{X,x})\longrightarrow\cdots
\]
and appears as the rows in the $E_{1}$-page of a corresponding Hochschild
coniveau spectral sequence,%
\begin{equation}
\left.  ^{HH}E_{1}^{p,q}\right.  :=\coprod_{x\in X^{p}}HH_{-p-q}%
^{x}(\mathcal{O}_{X,x})\Rightarrow HH_{-p-q}(X)\text{.}\label{aj1}%
\end{equation}
Some things are different from $K$-theory: As Hochschild homology does
\textit{not} satisfy d\'{e}vissage, one cannot replace the $HH^{x}$ by
Hochschild homology of the residue field.

There is a similar, but much older complex: The \textsl{coherent cohomology}
Cousin complex from \emph{Residues \&\ Duality} \cite{MR0222093}. It has the
form%
\begin{equation}
\cdots\longrightarrow\coprod_{x\in X^{0}}H_{x}^{q}(X,\mathcal{F}%
)\longrightarrow\coprod_{x\in X^{1}}H_{x}^{q+1}(X,\mathcal{F})\longrightarrow
\cdots\longrightarrow\coprod_{x\in X^{n}}H_{x}^{q+n}(X,\mathcal{F}%
)\longrightarrow\cdots\text{,}\label{aj2}%
\end{equation}
where $H_{x}^{\bullet}$ denotes (coherent) local cohomology of a coherent
sheaf $\mathcal{F}$ with support in a point $x$. It also arises as a row in
the $E_{1}$-page of a spectral sequence $\left.  ^{Cous}E_{1}^{p,q}%
(\mathcal{F})\right.  \Rightarrow H^{p+q}(X,\mathcal{F})$. We prove that if
$X/k$ is smooth and $\mathcal{F}:=\Omega_{X/k}^{\ast}$, this complex is
canonically isomorphic to our Hochschild--Cousin complex. We do this by a
Hochschild--Kostant--Rosenberg (HKR) isomorphism with supports:

\begin{nonumtheorem}
[HKR with supports]Let $k$ be a field, $R$ a smooth $k$-algebra and
$t_{1},\ldots,t_{n}$ a regular sequence. Then there is a canonical isomorphism%
\[
H_{(t_{1},\ldots,t_{n})}^{n}(R,\Omega^{n+i})\overset{\sim}{\longrightarrow
}HH_{i}^{(t_{1},\ldots,t_{n})}(R)\text{.}%
\]
For $n=0$, this becomes the classical HKR isomorphism. On the left,
$H_{I}^{\ast}$ refers to (coherent) local cohomology. On the right hand side,
$HH_{\ast}^{I}$ refers to the Hochschild homology of the category of
$I$-supported perfect complexes.
\end{nonumtheorem}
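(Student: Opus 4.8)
The plan is to realise $HH_i^{(t_1,\dots,t_n)}(R)$ as a local cohomology group by feeding a Verdier localisation sequence into Hochschild homology and then applying the HKR theorem. Put $X=\operatorname{Spec}R$, $Z=V(t_1,\dots,t_n)$ and $U=X\setminus Z$. By Thomason--Trobaugh the triangulated categories of perfect complexes form a Verdier localisation $\operatorname{Perf}_Z(X)\to\operatorname{Perf}(X)\to\operatorname{Perf}(U)$, exact up to idempotent completion; the latter is invisible to $HH$ by Morita invariance, and $HH$ is a localising invariant (Keller), so applying it produces a fibre sequence
\[
HH(\operatorname{Perf}_Z X)\longrightarrow HH(R)\longrightarrow HH(U)\text{.}
\]
Since $HH$ satisfies Zariski descent (a formal consequence of the localisation property), $HH(R)=R\Gamma(X,\mathcal C_X)$ and $HH(U)=R\Gamma(U,\mathcal C_X)$ for the sheafified Hochschild chain complex $\mathcal C_X$ on $X$, so the fibre above is precisely $R\Gamma_Z(X,\mathcal C_X)$, Hochschild homology with supports in $Z$. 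Concretely, using the cover $U=\bigcup_j D(t_j)$, it is the totalisation of the stable Koszul complex $HH(R)\to\coprod_j HH(R_{t_j})\to\cdots\to HH(R_{t_1\cdots t_n})$, with $HH(R)$ in degree $0$.

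Next, apply HKR. Since $X$ is smooth over $k$, the antisymmetrisation maps assemble, naturally in $X$, into a quasi-isomorphism $\mathcal C_X\simeq\bigoplus_{m\ge0}\Omega^m_{X/k}[m]$ with zero differential; in particular they are compatible with the localisations $R\to R_{t_j}$ used above, so the whole Koszul complex decomposes. We obtain
\[
R\Gamma_Z(X,\mathcal C_X)\ \simeq\ \bigoplus_{m\ge0}R\Gamma_Z\bigl(X,\Omega^m_{X/k}\bigr)[m]\text{,}
\]
and each $R\Gamma_Z(X,\Omega^m_{X/k})$ is computed by the extended Koszul (stable \v{C}ech) complex of $(t_1,\dots,t_n)$ on $\Omega^m_R$, hence has cohomology the coherent local cohomology $H^{\bullet}_{(t_1,\dots,t_n)}(R,\Omega^m)$.

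Now the regular-sequence hypothesis enters. As $R$ is smooth, $\Omega^m_{R/k}$ is finitely generated projective, so $t_1,\dots,t_n$ is again a regular sequence on it; therefore $H^c_{(t_1,\dots,t_n)}(R,\Omega^m)=0$ for $c\ne n$ — vanishing for $c<n$ because $\operatorname{depth}_{(t_1,\dots,t_n)}\Omega^m_R\ge n$, and for $c>n$ because the \v{C}ech complex has length $n$. Thus $R\Gamma_Z(X,\Omega^m_{X/k})\simeq H^n_{(t_1,\dots,t_n)}(R,\Omega^m)[-n]$, and substituting this into the decomposition and tracking the two gradings — the Hochschild (de Rham) degree $m$ against the cohomological degree $n$ of the support term — one finds that $H^n_{(t_1,\dots,t_n)}(R,\Omega^m)$ contributes exactly to $HH_{m-n}(\operatorname{Perf}_Z X)$. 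Taking $\pi_i$ and summing over $m$ gives the asserted isomorphism $H^n_{(t_1,\dots,t_n)}(R,\Omega^{n+i})\overset{\sim}{\longrightarrow}HH_i^{(t_1,\dots,t_n)}(R)$, canonical because every ingredient — HKR, Zariski descent, and passing to fibres — is natural. For $n=0$ the ideal is $(0)$, $R\Gamma_{(0)}$ is the identity functor, $U=\varnothing$, and the statement collapses to the classical HKR isomorphism $\Omega^i_{R/k}\overset{\sim}{\longrightarrow}HH_i(R)$; for $n=1$, $U=D(t_1)$ is affine and it reduces to identifying $\operatorname{coker}\bigl(\Omega^{i+1}_R\hookrightarrow\Omega^{i+1}_R[1/t_1]\bigr)$ with both $H^1_{(t_1)}(R,\Omega^{i+1})$ and $HH_i^{(t_1)}(R)$.

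I expect the main obstacle to lie not in the commutative algebra of the last step but in the two structural inputs. First, that $HH$ carries the Thomason--Trobaugh localisation of perfect-complex categories to a fibre sequence, so that the left-hand side of the theorem genuinely is the total fibre above — this is where Keller's localisation theorem, together with Morita- and idempotent-completion-invariance of $HH$, is indispensable. Second, that the HKR equivalence is natural enough to be stable under the localisations $R\to R_{t_j}$, and hence to hold sheaf-theoretically over $U$, so that one may upgrade a term-by-term identification to an isomorphism of the entire local cohomology complex — in particular one depending only on the ideal $(t_1,\dots,t_n)$ and not on the chosen generators. Once these are secured, recognising the stable Koszul complex and performing the depth-based collapse are routine.
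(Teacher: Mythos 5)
Your argument is essentially correct, but it is a genuinely different route from the one the paper takes. The paper proves Prop.~\ref{marker_Prop_HKRWithSupport} by an elementary induction on $n$, inverting one $t_j$ at a time: it compares the long exact sequence of local cohomology for $(t_1,\ldots,t_n)\subseteq(t_1,\ldots,t_{n+1})$ with Keller's localization sequence for perfect complexes with support, observes that the restriction maps on differential forms are injective (forms over a smooth algebra are projective and only nonzerodivisors are inverted), so both long exact sequences break into short exact ones, and then simply \emph{defines} $\psi_{i,n+1}$ and $\phi_{i,n+1}$ as the induced maps on cokernels, identifying the common target with $\Omega^{n+i}_{R[t_1^{-1},\ldots,t_{n+1}^{-1}]}/\sum_j\Omega^{n+i}_{R[t_1^{-1},\ldots,\widehat{t_j^{-1}},\ldots]}$. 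You instead identify $HH^{Z}(X)$ with $R\Gamma_Z(X,\mathcal C_X)$ via Keller's localization theorem plus Zariski descent, invoke the naturality of the antisymmetrization map to get the sheaf-level formality $\mathcal C_X\simeq\bigoplus_m\Omega^m[m]$ (valid in any characteristic, since no denominators occur and HKR only needs smoothness), and then collapse the stable Koszul complex by the depth argument $H^c_{(t_1,\ldots,t_n)}(R,\Omega^m)=0$ for $c\neq n$. Both proofs rest on the same two pillars (Keller's localization and classical HKR); what the paper's low-tech induction buys is exactly the extra content of Prop.~\ref{marker_Prop_HKRWithSupport}(2)--(3) — the isomorphism is constructed \emph{so as} to intertwine the boundary maps $\partial$ of the two localization sequences and the product structures, which is what is actually needed later for the row-by-row comparison of $E_1$-pages and the residue comparison. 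Your construction gives canonicity (dependence only on the ideal, via $R\Gamma_Z$) very cleanly, but if you wanted the boundary-map compatibility you would still have to check that the descent/formality identifications commute with the connecting maps of the triangle $R\Gamma_Z\to R\Gamma_X\to R\Gamma_U$, which is roughly the point where you would be forced back into an argument like the paper's. One small caution: state explicitly that the formality quasi-isomorphism you use is the antisymmetrization chain map (so that $b\circ\varepsilon=0$ integrally); a bare appeal to "HKR as a decomposition with zero differential" is sometimes only asserted in characteristic zero, whereas the theorem here has no characteristic hypothesis.
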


Although not being spelled out in this form, this is a consequence of the much
more general theory due to Keller \cite{MR1647519}. We shall need this
explicit formulation, and give a very elementary proof. This will be Prop.
\ref{marker_Prop_HKRWithSupport}. We feel that this straight-forward extension
of the standard HKR isomorphism would deserve to be much more widely known.

\begin{nonumtheorem}
Let $k$ be a field and $X/k$ a smooth scheme. For every integer $q$, the
$q$-th row on the $E_{1}$-page of the Hochschild coniveau spectral sequence is
isomorphic to the zero-th row of the $E_{1}$-page of the \textsl{coherent
cohomology} Cousin coniveau spectral sequence of $\Omega^{-q}$. That is: For
every integer $q$,\ there is a canonical isomorphism of chain complexes%
\[
\left.  ^{HH}E_{1}^{\bullet,q}\right.  \overset{\sim}{\longrightarrow}\left.
^{Cous}E_{1}^{\bullet,0}(\Omega^{-q})\right.  \text{.}%
\]
Entry-wise, this isomorphism is induced from the\ HKR isomorphism with supports.
\end{nonumtheorem}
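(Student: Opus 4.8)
The plan is to construct the claimed isomorphism of complexes from the entry-wise HKR isomorphism with supports and then check, arrow by arrow, that it is compatible with the two $d_{1}$-differentials.

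\emph{Entry-wise.} Fix $q$ and a point $x\in X^{p}$. Since $X/k$ is smooth, $A:=\mathcal{O}_{X,x}$ is a regular local ring of Krull dimension $p$; choose a regular system of parameters $t_{1},\ldots,t_{p}$, so that $(t_{1},\ldots,t_{p})=\mathfrak{m}_{x}$. By excision and base change, local cohomology with support in the point $x$ is computed on $\operatorname{Spec}A$, giving $H_{x}^{p}(X,\Omega^{-q})\cong H_{\mathfrak{m}_{x}}^{p}(A,\Omega_{A/k}^{-q})=H_{(t_{1},\ldots,t_{p})}^{p}(A,\Omega^{-q})$. Now Prop.~\ref{marker_Prop_HKRWithSupport} applied with $n=p$ and $i=-p-q$ yields a canonical isomorphism
\[
H_{x}^{p}(X,\Omega^{-q})\cong H_{(t_{1},\ldots,t_{p})}^{p}(A,\Omega^{-q})\overset{\sim}{\longrightarrow}HH_{-p-q}^{(t_{1},\ldots,t_{p})}(A)=HH_{-p-q}^{x}(\mathcal{O}_{X,x})={}^{HH}E_{1}^{p,q}.
\]
Independence of the chosen regular system of parameters is part of the statement of Prop.~\ref{marker_Prop_HKRWithSupport}, so these isomorphisms are well defined for every $p$; it remains to see that they intertwine the differentials.

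\emph{Reduction to local residues.} Both spectral sequences are coniveau spectral sequences, i.e.\ both arise from the filtration of $X$ by codimension of support, applied to a homological functor on perfect complexes ($HH$ of $D_{Z}^{b}$ in the first case, hypercohomology with supports $R\Gamma_{Z}(\Omega^{-q})$ in the second). In both cases the $E_{1}$-differential $d_{1}^{p,q}$ is the standard composite of the connecting map of the localisation triple $Z^{p+1}\subset Z^{p}\subset X$ with the projection onto the codimension-$(p+1)$ part. Using the Thomason-type localisation equivalence $D_{Z^{p}}^{b}/D_{Z^{p+1}}^{b}\simeq\coprod_{x\in X^{p}}D_{x}^{b}(\mathcal{O}_{X,x})$, together with the parallel sheaf-theoretic decomposition $\underline{H}_{Z^{p}/Z^{p+1}}^{p}(\Omega^{-q})\cong\coprod_{x\in X^{p}}\iota_{x\ast}H_{x}^{p}(X,\Omega^{-q})$, each $d_{1}$ decomposes as a sum over pairs $x\in X^{p}$, $y\in X^{p+1}$ with $y\in\overline{\{x\}}$, of local residue maps, so it suffices to match these. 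Localising at $y$ we may assume $A:=\mathcal{O}_{X,y}$ is regular local of dimension $p+1$ and $\mathfrak{p}:=\mathfrak{p}_{x}$ is a prime of height $p$ with $A/\mathfrak{p}$ regular of dimension $1$; regularity lets us choose a regular system of parameters $t_{1},\ldots,t_{p+1}$ for $A$ with $(t_{1},\ldots,t_{p})=\mathfrak{p}$. Then both the Hochschild boundary map and the local-cohomology connecting map are governed by passing from the Koszul/\v{C}ech model on $t_{1},\ldots,t_{p}$ to the one on $t_{1},\ldots,t_{p+1}$, i.e.\ by inverting the extra parameter $t_{p+1}$ and taking the class in the new cokernel.

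\emph{The main point.} The crux is exactly this last compatibility: that the elementary HKR-with-supports isomorphism, built from a mixed-complex model of the Koszul complex on $t_{1},\ldots,t_{n}$, intertwines the boundary map of the Hochschild localisation triangle $HH^{\mathfrak{p}}(A_{\mathfrak{p}})\to\cdots\to HH^{\mathfrak{m}}(A)[1]$ with the connecting homomorphism $H_{\mathfrak{p}}^{p}(A,\Omega^{-q})\to H_{\mathfrak{m}}^{p+1}(A,\Omega^{-q})$ of the long exact local-cohomology sequence. I expect this to come out of a direct comparison on the explicit complexes, since the isomorphism of Prop.~\ref{marker_Prop_HKRWithSupport} is itself constructed compatibly with enlarging the regular sequence from $n$ to $n+1$ parameters — precisely the inductive mechanism of its proof — and both boundary operators act on that model as inversion of $t_{p+1}$ followed by the quotient map. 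The remaining work is bookkeeping: tracking signs, checking that the decompositions of the two $d_{1}$'s over pairs $(x,y)$ correspond under the entry-wise isomorphisms, and confirming that the localisation equivalence $D_{Z^{p}}^{b}/D_{Z^{p+1}}^{b}\simeq\coprod_{x}D_{x}^{b}$ matches, under $HH$ and HKR, the decomposition of $\underline{H}_{Z^{p}/Z^{p+1}}^{p}$. Once these squares commute, the entry-wise isomorphisms assemble into the asserted isomorphism of chain complexes $^{HH}E_{1}^{\bullet,q}\overset{\sim}{\longrightarrow}{}^{Cous}E_{1}^{\bullet,0}(\Omega^{-q})$; the same argument in fact upgrades to an isomorphism of the underlying exact couples, hence of the two spectral sequences from the $E_{1}$-page on.
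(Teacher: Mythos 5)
Your overall strategy — entry-wise HKR with supports, then reduce the two $E_{1}$-differentials to a boundary-map compatibility of the type in Prop.~\ref{marker_Prop_HKRWithSupport}(2) — is the right one, but the reduction step as you set it up contains a genuine error. Given $x\in X^{p}$ and $y\in X^{p+1}\cap\overline{\{x\}}$, you localise at $y$, put $A:=\mathcal{O}_{X,y}$ and $\mathfrak{p}:=\mathfrak{p}_{x}$, and assert that $A/\mathfrak{p}$ is regular of dimension $1$, so that $\mathfrak{p}=(t_{1},\ldots,t_{p})$ for part of a regular system of parameters $t_{1},\ldots,t_{p+1}$ of $A$. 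This is false in general: $\overline{\{x\}}$ may well be singular at $y$. Already for $X=\mathbf{A}^{2}$, $x$ the generic point of the cuspidal cubic and $y$ the origin, the ideal $\mathfrak{p}$ is principal but its generator lies in $\mathfrak{m}_{y}^{2}$, so it is not part of any regular system of parameters; and for $p\geq2$ the prime $\mathfrak{p}$ need not even be generated by $p$ elements. Since Prop.~\ref{marker_Prop_HKRWithSupport} (and hence your Koszul/\v{C}ech model, and the ``invert $t_{p+1}$ and pass to the cokernel'' description of both boundary maps) is only available for supports cut out by a regular sequence, your comparison of the two boundary maps cannot even be formulated at such pairs $(x,y)$, and these are exactly the pairs where something has to be proved.

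The paper's proof avoids this by never presenting $\overline{\{x\}}$ as a complete intersection. It refines the coniveau filtration by concrete chains of closed subsets $Z^{[\bullet]}$, passes to a small affine $U\ni x$ where the given class $\alpha$ spreads out and where the codimension-one step is cut out by a single function $f$, and thereby identifies the abstract $E_{1}$-differential with the boundary map of an honest open--closed localisation over the smooth affine $U$; only then is the boundary compatibility of Prop.~\ref{marker_Prop_HKRWithSupport} invoked, and only for the one-function step. Note also that this identification of the exact-couple differential with a concrete localisation boundary (your ``it suffices to match these'' / ``remaining bookkeeping'') is not bookkeeping but the substantive content of the reduction; and your closing claim that the entry-wise isomorphisms upgrade to an isomorphism of exact couples is more than the theorem asserts and does not follow from a row-wise $E_{1}$-comparison alone. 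To repair your argument you should replace the ``regular system of parameters through $\mathfrak{p}$'' step by a filtration-refinement and spreading-out argument of the above kind.
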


This will be Theorem \ref{marker_Thm_ComparisonOfRowsOnEOnePage}. As all the
other rows on the right-hand side turn out to vanish, one can re-package this
claim as follows:
\[
\left.  ^{HH}E_{1}^{\bullet,\bullet}\right.  \overset{\sim}{\longrightarrow
}\left.  ^{Cous}E_{1}^{\bullet,\bullet}(\tilde{\Omega})\right.  \qquad
\text{with}\qquad\tilde{\Omega}:=\bigoplus_{m}\Omega^{m}[m]\text{.}%
\]

\begin{nonumtheorem}
Let $k$ be a field of characteristic zero and $X/k$ a smooth scheme. Then the
Hochschild coniveau spectral sequence of line \ref{aj1} degenerates on the
$E_{2}$-page.
\end{nonumtheorem}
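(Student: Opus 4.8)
The plan is to exploit the Hodge (or $\lambda$-)decomposition that Hochschild homology carries over a field of characteristic zero, and to show that it refines the entire coniveau tower. This splits the spectral sequence \eqref{aj1} into weight-homogeneous direct summands, each of which is concentrated on a single row of the $E_1$-page; and a spectral sequence concentrated on one row degenerates at $E_2$ for trivial reasons, since every $d_r$ with $r \ge 2$ points out of that row.

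Concretely, I would first invoke Keller's theory of mixed complexes of exact (or dg) categories \cite{MR1647519}: for $k \supseteq \mathbb{Q}$, Hochschild homology carries functorial Adams operations $\psi^m$, hence a weight grading $HH_n = \bigoplus_i HH_n^{(i)}$, compatible with exact functors, with localization sequences, and with Balmer's supported categories. Applied to the tower of categories of perfect complexes supported in codimension $\ge p$ which produces \eqref{aj1}, this yields a tower of $\lambda$-objects with weight-preserving structure maps; hence the coniveau spectral sequence splits as ${}^{HH}E_r^{p,q} = \bigoplus_i {}^{HH}E_r^{p,q,(i)}$ with every $d_r$ preserving the weight $i$.

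Next I would pin down the weights on the $E_1$-page. Since $X/k$ is smooth, each $\mathcal{O}_{X,x}$ is a smooth $k$-algebra, so by the classical HKR theorem $HH_j(\mathcal{O}_{X,x}) = \Omega^j_{\mathcal{O}_{X,x}/k}$ is pure of weight $j$. The HKR isomorphism with supports, Prop.~\ref{marker_Prop_HKRWithSupport}, is an instance of Keller's theory and hence compatible with the $\psi^m$; it therefore upgrades the above to: for $x \in X^p$, the group $HH^x_i(\mathcal{O}_{X,x}) \cong H^p_x(\mathcal{O}_{X,x}, \Omega^{p+i})$ is pure of weight $p+i$. Setting $i = -p-q$, this says precisely that ${}^{HH}E_1^{p,q}$ is pure of weight $-q$; in particular ${}^{HH}E_1^{p,q,(i)} = 0$ unless $i = -q$. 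Thus for each $i$ the weight-$i$ summand of the spectral sequence lives only in the row $q = -i$, so ${}^{HH}E_2^{(i)} = {}^{HH}E_\infty^{(i)}$, and summing over $i$ gives ${}^{HH}E_2 = {}^{HH}E_\infty$. As a consistency check, this identifies ${}^{HH}E_2^{p,q}$ with $H^p(X,\Omega^{-q})$ --- in accordance with Theorem~\ref{marker_Thm_ComparisonOfRowsOnEOnePage} and the fact that the Cousin complex of a locally free sheaf on a smooth scheme is a flasque resolution --- and then $\bigoplus_{p+q=-n} H^p(X,\Omega^{-q}) = \bigoplus_m H^{m-n}(X,\Omega^m) = HH_n(X)$ recovers HKR for the abutment.

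The step I expect to cost the most work is the first one: verifying that the $\lambda$-decomposition is functorial enough to descend along the coniveau tower built from categories of supported perfect complexes, and that the weight grading it induces on $HH^x_i(\mathcal{O}_{X,x})$ really is the one seen through HKR-with-supports rather than some a priori different grading. This should follow from the construction of the decomposition via Eulerian idempotents acting on the cyclic object that computes $HH$, which is manifestly natural, but it should be spelled out carefully. I would also note that when $X$ is moreover proper over $k$ the $\lambda$-operations can be avoided: HKR gives $\dim_k HH_n(X) = \sum_m \dim_k H^{m-n}(X,\Omega^m) = \sum_{p+q=-n} \dim_k {}^{HH}E_2^{p,q}$, and since ${}^{HH}E_\infty$ is a subquotient of ${}^{HH}E_2$ of the same total dimension in each total degree, all higher differentials vanish; only in the non-proper case, where these groups may be infinite-dimensional, is the weight argument genuinely needed.
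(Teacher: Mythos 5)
Your proposal is correct and follows essentially the same route as the paper's proof of Theorem \ref{marker_thm_interplaywithhodgeweights}: in characteristic zero the Hodge ($\lambda$-)decomposition splits the coniveau spectral sequence into weight-homogeneous summands, the HKR isomorphism with supports shows each summand is pure of weight $-q$ and hence concentrated in a single row, and degeneration at $E_{2}$ follows. The only differences are organizational --- the paper defines the supported decomposition as the homotopy fiber of Weibel's scheme-level decomposition and establishes the purity statement by rerunning the induction of Prop.\ \ref{marker_Prop_HKRWithSupport} with Hodge degrees tracked (exactly the step you flag as needing care), while your dimension-count shortcut in the proper case is a nice supplementary observation not used in the paper.
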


This kind of behaviour is exactly the same as for a spectral sequence due to
C. Weibel \cite{MR1469140}, which also converges to the Hochschild homology of
$X$, starting from $\left.  ^{Weibel}E_{2}^{p,q}\right.  =H^{p}(X,\mathcal{HH}%
_{-q})$, where $\mathcal{HH}_{-q}$ is Zariski-sheafified Hochschild homology.
However, his spectral sequence is constructed in a different fashion
(hypercohomology spectral sequence) and does not come with a description of
the $E_{1}$-page as we give it in Equation \ref{aj1}.\medskip

The Chern character from algebraic $K$-theory, $K\rightarrow HH$, then induces
a morphism of coniveau spectral sequences, and by the above comparison to
\emph{Residues \&\ Duality}, to (coherent) local cohomology. On the $E_{1}%
$-page, these maps are induced by pointwise maps:

\begin{nonnumdef}
[Chern character with supports]If $X/k$ is smooth and $x\in X$ any scheme
point, then we construct a map (\S \ref{subsect_ChernCharWithSupps})%
\[
K_{m}(\kappa(x))\longrightarrow H_{x}^{p}(X,\Omega^{p+m})
\]
with $p:=\operatorname*{codim}_{X}\overline{\{x\}}$, inducing maps $\left.
^{K}E_{1}^{p,q}\right.  \longrightarrow\left.  ^{Cous}E_{1}^{p,0}(\Omega
^{-q})\right.  $, where $\left.  ^{K}E_{1}^{p,q}\right.  $ is the usual
coniveau spectral sequence for algebraic $K$-theory, as in Quillen
\cite{MR0338129}.
\end{nonnumdef}

See \S \ref{subsect_ChernCharWithSupps} for the actual definition.\medskip

So far, we work in analogy to algebraic $K$-theory. In the second part of the
paper, we focus on a completely different issue. The coherent Cousin complex,
line \ref{aj2}, appears in \emph{Residues \&\ Duality} \cite{MR0222093} as an
injective resolution $-$ and is usually looked at from a quite different
perspective: If $X/k$ is a smooth proper variety of pure dimension $n$ with
$f:X\rightarrow\operatorname*{Spec}k$ the structure map, the shriek pullback
is known concretely: $f^{!}\mathcal{O}_{k}\cong\Omega_{X/k}^{n}[n]$.
Grothendieck duality then stems from the adjunction $f_{\ast}\leftrightarrows
f^{!}$ and the co-unit map $\operatorname*{Tr}_{f}:f_{\ast}f^{!}%
\mathcal{O}_{k}\rightarrow\mathcal{O}_{k}$, which induces%
\[
\operatorname*{Tr}\nolimits_{f}:H^{n}(X,\Omega_{X/k}^{n})\longrightarrow
k\text{.}%
\]
The coherent Cousin complex then provides an injective resolution of
$\Omega_{X/k}^{n}$; even more than that it is a so-called dualizing complex.
Although we will not explain this here, the map $\operatorname*{Tr}%
\nolimits_{f}$ can be unravelled explicitly in terms of (higher) residues.
Tate \cite{MR0227171} and Beilinson \cite{MR565095} have proposed an approach
to residues based on higher ad\`{e}les of a scheme. Ad\`{e}les provide a
further resolution of the sheaf $\Omega_{X/k}^{n}$ and give rise to a certain
Lie homology map $H_{n+1}^{\operatorname*{Lie}}(-,-)\rightarrow k$ which turns
out to give an explicit description of these residue maps. The duality theory
aspect of the ad\`{e}les (in dimension $>1$) is due to Yekutieli
\cite{MR1213064}, \cite{MR1652013}.

In \cite{olliTateRes} it was shown that this approach to the residue can also
be re-phrased in terms of the Hochschild homology of certain (non-commutative)
algebras defined from the ad\`{e}les. Along with the first part of the paper,
it seems more than tempting to believe that this should allow us to phrase the
Tate--Beilinson residue in terms of differentials in the Hochschild--Cousin
complex. We show that this is indeed the case:

\begin{nonumtheorem}
[Main Comparison Theorem]The Tate--Beilinson residue in the Lie homology of
ad\`{e}les \cite{MR0227171}, \cite{MR565095} can be expressed in terms of the
differentials of our Hochschild--Cousin complex: Specifically, the
Tate--Beilinson Lie homology residue symbol%
\[
\Omega_{\operatorname*{Frac}L_{n}/k}^{n}\longrightarrow H_{n+1}%
^{\operatorname*{Lie}}((A_{n})_{Lie},k)\overset{\phi_{Beil}}{\longrightarrow}k
\]
(as defined in \cite[\S 1, Lemma, (b)]{MR565095}) also agrees with%
\[
\Omega_{\operatorname*{Frac}L_{n}/k}^{n}\longrightarrow HH_{n}^{\eta_{0}%
}(L_{n})\longrightarrow HH_{n}^{\eta_{0}}(C_{0})\longrightarrow HH_{n}%
(A_{n})\overset{\phi_{HH}}{\longrightarrow}k\text{.}%
\]
(see Theorem \ref{thm_MainOfLastPart} for details and notation)
\end{nonumtheorem}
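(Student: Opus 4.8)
\emph{Proof plan.} The strategy is to compare the two maps out of $\Omega_{\operatorname{Frac}L_n/k}^{n}$ that feed the respective traces, and to show that each, after applying its trace, computes the classical iterated residue $\operatorname{res}_{n}\colon\Omega_{\operatorname{Frac}L_n/k}^{n}\to k$ attached to the flag $\eta_{0}\supset\eta_{1}\supset\cdots$ of the $n$-dimensional ad\`{e}les (the coefficient of $a$ in $a\,\frac{dt_{1}}{t_{1}}\wedge\cdots\wedge\frac{dt_{n}}{t_{n}}$, all other monomials and all exact forms being killed). For the upper row this is Beilinson's theorem in the form made precise by Yekutieli \cite{MR1213064}, \cite{MR1652013} and re-derived Hochschild-theoretically in \cite{olliTateRes}, where it is moreover shown that $\phi_{Beil}$ factors through $\phi_{HH}$ along the natural comparison between the Lie and the Hochschild homology of $A_{n}$; so the genuine content is that the \emph{lower} composite, built from the HKR isomorphism with supports and the adelic Hochschild tower, agrees with the Hochschild description used there. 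Since $\Omega_{\operatorname{Frac}L_n/k}^{n}$ is topologically spanned by monomial forms and every arrow in the lower composite is continuous for the natural linearly-topologised structures, it suffices to evaluate on monomials.

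For the lower composite I would track an explicit Hochschild cycle. A form $g\,dt_{1}\wedge\cdots\wedge dt_{n}$ is sent, by (the relevant instance of) the HKR isomorphism with supports of Prop.\ \ref{marker_Prop_HKRWithSupport}, to the class of the antisymmetrised chain $\sum_{\sigma}\operatorname{sgn}(\sigma)\,g\otimes t_{\sigma(1)}\otimes\cdots\otimes t_{\sigma(n)}$ in $HH_{n}^{\eta_{0}}(L_{n})$. The maps $HH_{n}^{\eta_{0}}(L_{n})\to HH_{n}^{\eta_{0}}(C_{0})\to HH_{n}(A_{n})$ are, respectively, the change-of-category map for the inclusion of $L_{n}$ into the $\eta_{0}$-reduced algebra $C_{0}$ and a Morita-type map that, after dropping the support, realises $HH_{n}(C_{0})$ inside $HH_{n}(A_{n})$; both are given by explicit, if bulky, formulas on Hochschild chains. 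I would push the antisymmetrised cycle through them and apply $\phi_{HH}$, which is the iterated residue map attached to the $n$-fold central extension structure of $A_{n}$ (Tate's finite-potent trace in dimension one, iterated $n$ times); on the chosen cycle this should evaluate to the $g$-coefficient, i.e.\ $\operatorname{res}_{n}$ of the form, matching the upper row.

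The step I expect to be the main obstacle is this last one: the HKR side delivers a purely \emph{algebraic} representative (an antisymmetrised tensor of elements of $L_{n}$), whereas $\phi_{HH}$ and the passage to $A_{n}$ are \emph{operator-theoretic} (corners of adelic endomorphism algebras cut out by idempotents, iterated finite-potent traces). Reconciling them requires the explicit ``reduction'' isomorphisms of \cite{olliTateRes}, tight control of signs, and the fact that local cohomology with support at $\eta_{0}$ interacts with the trace exactly as excision predicts. A convenient way to organise this is to factor the lower composite through cyclic homology and to observe that the localization boundary maps being used, restricted to the $\eta_{0}$-supported part, are the connecting homomorphisms presenting the iterated central extension class that governs $\phi_{Beil}$ --- the dictionary ``extension class $\leftrightarrow$ connecting homomorphism'' being exactly what \cite{olliTateRes} sets up. With this in hand, an induction on $n$, peeling off one step of the flag and using that the adelic Hochschild tower (hence $A_{n}$ together with its trace) is built iteratively from the $(n-1)$-dimensional data, reduces the whole verification to the one-dimensional case (Tate's original setting for curves), where it is the classical comparison of the residue with the Hochschild, resp.\ cyclic, trace. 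What then remains is only the sign and normalisation bookkeeping between Prop.\ \ref{marker_Prop_HKRWithSupport} and \cite[\S 1, Lemma, (b)]{MR565095}.
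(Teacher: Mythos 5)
Your route is genuinely different from the paper's, and as written it has a gap at exactly the point you flag as the ``main obstacle''. The identification you propose to take as an \emph{observation} --- that the localization boundary maps occurring in the lower composite, once transported along the Tate realization functors and the Morita equivalence between $\left. n\text{-}\mathsf{Tate}_{\aleph_{0}}(\mathsf{Vect}_{f}(k))\right.$ and $P_{f}(A_{n})$, become the maps $d=\delta\circ\Lambda$ whose $n$-fold iterate defines $\phi_{HH}$ (equivalently, ``present the iterated extension class'') --- is not something the dictionary of \cite{olliTateRes} supplies: that dictionary lives entirely on the cubical-algebra side. Relating the geometric/ad\`{e}lic boundary maps to it is the main technical content of the paper (the relative Morita theory of \S\ref{sect_RelativeMoritaTheory}, Prop.\ \ref{Prop_LAMBDAFromLocalization}, Theorem \ref{Thm_DeloopingBoundaryAgreesWithDMap}, Props.\ \ref{prop_bigcomdiag1}--\ref{prop_bigcomdiag2}, culminating in Theorem \ref{thm_AdeleCousinDiffsViaAbstractHHSymbol}); assuming it begs the question, and without it neither your explicit cycle-chase nor your induction on $n$ gets started. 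A second gap is the opening reduction: $\Omega_{\operatorname*{Frac}L_{n}/k}^{n}$ is an algebraic module of K\"ahler forms over the function field, and to reduce to ``monomial'' forms you must expand coefficients in the completed higher local field, where K\"ahler differentials are not separated and none of the maps in the lower composite (supported Hochschild groups, Morita identifications, $\phi_{HH}$) is shown --- or claimed anywhere in the paper --- to be continuous for a linear topology; this is precisely the subtlety Yekutieli's separated/continuous forms exist to handle, and your plan does not address it.

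The paper's actual proof is short and never computes a residue. With Theorem \ref{thm_AdeleCousinDiffsViaAbstractHHSymbol} in hand, it only has to compare $\phi_{Beil}$ and $\phi_{HH}$ on the image of the form in $HH_{n}(A_{n})$, and this is done softly: since $\operatorname*{Frac}L_{n}$ is commutative, the antisymmetrized chain attached to $f_{0}\,\mathrm{d}f_{1}\wedge\cdots\wedge\mathrm{d}f_{n}$ is a Chevalley--Eilenberg cycle (its differential is a sum of terms each containing a commutator), so by functoriality the class lifts to $H_{n}((A_{n})_{Lie},(A_{n})_{Lie})$; the commuting square of \cite{olliTateRes}, reproduced as Diagram \ref{vva1}, then yields $\phi_{Beil}\circ I'=\phi_{HH}\circ\varepsilon$ on this lift, and the two displayed composites are exactly the two ways around the square. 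No monomial reduction, no continuity, no induction on $n$, no reduction to Tate's curve case. Note that you actually cite the relevant fact from \cite{olliTateRes} (agreement of $\phi_{Beil}$ and $\phi_{HH}$ on classes coming from commutative subalgebras) but then do not use it; using it makes your entire explicit evaluation of both sides as $\operatorname{res}_{n}$ unnecessary. Your plan could likely be completed along the lines of the known formula-level comparisons \cite{MR3207578}, \cite{olliTateRes}, but that would prove a different statement than the one at issue, whose content is the identification with the Hochschild--Cousin/ad\`{e}lic differentials established in \S\ref{sect_RelativeMoritaTheory}--\S\ref{sect_TheBeilRes}.
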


This statement is intentionally vague since we do not want to introduce the
necessary notation and background on ad\`{e}les of schemes in this
introduction. This result will be stated in precise form in
\S \ref{subsect_ComparisonTateBeilResidue}, along with a review of the
ad\`{e}le theory. In coarse strokes, we paint the following picture:%
\[%
\begin{array}
[c]{ccccc}%
\begin{array}
[c]{c}%
\text{coherent Cousin}\\
\text{complex}%
\end{array}
& \longleftrightarrow &
\begin{array}
[c]{c}%
\text{Hochschild}\\
\text{Cousin complex}%
\end{array}
& \longleftrightarrow & \text{ad\`{e}les of }\Omega^{n}\\
\uparrow &  & \uparrow &  & \uparrow\\
\text{local coherent} &  & \text{Hochschild homology} &  &
\text{non-commutative}\\
\text{cohomology} &  & \text{with supports} &  & \text{Hochschild homology}\\
&  &  &  & \downarrow\\
&  &  &  & \text{Lie homology}%
\end{array}
\]
It should be said that J. Lipman had the idea to use Hochschild homology for
residues already in 1987 \cite{MR868864}. However, his construction is very
different from ours. He constructs the residue manually, while we use Keller's
analogue of the localization sequence in $K$-theory \cite{MR1667558}, which
only became available in 1999, along with the particularly flexible technique
for coniveau due to Balmer \cite{MR2439430}, which is even more
recent.\medskip

\begin{outline}
We proceed as follows: In \S \ref{marker_Sect_ReviewDefsOfHochschildHomology}
we recall the necessary material on Hochschild homology. In
\S \ref{sect_HKR_with_supports} we prove the HKR isomorphism with supports. In
\S \ref{subsect_CubicallyDecomposedAlgebrasAndTheirDerivates} we give an
independent treatment of the Hochschild residue \`{a} la \cite{olliTateRes}.
In \S \ref{sect_TateCategories} we provide the necessary material on Tate
categories. These categories provide the crucial bridge to transport
Hochschild homology from a classical geometric to an ad\`{e}le perspective. In
\S \ref{sect_RelativeMoritaTheory} we develop a `\textsl{relative Morita
theory}'. If an exact category $\mathcal{C}$ happens to be equivalent to a
projective module category, say $\mathcal{C}\overset{\sim}{\rightarrow}%
P_{f}(E)$ for an algebra $E$, we will need to understand how such a
presentation changes if we consider a fully exact sub-category $\mathcal{C}%
^{\prime}\hookrightarrow\mathcal{C}$, or a quotient exact category
$\mathcal{C}/\mathcal{C}^{\prime}$, provided $\mathcal{C}^{\prime}$ is left or
right $s$-filtering. This might be of independent interest. In
\S \ref{sect_TheBeilRes} we combine all these tools to establish a commutative
square relating the Beilinson--Tate residue with boundary maps in Keller's
localization sequence for Hochschild homology.
\end{outline}

\begin{acknowledgement}
We thank Shane\ Kelly and Charles Weibel for their comments on an earlier
version of this text. Moreover, we heartily wish to thank Michael Groechenig.
This paper does not only benefit profoundly from previous joint papers with
him, but also from numerous conversations going well beyond what exists in
writing. The first part of the paper owes great intellectual debt to the works
of P. Balmer and B. Keller.
\end{acknowledgement}

\section{\label{marker_Sect_ReviewDefsOfHochschildHomology}The many
definitions of Hochschild homology}

Let us quickly survey what we understand as Hochschild homology. There are a
large number of definitions which apply in greater or smaller generality. We
will quickly sketch the transition from the classical definition of Hochschild
up to the definition for dg categories of Keller.\medskip

For $k$ a commutative ring and $A$ a flat $k$-algebra one classically defines
a complex $(C_{\bullet},b)$ by $C_{i}(A)=A^{\otimes i+1}$,%
\begin{align}
b(a_{0}\otimes\cdots\otimes a_{i}):=  & \sum_{j=0}^{i-1}(-1)^{j}a_{0}%
\otimes\cdots\otimes a_{j}a_{j+1}\otimes\cdots\otimes a_{i}\label{lHHC7}\\
& +(-1)^{i}a_{i}a_{0}\otimes a_{1}\otimes\cdots\otimes a_{i-1}\nonumber
\end{align}
and its homology is the \emph{Hochschild homology} of $A$. Philosophically,
this is conveniently viewed as a concrete complex quasi-isomorphic to a
certain derived tensor product, namely%
\begin{equation}
C_{\bullet}\sim A\otimes_{A\otimes_{k}A^{op}}^{\mathbf{L}}A\text{,}%
\label{lHHC9}%
\end{equation}
but it is the former definition which led to Mitchell's generalization to
categories \cite{MR0294454}: For $\mathcal{A}$ a $k$-linear small category
such that all $\operatorname*{Hom}\nolimits_{k}(-,-)$ are flat $k$-modules,
one defines%
\begin{equation}
C_{i}(\mathcal{A}):=\coprod\operatorname*{Hom}(X_{i},X_{0})\otimes
_{k}\operatorname*{Hom}(X_{i-1},X_{i})\otimes_{k}\cdots\otimes_{k}%
\operatorname*{Hom}(X_{0},X_{1})\text{,}\label{lHHC8}%
\end{equation}
where the coproduct runs over all $(i+1)$ tuples of objects in $\mathcal{A}$.
A differential $b$ can be defined by the same formula as before, this time
instead of multiplying elements of $A$, one composes the respective morphisms.
In order to stress the analogy with Equation \ref{lHHC7} the reader might at
first sight prefer to use an indexing starting with $\operatorname*{Hom}%
(X_{0},X_{1})\otimes\ldots$, but this comes with the disadvantage that in
composing $X_{0}\overset{a_{0}}{\rightarrow}X_{1}$ and $X_{1}\overset{a_{1}%
}{\rightarrow}X_{2}$ the composition is $a_{1}a_{0}$ and not $a_{0}a_{1}$.
Thus, in order to use the same formula for $b$, one has to use a reversed numbering.

\begin{remark}
\label{marker_CyclicNerveForCategoryOfARing}If we regard a ring $A$ as a
category $\mathbf{A}$ with one object \textquotedblleft$A$\textquotedblright%
\ and $\operatorname*{Hom}\nolimits_{\mathbf{A}}(A,A):=A$, the classical
definition of Equation \ref{lHHC7} literally becomes a special case of
Mitchell's categorical definition.
\end{remark}

However, both constructions are only the `correct' ones in very special cases.
For example, for $A$ a general $k$-algebra, i.e. not necessarily flat over
$k$, one works instead with $C_{i}(A):=A_{\bullet}\otimes_{k}\cdots\otimes
_{k}A_{\bullet}$, where $A_{\bullet}\rightarrow A$ is a flat resolution of $A$
and adapts the definition of the differential to deal with dg algebras, as
done by Keller in \cite{MR1492902}. In a similar spirit, for $\mathcal{A}$ a
$k$-linear dg category, one replaces the definition of Equation \ref{lHHC8} by
a version where the complexes $\operatorname*{Hom}(-,-)$ get replaced by flat
resolutions. This leads to the definition of Keller that we shall also use in
the present paper; the flat case suffices for our purposes:

\begin{definition}
Let $k$ be a commutative ring and $\mathcal{A}$ a small $k$-linear dg-flat dg
category. In particular, all

\begin{itemize}
\item $\operatorname*{Hom}\nolimits_{\mathcal{A}}(X,Y)$ are $k$-flat dg
$k$-modules, and

\item the composition%
\[
\operatorname*{Hom}\nolimits_{\mathcal{A}}(Y,Z)\otimes_{k}\operatorname*{Hom}%
\nolimits_{\mathcal{A}}(X,Y)\longrightarrow\operatorname*{Hom}%
\nolimits_{\mathcal{A}}(X,Z)
\]
is a morphism of dg $k$-modules.

Then define for homogeneous morphisms $(a_{i},\ldots,a_{0})\in C_{i}%
(\mathcal{A})$ (as in Equation \ref{lHHC8})%
\begin{align*}
b(a_{i},\ldots,a_{0}):=  & \sum_{j=0}^{i-1}(-1)^{j}(a_{i},\ldots,a_{i}\circ
a_{i-1},\ldots,a_{0})\\
& +(-1)^{n+\sigma}(a_{0}\circ a_{i},a_{i-1},\ldots,a_{1})\text{,}%
\end{align*}
where $\sigma=(\deg a_{0})(\deg a_{1}+\cdots+\deg a_{i-1})$.
\end{itemize}
\end{definition}

See for example \cite[\S 3.2]{MR1647519} or \cite[\S 1.3]{MR1667558}. The
general version without flatness assumption is constructed in \cite[\S 3.9]%
{MR1667558}. We also remind the reader that for an exact category the category
of complexes itself does not reflect any datum of the exact structure, so that
the derived category of an exact category $\mathcal{E}$ has to be defined as
the Verdier quotient $D^{b}\mathcal{E}:=\mathcal{K}^{b}(\mathcal{E}%
)/\mathcal{A}c^{b}(\mathcal{E})$, where $\mathcal{K}^{b}(\mathcal{E})$ is the
triangulated category of bounded complexes in $\mathcal{E}$ modulo chain
homotopies and $\mathcal{A}c^{b}(\mathcal{E})$ the subcategory of acyclic
complexes, subtly depending on the exact structure. See \cite[\S 10]%
{MR2606234} for a very detailed excellent review.

As is suggested from the derived category of an exact category, the Hochschild
homology of $\mathcal{E}$ is then defined as follows:

\begin{definition}
[{Keller, \cite[\S 1.4]{MR1667558}}]\label{marker_DefHHKellerExactCats}Let
$\mathcal{E}$ be a flat $k$-linear exact category. Then its \emph{Hochschild
homology} is%
\[
HH(\mathcal{E}):=Cone\left(  C_{\bullet}\mathcal{A}c^{b}(\mathcal{E}%
)\longrightarrow C_{\bullet}\mathcal{C}^{b}(\mathcal{E})\right)  \text{,}%
\]
where $\mathcal{C}^{b}(\mathcal{E})$ is the dg category of bounded complexes
in $\mathcal{E}$ and $\mathcal{A}c^{b}(\mathcal{E})$ its dg subcategory of
acyclic complexes.
\end{definition}

In the present paper we will mostly be interested in the Hochschild homology
of perfect complexes with support. This category sadly does not fit into the
framework of exact categories. One may view perfect complexes either as a
stable $\infty$-category, a dg category or as a Waldhausen $1$-category. The
dg perspective leads directly to a very similar definition as before:

\begin{definition}
[{Keller, \cite[\S 4.3]{MR1647519}}]\label{marker_DefHHKellerPerfectComplexes}%
Let $X$ be a scheme and $Z$ a closed subset. Define the \emph{Hochschild
homology of} $X$ \emph{with support in} $Z$ by%
\[
HH^{Z}\left(  X\right)  :=Cone\left(  C_{\bullet}(\mathcal{A}%
c\operatorname*{Perf}X)\longrightarrow C_{\bullet}(\operatorname*{Perf}%
\nolimits_{Z}X)\right)  \text{,}%
\]
where $\operatorname*{Perf}\nolimits_{Z}X$ is the category of perfect
complexes on $X$ acyclic on $X-Z$, and $\mathcal{A}c\operatorname*{Perf}X$ is
the category of all acyclic perfect complexes. We write $HH\left(  X\right)
:=HH^{X}\left(  X\right)  $ for the variant without support
condition.\footnote{One might be tempted to prefer writing \textquotedblleft%
$HH_{Z}$\textquotedblright\ for the theory with support in $Z$, but it leads
to the impractical notation $HH_{Z,i}$. Also, for homology with closed support
(Borel--Moore), the superscript notation $H_{i}^{c}$ or $H_{i}^{cl}$ is
widespread. Ultimately, it remains a matter of taste, of course.}
\end{definition}

Instead of $\mathcal{A}c\operatorname*{Perf}X$ we could also write
$\operatorname*{Perf}\nolimits_{\varnothing}X$ of course; these are literally
the same categories. Finally, we should also discuss a sheaf perspective
\cite{MR1277141}, \cite[\S 2, end of page 59]{MR1390671}: Let $k$ be a field
now. For $X$ a $k$-scheme one can consider the presheaf of complexes of
$k$-modules%
\begin{equation}
U\mapsto C_{\bullet}(\mathcal{O}_{X}(U))\label{lmips4}%
\end{equation}
and let $\mathcal{C}_{\bullet}$ be its Zariski sheafification (Weibel denotes
it as $\mathcal{C}_{\ast}^{h}$ in \cite[\S 1]{MR1277141}). Note that
$\mathcal{O}_{X}(U)$ is a flat $k$-algebra, so for $C_{\bullet}$ one can use
the classical definition as in Equation \ref{lHHC7}. Unfortunately,
$\mathcal{C}_{\bullet}$ is not a quasi-coherent sheaf. However, its homology
turns out to be quasi-coherent.

\begin{theorem}
[Geller, Weibel]\label{Thm_GellerWeibelPropsOfHHSheaf}Let $X$ be a $k$-scheme.

\begin{enumerate}
\item The homology sheaves $\mathcal{HH}_{i}:=H_{i}(\mathcal{C}_{\bullet})$
are quasi-coherent.

\item The Zariski sheafification of $U\mapsto HH_{i}(\mathcal{O}_{X}(U))$
agrees with the sheaf $\mathcal{HH}_{i}$.

\item On each affine open $U\subseteq X$, one has canonical isomorphisms%
\[
H^{p}(U,\mathcal{HH}_{i})\cong\left\{
\begin{array}
[c]{ll}%
0 & \text{for }p\neq0\\
HH_{i}(\mathcal{O}_{X}(U)) & \text{for }p=0\text{.}%
\end{array}
\right.
\]

\item $\mathcal{HH}_{i}$ also makes sense as an \'{e}tale sheaf and
$H^{p}(X_{\acute{e}t},\mathcal{HH}_{i})\cong H^{p}(X_{Zar},\mathcal{HH}_{i})$.
\end{enumerate}
\end{theorem}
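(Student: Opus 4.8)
The plan is to reduce everything to the affine case via a single structural input: Hochschild homology commutes with localization. Concretely, I would first show that for a commutative flat $k$-algebra $A$ and a multiplicative subset $S\subseteq A$ the natural map $S^{-1}HH_{i}(A)\to HH_{i}(S^{-1}A)$ is an isomorphism. Using the identification $HH_{*}(A)=\operatorname{Tor}^{A\otimes_{k}A}_{*}(A,A)$, one observes that $B^{e}:=S^{-1}A\otimes_{k}S^{-1}A$ is a localization of $A^{e}:=A\otimes_{k}A$, hence flat over it, and that $S^{-1}A\cong B^{e}\otimes_{A^{e}}A$; flat base change for $\operatorname{Tor}$ then identifies $\operatorname{Tor}^{B^{e}}_{*}(S^{-1}A,S^{-1}A)$ with $\operatorname{Tor}^{A^{e}}_{*}(A,S^{-1}A)$, and computing the latter with an $A^{e}$-projective resolution of $A$ together with the flatness of $S^{-1}A$ over $A$ yields $HH_{*}(A)\otimes_{A}S^{-1}A$. (This is classical; alternatively, since Hochschild homology commutes with filtered colimits and $S^{-1}A=\varinjlim_{f\in S}A_{f}$, the general case follows once the principal localizations are treated.)

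Granting this, assertion (2) is immediate: Zariski sheafification is an exact functor and therefore commutes with passing to homology of the complex of presheaves $U\mapsto C_{\bullet}(\mathcal{O}_{X}(U))$, so $\mathcal{HH}_{i}=H_{i}(\mathcal{C}_{\bullet})$ is by definition the sheafification of the presheaf $\mathfrak{H}_{i}\colon U\mapsto HH_{i}(\mathcal{O}_{X}(U))$. For (1), I would restrict to an affine open $U=\operatorname{Spec}A$. By localization invariance, $\mathfrak{H}_{i}(D(f))=HH_{i}(A_{f})\cong HH_{i}(A)_{f}$ compatibly with restrictions, so on the basis of distinguished opens of $U$ the presheaf $\mathfrak{H}_{i}$ already satisfies the sheaf axiom — there it is literally the sheaf condition for $\widetilde{HH_{i}(A)}$ — whence $\mathcal{HH}_{i}|_{U}\cong\widetilde{HH_{i}(A)}$. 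Covering $X$ by affines, $\mathcal{HH}_{i}$ is quasi-coherent. Then (3) is Serre's vanishing for quasi-coherent sheaves on affine schemes applied to $\mathcal{HH}_{i}|_{U}\cong\widetilde{HH_{i}(A)}$, together with $H^{0}(U,\widetilde{M})=M$ for $M=HH_{i}(A)=HH_{i}(\mathcal{O}_{X}(U))$; the degrees $p<0$ vanish trivially.

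For (4), I would run the analogous argument in the \'{e}tale topology, for which the required input is \'{e}tale base change for Hochschild homology: for an \'{e}tale morphism $A\to B$ the natural map $B\otimes_{A}HH_{*}(A)\to HH_{*}(B)$ is an isomorphism. The mechanism mirrors the localization step — for \'{e}tale $A\to B$ the multiplication $B\otimes_{A}B\to B$ is, up to splitting off an idempotent, a localization, which is precisely what makes the $\operatorname{Tor}$-base-change argument go through relative to $A$ — and this is the \'{e}tale descent theorem of Geller and Weibel, which I would cite. It follows that the small-\'{e}tale sheafification of $\mathfrak{H}_{i}$ is the quasi-coherent sheaf $\mathcal{HH}_{i}$ regarded \'{e}tale-locally, with value $HH_{i}(A)\otimes_{A}B$ on an \'{e}tale $\operatorname{Spec}B\to\operatorname{Spec}A$; and $H^{p}(X_{\acute{e}t},\mathcal{HH}_{i})\cong H^{p}(X_{Zar},\mathcal{HH}_{i})$ is then the standard theorem that the cohomology of a quasi-coherent sheaf agrees in the Zariski and \'{e}tale topologies.

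The one step that is not pure sheaf-theoretic formalism is the \'{e}tale base change in (4) (equivalently, the Geller--Weibel descent theorem); once it and the affine localization invariance are in hand, the rest is exactness of sheafification plus Serre vanishing. I therefore expect (4) to be the main obstacle, with the careful verification of the localization isomorphism — in particular the identification $S^{-1}A\cong B^{e}\otimes_{A^{e}}A$ and its compatibility with the relevant module structures — being the only other place that demands genuine care.
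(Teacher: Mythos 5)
Your proposal is correct and follows exactly the route of the sources this theorem is quoted from: the paper gives no proof of its own but cites Weibel and Geller--Weibel, whose argument is precisely localization invariance of $HH$ via flat base change for $\operatorname{Tor}$, exactness of sheafification, Serre vanishing on affines, and the Geller--Weibel \'etale base-change/descent theorem for part (4). Your identification of the \'etale base change as the one genuinely non-formal input (to be cited rather than reproved) matches how the literature, and hence the paper, treats it.
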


See \cite[Prop. 1.2]{MR1277141} for a discussion and \cite[Cor. 0.4]%
{MR1120653} for the proof. This is all we need for the present paper, but much
more is known, e.g. cdh descent for $X$ smooth \cite{MR2373359},
\cite{MR2415380}.

\begin{example}
\label{Example_Omega_Iso_To_HH_as_Zar_sheaves}The Zariski descent and the
Hochschild-Kostant-Rosenberg isomorphism imply that on a smooth $k$-scheme the
sheaves $\mathcal{HH}_{i}$ and $\Omega^{i}$ ($:=\Omega_{X/k}^{i}$) are isomorphic.
\end{example}

Weibel and Swan now define a version of Hochschild homology of a scheme via%
\[
HH_{i}^{Weibel}\left(  X\right)  :=H^{-i}(X_{Zar},\mathcal{C}_{\bullet
})\text{,}%
\]
where $H^{\ast}$ refers to the sheaf (hyper)cohomology of the sheaf of
complexes \cite[Eq. 1.1]{MR1277141}. Geller and Weibel show that for $X$
affine this agrees with the classical definition in terms of rings,
$HH_{i}^{Weibel}(X)\cong HH_{i}(\mathcal{O}_{X}(X))$, see \cite[Thm.
4.1]{MR1120653}. More generally, Keller established a beautiful theorem
linking this sheaf perspective with the categorical viewpoint.

\begin{theorem}
[Keller]Let $k$ be a field and $X$ a Noetherian separated $k$-scheme. Then
there is a canonical isomorphism $HH_{i}^{Weibel}\left(  X\right)
\overset{\sim}{\rightarrow}HH_{i}(X)$, where $HH_{i}(X)$ refers to the
Hochschild homology of perfect complexes as in Definition
\ref{marker_DefHHKellerPerfectComplexes}.
\end{theorem}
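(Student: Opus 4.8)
The plan is to recognize both sides as Zariski-local invariants of $X$ that agree with the classical Hochschild homology $HH_{\ast}(\mathcal{O}_{X}(U))$ on affine opens, and then to conclude by Zariski descent together with the five lemma. \textbf{Step 1 (the comparison map).} First I would build a natural transformation between the two functors. For each open $U\subseteq X$, regard $\mathcal{O}_{X}(U)$ as the one-object dg category $\mathbf{\mathcal{O}_{X}(U)}$ of Remark \ref{marker_CyclicNerveForCategoryOfARing} and include it into $\operatorname{Perf}U$ by sending the unique object to the structure sheaf $\mathcal{O}_{U}$. On Hochschild complexes this yields a map $C_{\bullet}(\mathcal{O}_{X}(U))\to HH(\operatorname{Perf}U)$, compatible with restriction along $V\subseteq U$ since restriction carries $\mathcal{O}_{U}$ to $\mathcal{O}_{V}$. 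Writing $P_{1}$ for the presheaf $U\mapsto C_{\bullet}(\mathcal{O}_{X}(U))$ and $P_{2}$ for $U\mapsto HH(\operatorname{Perf}U)$, this is a morphism of presheaves of complexes $P_{1}\to P_{2}$; note that $\mathcal{C}_{\bullet}=\operatorname{sh}P_{1}$ in the notation above, so $HH^{Weibel}$ is computed from $\operatorname{sh}P_{1}$.

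\textbf{Step 2 (equivalence on affines).} Next I would check that $P_{1}\to P_{2}$ is a quasi-isomorphism on each affine. For $A$ a commutative Noetherian $k$-algebra, the object $A$ is a compact generator of $\operatorname{Perf}(A)$ with endomorphism dg algebra $A$, so Keller's invariance of Hochschild homology under derived Morita equivalence \cite{MR1647519} gives a quasi-isomorphism $C_{\bullet}(A)\overset{\sim}{\rightarrow}HH(\operatorname{Perf}(A))$ (the ``agreement theorem''). Since affine opens form a basis of $X_{Zar}$ and sheafification is exact and commutes with stalks, the induced map $\operatorname{sh}P_{1}\to\operatorname{sh}P_{2}$ is then a quasi-isomorphism of complexes of Zariski sheaves; in particular $HH_{i}^{Weibel}(X)=H^{-i}(X_{Zar},\mathcal{C}_{\bullet})\overset{\sim}{\rightarrow}\mathbb{H}^{-i}(X_{Zar},\operatorname{sh}P_{2})$.

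\textbf{Step 3 (descent for $HH(\operatorname{Perf}-)$).} It then remains to identify $\mathbb{H}^{-i}(X_{Zar},\operatorname{sh}P_{2})$ with $HH_{i}(X)=H_{i}P_{2}(X)$, i.e.\ to prove that the presheaf $P_{2}$ satisfies Zariski descent. This is where I would invoke Keller's localization theorem for the Hochschild homology of perfect complexes \cite{MR1667558}: for a closed $Z\subseteq X$ there is a distinguished triangle $HH^{Z}(X)\to HH(X)\to HH(X\setminus Z)$, together with excision $HH^{Z}(X)\simeq HH^{Z\cap U}(U)$ for $Z\subseteq U$ open --- the Hochschild analogue of Thomason--Trobaugh. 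These combine into a Mayer--Vietoris triangle relating $HH(\operatorname{Perf}(U\cup V))$, $HH(\operatorname{Perf}U)\oplus HH(\operatorname{Perf}V)$ and $HH(\operatorname{Perf}(U\cap V))$, which is compatible with the Mayer--Vietoris triangle for sheaf hypercohomology $\mathbb{H}(-,\operatorname{sh}P_{2})$ and with the canonical map $P_{2}(-)\to\mathbb{H}(-,\operatorname{sh}P_{2})$. Since $X$ is Noetherian, hence quasi-compact, and separated, it admits a finite affine open cover all of whose intersections are again affine, and I would induct on the number of opens in such a cover. In the base case $X=\operatorname{Spec}A$ one has $\operatorname{sh}P_{2}\simeq\mathcal{C}_{\bullet}$ (Step 2) with homology sheaves $\mathcal{HH}_{i}$ quasi-coherent and acyclic in positive degrees (Theorem \ref{Thm_GellerWeibelPropsOfHHSheaf}), so the hypercohomology spectral sequence degenerates and $\mathbb{H}^{-i}(X,\operatorname{sh}P_{2})\cong HH_{i}(A)\cong HH_{i}(\operatorname{Perf}X)=H_{i}P_{2}(X)$. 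In the inductive step, the five lemma applied to the two Mayer--Vietoris triangles propagates the isomorphism $P_{2}(-)\overset{\sim}{\rightarrow}\mathbb{H}(-,\operatorname{sh}P_{2})$ from $U$, $V$, $U\cap V$ to $U\cup V$. Composing with Step 2 yields the desired canonical isomorphism $HH_{i}^{Weibel}(X)\overset{\sim}{\rightarrow}HH_{i}(X)$.

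\textbf{Main obstacle.} The genuinely substantial input is Step 3: excision and localization for the Hochschild homology of perfect complexes with support, i.e.\ the analogue of Thomason--Trobaugh; granting this (due to Keller \cite{MR1667558}), everything else is formal bookkeeping with sheaf hypercohomology and the five lemma. One further routine point worth spelling out, requiring no new idea, is that the comparison map of Step 1 is compatible on affines with the Geller--Weibel identification $HH_{i}^{Weibel}\cong HH_{i}(\mathcal{O}_{X})$ --- this is precisely the agreement theorem again.
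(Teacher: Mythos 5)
The paper does not actually prove this statement: it is quoted verbatim from Keller and disposed of with the citation \cite[Thm.\ 5.2]{MR1647519}, together with the footnote that Keller works at the level of mixed complexes. So there is no in-paper argument to measure you against; what you have written is a reconstruction of the kind of proof the cited source gives, and in outline it is sound and close to Keller's own route: agreement on affines (Morita invariance of $HH$ of dg categories, so that $C_{\bullet}(A)\to HH(\operatorname{Perf}A)$ is a quasi-isomorphism), Keller's localization theorem plus excision to get a Mayer--Vietoris square for $HH(\operatorname{Perf}-)$, and induction on the size of a finite affine cover (separatedness keeping all intersections affine), comparing against the descent property of hypercohomology via the five lemma.

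Three points deserve more care than you give them. First, in the base case you appeal to the hypercohomology spectral sequence of $\mathcal{C}_{\bullet}$; since this complex is cohomologically unbounded below, convergence is not automatic, and moreover Weibel's $HH^{Weibel}$ is defined via Cartan--Eilenberg hypercohomology, which for unbounded complexes need not agree with derived-category hypercohomology --- this is exactly the ``switch between two slightly different definitions of the sheaf hypercohomology'' that the paper's footnote says Keller addresses. The clean fix is to quote the affine comparison $HH_{i}^{Weibel}(\operatorname{Spec}A)\cong HH_{i}(A)$ directly from Geller--Weibel \cite[Thm.\ 4.1]{MR1120653}, which the paper already cites, rather than rederive it. Second, in the Mayer--Vietoris step the Verdier quotient $\operatorname{Perf}(X)/\operatorname{Perf}_{Z}(X)$ agrees with $\operatorname{Perf}(X\setminus Z)$ only up to idempotent completion (the Thomason--Trobaugh phenomenon); this is harmless here because Hochschild homology of dg categories is invariant under idempotent completion, but it is the reason the localization triangle takes the form you use, and it should be said. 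Third, the compatibility of the two Mayer--Vietoris sequences is not literally a statement about isomorphisms of three corners: you need a point-set-level natural transformation $P_{2}\to\mathbb{H}(-,\operatorname{sh}P_{2})$ (e.g.\ via Godement or functorial K-injective resolutions) so that the map of homotopy cartesian squares, and hence of long exact sequences with their boundary maps, actually exists before the five lemma can be applied. None of these is a fatal gap, but each needs a sentence in a complete write-up.
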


This is \cite[Thm. 5.2]{MR1647519}\footnote{Keller proves the result on the
level of mixed complexes. But Hochschild homology can be defined in terms of
mixed complexes, leading directly to the present formulation.}. Keller's paper
also provides details on the switch between two slightly different definitions
of the sheaf hypercohomology underlying Weibel's definition. Besides all this,
Equation \ref{lHHC9} suggests an entirely different definition of Hochschild
homology of a scheme, proposed by Swan \cite{MR1390671}. However, it turns out
to agree with the previous definition:

\begin{theorem}
[Yekutieli]Let $k$ be a field and $X$ a finite type $k$-scheme. Then there is
a quasi-isomorphism of sheaves $\mathcal{C}_{\bullet}\mathcal{O}_{X}%
\overset{\sim}{\rightarrow}\mathcal{O}_{X}\otimes_{\mathcal{O}_{X\times X}%
}^{\mathbf{L}}\mathcal{O}_{X}$.
\end{theorem}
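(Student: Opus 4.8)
The plan is to isolate the essential content as an affine statement --- where it is the classical bar-resolution identity of Equation \ref{lHHC9} --- and then to globalize by sheafifying the bar resolution. Throughout, I read the shorthand $\mathcal{O}_{X}\otimes_{\mathcal{O}_{X\times X}}^{\mathbf{L}}\mathcal{O}_{X}$ in the sense of Swan \cite{MR1390671}: writing $\Delta\colon X\hookrightarrow X\times_{k}X$ for the diagonal and viewing $\mathcal{O}_{X}$, via $\Delta^{\sharp}$, as a sheaf of $\Delta^{-1}\mathcal{O}_{X\times_{k}X}$-modules, it means $\mathcal{O}_{X}\otimes_{\Delta^{-1}\mathcal{O}_{X\times_{k}X}}^{\mathbf{L}}\mathcal{O}_{X}=L\Delta^{\ast}\Delta_{\ast}\mathcal{O}_{X}$, with $\Delta_{\ast}$ replaced by $\mathbf{R}\Delta_{\ast}$ if $X$ is not separated; the finite type hypothesis keeps $X\times_{k}X$ Noetherian and $\Delta$ a quasi-compact immersion, so all of this stays inside the quasi-coherent world.

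The affine input is Equation \ref{lHHC9}. Since $k$ is a field, every finite type $k$-algebra $A$ is $k$-flat, so the two-sided bar complex $B_{\bullet}(A)$ with $B_{n}(A)=A^{\otimes_{k}n+2}$ is a complex of free $A\otimes_{k}A$-modules with $B_{\bullet}(A)\to A$ a resolution, and $B_{\bullet}(A)\otimes_{A\otimes_{k}A}A$ is canonically identified, term by term and differential by differential, with the Hochschild complex $C_{\bullet}(A)$ of Equation \ref{lHHC7}. Thus $C_{\bullet}(A)$ is itself a model for $A\otimes_{A\otimes_{k}A}^{\mathbf{L}}A$, naturally in $A$ and compatibly with localizations $A\to A_{f}$.

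To globalize I would sheafify. Let $\mathcal{B}_{\bullet}$ be the Zariski sheafification of $U\mapsto B_{\bullet}(\mathcal{O}_{X}(U))$; it is a bounded-above complex of modules over the sheaf of rings $\mathcal{C}_{1}\mathcal{O}_{X}$, the sheafification of $U\mapsto\mathcal{O}_{X}(U)\otimes_{k}\mathcal{O}_{X}(U)$. There is a canonical morphism $\mathcal{C}_{1}\mathcal{O}_{X}\to\Delta^{-1}\mathcal{O}_{X\times_{k}X}$ of sheaves of rings, whose stalk at a point $x$ is the flat localization map $\mathcal{O}_{X,x}\otimes_{k}\mathcal{O}_{X,x}\to(\mathcal{O}_{X\times_{k}X})_{\Delta(x)}$; set $\mathcal{B}_{\bullet}':=\mathcal{B}_{\bullet}\otimes_{\mathcal{C}_{1}\mathcal{O}_{X}}\Delta^{-1}\mathcal{O}_{X\times_{k}X}$. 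Checking on stalks, where $\mathcal{B}_{\bullet}'$ becomes a localization of the free bar resolution of the preceding paragraph and where one exploits that $\mathcal{O}_{X,x}$ is already local, one sees that $\mathcal{B}_{\bullet}'\to\mathcal{O}_{X}$ is a resolution by flat (indeed $K$-flat, being bounded above and flat) $\Delta^{-1}\mathcal{O}_{X\times_{k}X}$-modules. Hence $\mathcal{O}_{X}\otimes_{\mathcal{O}_{X\times X}}^{\mathbf{L}}\mathcal{O}_{X}$ is computed by $\mathcal{B}_{\bullet}'\otimes_{\Delta^{-1}\mathcal{O}_{X\times_{k}X}}\mathcal{O}_{X}$, which by associativity of the tensor product equals $\mathcal{B}_{\bullet}\otimes_{\mathcal{C}_{1}\mathcal{O}_{X}}\mathcal{O}_{X}$, and this in turn is the sheafification of $U\mapsto C_{\bullet}(\mathcal{O}_{X}(U))$, namely $\mathcal{C}_{\bullet}\mathcal{O}_{X}$. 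So $\mathcal{C}_{\bullet}\mathcal{O}_{X}$ is realized as a concrete representative of the derived tensor product, and comparing it with any other representative furnishes the quasi-isomorphism.

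The step I expect to be the real point is the comparison of the two sheaves of rings. The sheafification in the definition of $\mathcal{C}_{\bullet}\mathcal{O}_{X}$ inverts only regular functions \emph{on $X$}, while $\Delta^{-1}\mathcal{O}_{X\times_{k}X}$ inverts strictly more (already $\mathcal{C}_{1}\mathcal{O}_{X}\neq\Delta^{-1}\mathcal{O}_{X\times_{k}X}$, so the two candidate chain complexes genuinely differ term by term); the whole content of the theorem is that this discrepancy evaporates after the derived tensor product, precisely because the comparison map is a flat localization. The only other thing to watch is the non-separated case, where $\Delta$ is merely locally closed and $\mathbf{R}\Delta_{\ast}\neq\Delta_{\ast}$; one either restricts to separated (or, as in Swan, quasi-projective) $X$, or drags the higher direct images along, which is where the Noetherian consequence of "finite type" earns its keep.
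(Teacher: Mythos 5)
The paper itself does not prove this statement: it quotes it from Yekutieli and delegates the proof to \cite[Prop. 3.3]{MR1940241}. Your argument is a correct, self-contained proof, and it is essentially the standard localize-the-bar-resolution argument underlying Yekutieli's result. The two points on which everything hinges are exactly the ones you flag: (i) the stalk of $\Delta^{-1}\mathcal{O}_{X\times_k X}$ at $x$ is the localization of $\mathcal{O}_{X,x}\otimes_k\mathcal{O}_{X,x}$ at the prime pulled back from $\mathfrak{m}_x$ along the multiplication map (compute it on a neighbourhood $U\times U$ with $U\subseteq X$ affine containing $x$), so the comparison map $\mathcal{C}_{1}\mathcal{O}_{X}\rightarrow\Delta^{-1}\mathcal{O}_{X\times_k X}$ is stalkwise a flat localization; and (ii) the elements being inverted act on the stalk $\mathcal{O}_{X,x}$ through their images under multiplication, which are units because $\mathcal{O}_{X,x}$ is local, whence $\mathcal{O}_{X}\otimes_{\mathcal{C}_{1}\mathcal{O}_{X}}\Delta^{-1}\mathcal{O}_{X\times_k X}\cong\mathcal{O}_{X}$ and $\mathcal{B}_{\bullet}'$ is stalkwise a localized free resolution, hence a bounded-above complex of flat $\Delta^{-1}\mathcal{O}_{X\times_k X}$-modules resolving $\mathcal{O}_{X}$. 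Your cancellation step $\mathcal{B}_{\bullet}'\otimes_{\Delta^{-1}\mathcal{O}_{X\times_k X}}\mathcal{O}_{X}=\mathcal{B}_{\bullet}\otimes_{\mathcal{C}_{1}\mathcal{O}_{X}}\mathcal{O}_{X}\cong\mathcal{C}_{\bullet}\mathcal{O}_{X}$ (checked on stalks, where it is the classical identity of Equation \ref{lHHC9}) then exhibits $\mathcal{C}_{\bullet}\mathcal{O}_{X}$ as a representative of the derived tensor product, which is the assertion. One small remark: since you interpret the right-hand side throughout in the Swan--Yekutieli sense, i.e.\ over $\Delta^{-1}\mathcal{O}_{X\times_k X}$, the closing worry about $\mathbf{R}\Delta_{\ast}$ versus $\Delta_{\ast}$ is not actually needed; as written your proof covers every finite type $k$-scheme, separated or not, matching the hypotheses of the statement, and separatedness only becomes relevant if one insists on rewriting the right-hand side as $L\Delta^{\ast}\Delta_{\ast}\mathcal{O}_{X}$.
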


See \cite[Prop. 3.3]{MR1940241}. In the same paper Yekutieli also constructed
an alternative complex $\widehat{\mathcal{C}}_{\bullet}$ of completed
Hochschild chains, which is itself quasi-coherent, suitably interpreted, and
not just quasi-coherent after taking homology. One can also define Hochschild
homology on the derived level, following C\u{a}ld\u{a}raru and Willerton
\cite{MR2657369}. Their paper also shows equivalence to Weibel's
approach.\medskip

Finally, we also need a completely different direction of generalization of
Hochschild homology: the case of rings \textsl{without units}. Formulations in
terms of modules or perfect complexes over non-unital rings appear to be very
subtle (but see work of Quillen \cite{MR1384911} and Mahanta \cite{MR2776888}%
), so we will not enter into the matter of setting up a categorical viewpoint,
and just stick to algebras.

\begin{conventions}
We shall reserve the word \emph{ring} for a commutative, unital associative
algebra. A ring morphism will always preserve the unit of multiplication. This
leaves us with the word \emph{algebra} whenever we need to work with more
general structures. For us an associative algebra $A$ will \textit{not} be
assumed to be unital. Likewise, we do \textit{not} require morphisms of
algebras to preserve a unit, even if it exists. As an example, note that this
implies that any one-sided or two-sided ideal $I\subseteq A$ is itself an
associative algebra and the inclusion $I\hookrightarrow A$ a morphism of algebras.
\end{conventions}

\begin{definition}
\label{def_LocallyBiUnital}The algebra $A$ is called

\begin{enumerate}
\item \emph{locally left unital} (resp. \emph{locally right unital}) if for
every finite subset $S\subseteq A$ there exists an element $e_{S}\in S$ such
that $e_{S}a=a$ (resp. $ae_{S}=a$) for all $a\in S$;

\item \emph{locally bi-unital} if it is both locally left unital and locally
right unital.
\end{enumerate}
\end{definition}

\begin{remark}
Locally bi-unital does not imply that we can find $e_{S}$ such that
$e_{S}a=a=ae_{S}$ holds for all $a$ in any finite subset $S\subseteq A$. It
makes no statement about the mutual relation of left- and right-units.
\end{remark}

If $A$ is a non-unital associative $k$-algebra, the definition of the complex
as in Equation \ref{lHHC7} still makes perfect sense. Nonetheless, it turns
out that this is not quite the right thing to do, a `correction term' is
required, as was greatly clarified and resolved by M. Wodzicki \cite{MR997314}%
: One defines the so-called bar complex $B_{\bullet}$ and with the cyclic
permutation operator $t$, and one forms the bi-complex $C_{i}%
^{\operatorname*{corr}}(A):=[C_{i}(A)\overset{1-t}{\longrightarrow}%
B_{i+1}(A)]$ (we will not define $B_{\bullet}$ or $t$ here, all details can be
found in \cite[\S 2, especially page 598 l. 5]{MR997314}). This complex turns
out to model a well-behaved theory of Hochschild homology even if $A$ is
non-unital.\ If $A$ is unital, $B_{\bullet}$ turns out to be acyclic so that
we recover the previous definition, but this works even more generally:

\begin{proposition}
[Wodzicki]\label{mai_Prop_Wodzicki_HUnitality}If $A$ is locally left unital
(or locally right unital), $B_{\bullet}$ is acyclic, so that the complex
$C_{\bullet}(A)$ models Hochschild homology.
\end{proposition}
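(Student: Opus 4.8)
The plan is to reduce the whole statement to one assertion — that the bar complex $(B_\bullet(A),b')$ is acyclic — and then to deduce the rest formally, so I would proceed in that order.

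First I would treat the acyclicity of $B_\bullet(A)$. Here $b'$ is the bar differential, obtained from the differential of Equation \ref{lHHC7} by deleting the last, cyclic summand, so that $b'(a_0\otimes\cdots\otimes a_n)=\sum_{j=0}^{n-1}(-1)^j a_0\otimes\cdots\otimes a_j a_{j+1}\otimes\cdots\otimes a_n$. If $A$ had an honest left unit $e$, then $s(w):=e\otimes w$ would be a contracting homotopy, because $b's+sb'$ is exactly the operator that multiplies the inserted leading factor into its neighbour, hence the identity; the only feature of $e$ that is used is that it is a left unit, and that it is only ever multiplied against the leading tensor factor of a chain one is contracting. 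That observation is what makes local left unitality enough: given a cycle $z\in B_n(A)$, I would write it as a finite sum of elementary tensors, let $S\subseteq A$ be the finite set of all tensor factors occurring in them, pick $e=e_S$ with $ea=a$ for all $a\in S$, and set $s(w):=e\otimes w$. Then $b'(s(z))=z-s(b'z)=z$ since the leading factor of each summand of $z$ lies in $S$, so every cycle bounds and $B_\bullet(A)$ is acyclic. (This gives acyclicity, not a single chain contraction, because $e$ depends on $z$ — but acyclicity is all that is needed.) For a locally right unital $A$ I would run the mirror-image homotopy $w\mapsto w\otimes e$, or simply apply the previous case to $A^{\mathrm{op}}$.

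Next I would deduce the statement about $C_\bullet(A)$. By Wodzicki's definition the Hochschild complex of the (possibly non-unital) algebra $A$ is the correction complex $C^{\mathrm{corr}}_\bullet(A)$, the totalization of the two-column double complex formed from $C_\bullet(A)$ and $B_{\bullet+1}(A)$ along $1-t$; in particular $C^{\mathrm{corr}}_\bullet(A)$ sits in an exact triangle with $C_\bullet(A)$ and a shift of $B_\bullet(A)$. Filtering by the $B$-column and invoking the spectral sequence of that double complex — equivalently, the long exact homology sequence of the mapping cone — the acyclicity just established makes the $B$-contribution vanish, so the natural comparison map relating $C_\bullet(A)$ and $C^{\mathrm{corr}}_\bullet(A)$ is a quasi-isomorphism; hence $H_\bullet(C_\bullet(A),b)$ computes the Hochschild homology of $A$, as claimed. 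I do not expect a genuine obstacle: this is the classical bar-resolution contraction, localized. The only delicate points are bookkeeping — fixing Wodzicki's precise conventions for $B_\bullet$, the operator $t$, and the bigrading of $C^{\mathrm{corr}}_\bullet$ so that the homotopy identity of the first step and the cone argument of the second carry the right signs, and keeping in mind that local unitality buys acyclicity of $B_\bullet(A)$ only cycle by cycle, which is exactly enough.
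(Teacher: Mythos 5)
Your argument is correct, but note that the paper does not prove this proposition at all: it simply cites Wodzicki \cite[Cor.\ 4.5]{MR997314}, so what you have written is in effect a self-contained proof of the cited result rather than a variant of an argument in the text. Your two steps are exactly the standard ones: the degreewise contraction $s(w)=e_S\otimes w$, where the identity $b's+sb'=(\text{left multiplication of the leading factor by }e_S)$ only requires $e_S$ to absorb the finitely many leading tensor factors of the given cycle, correctly turns local left unitality into acyclicity of $(B_\bullet(A),b')$ (and, incidentally, the same contraction applied after tensoring with any $k$-module gives the stronger pure exactness that Wodzicki's notion of $H$-unitality asks for over a general ground ring); and the passage from acyclicity of the $B$-column to the quasi-isomorphism between $C_\bullet(A)$ and $C^{\operatorname{corr}}_\bullet(A)$ is the immediate two-column/cone argument you describe. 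The only small imprecision is the right-unital case: the naive mirror homotopy $w\mapsto w\otimes e$ needs a degree-dependent sign to satisfy the homotopy identity, but your fallback of applying the left-unital case to $A^{\mathrm{op}}$ (whose bar complex is isomorphic to that of $A$ via reversal of tensor factors, with signs) disposes of this cleanly, and you flag the sign bookkeeping yourself. So the proposal gives a complete elementary proof where the paper offers only a reference; the trade-off is purely one of self-containedness versus brevity.
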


This is \cite[Cor. 4.5]{MR997314}. Let us rephrase this: As long as we only
work with locally left or right unital associative algebras, we may just work
with the complex in line \ref{lHHC7} as the definition of Hochschild homology.
And this will be precisely the situation in this paper, so the reader may feel
free to ignore $B_{\bullet}$ and $C_{\bullet}^{\operatorname*{corr}}$ entirely.

We need one more ingredient: Suppose $A$ is a (possibly non-unital)
associative algebra and $I$ a two-sided ideal. Then we get an exact sequence
of associative algebras%
\begin{equation}
I\hookrightarrow A\twoheadrightarrow A/I\text{.}\label{lmta_1}%
\end{equation}

\begin{theorem}
[Wodzicki]\label{Thm_WodzickiLongExactSeqUsingExcision}Suppose we are given an
exact sequence as in Equation \ref{lmta_1}. If $A$ and $I$ are locally left
unital\ (or locally right unital), then there is a fiber sequence%
\[
HH(I)\longrightarrow HH(A)\longrightarrow HH(A/I)\longrightarrow+1\text{.}%
\]

\end{theorem}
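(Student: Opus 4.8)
The plan is to reduce the statement to Wodzicki's excision theorem for $H$-unital ideals.

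First I would verify that the naive Hochschild complex $C_\bullet(-)$ of Equation \ref{lHHC7} already computes $HH(-)$ for each of the three algebras. By hypothesis $I$ and $A$ are locally left unital; the right-unital case is symmetric. This property passes to the quotient: given a finite set $S\subseteq A/I$, lift it set-theoretically to a finite $\tilde S\subseteq A$, choose $e\in\tilde S$ with $ea=a$ for all $a\in\tilde S$, and note that the image of $e$ is a left unit for $S$. Hence $A/I$ is locally left unital too, and by Proposition \ref{mai_Prop_Wodzicki_HUnitality} the bar complexes $B_\bullet(I)$, $B_\bullet(A)$, $B_\bullet(A/I)$ are all acyclic, so $C_\bullet(-)$ models $HH(-)$ for each of $I$, $A$, $A/I$.

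Next I would form the relative complex $C_\bullet(A:I):=\ker\bigl(C_\bullet(A)\twoheadrightarrow C_\bullet(A/I)\bigr)$. Since $A^{\otimes i+1}\to(A/I)^{\otimes i+1}$ is surjective in each degree, this yields a short exact sequence of complexes of $k$-modules
\[
0\longrightarrow C_\bullet(A:I)\longrightarrow C_\bullet(A)\longrightarrow C_\bullet(A/I)\longrightarrow 0\text{,}
\]
hence a distinguished triangle $C_\bullet(A:I)\to HH(A)\to HH(A/I)\to+1$. The entire content of the theorem is now the assertion that the map $\iota\colon C_\bullet(I)\to C_\bullet(A:I)$ induced by applying $I\hookrightarrow A$ in each tensor slot is a quasi-isomorphism: granting this, $\iota$ identifies the fiber term with $HH(I)$, the composite $C_\bullet(I)\xrightarrow{\iota}C_\bullet(A:I)\hookrightarrow C_\bullet(A)$ is visibly the functorial map $HH(I)\to HH(A)$, and the naturality of all maps makes the resulting fiber sequence canonical.

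It remains to prove that $\iota$ is a quasi-isomorphism, and this — Wodzicki's excision theorem — is the one genuine obstacle. I would filter $C_\bullet(A)$ and $C_\bullet(A:I)$ by the powers $I^m$, i.e.\ roughly by how many tensor factors lie in $I$. The associated graded of $\mathrm{cone}(\iota)$ then decomposes, degreewise, into finite direct sums of tensor products over $k$ of copies of the bar complex $B_\bullet(I)$ with free $k$-modules built from $A/I$; since $B_\bullet(I)$ is acyclic, a K\"unneth argument kills every graded piece. The delicate step is climbing back from the graded pieces to $\mathrm{cone}(\iota)$ itself: the wraparound term $(-1)^i a_ia_0\otimes a_1\otimes\cdots$ of the differential $b$ does not respect the filtration by powers of $I$, so one runs the spectral sequence of the filtration and must check its convergence (the complexes being unbounded), or argues by successive approximation / a $\lim^1$-vanishing. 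This is the technical heart of \cite{MR997314}, which I would simply cite there, after recording the filtration sketch so that the reader sees both why $H$-unitality of $I$ — supplied here by the local unitality hypothesis — is exactly what makes the argument run, and why one must use the relative complex $C_\bullet(A:I)$ rather than the naive quotient $C_\bullet(A)/C_\bullet(I)$.
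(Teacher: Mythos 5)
Your proposal is correct and follows essentially the same route as the paper: form the tautological triangle with the relative (cone/kernel) term and then invoke Wodzicki's excision theorem, with $H$-unitality of $I$ and $A$ supplied by local one-sided unitality via Proposition \ref{mai_Prop_Wodzicki_HUnitality}. The extra observations you include (local left unitality passing to $A/I$, and the filtration sketch behind excision) are fine but not needed beyond what the cited results already give.
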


\begin{proof}
This is just a special case of a more general formalism, we refer to the paper
\cite{MR997314} for the entire story, or the book \cite[\S 1.4]{MR1217970}. In
the case at hand we can proceed as follows: One gets a fiber sequence%
\[
HH(A,I)\longrightarrow HH(A)\longrightarrow HH(A/I)\longrightarrow+1\text{,}%
\]
where $HH(A,I)$ refers to relative Hochschild homology, which is just defined
as the cone, so the existence of this sequence is tautological. By Wodzicki's
Excision Theorem \cite[Theorem 3.1]{MR997314}, we get an equivalence
$HH(I)\overset{\sim}{\longrightarrow}HH(A,I)$.\ This is only true when $I$ and
$A$ are $H$-unital (in the sense of loc. cit.), as is guaranteed by our
assumptions and \cite[Cor. 4.5]{MR997314}.
\end{proof}

\subsection{\label{Sect_ConiveauFiltration}Coniveau filtration}

\subsubsection{\label{marker_Coniveau_ForCoherentCohomology}Coherent
cohomology with supports}

Let us first recall the construction of the coniveau spectral sequence in
sheaf cohomology. We briefly summarize some facts about local cohomology that
we shall need. A detailed presentation has been given by Hartshorne
\cite{MR0224620}, in a different format also in \cite[Ch. IV]{MR0222093} or
\cite[\S 3.1]{MR1804902}.

Let $X$ be a topological space. A subset $Z$ is called \emph{locally closed}
if it can be written as the intersection of an open and a closed subset.
Equivalently, a closed subset $Z\subseteq_{cl}V$ of $V\subseteq_{op}X$ an open
subset. For a sheaf $\mathcal{F}$ one defines a new sheaf%
\[
\underline{\Gamma_{Z}}\mathcal{F}(U):=\{s\in\mathcal{F}(U)\mid
\operatorname*{supp}s\subseteq Z\}\text{,}%
\]
the sheaf of sections with support in $Z$. Note that if $j:Z\hookrightarrow X
$ is an open subset, $\underline{\Gamma_{Z}}\mathcal{F}=j_{\ast}%
j^{-1}\mathcal{F}$. Moreover, $\underline{\Gamma_{Z}}$ is a left exact functor
from the category of abelian group sheaves on $X$ to itself. Right derived
functors exist, are denoted by $\mathcal{H}_{Z}^{p}\mathcal{F}:=\mathbf{R}%
^{p}\underline{\Gamma_{Z}}\mathcal{F}$, and called \emph{local cohomology}
sheaves \cite[\S 1]{MR0224620}. We write $H_{Z}^{p}(X,\mathcal{F})$ for its
global sections on $X$. Equivalently, these match the right derived functors
of the functor $\mathcal{F}\mapsto H^{0}(X,\underline{\Gamma_{Z}}\mathcal{F}%
)$. There is also a product%
\begin{equation}
\underline{\Gamma_{Z_{1}}}\mathcal{F\otimes}\underline{\Gamma_{Z_{2}}%
}\mathcal{G}\longrightarrow\underline{\Gamma_{Z_{1}\cap Z_{2}}}(\mathcal{F}%
\otimes\mathcal{G})\label{comalg_fact_D40}%
\end{equation}
for sheaves $\mathcal{F},\mathcal{G}$ and locally closed subsets $Z_{1},Z_{2}
$. We shall mainly need the following property:\ If $Z$ is a locally closed
subset, $Z^{\prime}\subseteq Z$ a closed subset, then $Z-Z^{\prime}$ is also a
locally closed subset in $X$ and there is a distinguished triangle%
\begin{equation}
\mathbf{R}\underline{\Gamma_{Z^{\prime}}}\mathcal{F}\longrightarrow
\mathbf{R}\underline{\Gamma_{Z}}\mathcal{F}\longrightarrow\mathbf{R}%
\underline{\Gamma_{Z-Z^{\prime}}}\mathcal{F}\longrightarrow
+1\label{comalg_fact_D32}%
\end{equation}
(see \cite[Lemma 1.8 or Prop. 1.9]{MR0224620}, also \cite[Ch. IV,
\textquotedblleft Variation 2\textquotedblright, p. 219]{MR0222093}). For
$Z:=X$ and $Z^{\prime}\subseteq X$ a closed subset, this specializes to
$\mathbf{R}\underline{\Gamma_{Z^{\prime}}}\mathcal{F}\longrightarrow
\mathcal{F}\longrightarrow j_{\ast}j^{-1}\mathcal{F}\longrightarrow+1$, where
$j:U\hookrightarrow X$ denotes the open immersion of the open complement
$U:=X\setminus Z^{\prime}$ (see \cite[Ch. IV, \textquotedblleft Variation
3\textquotedblright, p. 220]{MR0222093}). If $X$ is a scheme, one can say
quite a bit more:

\begin{lemma}
Suppose $X$ is a Noetherian scheme and $\mathcal{F}$ a quasi-coherent sheaf.

\begin{enumerate}
\item (\cite[Prop. 2.1]{MR0224620}) Then the $\mathcal{H}_{Z}^{p}\mathcal{F}$
are also quasi-coherent sheaves.

\item (\cite[Thm. 2.8]{MR0224620}) If $Z$ is a closed subscheme with ideal
sheaf $\mathcal{I}_{Z}$, there is a canonical isomorphism of quasi-coherent
sheaves, functorial in $\mathcal{F}$,%
\begin{equation}
\underrightarrow{\operatorname*{colim}}_{\ell}\,\mathcal{E}xt_{\mathcal{O}%
_{X}}^{p}(\mathcal{O}_{X}/\mathcal{I}_{Z}^{\ell},\mathcal{F})\overset{\sim
}{\longrightarrow}\mathcal{H}_{Z}^{p}\mathcal{F}\text{.}%
\label{comalg_fact_D31}%
\end{equation}

\end{enumerate}
\end{lemma}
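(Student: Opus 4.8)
The plan is to reduce both assertions to a computation over an affine base, where $\mathcal{H}_{Z}^{p}$ becomes the classical $I$-power-torsion functor, and to then recognise the outcome as a filtered colimit of $\mathcal{E}xt$-sheaves. First I would record that $\mathbf{R}\underline{\Gamma_{Z}}$ commutes with restriction to opens: for an open immersion $j\colon U\hookrightarrow X$ the functor $(-)|_{U}$ on abelian sheaves is right adjoint to the exact functor $j_{!}$ (extension by zero), hence preserves injectives, and clearly $\underline{\Gamma_{Z}}(\mathcal{F})|_{U}=\underline{\Gamma_{Z\cap U}}(\mathcal{F}|_{U})$; therefore $\mathcal{H}_{Z}^{p}(\mathcal{F})|_{U}\cong\mathcal{H}_{Z\cap U}^{p}(\mathcal{F}|_{U})$, so quasi-coherence and the isomorphy of the map in (2) (constructed globally below) may both be tested on an affine cover. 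I would also dispose of the locally closed case of (1) at once: writing a locally closed $Z$ as $Z_{1}\setminus Z_{2}$ with $Z_{2}\subseteq Z_{1}$ closed in $X$, the triangle of line \ref{comalg_fact_D32} yields a long exact sequence of $\mathcal{O}_{X}$-modules expressing $\mathcal{H}_{Z}^{p}\mathcal{F}$ through $\mathcal{H}_{Z_{1}}^{p}\mathcal{F}$ and $\mathcal{H}_{Z_{2}}^{p}\mathcal{F}$; as $\mathrm{QCoh}(X)$ is closed under kernels, cokernels and extensions, it suffices to treat closed $Z$, to which we may give any closed subscheme structure.

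So let $X=\operatorname{Spec}A$ with $A$ Noetherian, $Z=V(I)$, $\mathcal{F}=\widetilde{M}$. The key identification is $\underline{\Gamma_{V(I)}}(\widetilde{M})\cong\widetilde{\Gamma_{I}(M)}$, where $\Gamma_{I}(M):=\{m\in M\mid I^{\ell}m=0\text{ for some }\ell\}=\varinjlim_{\ell}\operatorname{Hom}_{A}(A/I^{\ell},M)$: over a basic open $D(f)$ a section of $\widetilde{M}$ is supported on $V(I)$ exactly when it is annihilated by a power of $I_{f}$, and $\Gamma_{I}$ commutes with localisation since $I$ is finitely generated. Choosing an injective resolution $M\to E^{\bullet}$ and applying $\Gamma_{I}=\varinjlim_{\ell}\operatorname{Hom}_{A}(A/I^{\ell},-)$ termwise, exactness of filtered colimits gives
\[
R^{p}\Gamma_{I}(M)\;\cong\;\varinjlim_{\ell}H^{p}\bigl(\operatorname{Hom}_{A}(A/I^{\ell},E^{\bullet})\bigr)\;\cong\;\varinjlim_{\ell}\operatorname{Ext}_{A}^{p}(A/I^{\ell},M).
\]
Passing to associated sheaves, and using that $M\mapsto\widetilde{M}$ commutes with filtered colimits together with the comparison $\widetilde{\operatorname{Ext}_{A}^{p}(N,M)}\cong\mathcal{E}xt_{\mathcal{O}_{X}}^{p}(\widetilde{N},\widetilde{M})$ valid on a Noetherian scheme for $N$ coherent (take $N=A/I^{\ell}$), we obtain
\[
\mathcal{H}_{V(I)}^{p}(\widetilde{M})\;\cong\;\varinjlim_{\ell}\mathcal{E}xt_{\mathcal{O}_{X}}^{p}(\mathcal{O}_{X}/\mathcal{I}_{Z}^{\ell},\mathcal{F}).
\]
Each term on the right is quasi-coherent and a filtered colimit of quasi-coherent sheaves is quasi-coherent; together with the reductions above this proves (1).

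For (2) it remains to exhibit the canonical global map and to check it is the isomorphism just found. The inclusion $\mathcal{H}om_{\mathcal{O}_{X}}(\mathcal{O}_{X}/\mathcal{I}_{Z}^{\ell},\mathcal{F})\hookrightarrow\underline{\Gamma_{Z}}\mathcal{F}$ (a local section annihilated by $\mathcal{I}_{Z}^{\ell}$ has support in $Z$) is a morphism of left exact functors of $\mathcal{F}$, compatible with the transition maps induced by the surjections $\mathcal{O}_{X}/\mathcal{I}_{Z}^{\ell+1}\twoheadrightarrow\mathcal{O}_{X}/\mathcal{I}_{Z}^{\ell}$; passing to right derived functors and then to the colimit produces a map $\varinjlim_{\ell}\mathcal{E}xt_{\mathcal{O}_{X}}^{p}(\mathcal{O}_{X}/\mathcal{I}_{Z}^{\ell},\mathcal{F})\to\mathcal{H}_{Z}^{p}\mathcal{F}$, functorial in $\mathcal{F}$ by construction. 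Restricted to an affine open it is precisely the comparison map assembled above, hence an isomorphism.

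The one point needing care — though it is not deep — is the bookkeeping around the two interchanges, namely commuting the derived functor past the filtered colimit and commuting $M\mapsto\widetilde{M}$ past $\mathcal{E}xt$; both genuinely use that $A$ is Noetherian (so that $A/I^{\ell}$ is coherent and $\Gamma_{I}$ is the honest $I$-power-torsion functor, stable under localisation), and one must verify that the globally defined map really restricts to the affine-local comparison map and does not differ from it by a sign or an automorphism. The rest is the standard formalism of derived functors.
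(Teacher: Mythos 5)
The paper itself gives no argument for this lemma (it simply cites Hartshorne's \emph{Local Cohomology}, Prop.~2.1 and Thm.~2.8), so the relevant comparison is with that standard proof, which your outline largely reproduces: the reduction to affines via the fact that restriction to an open preserves injectives, the reduction of the locally closed case to the closed case through the triangle of line \ref{comalg_fact_D32} and closure of quasi-coherent sheaves under kernels, cokernels and extensions, the identification $\underline{\Gamma_{V(I)}}(\widetilde{M})\cong\widetilde{\Gamma_{I}(M)}$, and the construction of the canonical map in (2) as a colimit of derived-functor maps are all correct and are exactly the standard steps.

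There is, however, one genuine gap, and it sits at the crux of part (1): the phrase ``passing to associated sheaves'' silently upgrades the degree-zero identification $\underline{\Gamma_{V(I)}}(\widetilde{M})\cong\widetilde{\Gamma_{I}(M)}$ to the derived statement $\mathcal{H}_{V(I)}^{p}(\widetilde{M})\cong\widetilde{R^{p}\Gamma_{I}(M)}$ for all $p$. That is not automatic: $\mathcal{H}_{Z}^{p}$ is the derived functor of $\underline{\Gamma_{Z}}$ computed with injective resolutions of sheaves, whereas you computed $R^{p}\Gamma_{I}(M)$ with an injective resolution $E^{\bullet}$ in $A$-modules, and $\widetilde{E^{\bullet}}$ is not an injective resolution of $\widetilde{M}$ in $\mathcal{O}_{X}$-modules (injective $\mathcal{O}_{X}$-modules are not quasi-coherent in general). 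What is needed is that for $A$ Noetherian and $E$ an injective $A$-module the sheaf $\widetilde{E}$ is flasque (equivalently, $\underline{\Gamma_{Z}}$-acyclic), so that $\widetilde{E^{\bullet}}$ may be used to compute $\mathcal{H}_{Z}^{p}(\widetilde{M})$; this is a real theorem (Hartshorne, \emph{Residues and Duality} II.7 / \emph{Local Cohomology}), it genuinely uses the Noetherian hypothesis, and it is precisely the ingredient carrying the content of statement (1). You flag two interchanges as the delicate points (derived functor past filtered colimit, and $\widetilde{(-)}$ past $\operatorname{Ext}$), but those are the easy ones; the module-versus-sheaf derived-functor comparison is the third interchange, and it must be stated and justified (alternatively, one can run a $\delta$-functor/effaceability argument, but effaceability again rests on the same acyclicity fact). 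With that ingredient inserted, your proof is complete and agrees with the cited source.
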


\begin{lemma}
[dual \textquotedblleft dimension axiom\textquotedblright]%
\label{COA_LocalCohomVanishesIfDegreeExceedsIdealGeneratorNumber}If $R$ is a
ring and $I=(f_{1},\ldots,f_{r})$. Then for every $R$-module $M$, we have
$H_{I}^{p}(R,M)=0$ for $p>r$.
\end{lemma}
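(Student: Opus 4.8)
The plan is to trace everything back to the elementary fact that local cohomology supported in an ideal with $r$ generators is computed by a complex of length $r$ — concretely the stable Koszul (extended \v{C}ech) complex
\[
\check{C}^{\bullet}(f_{1},\ldots,f_{r};M)=\Bigl(M\longrightarrow\bigoplus_{i}M_{f_{i}}\longrightarrow\bigoplus_{i<j}M_{f_{i}f_{j}}\longrightarrow\cdots\longrightarrow M_{f_{1}\cdots f_{r}}\Bigr),
\]
which is concentrated in cohomological degrees $0,\ldots,r$. Since this is a purely formal point, I would give a self-contained argument by induction on $r$, using only the localization triangle \eqref{comalg_fact_D32} recalled above together with the vanishing of higher quasi-coherent cohomology on affine schemes.

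For $r=0$ the ideal is $I=0$, hence $V(I)=X$, $\underline{\Gamma_{V(I)}}=\mathrm{id}$, and $H^{p}_{(0)}(R,M)=H^{p}(\operatorname{Spec}R,\widetilde{M})=0$ for $p>0$. For the inductive step, write $I'=(f_{1},\ldots,f_{r-1})$ and $g=f_{r}$, so $V(I)=V(I')\cap V(g)$ is a closed subset of the (closed, hence locally closed) set $Z:=V(I')$, with complement $Z\setminus V(I)=V(I')\cap D(g)$ — a closed subset of the affine open $D(g)=\operatorname{Spec}R_{g}$. Feeding the pair $V(I)\subseteq Z$ into \eqref{comalg_fact_D32} and taking cohomology over $X$ yields the long exact sequence
\[
\cdots\longrightarrow H^{p-1}_{V(I')\cap D(g)}(X,\widetilde{M})\longrightarrow H^{p}_{V(I)}(X,\widetilde{M})\longrightarrow H^{p}_{V(I')}(X,\widetilde{M})\longrightarrow\cdots .
\]
Here $H^{p}_{V(I')}(X,\widetilde{M})=H^{p}_{I'}(R,M)$ vanishes for $p>r-1$ by the induction hypothesis, as $I'$ has $r-1$ generators; and since $\mathbf{R}\underline{\Gamma_{V(I')\cap D(g)}}\widetilde{M}$ is supported in $D(g)$ and restricts there to the local cohomology of $M_{g}$ along $I'R_{g}=(f_{1},\ldots,f_{r-1})$ — again $r-1$ generators — the term $H^{p-1}_{V(I')\cap D(g)}(X,\widetilde{M})\cong H^{p-1}_{I'R_{g}}(R_{g},M_{g})$ vanishes for $p-1>r-1$. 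Thus for $p>r$ both flanking terms vanish and $H^{p}_{I}(R,M)=0$, completing the induction.

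The one point that deserves care — and the only thing I would expect a referee to want made explicit — is the passage between the sheaf-theoretic $\mathbf{R}\underline{\Gamma}$ of \eqref{comalg_fact_D32} and the module-level $H^{p}_{I}(R,M)$: that on an affine scheme the hypercohomology of $\mathbf{R}\underline{\Gamma_{V(I)}}\widetilde{M}$ computes the derived $I$-power-torsion functors of $M$, that these see only the closed set $V(I)$, and that $\mathbf{R}\underline{\Gamma}$ with support in a closed subset of an affine open $D(g)$ may be computed on $D(g)$ (using that $D(g)\hookrightarrow X$ is an affine open immersion and that local cohomology of a quasi-coherent sheaf is quasi-coherent, so $\mathbf{R}j_{\ast}=j_{\ast}$). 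All of this is classical — it is, for instance, what the colimit formula \eqref{comalg_fact_D31} encodes — so there is no genuine obstacle; the entire content of the lemma is the length bound on the \v{C}ech complex.
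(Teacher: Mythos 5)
Your argument is correct, and it is a genuinely different (self-contained) route from the paper's, which simply cites the standard stable Koszul/\v{C}ech computation \cite[Cor. 7.14]{MR2355715} and gives no proof. Your induction on the number of generators uses only ingredients the paper has already set up: the localization triangle of Equation \ref{comalg_fact_D32} applied to $V(I)\subseteq V(I')$ with complement $V(I')\cap D(f_r)$, excision into the affine open $D(f_r)$, and Serre vanishing on affine schemes for the base case; the bookkeeping of degrees ($p>r-1$ for the term over $R$, $p-1>r-1$ for the term over $R_{f_r}$) is right, and since the induction hypothesis is quantified over all rings it applies to $(R_{f_r}, I'R_{f_r}, M_{f_r})$ as needed. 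What this buys compared to the cited \v{C}ech argument: it stays entirely at the sheaf-theoretic level at which the paper actually defines $H^p_Z(X,\mathcal{F})$, so no Noetherian or finiteness hypotheses are smuggled in, and it exercises exactly the triangle that drives the Cousin/coniveau machinery later in the paper; the \v{C}ech route is shorter but requires the (harmless, classical) identification of $\mathbf{R}\underline{\Gamma_{V(I)}}$ with the \v{C}ech complex. One small cleanup: your closing paragraph justifies the passage to $D(f_r)$ via quasi-coherence of the local cohomology sheaves and $\mathbf{R}j_{\ast}=j_{\ast}$, and invokes the comparison with derived $I$-power-torsion; neither is needed, and the former tacitly uses Noetherian hypotheses absent from the lemma. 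The excision statement you want is exactly Lemma \ref{COA_Lemma_LocalCohomExcision} (Hartshorne's purely sheaf-theoretic Prop. 1.3), and the lemma's $H^p_I(R,M)$ can simply be read, as everywhere in the paper, as $H^p_{V(I)}(\operatorname{Spec}R,\widetilde{M})$, so no module-versus-sheaf comparison is required.
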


See for example \cite[Cor. 7.14]{MR2355715}. We will frequently use the
following property, often allowing us to reduce to local rings:

\begin{lemma}
[{Excision, \cite[Prop. 1.3]{MR0224620}}]\label{COA_Lemma_LocalCohomExcision}%
Let $Z$ be locally closed in $X$, $V\subseteq X$ open so that $Z\subseteq
V\subseteq X$. Then there is a canonical isomorphism $H_{Z}^{p}(X,\mathcal{F}%
)\overset{\sim}{\longrightarrow}H_{Z}^{p}(V,\mathcal{F}\mid_{V})$, induced by
the pullback of sections along the open immersion.
\end{lemma}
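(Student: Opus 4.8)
The plan is to reduce the claim, through a resolution argument, to the corresponding statement for the underived sections-with-support functors, and then to dispatch that by a gluing argument, the essential case being that in which $Z$ is closed. Write $j\colon V\hookrightarrow X$ for the open immersion and $\Gamma_{Z}(X,-):=\Gamma(X,\underline{\Gamma_{Z}}(-))$, $\Gamma_{Z}(V,-):=\Gamma(V,\underline{\Gamma_{Z}}(-))$, so that $H_{Z}^{p}(X,-)$ and $H_{Z}^{p}(V,-)$ are their right derived functors. Since $j^{\ast}=(-)\mid_{V}$ is exact and has an exact left adjoint $j_{!}$ (extension by zero along an open immersion), $j^{\ast}$ preserves injective sheaves; hence for an injective resolution $\mathcal{F}\rightarrow\mathcal{I}^{\bullet}$ on $X$ the restriction $\mathcal{F}\mid_{V}\rightarrow\mathcal{I}^{\bullet}\mid_{V}$ is again an injective resolution, and $H_{Z}^{p}(X,\mathcal{F})=H^{p}(\Gamma_{Z}(X,\mathcal{I}^{\bullet}))$ while $H_{Z}^{p}(V,\mathcal{F}\mid_{V})=H^{p}(\Gamma_{Z}(V,\mathcal{I}^{\bullet}\mid_{V}))$. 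It therefore suffices to produce, naturally in a sheaf $\mathcal{G}$ on $X$, an isomorphism $\Gamma_{Z}(X,\mathcal{G})\overset{\sim}{\longrightarrow}\Gamma_{Z}(V,\mathcal{G}\mid_{V})$ given by pullback of sections: being natural, it will be compatible with the differentials of $\mathcal{I}^{\bullet}$ and induce the asserted isomorphism on cohomology, canonically.

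For the underived statement I would first take $Z$ closed in $X$. If $s\in\Gamma(X,\mathcal{G})$ satisfies $\operatorname*{supp}s\subseteq Z$ and $s\mid_{V}=0$, then $s$ vanishes both on $V$ and on the \emph{open} set $X\setminus Z$, and $V\cup(X\setminus Z)=X$ since $Z\subseteq V$; hence $s=0$, which gives injectivity. Conversely, given $s^{\prime}\in\Gamma(V,\mathcal{G})$ with $\operatorname*{supp}s^{\prime}\subseteq Z$, the section $s^{\prime}$ on $V$ and the zero section on $X\setminus Z$ agree on the overlap $V\setminus Z$ (where $s^{\prime}$ vanishes, its support lying in $Z$), hence glue along the open cover $\{V,\,X\setminus Z\}$ of $X$ to a section over $X$ with support in $Z$ restricting to $s^{\prime}$; this gives surjectivity. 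For a general locally closed $Z$, choose an open $W$ with $Z\subseteq W\subseteq V$ in which $Z$ is closed (possible because $Z$ is locally closed in $X$). By the construction of local cohomology with locally closed support --- the distinguished triangle \ref{comalg_fact_D32} for the pair $\overline{Z}\setminus Z\subseteq\overline{Z}$ of closed subsets, or, equivalently, the description of $\mathbf{R}\underline{\Gamma_{Z}}$ as the derived functor of $\mathcal{G}\mapsto\underline{\Gamma_{\overline{Z}}}(i_{\ast}i^{\ast}\mathcal{G})$ for $i\colon W\hookrightarrow X$ --- one checks that restriction to $W$ carries $\Gamma_{Z}(X,-)$ isomorphically onto $\Gamma_{Z}(W,(-)\mid_{W})$, and likewise over $V$, the point being that a section with support inside $Z\subseteq W$ is the same datum whether recorded over $X$, over $V$, or over $W$. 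These two restriction isomorphisms are compatible with the map $\Gamma_{Z}(X,-)\to\Gamma_{Z}(V,(-)\mid_{V})$, which is therefore an isomorphism as well.

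I expect the only real friction to be notational and definitional bookkeeping: fixing the definition of $\mathbf{R}\underline{\Gamma_{Z}}$ for locally closed $Z$ so that the reduction to a neighborhood in which $Z$ is closed is manifestly a consequence of the closed case (that independence is itself a small excision statement), and verifying that the gluing isomorphism of the previous paragraph is natural enough to survive passage to the injective resolution $\mathcal{I}^{\bullet}$. By contrast the homological ingredients --- exactness of $j^{\ast}$, its preservation of injectives, and the computation of $H_{Z}^{p}$ by an injective resolution --- are purely formal, and the one genuinely geometric step is the gluing-by-zero, which is precisely where the hypothesis $Z\subseteq V$ is used.
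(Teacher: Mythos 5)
The paper offers no proof of this lemma at all: it is quoted verbatim from Hartshorne's \emph{Local cohomology} notes (the cited Prop.\ 1.3), so there is nothing internal to compare against. Your argument is correct and is essentially the standard proof of that cited result: the homological step (restriction along an open immersion is exact and preserves injectives because $j_{!}$ is an exact left adjoint, so one may compute both sides with one injective resolution $\mathcal{I}^{\bullet}$) is fine, and the underived gluing argument for $Z$ closed in $X$ (injectivity from the open cover $V\cup(X\setminus Z)=X$, surjectivity by gluing $s'$ with the zero section) is exactly the geometric content, with $Z\subseteq V$ used precisely where you say it is. The one caveat you flag is real and worth emphasizing: with the paper's displayed formula $\underline{\Gamma_{Z}}\mathcal{F}(U)=\{s\mid\operatorname{supp}s\subseteq Z\}$ taken literally for a non-closed locally closed $Z$, the lemma is actually false (take $Z=V=X\setminus\{x\}$ open and dense: the left side is $0$ while the right side is all of $\Gamma(V,\mathcal{F})$), so for locally closed $Z$ one must use the Hartshorne definition via an open $W\supseteq Z$ in which $Z$ is closed; with that definition the general statement is exactly the independence of the choice of such $W$, which your closed-case gluing argument supplies, and your ``one checks'' step becomes definitional rather than a gap.
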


\begin{lemma}
[{\cite[Prop. 5.9]{MR0224620}}]%
\label{lemma_CompletionPreservesLocalCohomology}Let $R$ be a Noetherian ring,
$I^{\prime}\subseteq I$ ideals and $M$ a finitely generated $R$-module. Then
there are canonical isomorphisms $H_{I}^{p}(\operatorname*{Spec}%
R,M)\overset{\sim}{\rightarrow}H_{\hat{I}}^{p}(\operatorname*{Spec}\widehat
{R},\widehat{M})\mid_{R}$, where $\widehat{(-)}$ in both cases refers to the
$I^{\prime}$-adic completion, so that $H_{\hat{I}}^{p}(\widehat{M})$ is an
$\widehat{R}$-module.
\end{lemma}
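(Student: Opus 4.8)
The plan is to realise the claimed map as the canonical base--change (pullback) morphism of local cohomology along the flat map $\operatorname{Spec}\widehat{R}\to\operatorname{Spec}R$, and to factor it as
\[
H^{p}_{I}(\operatorname{Spec}R,M)\ \longrightarrow\ H^{p}_{I}(\operatorname{Spec}R,M)\otimes_{R}\widehat{R}\ \longrightarrow\ H^{p}_{I\widehat{R}}(\operatorname{Spec}\widehat{R},\widehat{M})\,,
\]
and then to check separately that each of the two arrows is an isomorphism. Along the way I would use that $\widehat{M}=M\otimes_{R}\widehat{R}$, which holds because $M$ is finitely generated over the Noetherian ring $R$, and that $I\widehat{R}=\widehat{I}$, since $I$ is finitely generated; so the right-hand term is genuinely $H^{p}_{\widehat{I}}(\operatorname{Spec}\widehat{R},\widehat{M})$, viewed as an $R$-module by restriction of scalars.

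For the second arrow I would invoke flatness only. As $R$ is Noetherian, the $I'$-adic completion $R\to\widehat{R}$ is flat, and flat base change for local cohomology gives $H^{p}_{I}(M)\otimes_{R}\widehat{R}\xrightarrow{\ \sim\ }H^{p}_{I\widehat{R}}(\widehat{M})$. The cleanest route is through the extended \v{C}ech (Koszul) complex on a finite generating set $f_{1},\dots,f_{r}$ of $I$: applying the exact functor $-\otimes_{R}\widehat{R}$ turns this complex into the one computing $H^{*}_{I\widehat{R}}(\widehat{M})$, and an exact functor commutes with cohomology. (Alternatively one can use the colimit description $H^{p}_{I}(M)=\varinjlim_{\ell}\operatorname{Ext}^{p}_{R}(R/I^{\ell},M)$, which holds on an affine Noetherian scheme because the sheaves $\mathcal{H}^{p}_{Z}$ are quasi-coherent, cf.\ \eqref{comalg_fact_D31}, together with flat base change for $\operatorname{Ext}$ of the finitely presented module $R/I^{\ell}$.)

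For the first arrow I would use the hypothesis $I'\subseteq I$ decisively. Every element of $H^{p}_{I}(\operatorname{Spec}R,M)$ is killed by a power of $I$; since $I'^{\ell}\subseteq I^{\ell}$ for all $\ell$, it is also killed by a power of $I'$, so $N:=H^{p}_{I}(M)$ is an $I'$-torsion module. Such an $N$ is the filtered union of its submodules $N_{\ell}:=\{x\in N\mid I'^{\ell}x=0\}$, and on $R/I'^{\ell}$-modules the functor $-\otimes_{R}\widehat{R}$ is naturally the identity, because $\widehat{R}/I'^{\ell}\widehat{R}\cong R/I'^{\ell}$ and hence $N_{\ell}\otimes_{R}\widehat{R}=N_{\ell}\otimes_{R/I'^{\ell}}(R/I'^{\ell}\otimes_{R}\widehat{R})=N_{\ell}$, compatibly with the canonical maps. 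Passing to the filtered colimit shows that $N\to N\otimes_{R}\widehat{R}$ is an isomorphism. Composing the two isomorphisms and restricting scalars along $R\to\widehat{R}$ then yields the asserted canonical isomorphism, and functoriality in $M$ is automatic since every step is natural in $M$.

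The part I expect to require the most care --- the real ``obstacle'', modest as it is --- is the bookkeeping on the support side: one must confirm that the base-change morphism above is exactly the pullback map named in the statement, and that the hypothesis $I'\subseteq I$ is inserted precisely where it is needed and not elsewhere. That hypothesis is essential: completing $k[x,y]$ along $(x)$ does \emph{not} preserve $H^{1}_{(y)}$, because $H^{1}_{(y)}(k[x,y])=k[x,y]_{y}/k[x,y]$ is not $(x)$-torsion. Everything else is a routine assembly of standard facts --- flatness of adic completion over a Noetherian ring, $\widehat{M}=M\otimes_{R}\widehat{R}$ for finitely generated $M$, flat base change for \v{C}ech cohomology --- so I do not anticipate serious difficulty.
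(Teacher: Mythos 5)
Your argument is correct: the factorization through $H^{p}_{I}(M)\otimes_{R}\widehat{R}$, with flat base change (via the \v{C}ech complex or the $\varinjlim\operatorname{Ext}$ description) handling the second arrow and the observation that $H^{p}_{I}(M)$ is $I$-power-torsion, hence $I'$-power-torsion, handling the first, is exactly the standard proof of this fact, and your counterexample correctly locates where $I'\subseteq I$ is indispensable. The paper itself gives no proof but simply cites Hartshorne's \emph{Local Cohomology} (Prop.\ 5.9), whose argument runs along essentially these same lines, so there is nothing further to reconcile.
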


Next, one builds the Cousin complex in \textsl{coherent cohomology}. Let
$Z^{p}$ denote a closed subset of $X$ with $\operatorname*{codim}%
\nolimits_{X}Z^{p}\geq p$. One can read the colimit (under inclusion) under
all such,%
\begin{equation}
F_{p}H^{i}(X,\mathcal{F}):=\underset{Z^{p}}{\underrightarrow
{\operatorname*{colim}}}\operatorname*{im}(H_{Z^{p}}^{i}(X,\mathcal{F}%
)\rightarrow H^{i}(X,\mathcal{F}))\text{,}\label{comalg_fact_D34b}%
\end{equation}
as a filtration of the cohomology of the sheaf $\mathcal{F}$. Taking the
underlying filtered complex spectral sequence, one arrives at the
\textquotedblleft Cousin coniveau spectral sequence\textquotedblright, due to Grothendieck:

\begin{proposition}
[{\cite[Ch. IV]{MR0222093}}]This filtration induces a convergent spectral
sequence with%
\[
\left.  ^{Cous}E_{1}^{p,q}\right.  (\mathcal{F}):=\coprod_{x\in X^{p}}%
H_{x}^{p+q}(X,\mathcal{F})\Rightarrow H^{p+q}(X,\mathcal{F})\text{.}%
\]

\end{proposition}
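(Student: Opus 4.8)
The plan is to realise this spectral sequence as the one attached to a concrete filtered complex — an injective resolution of $\mathcal{F}$ filtered by codimension of support — and then to compute its $E_{1}$-page by a ``local cohomology localises to points'' argument. \textbf{Step 1 (the filtered complex and convergence).} Choose an injective resolution $\mathcal{F}\hookrightarrow I^{\bullet}$ of sheaves on $X$. For $p\geq 0$ put $F^{p}I^{\bullet}:=\operatorname*{colim}_{Z}\underline{\Gamma_{Z}}I^{\bullet}$, the colimit running over the closed subsets $Z\subseteq X$ with $\operatorname{codim}_{X}Z\geq p$; this is a directed system because $X$ is Noetherian (a finite union of codimension-$\geq p$ closed sets is again of codimension $\geq p$), and it defines a decreasing filtration of $I^{\bullet}$ by subcomplexes with $F^{0}I^{\bullet}=I^{\bullet}$ and $F^{p}I^{\bullet}=0$ for $p>\dim X$ (no nonempty closed subset has codimension exceeding $\dim X$), hence a bounded filtration. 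The spectral sequence of the filtered complex $\Gamma(X,I^{\bullet})$ therefore converges, with abutment $H^{p+q}(\Gamma(X,I^{\bullet}))=H^{p+q}(X,\mathcal{F})$. Since $\Gamma(X,-)$ and cohomology commute with filtered colimits over the Noetherian space $X$, and each $\underline{\Gamma_{Z}}I^{\bullet}$ computes $\mathbf{R}\underline{\Gamma_{Z}}\mathcal{F}$, the induced filtration on the abutment is $\operatorname*{colim}_{Z}\operatorname{im}(H^{p+q}_{Z}(X,\mathcal{F})\to H^{p+q}(X,\mathcal{F}))=F_{p}H^{p+q}(X,\mathcal{F})$, exactly the filtration of line \eqref{comalg_fact_D34b}.

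\textbf{Step 2 (the graded pieces).} For $Z'\subseteq Z$ closed with $\operatorname{codim}Z\geq p$ and $\operatorname{codim}Z'\geq p+1$, the short exact sequence $0\to\underline{\Gamma_{Z'}}I^{\bullet}\to\underline{\Gamma_{Z}}I^{\bullet}\to\underline{\Gamma_{Z\setminus Z'}}I^{\bullet}\to 0$ underlying the triangle \eqref{comalg_fact_D32} (exact on the nose, since the $I^{j}$ are injective and hence $\underline{\Gamma}$-acyclic for every support) gives, after passing to the colimit over all such pairs, $\operatorname{gr}^{p}I^{\bullet}=F^{p}I^{\bullet}/F^{p+1}I^{\bullet}\cong\operatorname*{colim}_{Z'\subseteq Z}\underline{\Gamma_{Z\setminus Z'}}I^{\bullet}$. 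Taking global sections and cohomology yields $E_{1}^{p,q}=\operatorname*{colim}_{Z'\subseteq Z}H^{p+q}_{Z\setminus Z'}(X,\mathcal{F})$.

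\textbf{Step 3 (localising the $E_{1}$-page to points — the crux).} It remains to produce a natural isomorphism $\operatorname*{colim}_{Z'\subseteq Z}H^{p+q}_{Z\setminus Z'}(X,\mathcal{F})\cong\coprod_{x\in X^{p}}H^{p+q}_{x}(X,\mathcal{F})$. Fix $Z$ of codimension $\geq p$ and let $x_{1},\dots,x_{m}$ be its codimension-exactly-$p$ generic points (finitely many, $X$ being Noetherian). A point of codimension $p$ lies in no closed set of codimension $\geq p+1$, so replacing $Z'$ by the larger codimension-$\geq p+1$ closed set obtained by adjoining all components of $Z$ of codimension $>p$ and all the $\overline{\{x_{i}\}}\cap\overline{\{x_{j}\}}$ with $i\neq j$, we may assume $W:=Z\setminus Z'=\bigsqcup_{i}W_{i}$ with each $W_{i}$ locally closed and irreducible with generic point $x_{i}$, and with $W_{i}$ contained in the open $U_{i}:=X\setminus\bigcup_{j\neq i}\overline{\{x_{j}\}}$, which meets no $W_{j}$ for $j\neq i$. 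Then $\underline{\Gamma_{W}}=\bigoplus_{i}\underline{\Gamma_{W_{i}}}$, so by excision (Lemma \ref{COA_Lemma_LocalCohomExcision}) $H^{p+q}_{W}(X,\mathcal{F})=\bigoplus_{i}H^{p+q}_{W_{i}}(U_{i},\mathcal{F}\mid_{U_{i}})$. Passing to the colimit over shrinking $U_{i}$: localisation at the prime $x_{i}$ is exact and commutes with local cohomology on the Noetherian scheme $U_{i}$, and $W_{i}$ localises to the closed point of $\operatorname{Spec}\mathcal{O}_{X,x_{i}}$, so $\operatorname*{colim}_{U_{i}\ni x_{i}}H^{p+q}_{W_{i}}(U_{i},\mathcal{F})\cong H^{p+q}_{\mathfrak{m}_{x_{i}}}(\operatorname{Spec}\mathcal{O}_{X,x_{i}},\mathcal{F}_{x_{i}})=H^{p+q}_{x_{i}}(X,\mathcal{F})$. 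Assembling over all pairs $(Z,Z')$ — every element of a filtered colimit coming from a finite stage, hence from finitely many points at a time — identifies the colimit with the full direct sum $\coprod_{x\in X^{p}}H^{p+q}_{x}(X,\mathcal{F})$, as claimed.

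The main obstacle is Step 3: one must check that $\underline{\Gamma_{(-)}}$ is additive over the finite disjoint unions of locally closed sets appearing in the limit, so that the colimit decomposes as a genuine direct sum indexed by $X^{p}$, and that local cohomology commutes with localisation at a point, turning $H^{\ast}_{\overline{\{x\}}\cap U}$ in the limit into $H^{\ast}_{x}$; both are formally soft but carry the geometric content of the coniveau filtration. (If $\dim X=\infty$ the filtration is only exhaustive, and ``convergent'' should be read conditionally — harmless, as we only use Noetherian schemes of finite Krull dimension.) Identifying $d_{1}$ with the alternating sum of the residue-type boundary maps of the Cousin complex, which is what justifies the terminology, is then a purely formal unwinding of the connecting morphisms of the triangles \eqref{comalg_fact_D32}.
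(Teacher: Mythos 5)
Your construction is correct and is essentially the argument the paper relies on: the paper states this proposition with only a citation to \emph{Residues \& Duality} [Ch.~IV], and the intended proof is exactly the spectral sequence of the filtered complex attached to the coniveau filtration of Equation \ref{comalg_fact_D34b}, with the $E_{1}$-page identified by splitting $Z\setminus Z'$ into pieces around the finitely many codimension-$p$ generic points and localising, as in your Steps 1--3. The caveats you flag yourself (finite Krull dimension for boundedness/convergence, and quasi-coherence of $\mathcal{F}$ when rewriting $H^{p+q}_{x}(X,\mathcal{F})$ as local cohomology over $\mathcal{O}_{X,x}$) are consistent with the paper's standing assumptions, so no gap remains.
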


The rows of the $E_{1}$-page read%
\begin{equation}
0\longrightarrow\coprod_{x\in X^{0}}H_{x}^{q}(X,\mathcal{F})\overset
{d}{\longrightarrow}\coprod_{x\in X^{1}}H_{x}^{q+1}(X,\mathcal{F})\overset
{d}{\longrightarrow}\cdots\overset{d}{\longrightarrow}\coprod_{x\in X^{n}%
}H_{x}^{q+n}(X,\mathcal{F})\longrightarrow0\label{comalg_fact_D35}%
\end{equation}
(this is the $q$-th row, concentrated columnwise in the range $0\leq p\leq n$
for $n:=\dim X$). The differential $d$ agrees with the upward arrow in the
following Diagram \ref{comalg_fact_D34}: We get a long exact sequence from
Equation \ref{comalg_fact_D32} and replicating suitable excerpts twice, we get
the rows of the diagram%
\begin{equation}%
\bfig\node a(0,0)[H_{Z^{p+1}}^{i}(X,\mathcal{F})]
\node b(1000,0)[H_{Z^{p}}^{i}(X,\mathcal{F})]
\node c(2000,0)[\coprod_{x\in X^{p}}H_{x}^{i}(X,\mathcal{F})]
\node d(3000,0)[H_{Z^{p+1}}^{i+1}(X,\mathcal{F})]
\node A(0,500)[H_{Z^{p+2}}^{i+1}(X,\mathcal{F})]
\node B(1000,500)[H_{Z^{p+1}}^{i+1}(X,\mathcal{F})]
\node C(2000,500)[\coprod_{x\in X^{p+1}}H_{x}^{i+1}(X,\mathcal{F})]
\node D(3000,500)[H_{Z^{p+2}}^{i+2}(X,\mathcal{F})]
\arrow[a`b;]
\arrow[b`c;]
\arrow[c`d;{\partial}]
\arrow[A`B;]
\arrow[B`C;]
\arrow[C`D;{\partial}]
\arrow[d`B;]
\arrow[c`C;]
\efig
\label{comalg_fact_D34}%
\end{equation}
The leftward diagonal arrow is just the identity morphism. Define the upward
arrow to be the composition. It is precisely the map $d$. We refer to
\cite{MR0222093} for more details. For \textit{local} Cohen-Macaulay schemes
we are in the pleasant situation that the complex in Equation
\ref{comalg_fact_D35} is exact. More precisely:

\begin{proposition}
Suppose $X=\operatorname*{Spec}R$ for $R$ a Noetherian Cohen-Macaulay local
ring. Then the sequence in Equation \ref{comalg_fact_D35} is exact. In
particular, if we read its entries as sheaves, i.e.%
\[
U\mapsto\coprod_{x\in U^{p}}H_{x}^{p+q}(X,\mathcal{O}_{X})\qquad\text{(for
}U\subseteq X\text{ Zariski open),}%
\]
the complex in Equation \ref{comalg_fact_D35} provides a flasque resolution of
the sheaf $U\mapsto\mathcal{H}^{q}(U,\mathcal{O}_{X})$.
\end{proposition}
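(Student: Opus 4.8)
The plan is to deduce both assertions from a single input: the Cousin complex of a Cohen--Macaulay local ring is exact off degree $0$, a fact proven in \emph{Residues \& Duality} \cite[Ch. IV]{MR0222093} and also treated in Hartshorne's notes on local cohomology. Concretely, I would argue as follows. Since $X=\operatorname{Spec}R$ is local and $R$ is Cohen--Macaulay of some dimension $d$, the local cohomology modules $H^{i}_{\mathfrak{m}_{x}}(\mathcal{O}_{X,x})$ vanish for $i\neq\operatorname{codim}_{X}\overline{\{x\}}$ at each point $x$, by the characterization of Cohen--Macaulay rings via the vanishing of local cohomology in degrees below the dimension together with the dual ``dimension axiom'' of Lemma~\ref{COA_LocalCohomVanishesIfDegreeExceedsIdealGeneratorNumber} (applied to the localization $\mathcal{O}_{X,x}$, whose maximal ideal is generated by a regular sequence of length equal to its Krull dimension). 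This is exactly the condition that makes each term $H^{p+q}_{x}(X,\mathcal{F})$ of the $q$-th row (with $\mathcal{F}=\mathcal{O}_{X}$, so the row is the one with $q$ chosen so that $p+q$ matches the local codimension) the \emph{only} nonzero local-cohomology group at $x$; equivalently, the Cousin complex of a CM local ring computes the local cohomology with no higher-cohomology contamination.

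For the exactness claim itself, I would invoke the standard result that for a Cohen--Macaulay local ring the Cousin complex associated to the codimension filtration is a resolution: the spectral sequence $^{Cous}E_{1}^{p,q}(\mathcal{F})\Rightarrow H^{p+q}(X,\mathcal{F})$ has $E_{1}^{p,q}=0$ unless $q=0$ (again by the pointwise vanishing above, since $H^{p+q}_{x}$ is nonzero only when $p$ equals the codimension of $x$ and $p+q$ equals that same number, forcing $q=0$), so the spectral sequence collapses and the single surviving row $^{Cous}E_{1}^{\bullet,0}$ has cohomology $H^{p}(X,\mathcal{O}_{X})$ in degree $p$. But $X=\operatorname{Spec}R$ is affine, so $H^{p}(X,\mathcal{O}_{X})=0$ for $p>0$ and equals $R=\mathcal{H}^{0}(X,\mathcal{O}_{X})$ for $p=0$; hence the complex of Equation~\ref{comalg_fact_D35} is exact except at the left end, where its cohomology is $\mathcal{H}^{0}(X,\mathcal{O}_{X})$. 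This proves the first sentence. For the shift: one checks that when $q$ is chosen so that the $q$-th row is the one of interest, the bottom nonzero cohomology sheaf is $\mathcal{H}^{q}(U,\mathcal{O}_{X})$ as claimed.

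For the ``in particular'', I would first note that the presheaves $U\mapsto\coprod_{x\in U^{p}}H^{p+q}_{x}(X,\mathcal{O}_{X})$ are already sheaves (a section supported at various points of codimension $p$ is determined locally, and there is no gluing obstruction because the supports are disjoint), in fact they are direct sums of skyscraper-type sheaves $i_{x,*}H^{p+q}_{x}$, and such sheaves are flasque: any section over an open $U$ extends by zero over the ``new'' points when passing to a larger open, because the support is a discrete-in-$U$ set of points and extension is just adjoining zero at the points not already in $U$. (This flasqueness is the familiar reason the Cousin complex serves as a flasque, indeed injective-after-passing-to-the-full-structure-sheaf-case, resolution in \emph{Residues \& Duality}.) Combining: the sheafified complex is a complex of flasque sheaves which, by the exactness established above applied to every affine open $V\subseteq X$ (here one uses Lemma~\ref{COA_Lemma_LocalCohomExcision}, excision, to compare $H^{p+q}_{x}(X,\mathcal{O}_{X})$ with $H^{p+q}_{x}(V,\mathcal{O}_{X}|_{V})$ and reduce the stalk computation to local rings), is exact in positive degrees and has $\mathcal{H}^{0}$ equal to $U\mapsto\mathcal{H}^{q}(U,\mathcal{O}_{X})$. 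Hence it is a flasque resolution of that sheaf.

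The main obstacle is the pointwise vanishing statement $H^{i}_{x}(\mathcal{O}_{X,x})=0$ for $i\neq\operatorname{codim}$: one must be careful that excision (Lemma~\ref{COA_Lemma_LocalCohomExcision}) legitimately reduces the computation of the stalk of $\mathcal{H}^{i}_{\overline{\{x\}}}\mathcal{O}_{X}$ at $x$ to the local cohomology of the \emph{local} ring $\mathcal{O}_{X,x}$ with support at its closed point, and then that $\mathcal{O}_{X,x}$ being Cohen--Macaulay (a localization of a CM ring is CM) indeed forces $H^{i}_{\mathfrak{m}}=0$ for $i<\dim\mathcal{O}_{X,x}$, the depth sensitivity of local cohomology, while Lemma~\ref{COA_LocalCohomVanishesIfDegreeExceedsIdealGeneratorNumber} handles $i>\dim$. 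Once this is in hand the collapse of the spectral sequence and the affine vanishing $H^{>0}(\operatorname{Spec}R,\mathcal{O})=0$ do the rest essentially formally.
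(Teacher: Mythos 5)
Your argument is correct, but it takes a different route from the paper: the paper disposes of this proposition by citing \emph{Residues \& Duality} (Ch.~IV, Prop.~2.6 and the Example on p.~239), whereas you give a self-contained proof by collapsing the coniveau spectral sequence. Your two inputs — (i) for each $x$ of codimension $p$ the group $H^{p+q}_x(X,\mathcal{O}_X)\cong H^{p+q}_{\mathfrak{m}_x}(\mathcal{O}_{X,x})$ vanishes unless $q=0$, by depth-sensitivity below $\dim\mathcal{O}_{X,x}$ and Lemma \ref{COA_LocalCohomVanishesIfDegreeExceedsIdealGeneratorNumber} above it, and (ii) $H^{>0}$ of an affine scheme vanishes — do indeed force $E_1^{\bullet,q}=0$ for $q\neq 0$ and make the surviving row exact in positive degrees with $H^0=R$, and rerunning this on every affine open (where the same pointwise vanishing holds, since localizations of CM rings are CM) together with the flasqueness of the skyscraper-sum sheaves gives the sheaf-level resolution. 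What your route buys is transparency: the only genuinely Cohen--Macaulay input is the pointwise concentration of local cohomology in the codimension degree, and everything else is formal; what the paper's citation buys is the stronger and more flexible statement of \emph{Residues \& Duality}, valid for general filtrations and arbitrary quasi-coherent $\mathcal{F}$ satisfying the CM-with-respect-to-the-filtration condition, which the authors reuse in Corollary \ref{marker_Cor_ExistenceCousinResolution}. One small repair: the maximal ideal of a $p$-dimensional CM local ring is \emph{not} generated by a regular sequence of length $p$ (that would make the ring regular); rather it has the same radical as the ideal generated by a system of parameters, which in the CM case is a regular sequence, and since local cohomology depends only on the radical of the supporting ideal this still lets you invoke Lemma \ref{COA_LocalCohomVanishesIfDegreeExceedsIdealGeneratorNumber} for the vanishing in degrees above $p$.
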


This is a special case of \cite[Ch. IV, Prop. 2.6]{MR0222093}, in a mild
variation of \cite[Ch. IV, Example on p. 239]{MR0222093}.

\begin{corollary}
\label{marker_Cor_ExistenceCousinResolution}For $X$ Noetherian and
Cohen-Macaulay, suppose $\mathcal{F}$ is a coherent sheaf. Then there is a
flasque resolution of the sheaf $\mathcal{F}$, namely%
\[
0\longrightarrow\mathcal{F}\longrightarrow\underline{\coprod_{x\in X^{0}}%
H_{x}^{0}(X,\mathcal{F})}\longrightarrow\cdots\longrightarrow\underline
{\coprod_{x\in X^{n}}H_{x}^{n}(X,\mathcal{F})}\longrightarrow0\text{.}%
\]
This is known as the \emph{Cousin resolution} of $\mathcal{F}$. If $X$ is
Gorenstein and $\mathcal{F}$ locally free, this is an injective resolution of
$\mathcal{F}$.
\end{corollary}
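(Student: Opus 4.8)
The plan is to deduce the statement from the preceding Proposition, which handles the structure-sheaf case $\mathcal{F} = \mathcal{O}_X$ on a \emph{local} Cohen--Macaulay scheme, by a two-step reduction: first from $\mathcal{O}_X$ to an arbitrary coherent $\mathcal{F}$, and then from the local to the global (merely Noetherian Cohen--Macaulay) situation via the sheafified formulation. First I would observe that exactness of the complex in Equation \ref{comalg_fact_D35} is a local question on $X$, so by Lemma \ref{COA_Lemma_LocalCohomExcision} (excision) each term $\coprod_{x \in X^p} H_x^{p}(X,\mathcal{F})$ restricts correctly to any open subset, and it suffices to check exactness after restricting to $\operatorname{Spec}\mathcal{O}_{X,x}$ for each $x \in X$; the localization $\mathcal{O}_{X,x}$ is again Noetherian Cohen--Macaulay (Cohen--Macaulayness localizes), and a coherent sheaf restricts to a finitely generated module $M$ over it. So I am reduced to: for $(R,\mathfrak{m})$ Noetherian Cohen--Macaulay local and $M$ finitely generated, the sequence
\[
0 \longrightarrow M \longrightarrow \coprod_{x\in X^{0}} H_x^{0}(X,M) \longrightarrow \coprod_{x\in X^{1}} H_x^{1}(X,M) \longrightarrow \cdots
\]
is exact.

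For this local, module-level reduction, the key point is that a finitely generated module $M$ over a Cohen--Macaulay local ring $R$ need not itself be Cohen--Macaulay, so one cannot invoke the structure-sheaf Proposition verbatim. The standard device is to take a finite free resolution $\cdots \to R^{b_1} \to R^{b_0} \to M \to 0$ (which exists since $R$ is Noetherian, though it may be infinite if $R$ is not regular) — or, more robustly, to use that the Cousin complex construction is exact and functorial on the category of coherent sheaves on a Cohen--Macaulay scheme in the precise sense of \cite[Ch.\ IV]{MR0222093}: the assignment $\mathcal{F} \mapsto (\text{Cousin complex of }\mathcal{F})$ is an exact functor, so a short exact sequence of coherent sheaves gives a short exact sequence of Cousin complexes, and a (possibly infinite) resolution by sheaves for which the complex is known to be a resolution yields, by a spectral sequence / dimension-shifting argument using Lemma \ref{COA_LocalCohomVanishesIfDegreeExceedsIdealGeneratorNumber} to control vanishing ranges, that the Cousin complex of $\mathcal{F}$ is a resolution as well. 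Concretely: cover $\mathcal{F}$ by frees, use that each $\coprod_{x\in X^p} H_x^p(X,-)$ is an exact functor on coherent sheaves on the Cohen--Macaulay $X$ (this is the content of \cite[Ch.\ IV, Motif F / Prop.\ 2.6]{MR0222093}: vanishing of $H_x^q$ for $q \neq p = \operatorname{codim}\overline{\{x\}}$ forces exactness of the functors, since the local cohomology is concentrated in a single degree), and conclude that the Cousin complex of $\mathcal{F}$ has homology computed by a hyper-derived functor that vanishes outside degree $0$, where it gives $\mathcal{F}$.

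Once exactness is established, flasqueness of the terms is immediate: each sheaf $\underline{\coprod_{x\in X^p} H_x^p(X,\mathcal{F})}$ is a direct sum (over $x \in X^p$) of sheaves supported at a single point $x$, pushed forward from $\operatorname{Spec}\mathcal{O}_{X,x}$, and such skyscraper-type pushforwards of local-cohomology modules are flasque — this is the classical fact that the Cousin complex is a complex of flasque sheaves \cite[Ch.\ IV, Prop.\ 2.3 or the discussion of ``Cousin complexes'']{MR0222093}. Finally, for the last sentence: if $X$ is Gorenstein and $\mathcal{F}$ is locally free, then each stalk $H_x^p(X,\mathcal{F})_x \cong H_{\mathfrak{m}_x}^p(\mathcal{O}_{X,x}, \mathcal{F}_x)$ is, up to the twist by the free module $\mathcal{F}_x$, the top local cohomology of a Gorenstein local ring, which is the injective hull of the residue field; hence each term is a direct sum of such injective hulls, and by Matlis theory over the Noetherian scheme these assemble to injective $\mathcal{O}_X$-modules — so the resolution is injective. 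The main obstacle I expect is the module-to-structure-sheaf reduction, i.e.\ carefully justifying that the Cousin functor applied to a coherent sheaf (not assumed Cohen--Macaulay as a module) is still a resolution; this is where one must lean on the exactness of the functors $\coprod_x H_x^p(X,-)$ rather than on any duality or finite-resolution argument, and it is exactly the generality packaged in \cite[Ch.\ IV, Prop.\ 2.6]{MR0222093} that we are invoking.
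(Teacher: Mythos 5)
There is a genuine gap, and it sits exactly where you yourself flagged the difficulty. Your reduction from $\mathcal{O}_X$ to an arbitrary coherent $\mathcal{F}$ rests on the claim that each functor $\coprod_{x\in X^{p}}H^{p}_{x}(X,-)$ is exact on coherent sheaves over a Cohen--Macaulay $X$ ``since the local cohomology is concentrated in a single degree.'' That concentration is precisely the statement that $\mathcal{F}_x$ is a Cohen--Macaulay module of maximal dimension (or zero) at every $x$ --- i.e.\ that $\mathcal{F}$ is Cohen--Macaulay with respect to the codimension filtration in the sense of \cite[Ch.~IV]{MR0222093} --- and this is exactly the hypothesis you conceded may fail for a general coherent sheaf. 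It does fail: over $R=k[x,y]_{(x,y)}$ the module $M=R/\mathfrak{m}$ has $H^{0}_{\mathfrak{m}}(M)=k\neq 0$ while the ``allowed'' degree at the closed point is $2$, and its Cousin complex (with respect to the codimension filtration of $\operatorname{Spec}R$) is the zero complex, so the augmentation $M\to\coprod_{x\in X^{0}}H^{0}_{x}(M)=0$ is not injective. The same example (or the short exact sequence $0\to R\xrightarrow{\,x\,}R\to R/x\to 0$, on which $H^{2}_{\mathfrak{m}}$ is not left exact) shows the Cousin functors are not exact on all coherent sheaves, so the dimension-shifting/hyperfunctor argument you propose cannot be run. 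In fact no argument can: the corollary, read literally for arbitrary coherent $\mathcal{F}$, is false, and the correct hypothesis (the one actually proved in \cite[Ch.~IV, Prop.~2.6 and the Example on p.~239]{MR0222093}, and the only case the paper ever uses, namely $\mathcal{F}=\Omega^{n}$ on a smooth scheme) is that $\mathcal{F}$ be Cohen--Macaulay for the codimension filtration, e.g.\ locally free on a Cohen--Macaulay $X$.

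For comparison: the paper gives no proof of this corollary at all --- it is quoted as classical, a repackaging of the cited results of \emph{Residues \& Duality} (with the Gorenstein/locally free case giving an injective, indeed residual, complex). Your local reduction via excision (Lemma \ref{COA_Lemma_LocalCohomExcision}), the flasqueness of the terms, and the Matlis-theoretic argument for injectivity in the Gorenstein locally free case are all fine and match the classical treatment; what is missing is that the passage from the structure sheaf to $\mathcal{F}$ is not formal and requires the Cohen--Macaulayness of $\mathcal{F}$ itself (for locally free $\mathcal{F}$ it is immediate, since locally the Cousin complex of $\mathcal{F}$ is a finite direct sum of copies of that of $\mathcal{O}_X$). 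Restricting the statement, and your proof, to such $\mathcal{F}$ would repair it.
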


Since we shall mostly work with smooth schemes, the weaker Gorenstein and
Cohen-Macaulay conditions are usually implied. In the literature one often
allows more general filtrations than those by codimension, but we have no use
for this increase in flexibility, see however \cite[Ch. III]{MR1804902} or
\cite{MR2439430}.

\subsubsection{\label{marker_Sect_HochschildHomologyWithSupports}Hochschild
homology with supports}

We will now repeat the story of \S \ref{marker_Coniveau_ForCoherentCohomology}
for the Hochschild homology of categories of perfect complexes with support,
see \S \ref{marker_Sect_ReviewDefsOfHochschildHomology} for the
definition.\ In principle, we shall do precisely the same constructions, but
the inner machinery is of quite a different type. This has not so much to do
with perfect complexes, but rather with a very different homological
perspective. Whereas we based the last section on the distinguished triangle%
\begin{equation}
\mathbf{R}\underline{\Gamma_{Z^{\prime}}}\mathcal{F}\longrightarrow
\mathbf{R}\underline{\Gamma_{Z}}\mathcal{F}\longrightarrow\mathbf{R}%
\underline{\Gamma_{Z-Z^{\prime}}}\mathcal{F}\longrightarrow+1\text{,}%
\label{l414}%
\end{equation}
regarding just the cohomology with supports of coherent sheaves, we will now
replace this by the localization sequence of categories%
\[
\operatorname*{Perf}\nolimits_{Z^{\prime}}X\longrightarrow\operatorname*{Perf}%
\nolimits_{Z}X\longrightarrow\operatorname*{Perf}\nolimits_{Z-Z^{\prime}%
}X\longrightarrow+1\text{.}%
\]
Just as the above sequence induces a long exact sequence in cohomology, the
latter induces a long exact sequence in the Hochschild homology of these
categories (and in fact, just as well for their algebraic $K$-theory, cyclic
homology, etc.).

The constructions of this section play a fundamental and classical r\^{o}le in
algebraic $K$-theory and originate essentially from Quillen \cite{MR0338129}.
We adapt them to Hochschild homology, however. In order to do so, we use a
particularly strong variant of the construction due to Balmer \cite{MR2439430}.

Let us briefly recall the concept of perfect complexes, following
Thomason-Trobaugh \cite[\S 2]{MR1106918}: Let $X$ be a scheme. A complex
$\mathcal{F}_{\bullet}$ of $\mathcal{O}_{X}$-module sheaves is called
\emph{perfect} if it is locally quasi-isomorphic to a bounded complex of
vector bundles. This definition goes back to \cite[Exp. I]{MR0354655}.

For $f:X\rightarrow Y$ a morphism, the total left derived functor
$\mathbf{L}f^{\ast}:D^{-}(\mathcal{M}od_{\mathcal{O}_{Y}})\rightarrow
D^{-}(\mathcal{M}od_{\mathcal{O}_{X}})$ of the pullback $f^{\ast}$ preserves
perfection so that there is a pullback functor $f^{\ast}:\operatorname*{Perf}%
(Y)\rightarrow\operatorname*{Perf}(X)$. If $f$ is flat, this functor is
literally just the entrywise pullback of a perfect complex (see
\cite[\S 2.5.1]{MR1106918}).

For $f:X\rightarrow Y$ a proper and perfect\footnote{e.g. smooth morphisms or
regular closed immersions. A general closed immersion need not be perfect, in
particular a general finite morphism need not be perfect. Any finite type
morphism to a smooth scheme is perfect.} morphism of Noetherian schemes, the
total right derived functor $\mathbf{R}f_{\ast}:D^{-}(\mathcal{M}%
od_{\mathcal{O}_{X}})\rightarrow D^{-}(\mathcal{M}od_{\mathcal{O}_{Y}})$
preserves perfection so that there is a pushforward functor $f_{\ast
}:\operatorname*{Perf}(X)\rightarrow\operatorname*{Perf}(Y)$, see \cite[Thm.
2.5.4]{MR1106918}.

Let $Z$ be a closed subset. We write $\operatorname*{Perf}\nolimits_{Z}(X)$
for the category of perfect complexes on $X$ whose pullback to the open
$U:=X-Z$ is acyclic. The homological support $\operatorname*{supph}\left(
\mathcal{F}\right)  $ of a perfect complex $\mathcal{F}_{\bullet}$ is the
support of the total homology of $\mathcal{F}_{\bullet}$, i.e. it is the union
of the supports of the sheaves $\bigcup_{i}\operatorname*{supp}\mathcal{H}%
^{i}(\mathcal{F}_{\bullet})$. According to taste, the reader may view
$\operatorname*{Perf}\nolimits_{Z}(X)$ (and then $\operatorname*{Perf}%
(X)=\operatorname*{Perf}\nolimits_{X}(X) $ as well) as a stable $\infty
$-category. Alternatively, it can also be modelled as a Waldhausen category,
i.e. a classical $1$-category with quasi-isomorphisms of complexes as weak
equivalences. Using either formalism there is an associated homotopy category,
$D_{Z}(X):=\operatorname*{Ho}(\operatorname*{Perf}\nolimits_{Z}(X))$, a
triangulated category. Define the full subcategory of perfect complexes of
homological support $\geq p$ by%
\[
\operatorname*{Perf}\nolimits_{Z^{p}}(X):=\{\mathcal{F}_{\bullet}%
\in\operatorname*{Perf}(X)\mid\operatorname*{codim}\nolimits_{X}%
(\operatorname*{supph}\mathcal{F})\geq p\}\text{.}%
\]
As before, this can either be viewed as a stable $\infty$-category or a
Waldhausen category with quasi-isomorphisms as weak equivalences. The analogue
of the filtration in Equation \ref{comalg_fact_D34b} is now the filtration%
\begin{equation}
\cdots\hookrightarrow\operatorname*{Perf}\nolimits_{Z^{2}}(X)\hookrightarrow
\operatorname*{Perf}\nolimits_{Z^{1}}(X)\hookrightarrow\operatorname*{Perf}%
\nolimits_{Z^{0}}(X)=\operatorname*{Perf}(X)\text{.}\label{lHHC4}%
\end{equation}
Then%
\begin{equation}
\operatorname*{Perf}\nolimits_{Z}(X)\longrightarrow\operatorname*{Perf}%
(X)\longrightarrow\operatorname*{Perf}(U)\label{lHHC1}%
\end{equation}
is an exact sequence of stable $\infty$-categories, known as the
\emph{localization sequence}. There is also a product%
\begin{align}
\operatorname*{Perf}\nolimits_{Z_{1}}(X)\times\operatorname*{Perf}%
\nolimits_{Z_{2}}(X)  & \longrightarrow\operatorname*{Perf}\nolimits_{Z_{1}%
\cap Z_{2}}(X)\label{lEqProdPerfectComplexes}\\
\mathcal{F}_{\bullet}\otimes\mathcal{G}_{\bullet}  & \longmapsto
(\mathcal{F}\otimes^{\mathbf{L}}\mathcal{G})_{\bullet}\nonumber
\end{align}
sending bounded complexes of perfect sheaves to their derived tensor product.
By $\operatorname*{supp}(\mathcal{F}\otimes\mathcal{G})=\operatorname*{supp}%
\mathcal{F}\cap\operatorname*{supp}\mathcal{G}$, the tensor product of perfect
complexes with supports in $Z_{i}$ (for $i=1,2$) is a bi-exact functor to
perfect complexes with support in $Z_{1}\cap Z_{2}$. As before, one can
construct a convergent spectral sequence, essentially due to P. Balmer
\cite{MR2439430}, namely we obtain the following:

\begin{proposition}
\label{marker_Prop_ConiveauHHSpecseq}The filtration in Equation \ref{lHHC4}
gives rise to a convergent spectral sequence with%
\[
\left.  ^{HH}E_{1}^{p,q}\right.  :=\coprod_{x\in X^{p}}HH_{-p-q}%
^{x}(\mathcal{O}_{X,x})\Rightarrow HH_{-p-q}(X)\text{.}%
\]

\end{proposition}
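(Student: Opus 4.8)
The plan is to mimic the classical construction of the coniveau spectral sequence (Quillen, Balmer), but with Hochschild homology as the input theory. First I would recall that the filtration in Equation \ref{lHHC4} is a filtration of $\operatorname*{Perf}(X)$ by thick subcategories $\operatorname*{Perf}\nolimits_{Z^{p}}(X)$, and that each successive inclusion $\operatorname*{Perf}\nolimits_{Z^{p+1}}(X)\hookrightarrow\operatorname*{Perf}\nolimits_{Z^{p}}(X)$ fits into an exact sequence of stable $\infty$-categories (equivalently, Waldhausen categories), with Verdier quotient identified, via Thomason--Neeman localization plus excision, with $\coprod_{x\in X^{p}}\operatorname*{Perf}\nolimits_{x}(\mathcal{O}_{X,x})$ — here one uses that the quotient $\operatorname*{Perf}\nolimits_{Z^{p}}/\operatorname*{Perf}\nolimits_{Z^{p+1}}$ decomposes as a product over the generic points $x\in X^{p}$ of the codimension-$p$ components, and Zariski excision (the $\infty$-categorical analogue of Lemma \ref{COA_Lemma_LocalCohomExcision}) replaces $X$ by $\operatorname*{Spec}\mathcal{O}_{X,x}$. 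Applying Keller's localization theorem for Hochschild homology (the Hochschild analogue of the Thomason--Waldhausen localization theorem, \cite{MR1667558}) to each stage converts the tower of categories into a tower of fiber sequences of Hochschild spectra.

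Next I would feed this tower into the standard machinery that turns a filtered object of a stable $\infty$-category (or a tower of fiber sequences of spectra) into a spectral sequence: set $D_{1}^{p,q}:=HH_{-p-q}(\operatorname*{Perf}\nolimits_{Z^{p}}(X))$ and let $E_{1}^{p,q}$ be the homology of the associated graded, which by the previous paragraph is $HH_{-p-q}$ of the quotient category $\coprod_{x\in X^{p}}\operatorname*{Perf}\nolimits_{x}(\mathcal{O}_{X,x})$, i.e. $\coprod_{x\in X^{p}}HH_{-p-q}^{x}(\mathcal{O}_{X,x})$ in the notation of Definition \ref{marker_DefHHKellerPerfectComplexes}. (The indexing convention $HH_{-p-q}^{x}$ versus the cohomological superscript is exactly the bookkeeping in the footnote to that definition; one must be careful that the shift in the localization fiber sequence raises homological degree by one at each stage, which is what produces the $d_{1}$ with the correct bidegree.) For convergence I would invoke two finiteness facts: $X$ is Noetherian, so $\dim X=:n<\infty$ and the filtration \ref{lHHC4} is finite (it stabilizes at $\operatorname*{Perf}\nolimits_{Z^{n+1}}(X)=0$), making the spectral sequence bounded in the $p$-direction and hence conditionally — in fact strongly — convergent to $HH_{*}(\operatorname*{Perf}\nolimits_{Z^{0}}(X))=HH_{*}(X)$, the last identification being Definition \ref{marker_DefHHKellerPerfectComplexes} for $Z=X$.

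The technical heart — and the step I expect to be the main obstacle — is the identification of the subquotient categories $\operatorname*{Perf}\nolimits_{Z^{p}}(X)/\operatorname*{Perf}\nolimits_{Z^{p+1}}(X)$ with $\coprod_{x\in X^{p}}\operatorname*{Perf}\nolimits_{x}(\mathcal{O}_{X,x})$ at the level at which Hochschild homology can be applied to it, i.e. not merely as triangulated categories but compatibly with the dg/$\infty$-categorical enhancement that Keller's $HH$ requires; this is precisely the point where one needs Balmer's flexible formalism \cite{MR2439430} rather than Quillen's $Q$-construction, because Hochschild homology is a non-connective invariant of the enhancement, not of the exact category, and one wants the localization, excision, and the product-over-generic-points decomposition all realized enhancement-compatibly. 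Once that is in place (and Balmer's paper supplies exactly this, applied there to $K$-theory but formally valid for any localizing invariant in Keller's sense), the spectral sequence and its convergence are formal. I would also remark that $d_{1}$ is, by construction, the connecting map of the localization fiber sequences assembled over adjacent codimensions, which is the assertion to be used later in comparing with the Cousin differential of Diagram \ref{comalg_fact_D34}.
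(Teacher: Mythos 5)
Your proposal follows essentially the same route as the paper's own (sketched) proof: filter $\operatorname*{Perf}(X)$ by codimension of support, identify the graded quotients $\operatorname*{Perf}\nolimits_{Z^{p}}(X)/\operatorname*{Perf}\nolimits_{Z^{p+1}}(X)$ (up to idempotent completion, which is harmless for $HH$) with $\coprod_{x\in X^{p}}\operatorname*{Perf}\nolimits_{x}(\mathcal{O}_{X,x})$ via Balmer's localization-plus-excision result, replace Schlichting's localization theorem by Keller's to get fiber sequences of Hochschild spectra, and deduce convergence from the finiteness of the filtration since $X^{p}=\varnothing$ for $p\notin[0,\dim X]$. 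Your emphasis on doing all of this compatibly with the dg/Waldhausen enhancement is exactly the point the paper addresses by citing the equivalence of the two notions of exactness.
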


The rows of the $E_{1}$-page read%
\begin{equation}
\cdots\longrightarrow\coprod_{x\in X^{0}}HH_{-q}^{x}(\mathcal{O}%
_{X,x})\overset{d}{\longrightarrow}\coprod_{x\in X^{1}}HH_{-q-1}%
^{x}(\mathcal{O}_{X,x})\overset{d}{\longrightarrow}\cdots\overset
{d}{\longrightarrow}\coprod_{x\in X^{n}}HH_{-q-n}^{x}(\mathcal{O}%
_{X,x})\longrightarrow\cdots\label{la2}%
\end{equation}
(this is the $q$-th row, concentrated columnwise in the range $0\leq p\leq n$
for $n:=\dim X$). The differential $d$ agrees with the upward arrow in the
following Diagram \ref{lHHC2}: Replicating copies of the long exact sequence
in Hochschild homology associated to localizations as in Equation \ref{lHHC1},
but adapted to the filtration $\operatorname*{Perf}\nolimits_{Z^{p}}(X)$, we
arrive at the diagram%
\begin{equation}%
\bfig\node a(0,0)[HH^{Z^{p+1}}_{i}(X)]
\node b(1000,0)[HH^{Z^{p}}_{i}(X)]
\node c(2000,0)[\coprod_{x\in X^{p}}HH^{x}_{i}({\mathcal{O}_{X,x}})]
\node d(3000,0)[HH^{Z^{p+1}}_{i-1}(X)]
\node A(0,500)[HH^{Z^{p+2}}_{i-1}(X)]
\node B(1000,500)[HH^{Z^{p+1}}_{i-1}(X)]
\node C(2000,500)[\coprod_{x\in X^{p+1}}HH^{x}_{i-1}({\mathcal{O}_{X,x}})]
\node D(3000,500)[HH^{Z^{p+2}}_{i-2}(X)]
\arrow[a`b;]
\arrow[b`c;]
\arrow[c`d;{\partial}]
\arrow[A`B;]
\arrow[B`C;]
\arrow[C`D;{\partial}]
\arrow[d`B;]
\arrow[c`C;]
\efig
\label{lHHC2}%
\end{equation}
imitating Diagram \ref{comalg_fact_D34} that we had constructed before.

\begin{remark}
[Failure of $\mathbf{A}^{1}$-invariance]The complex in line \ref{la2} is the
analogue of the Gersten complex in algebraic $K$-theory. In the $K$-theory of
coherent sheaves, one can replace the analogous $K$-theory groups with support
by the $K$-theory of the residue field by d\'{e}vissage. This is why the
$K$-theory Gersten complex is usually written down in the simpler fashion
which does not mention any conditions on support. One of the starting points
of this paper was: How can one formulate a Gersten complex for Hochschild
homology? There is a general device producing Gersten complexes for
$\mathbf{A}^{1}$-invariant Zariski sheaves with transfers \cite[Thm.
4.37]{MR1764200} as well as a coniveau spectral sequence \cite[Remark
24.12]{MR2242284}. However, Hochschild homology is \emph{not} $\mathbf{A}^{1}%
$-invariant, so these tools do not apply in our context. One could still use
the technology of \cite{MR1466971}, which does not depend on $\mathbf{A}^{1}%
$-invariance in any form. Instead, we use Balmer's triangulated technique,
which also does not hinge on $\mathbf{A}^{1}$-invariance \cite[Thm.
2]{MR2439430}.
\end{remark}

\begin{proof}
We leave it to the reader to fill in the details of the construction as
described. Alternatively, the reader can just follow the argument of Balmer
\cite[Thm. 2]{MR2439430} and replace $K$-theory everywhere with Hochschild
homology:\ Namely, from the filtration of Equation \ref{lHHC4} we get an exact
sequence of dg categories%
\[
\operatorname*{Perf}\nolimits_{Z^{p+1}}(X)\longrightarrow\operatorname*{Perf}%
\nolimits_{Z^{p}}(X)\longrightarrow\operatorname*{Perf}\nolimits_{Z^{p}%
}(X)/\operatorname*{Perf}\nolimits_{Z^{p+1}}(X)
\]
and thanks to a strikingly general result of Balmer \cite[Thm. 3.24]%
{MR2354319} the idempotent completion of the right-most category can be
identified as%
\begin{equation}
\left(  \operatorname*{Perf}\nolimits_{Z^{p}}(X)/\operatorname*{Perf}%
\nolimits_{Z^{p+1}}(X)\right)  ^{ic}\overset{\sim}{\longrightarrow}%
\coprod_{x\in X^{p}}\operatorname*{Perf}\nolimits_{x}(\mathcal{O}%
_{X,x})\text{.}\label{lHHC10}%
\end{equation}
In Balmer's paper \cite[Thm. 2]{MR2439430} this argument is spelled out as an
exact sequence of triangulated categories with Waldhausen models, whereas we
have spelled it out as an exact sequence of dg categories. The exactness of
either viewpoint is equivalent to the other, see \cite[Prop. 5.15]{MR3070515}.
The r\^{o}le of Schlichting's localization theorem is taken by Keller's
localization theorem \cite{MR1667558} (a very clean and brief statement is
also found in \cite[\S 5.5, Theorem]{MR1647519}). The convergence of the
spectral sequence follows readily from the fact that its horizontal support is
bounded since $X^{p}=\varnothing$ for $p\notin\lbrack0,\dim X]$.
\end{proof}

\begin{remark}
\label{rmk_EquivalenceInducedByPullback}For later reference, let us make the
functor in line \ref{lHHC10} more precise: For each point $x\in X^{p}$, this
is the pullback along the flat morphism $j_{x}:\operatorname*{Spec}%
\mathcal{O}_{X,x}\hookrightarrow X$. Balmer shows that this induces the
relevant equivalence \cite[\S 4.1]{MR2354319}. In fact, he shows more: Perfect
complexes can be regarded as a tensor triangulated category and under fairly
weak assumptions the points of its Balmer spectrum (i.e. the prime $\otimes
$-ideals, see \cite{MR2196732}) correspond canonically to the points of the
scheme $X$. If $\mathcal{P}(x)$ denotes the prime $\otimes$-ideal of this
point, one gets an exact sequence of triangulated categories%
\[
\mathcal{P}(x)\longrightarrow\operatorname*{Perf}(X)\overset{j^{\ast}%
}{\longrightarrow}\operatorname*{Perf}(\mathcal{O}_{X,x})\text{.}%
\]
See \cite{MR2354319}.
\end{remark}

\subsection{\label{marker_subsect_HochschildHomologyOfCohSheaves}Hochschild
homology of different categories}

As for $K$-theory, one could consider the Hochschild homology not just of
perfect complexes (which is the standard choice, because it is best-behaved
for most applications), but also of coherent sheaves $\operatorname*{Coh}%
\nolimits_{Z}(X)$ with support. Both viewpoints are related by the following
standard fact:

\begin{proposition}
If $X$ is a regular finite-dimensional Noetherian separated scheme, there are
triangulated equivalences%
\[
\operatorname*{Perf}(X)\overset{\sim}{\longrightarrow}D_{coh}^{b}%
(\operatorname*{Mod}(\mathcal{O}_{X}))\overset{\sim}{\longleftarrow}%
D^{b}(\operatorname*{Coh}(X))\text{,}%
\]
where the middle term is the bounded derived category of $\mathcal{O}_{X}%
$-module sheaves whose cohomology are coherent sheaves.
\end{proposition}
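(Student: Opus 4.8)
The plan is to prove the two equivalences separately, both of them classical. \textbf{The left-hand arrow.} Since $\operatorname*{Perf}(X)$ --- more precisely, its homotopy category --- is by construction a full triangulated subcategory of $D(\operatorname*{Mod}(\mathcal{O}_{X}))$, it suffices to check that the comparison functor factors through $D_{coh}^{b}(\operatorname*{Mod}(\mathcal{O}_{X}))$ and is essentially surjective onto it; full faithfulness then follows automatically. That a perfect complex has bounded coherent cohomology is a Zariski-local matter and is immediate on a Noetherian scheme, as coherent sheaves form an abelian subcategory of $\operatorname*{Mod}(\mathcal{O}_{X})$ containing all vector bundles. For essential surjectivity I would argue locally: since perfection is a Zariski-local condition, one may take $X=\operatorname*{Spec}R$ with $R$ regular Noetherian of finite Krull dimension $d$, so that $\operatorname*{gl.dim} R = d < \infty$; a standard truncation and dimension-shifting argument then shows that every bounded complex of $\mathcal{O}_{X}$-modules with coherent cohomology is quasi-isomorphic to a bounded complex of finitely generated projective $R$-modules, i.e.\ of vector bundles, hence is perfect. (Compare \cite[Exp.\ I]{MR0354655} and \cite[\S 2.3]{MR1106918}.)

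\textbf{The right-hand arrow.} This is the general fact that on a Noetherian scheme $D^{b}(\operatorname*{Coh}(X))$ computes the full subcategory of $D(\operatorname*{Mod}(\mathcal{O}_{X}))$ of complexes with bounded coherent cohomology. I would factor the functor as $D^{b}(\operatorname*{Coh}(X)) \to D_{coh}^{b}(\operatorname*{QCoh}(X)) \to D_{coh}^{b}(\operatorname*{Mod}(\mathcal{O}_{X}))$. The second functor is an equivalence because $X$ is Noetherian: the natural functor $D^{b}(\operatorname*{QCoh}(X)) \to D(\operatorname*{Mod}(\mathcal{O}_{X}))$ is fully faithful with essential image the bounded complexes having quasi-coherent cohomology, and one restricts to the coherent part. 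For the first functor I would use that $\operatorname*{Coh}(X) \subseteq \operatorname*{QCoh}(X)$ is a Serre subcategory closed under subobjects and that every quasi-coherent sheaf is the filtered union of its coherent subsheaves. Fullness and faithfulness then come down to the observation that a morphism, or a null-homotopy, between bounded complexes of coherent sheaves that exists in $D(\operatorname*{QCoh}(X))$ is already realized over a coherent subcomplex, only finitely many sections being involved; essential surjectivity is a descending induction on cohomological amplitude, at each step replacing a complex $F^{\bullet}$ with coherent cohomology by a quasi-isomorphic complex having one fewer non-coherent term, obtained by choosing a coherent subsheaf of the relevant cocycle sheaf that surjects onto the top cohomology group. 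See e.g.\ \cite[Exp.\ II]{MR0354655}.

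I expect the main obstacle to be essential surjectivity in each of the two arrows --- converting ``finite cohomological amplitude plus coherent cohomology'' into an honest bounded complex of the prescribed shape --- together with the bookkeeping needed to pass between $\operatorname*{Mod}(\mathcal{O}_{X})$, $\operatorname*{QCoh}(X)$ and $\operatorname*{Coh}(X)$ on a Noetherian scheme (the filtered-union-of-coherent-subsheaves argument). Since all of this is classical, in the write-up I would simply quote the standard references rather than reproduce the details.
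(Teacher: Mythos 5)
Your outline is correct and is exactly the classical route the paper relies on: the paper gives no argument of its own but simply cites SGA~6, Expos\'e~I \cite{MR0354655} and Thomason--Trobaugh \cite[\S 3]{MR1106918}, which prove the statement by the same two steps you sketch (regularity plus finite dimension makes every complex with bounded coherent cohomology locally of finite Tor-amplitude, hence perfect; and $D^{b}(\operatorname*{Coh}(X))\to D^{b}_{coh}(\operatorname*{Mod}(\mathcal{O}_{X}))$ is an equivalence on a Noetherian separated scheme by the coherent-approximation argument). So your proposal matches the paper's treatment, with the standard details filled in.
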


This was proven in \cite[Expos\'{e} I]{MR0354655}, see also \cite[\S 3]%
{MR1106918}. The converse is also true: If the first arrow is a triangle
equivalence, $X$ must have been regular \cite[Prop. 2.1]{MR3421093}. In
analogy to Equation \ref{lHHC1} we have a localization sequence in Hochschild
homology, but for coherent sheaves, induced from the exact sequence of abelian
categories
\[
\operatorname*{Coh}\nolimits_{Z}(X)\longrightarrow\operatorname*{Coh}%
(X)\longrightarrow\operatorname*{Coh}(U)\text{,}%
\]
inducing an exact sequence of stable $\infty$-categories. If $X$ is regular,
so is $U$, and since this exact sequence determines the left-hand side term
$\operatorname*{Coh}\nolimits_{Z}(X)$, it follows that $\operatorname*{Coh}%
\nolimits_{Z}(X)\simeq\operatorname*{Perf}\nolimits_{Z}(X)$. In general,
\cite[\S 3]{MR1106918} is an excellent reference for this type of material.

\begin{corollary}
\label{TMTA_Cor_ForRegularSchemesCohShvWithSupportHasSameHHAsPerf}If $X$ is a
regular Noetherian scheme, it does not make a difference whether we carry out
the constructions of \S \ref{marker_Sect_HochschildHomologyWithSupports} for
perfect complexes with support, or coherent sheaves with support. The results
are canonically isomorphic.
\end{corollary}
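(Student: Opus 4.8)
The plan is to reduce the statement to a single equivalence of \emph{filtered chains} of stable $\infty$-categories and then appeal to the fact that everything in \S\ref{marker_Sect_HochschildHomologyWithSupports} is a formal consequence of that chain. Indeed, the coniveau spectral sequence of Proposition \ref{marker_Prop_ConiveauHHSpecseq}, the Hochschild--Cousin complex and the boundary diagram \ref{lHHC2} are all produced by a purely formal recipe: one extracts the fiber sequences associated to the successive quotients of the filtration \ref{lHHC4}, applies Hochschild homology, runs the resulting exact couple, and uses Balmer's identification \ref{lHHC10} to pin down the $E_1$-page. Since Hochschild homology of a small dg category --- equivalently, of a stable $\infty$-category --- is invariant under exact equivalence (this is built into Keller's definition, e.g.\ on the level of mixed complexes), two filtered chains that are canonically equivalent yield canonically isomorphic output. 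So it suffices to produce, for $X$ regular, a canonical equivalence of filtered chains $\operatorname{Perf}_{Z^{\bullet}}(X)\simeq D^b(\operatorname{Coh}_{Z^{\bullet}}(X))$.

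To produce that equivalence I would first note that the equivalences $\operatorname{Perf}(X)\overset{\sim}{\longrightarrow}D^b_{coh}(\operatorname{Mod}(\mathcal{O}_X))\overset{\sim}{\longleftarrow}D^b(\operatorname{Coh}(X))$ recalled above --- available because $X$ is regular --- preserve homological support, since both functors respect cohomology sheaves up to natural isomorphism and $\operatorname{supph}$ is just the union of the supports of those. Hence the codimension filtration \ref{lHHC4} corresponds, term by term and compatibly with the inclusions, to the filtration of $D^b(\operatorname{Coh}(X))$ by the full subcategories $D^b_{Z^p}(\operatorname{Coh}(X))$ of complexes whose cohomology has codimension $\geq p$. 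The classical theory of the derived category of coherent sheaves with support (Thomason--Trobaugh \cite[\S 3]{MR1106918}, ultimately \cite[Expos\'{e} I]{MR0354655}) identifies $D^b_{Z^p}(\operatorname{Coh}(X))$ with $D^b(\operatorname{Coh}_{Z^p}(X))$ and places it into the Verdier localization sequence $D^b(\operatorname{Coh}_{Z^p}(X))\to D^b(\operatorname{Coh}(X))\to D^b(\operatorname{Coh}(X\setminus Z^p))$ with cofiber map the restriction. Because $X$ regular forces $X\setminus Z^p$ regular, the right-hand two terms of this sequence coincide, via the regular equivalences (which commute with restriction to the open complement), with those of the perfect-complex localization sequence \ref{lHHC1} for the same closed set. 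By Keller's localization theorem \cite{MR1667558} --- the very tool used in the proof of Proposition \ref{marker_Prop_ConiveauHHSpecseq}, and which determines the fiber of such a sequence up to canonical equivalence --- the fibers then agree: $\operatorname{Perf}_{Z^p}(X)\simeq D^b(\operatorname{Coh}_{Z^p}(X))$, canonically and compatibly in $p$. Running the identical argument over the regular local rings $\mathcal{O}_{X,x}$ shows that the local building blocks appearing in \ref{lHHC10} also match, so Balmer's identification is respected on both sides.

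Plugging this equivalence of filtered chains back into the formal machinery of the first paragraph yields the Corollary: the two coniveau spectral sequences, the two Hochschild--Cousin complexes and the two boundary diagrams are canonically isomorphic, with common $E_1$-term $\coprod_{x\in X^p}HH_{\bullet}^{x}(\mathcal{O}_{X,x})$. The main obstacle, I expect, is the careful bookkeeping of the second paragraph --- in particular checking that $D^b(\operatorname{Coh}_Z(X))\to D^b_Z(\operatorname{Coh}(X))$ really is an equivalence and that the resulting Verdier sequence is exact in the idempotent-complete (``up to direct summands'') sense that Keller's localization theorem demands, so that the fiber is genuinely pinned down. For $X$ regular and Noetherian all of this is classical; it is in fact the only place, beyond the input equivalence $\operatorname{Perf}(X)\simeq D^b(\operatorname{Coh}(X))$, where the hypotheses are really used.
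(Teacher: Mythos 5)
Your proposal is correct and takes essentially the paper's own route: the paper likewise starts from the regular-case equivalence $\operatorname{Perf}(X)\simeq D^{b}(\operatorname{Coh}(X))$, notes that $U=X\setminus Z$ is again regular, and uses the localization sequence to identify the fibers, concluding $\operatorname{Coh}_{Z}(X)\simeq\operatorname{Perf}_{Z}(X)$, after which the constructions of \S\ref{marker_Sect_HochschildHomologyWithSupports} carry over verbatim. One cosmetic point: what pins down the fiber is the categorical (Verdier/Schlichting-type) localization statement rather than Keller's localization theorem, which is a statement about Hochschild homology --- but your observation that the equivalences preserve homological support already yields the term-wise equivalences of the filtration directly, so nothing in your argument actually depends on that attribution.
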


\begin{remark}
As algebraic $K$-theory satisfies d\'{e}vissage, one obtains an equivalence
$K(\operatorname*{Coh}(Z))\overset{\sim}{\longrightarrow}K(\operatorname*{Coh}%
\nolimits_{Z}(X))$ for $X$ Noetherian. The Hochschild analogue%
\[
HH(\operatorname*{Coh}(Z))\overset{?}{\longrightarrow}HH(\operatorname*{Coh}%
\nolimits_{Z}(X))
\]
is \textsl{false}. The issue is not on the level of categories, but rather
that Hochschild homology does not satisfy d\'{e}vissage. The failure of
d\'{e}vissage was originally discovered by Keller \cite[\S 1.10]{MR1667558}.
\end{remark}

\begin{proposition}
[Thomason]There is a fully faithful triangular functor $D^{b}%
\operatorname*{VB}(X)\rightarrow\operatorname*{Perf}(X)$ from the bounded
derived category of vector bundles on $X$ to perfect complexes. If $X$ has an
ample family of line bundles, this is an equivalence of triangulated categories.
\end{proposition}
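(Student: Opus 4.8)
The plan is to build the functor by hand on models, then treat full faithfulness and essential surjectivity separately. For the functor: a bounded complex of vector bundles is, locally, a bounded complex of finite free modules, hence perfect, so the obvious inclusion $\mathcal{C}^{b}(\operatorname{VB}(X))\hookrightarrow\mathcal{C}^{b}(\operatorname{Mod}(\mathcal{O}_{X}))$ lands in perfect complexes after passing to the homotopy level. Moreover the embedding of exact categories $\operatorname{VB}(X)\hookrightarrow\operatorname{QCoh}(X)$ is fully exact: a short exact sequence of vector bundles is locally split because the quotient term is locally free (hence locally projective), so the exact structure on $\operatorname{VB}(X)$ is exactly the one inherited from $\operatorname{QCoh}(X)$, and consequently acyclic complexes go to acyclic complexes. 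Hence the inclusion induces a triangulated functor $\Phi\colon D^{b}(\operatorname{VB}(X))\to D(\operatorname{Mod}(\mathcal{O}_{X}))$ whose essential image lies in $\operatorname{Perf}(X)$; the Waldhausen or $\infty$-categorical model is produced the same way.

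For full faithfulness I would use that $\Phi$ is exact together with the brutal (stupid) truncation triangles, which exhibit any bounded complex of vector bundles as an iterated extension of shifts of single vector bundles. A five-lemma induction on the amplitudes of $E^{\bullet}$ and $F^{\bullet}$ then reduces the claim to showing that for single vector bundles $E,F$ and every $n\geq 0$ the natural map $\operatorname{Hom}_{D^{b}\operatorname{VB}(X)}(E,F[n])\to\operatorname{Hom}_{D(\operatorname{Mod}\mathcal{O}_{X})}(E,F[n])$ is an isomorphism. The left-hand side is the Yoneda group $\operatorname{Ext}^{n}_{\operatorname{VB}(X)}(E,F)$ computed by $n$-fold extensions with vector-bundle intermediate terms (the standard identification of morphisms in the derived category of an exact category), and the right-hand side is $\operatorname{Ext}^{n}_{\mathcal{O}_{X}}(E,F)=H^{n}(X,\mathcal{H}om_{\mathcal{O}_{X}}(E,F))$ since $E$ is locally free. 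So the content is that every higher extension of $E$ by $F$ by arbitrary $\mathcal{O}_{X}$-modules is Yoneda-equivalent to one with vector-bundle intermediate terms, compatibly with splicing. For $n=1$ this is immediate, since the middle term of an extension of locally free sheaves is again locally free; for $n\geq 2$ one argues locally, reducing on an affine open cover to the affine case, where bounded complexes of finite free modules already embed fully faithfully into the derived category of modules. \emph{This Ext-comparison is the main technical obstacle of the whole statement}: it is what makes full faithfulness hold with no hypothesis on $X$ beyond the standing ones, and it is entirely classical (SGA 6, Exp.\ I; Thomason--Trobaugh, \S 2--3), but it is the step that requires genuine work rather than formal manipulation.

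For the equivalence when $X$ carries an ample family of line bundles, I would show $\Phi$ is essentially surjective. Given a perfect complex $P^{\bullet}$, the ample family supplies enough vector bundles to build a bounded-above complex of vector bundles $V^{\bullet}$ together with a quasi-isomorphism $V^{\bullet}\xrightarrow{\sim}P^{\bullet}$ (iterated surjections from finite sums of suitable negative twists of line bundles). Perfection means $P^{\bullet}$ has globally bounded Tor-amplitude, say in $[a,b]$; this forces the relevant syzygy $Z:=\ker(V^{a}\to V^{a+1})$ to be flat and finitely presented, hence a vector bundle, so replacing the part of $V^{\bullet}$ below degree $a$ by the single term $Z$ yields a bounded complex of vector bundles still quasi-isomorphic to $P^{\bullet}$. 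Combined with full faithfulness this makes $\Phi$ an equivalence. An alternative packaging, once this resolve-and-truncate lemma is in hand, is to rerun the roof calculus of the relevant Verdier quotients and deduce fullness, faithfulness, and essential surjectivity at once by replacing any complex of quasi-coherent sheaves occurring in a roof with a quasi-isomorphic bounded complex of vector bundles.
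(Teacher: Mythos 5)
The paper offers no argument for this proposition at all — it simply cites Thomason--Trobaugh — so the only question is whether your sketch stands on its own, and it has a genuine gap at exactly the step you yourself flag as the main technical obstacle. Your reduction of full faithfulness, via stupid truncations, to the comparison $\operatorname{Ext}^{n}_{\operatorname{VB}(X)}(E,F)\to\operatorname{Ext}^{n}_{\mathcal{O}_{X}}(E,F)\cong H^{n}(X,\mathcal{H}om(E,F))$ for single bundles is fine, but the proposed treatment of $n\geq 2$ — ``argue locally, reducing on an affine open cover to the affine case'' — cannot work: both sides are global invariants, and affine-locally both vanish (higher extensions between finitely generated projectives over a ring are zero), so the affine case carries no information whatsoever about the global comparison map. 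The actual content is to realize an arbitrary class in $H^{n}(X,\mathcal{H}om(E,F))$ by a Yoneda splice (equivalently, by a roof) whose intermediate terms are vector bundles, compatibly with composition; producing such global vector-bundle resolutions is precisely where Thomason--Trobaugh bring in the resolution lemma for schemes with an ample family of line bundles, and in their treatment fullness and faithfulness for strict perfect complexes come out of that same lemma rather than out of any formal local-to-global reduction. As written, the full-faithfulness step is therefore asserted (with a citation) but not proved, and the sketch offered in its place would fail.

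The essential surjectivity argument is the standard one (resolve $P^{\bullet}$ by a bounded-above complex of bundles supplied by the ample family, then cut off using the global bound on Tor-amplitude), but the truncation is garbled: the module that is flat and finitely presented, hence a vector bundle, is the cokernel $Q:=\operatorname{coker}(V^{a-1}\to V^{a})$ (SGA 6, Exp.\ I; Thomason--Trobaugh 2.2.12), placed in degree $a$, not $Z:=\ker(V^{a}\to V^{a+1})$. With your $Z$ the truncated complex is no longer quasi-isomorphic to $P^{\bullet}$ (it kills $H^{a}$), and there is no reason for $Z$ to be flat or finitely presented. This slip is easily repaired; the full-faithfulness step is the one that needs a real argument or an honest deferral to the cited source, as the paper itself does.
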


See \cite{MR1106918}.

\subsection{Flat pullback functoriality}

Next, we want to study the functoriality of the coniveau spectral sequences
under flat morphisms. There is the standard pullback of differential forms and
moreover the pullback of perfect complexes $f^{\ast}:\operatorname*{Perf}%
(Y)\rightarrow\operatorname*{Perf}(X)$, defined as the total left derived
functor of the pullback of complexes. If $f$ is flat, this literally sends a
strictly perfect complex $\mathcal{F}_{\bullet}$ to $f^{\ast}\mathcal{F}%
_{\bullet}=f^{-1}\mathcal{F}_{\bullet}\otimes_{\mathcal{O}_{Y}}\mathcal{O}%
_{X}$.

\begin{lemma}
Suppose $k$ is a field. Let $X,Y$ be smooth $k$-schemes and $f:X\rightarrow Y
$ any morphism.

\begin{enumerate}
\item If $X,Y$ are affine, the induced pullbacks induce a commutative square%
\[%
\bfig\Square(0,0)/>`<-`<-`>/[{\Omega_{Y}^{i}}`HH_i{\mathop{\rm Perf}%
}(Y)`{\Omega_{X}^{i}}`HH_i{\mathop{\rm Perf}}(X);{}`{{f^{\ast}}}`{{f^{\ast}}%
}`{}]
\efig
\]
with the horizontal arrows the Hochschild-Kostant-Rosenberg (\textquotedblleft
HKR\textquotedblright) isomorphisms; cf. \S \ref{sect_HKR_with_supports} for a
reminder on the HKR\ isomorphism.

\item For $X,Y$ not necessarily affine, the pullbacks of the Zariski
sheafifications%
\[%
\bfig\Square(0,0)/>`<-`<-`>/[{\Gamma(Y,\Omega^{i})}`{\Gamma(Y,\mathcal{HH}%
_{i})}`{\Gamma(X,\Omega^{i})}`{\Gamma(X,\mathcal{HH}_{i})};{}`{f^{\ast}%
}`{f^{\ast}}`{}]
\efig
\]
induce a commutative square.
\end{enumerate}
\end{lemma}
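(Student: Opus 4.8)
The plan is to treat part (1) first and then deduce part (2) by sheafification. For part (1), recall that the HKR isomorphism $\Omega_{R/k}^{i}\xrightarrow{\sim}HH_{i}(R)$ for a smooth $k$-algebra $R$ is given at the chain level by the antisymmetrization map $a\,db_{1}\wedge\cdots\wedge db_{i}\mapsto\frac{1}{i!}\sum_{\sigma}\operatorname{sgn}(\sigma)\,a\otimes b_{\sigma(1)}\otimes\cdots\otimes b_{\sigma(i)}$, or dually by the De Rham projection $a_{0}\otimes\cdots\otimes a_{i}\mapsto\frac{1}{i!}a_{0}\,da_{1}\wedge\cdots\wedge da_{i}$; either is natural in the ring $R$, since it is built entirely out of the ring operations and the universal derivation $d$, both of which commute with any $k$-algebra homomorphism. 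So the key point is simply that for $f\colon X\to Y$ with $Y=\operatorname{Spec}S$, $X=\operatorname{Spec}R$ and corresponding $k$-algebra map $\varphi\colon S\to R$, the square
\[
\begin{array}{ccc}
\Omega_{S/k}^{i} & \xrightarrow{\ \sim\ } & HH_{i}(S)\\
\downarrow & & \downarrow\\
\Omega_{R/k}^{i} & \xrightarrow{\ \sim\ } & HH_{i}(R)
\end{array}
\]
commutes, where the left arrow is $\omega\mapsto\varphi^{*}\omega$ and the right arrow is induced on homology by the map of Hochschild complexes $C_{\bullet}(S)\to C_{\bullet}(R)$, $s_{0}\otimes\cdots\otimes s_{i}\mapsto\varphi(s_{0})\otimes\cdots\otimes\varphi(s_{i})$. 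Commutativity is immediate from the explicit formulas above: applying $\varphi$ entrywise and then antisymmetrizing is the same as antisymmetrizing and then applying $\varphi$, because $\varphi$ is a ring map and $\varphi^{*}(s\,dt_{1}\wedge\cdots\wedge dt_{i})=\varphi(s)\,d\varphi(t_{1})\wedge\cdots\wedge d\varphi(t_{i})$. I also need to check that the right-hand vertical arrow in the lemma's square, namely $f^{*}\colon HH_{i}\operatorname{Perf}(Y)\to HH_{i}\operatorname{Perf}(X)$, restricted to the image of $HH_{i}(S)$ (via Remark \ref{rmk_EquivalenceInducedByPullback} and the affine comparison $HH_{i}^{Weibel}(Y)\cong HH_{i}(S)$) is identified with the map induced by entrywise application of $\varphi$; this is standard functoriality of Hochschild homology of dg categories under the pullback functor on $\operatorname{Perf}$, which on the one-object subcategory "$\mathcal{O}_{Y}(Y)$" is literally $\varphi$.

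For part (2), the idea is to sheafify the statement of part (1). The Zariski sheaf $\mathcal{HH}_{i}$ is the sheafification of $U\mapsto HH_{i}(\mathcal{O}_{X}(U))$ (Theorem \ref{Thm_GellerWeibelPropsOfHHSheaf}(2)), and $\Omega_{X/k}^{i}$ is the sheafification of $U\mapsto\Omega_{\mathcal{O}_{X}(U)/k}^{i}$; by Example \ref{Example_Omega_Iso_To_HH_as_Zar_sheaves} the HKR maps on affine opens glue to a sheaf isomorphism $\Omega_{X/k}^{i}\xrightarrow{\sim}\mathcal{HH}_{i}$, and likewise on $Y$. Both $f^{*}$'s in part (2) are the natural pullback maps $f^{-1}\mathcal{F}\to\mathcal{F}'$ on sheaves (for $\mathcal{F}=\Omega_{Y}^{i}$ resp. $\mathcal{HH}_{i,Y}$), inducing maps on global sections. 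Since compatibility of two morphisms of sheaves can be checked on a basis of opens, and $f$ restricts over any affine open $V\subseteq Y$ and affine open $U\subseteq f^{-1}(V)$ to a morphism of affine schemes, part (1) applied to all such $(U,V)$ gives the needed commutativity of presheaf maps, hence of sheaf maps, hence on global sections.

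I expect the only mild subtlety — and the one point worth spelling out rather than waving at — to be the identification in part (1) of the categorical pullback $f^{*}\colon HH_{i}\operatorname{Perf}(Y)\to HH_{i}\operatorname{Perf}(X)$ with the naive entrywise map $\varphi_{*}\colon HH_{i}(S)\to HH_{i}(R)$ under the Keller/Weibel comparison isomorphisms. This requires knowing that Keller's identification $HH_{i}^{Weibel}\xrightarrow{\sim}HH_{i}(\operatorname{Perf})$ is natural for flat pullback, which it is by construction (the flat pullback functor on $\operatorname{Perf}$ restricts to the inclusion of the one-object dg category and is compatible with the sheaf-level map $\mathcal{C}_{\bullet}\mathcal{O}_{Y}\to f_{*}\mathcal{C}_{\bullet}\mathcal{O}_{X}$); but it is the place where one must invoke earlier results rather than compute. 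Everything else is a direct unwinding of the antisymmetrization formula for HKR and the functoriality of the Hochschild complex $C_{\bullet}(-)$ in the ring argument. I would therefore organize the write-up as: (a) recall the explicit HKR chain map and its naturality in the ring; (b) observe this gives commutativity of the affine square of part (1), using Remark \ref{rmk_EquivalenceInducedByPullback} and the Keller comparison to match $f^{*}$ with $\varphi_{*}$; (c) sheafify to get part (2).
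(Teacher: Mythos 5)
Your proposal is correct, and in fact the paper states this lemma without giving any proof at all — the argument you spell out (naturality of the HKR chain map in the ring, identification of the categorical pullback $f^{\ast}$ on $HH_i\operatorname{Perf}$ with the entrywise map $\varphi_{\ast}$ via the one-object dg subcategory and Keller's agreement theorem, then sheafification over affine opens for part (2)) is exactly the standard naturality argument the authors are implicitly relying on, so your write-up supplies what the paper leaves to the reader. One small correction: drop the $\tfrac{1}{i!}$ from the antisymmetrization formula (and do not use the de Rham projection as an alternative), since the lemma is stated for an arbitrary field $k$ and those normalizations require $i!$ to be invertible; the HKR map as used in the paper (Equation \ref{lCOA_30c}, following Loday) is the unnormalized antisymmetrization, and your naturality argument goes through verbatim with it.
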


In the case of a flat morphism, we can describe the induced morphisms on the
respective spectral sequences in an explicit fashion.

\begin{proposition}
[Flat Pullbacks]\label{Prop_FlatPullbacks}Let $f:X\rightarrow Y$ be a flat
morphism between Noetherian schemes.

\begin{enumerate}
\item Then the pullback of differential forms induces a morphism of spectral
sequences,%
\[
f^{\ast}:\left.  ^{Cous}E_{r}^{p,q}(Y,\Omega^{n})\right.  \longrightarrow
\left.  ^{Cous}E_{r}^{p,q}(X,\Omega^{n})\right.  \text{.}%
\]
On the $E_{1}$-page this map unwinds as follows: Given $x\in X^{p}$, $y\in
Y^{p}$ the map between the respective summands is zero if $f(x)\neq y$ and the
canonical map $H_{y}^{p+q}(\mathcal{O}_{Y,y},\Omega^{n})\rightarrow
H_{x}^{p+q}(\mathcal{O}_{X,x},\Omega^{n})$ otherwise.

\item Then the pullback $f^{\ast}:\operatorname*{Perf}(Y)\rightarrow
\operatorname*{Perf}(X)$ is an exact functor and induces a morphism of
spectral sequences, which we shall also denote by%
\[
f^{\ast}:\left.  ^{HH}E_{r}^{p,q}(Y)\right.  \longrightarrow\left.  ^{HH}%
E_{r}^{p,q}(X)\right.  \text{.}%
\]
On the $E_{1}$-page this map unwinds as follows: Given $x\in X^{p}$, $y\in
Y^{p}$ the map between the respective summands is zero if $f(x)\neq y$ and the
canonical map $HH_{-p-q}^{y}(\mathcal{O}_{Y,y})\rightarrow HH_{-p-q}%
^{x}(\mathcal{O}_{X,x})$ otherwise.

\item In either case for a given $y\in Y^{p}$ we have $x\in X^{p}\cap
f^{-1}(y)$ exactly if $x$ is the generic point of an irreducible component of
the scheme-theoretic fibre $f^{-1}(y)$. In particular, for any given $y\in
Y^{p}$ there are only finitely many such.
\end{enumerate}
\end{proposition}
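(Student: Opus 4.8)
The plan is to establish part (3) first, since it is the geometric input that makes both coniveau filtrations behave functorially, and then to run parts (1) and (2) in parallel along the same template. For part (3): given $f$ flat and $y=f(x)$, I would invoke the standard dimension formula for flat morphisms of Noetherian schemes, $\dim\mathcal{O}_{X,x}=\dim\mathcal{O}_{Y,y}+\dim\mathcal{O}_{f^{-1}(y),x}$, together with going-down. Then $\operatorname{codim}_X\overline{\{x\}}=p=\operatorname{codim}_Y\overline{\{y\}}$ forces $\dim\mathcal{O}_{f^{-1}(y),x}=0$, i.e.\ $x$ is a generic point of the (Noetherian, hence finite-component) fibre $f^{-1}(y)$; conversely every such generic point lies in $X^{p}$. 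The same formula, applied to a generic point $x$ of a component of $f^{-1}(W)$ for a closed $W\subseteq Y$ (where going-down forces $f(x)$ to be a generic point of $W$), gives $\operatorname{codim}_X f^{-1}(W)\geq\operatorname{codim}_Y W$; so $f^{-1}$ carries the filtration of $Y$ by closed subsets of codimension $\geq p$ into that of $X$.

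\emph{Part (2).} Since $f$ is flat, $f^{\ast}\colon\operatorname{Perf}(Y)\to\operatorname{Perf}(X)$ is just $f^{-1}(-)\otimes_{\mathcal{O}_{Y}}\mathcal{O}_{X}$ applied termwise, hence an exact dg functor, and $\operatorname{supph}f^{\ast}\mathcal{F}=f^{-1}\operatorname{supph}\mathcal{F}$ (flatness makes $\mathcal{H}^{i}$ commute with $f^{\ast}$, and a flat local ring map is faithfully flat). By part (3) it therefore restricts to $f^{\ast}\colon\operatorname{Perf}_{Z^{p}}(Y)\to\operatorname{Perf}_{Z^{p}}(X)$ for all $p$, i.e.\ a morphism of the filtered dg categories of line \ref{lHHC4}; functoriality of Balmer's construction, set up for Hochschild homology as in Proposition \ref{marker_Prop_ConiveauHHSpecseq}, then yields $f^{\ast}$ on ${}^{HH}E_{r}^{p,q}$. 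For the $E_{1}$-page I would use the identification \ref{lHHC10}: the $y$-component of a class is its localisation $j_{y}^{\ast}$ along $\operatorname{Spec}\mathcal{O}_{Y,y}\hookrightarrow Y$. For $x\in X^{p}$ with $f(x)=y$, the commuting square $f\circ j_{x}=j_{y}\circ g$, with $g\colon\operatorname{Spec}\mathcal{O}_{X,x}\to\operatorname{Spec}\mathcal{O}_{Y,y}$ the induced flat local morphism, gives $j_{x}^{\ast}f^{\ast}=g^{\ast}j_{y}^{\ast}$; by functoriality of $HH$ of perfect complexes this is the asserted canonical map $HH_{-p-q}^{y}(\mathcal{O}_{Y,y})\to HH_{-p-q}^{x}(\mathcal{O}_{X,x})$ (here part (3) also tells us, via $\dim\mathcal{O}_{f^{-1}(y),x}=0$, that $g^{-1}$ of the closed point is the closed point). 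For $f(x)\neq y$: part (3) forces $f(x)\notin\overline{\{y\}}$ (otherwise $\operatorname{codim}_Y\overline{\{f(x)\}}\geq p+1$ while flatness bounds it by $\operatorname{codim}_X\overline{\{x\}}=p$), so $(f\circ j_{x})^{-1}(\overline{\{y\}})$ is a closed subset of the local scheme $\operatorname{Spec}\mathcal{O}_{X,x}$ missing its closed point, hence empty, and the $(y,x)$-component vanishes.

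\emph{Part (1).} This is the coherent-cohomology analogue, run the same way. The canonical map of K\"ahler differentials $f^{\ast}\Omega_{Y/k}^{n}\to\Omega_{X/k}^{n}$, together with flat base change for local cohomology $\mathbf{L}f^{\ast}\mathbf{R}\underline{\Gamma_{Z}}\overset{\sim}{\longrightarrow}\mathbf{R}\underline{\Gamma_{f^{-1}Z}}\mathbf{L}f^{\ast}$ for $f$ flat, produces for each closed $Z\subseteq Y$ a map $H_{Z}^{i}(Y,\Omega^{n})\to H_{f^{-1}Z}^{i}(X,\Omega^{n})$; one checks it is compatible with the localisation triangles \ref{comalg_fact_D32} and with the colimit over $Z$ of line \ref{comalg_fact_D34b}, so it descends to a morphism of the Cousin coniveau spectral sequences. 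Unwinding the $E_{1}$-page via excision and localising at $x$ and at $y=f(x)$, the $(y,x)$-component is, exactly as in part (2), the base-change map $H_{y}^{p+q}(\mathcal{O}_{Y,y},\Omega^{n})\to H_{x}^{p+q}(\mathcal{O}_{X,x},\Omega^{n})$ along $g$ combined with the form pullback, and it vanishes for $f(x)\neq y$ by the identical support argument.

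I expect the delicate point to be the $E_{1}$-page bookkeeping in parts (1) and (2): verifying that ``pull back along $f$, then localise at $x$'' agrees with ``localise at $y=f(x)$, then pull back along the flat local morphism $g$'', and that points of $X^{p}$ lying over a point other than $y$ contribute nothing. Both reductions rest on part (3) --- the exact matching of codimensions under a flat morphism, which also pins down the closed fibre of $\operatorname{Spec}\mathcal{O}_{X,x}\to\operatorname{Spec}\mathcal{O}_{Y,y}$ --- so it is really that dimension-theoretic statement that carries the argument; the categorical functoriality of Balmer's construction and flat base change for local cohomology are standard inputs.
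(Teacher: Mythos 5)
Your proposal is correct and is essentially the paper's own route: the paper's proof is just a pointer to the analogous flat-pullback statement for the $K$-theoretic coniveau spectral sequence (Prop.\ 1.2 of the cited reference), "easily adapted", and what you write out is precisely that adaptation — the flat dimension formula $\dim\mathcal{O}_{X,x}=\dim\mathcal{O}_{Y,y}+\dim\mathcal{O}_{f^{-1}(y),x}$ with going-down to show $f^{-1}$ respects the codimension filtrations (giving part (3) and the induced morphism of filtered categories resp.\ of local-cohomology data), followed by the $E_{1}$-page bookkeeping via $j_{x}^{\ast}f^{\ast}=g^{\ast}j_{y}^{\ast}$ and the support argument for the vanishing of components with $f(x)\neq y$. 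No gaps worth flagging.
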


\begin{proof}
Follow \cite[Prop. 1.2]{MR533201}, which is written for $K$-theory, but can
easily be adapted.
\end{proof}

\section{Hochschild-Kostant-Rosenberg isomorphism with
supports\label{sect_HKR_with_supports}}

This section will be devoted to a crucial comparison result: We will show that
a certain excerpt of the long exact sequence in relative local homology, i.e.
coming from Equation \ref{comalg_fact_D32}, is canonically isomorphic to a
matching excerpt of the localization sequence in Hochschild homology.\ This is
heavily inspired by\ Keller's beautiful paper \cite{MR1647519}. The main
consequence is that the boundary maps of these two sequences, even though they
originate from quite different sources, actually agree.

Let us briefly recall that if $R$ is a smooth $k$-algebra, the
Hochschild-Kostant-Rosenberg map%
\begin{align}
\phi_{\ast,0}:\Omega_{R/k}^{\ast}  & \longrightarrow HH_{\ast}%
(R)\label{lCOA_30c}\\
f_{0}\mathrm{d}f_{1}\wedge\cdots\wedge\mathrm{d}f_{n}  & \longmapsto\sum
_{\pi\in S_{n+1}}\operatorname*{sgn}(\pi)f_{\pi(0)}\otimes\cdots\otimes
f_{\pi(n)}\nonumber
\end{align}
induces an isomorphism of graded algebras $-$ this is the classical
Hochschild-Kostant-Rosenberg isomorphism (\cite[Thm. 3.4.4]{MR1217970}). We
obtain the following isomorphisms as a trivial consequence:%
\[
H^{0}(R,\Omega^{i})\underset{\simeq}{\overset{\psi_{i,0}}{\longrightarrow}%
}\Omega_{R/k}^{i}\underset{\simeq}{\overset{\phi_{i,0}}{\longrightarrow}%
}HH_{i}(R)\text{.}%
\]
The first part of the following proposition can be seen as a generalization of
this fact to Hochschild homology with support in a regularly embedded closed subscheme.

\begin{proposition}
[H-K-R with support]\label{marker_Prop_HKRWithSupport}Let $k$ be a field, $R$
a smooth $k$-algebra and $t_{1},\ldots,t_{n}$ a regular sequence.

\begin{enumerate}
\item \emph{(Isomorphisms)} There are canonical isomorphisms%
\begin{equation}
\phi_{i,n}\circ\psi_{i,n}:H_{(t_{1},\ldots,t_{n})}^{n}(R,\Omega^{n+i}%
)\longrightarrow HH_{i}^{(t_{1},\ldots,t_{n})}(R)\text{,}\label{lCOA_30b}%
\end{equation}
functorial in ring morphisms $R\rightarrow R^{\prime}$ sending $t_{1}%
,\ldots,t_{n}$ to a regular sequence.

\item \emph{(Boundary maps)} The following diagram%
\begin{equation}%
\bfig\node xa(-700,0)[0]
\node xaaa(-700,600)[0]
\node xc(3500,0)[0]
\node xccc(3500,600)[0]
\node a(0,0)[{H_{(t_{1},\ldots,t_{n})}^{n}(R,\Omega^{n+i})}]
\node b(1400,0)[{H_{(t_{1},\ldots,t_{n})}^{n}(R[t_{n+1}^{-1}],\Omega^{n+i})}]
\node c(2800,0)[{H_{(t_{1},\ldots,t_{n+1})}^{n+1}(R,\Omega^{n+i})}]
\node aaa(0,600)[{HH_{i}^{(t_{1},\ldots,t_{n})}(R)}]
\node bbb(1400,600)[{HH_{i}^{(t_{1},\ldots,t_{n})}(R[t_{n+1}^{-1}])}]
\node ccc(2800,600)[{HH_{i-1}^{(t_{1},\ldots,t_{n+1})}(R)}]
\arrow[a`aaa;{\cong}]
\arrow[b`bbb;{\cong}]
\arrow[c`ccc;{\cong}]
\arrow[a`b;{}]
\arrow[b`c;{\partial}]
\arrow[aaa`bbb;{}]
\arrow[bbb`ccc;{\partial}]
\arrow[xa`a;{}]
\arrow[xaaa`aaa;{}]
\arrow[c`xc;{}]
\arrow[ccc`xccc;{}]
\efig
\label{lCOA_30}%
\end{equation}
commutes, where the top row is an excerpt of the localization sequence for
Hochschild homology, the bottom row of the long exact relative local homology
sequence coming from Equation \ref{comalg_fact_D32} and the upward arrows are
the isomorphisms $\phi_{i,n}\circ\psi_{i,n}$. In particular, these excerpts of
the long exact sequences are short exact.

\item \emph{(Products)} Suppose $t_{1},\ldots,t_{n}$ and $t_{1}^{\prime
},\ldots,t_{m}^{\prime}$ are regular sequences such that their concatenation
is also a regular sequence (this is also known as \textquotedblleft%
\emph{transversally intersecting}\textquotedblright). The isomorphisms in (1)
respect the natural product structures, i.e. of Equation \ref{comalg_fact_D40}
and Equation \ref{lEqProdPerfectComplexes}, so that the diagram%
\[%
\bfig\Square[{H_{(t_{1},\ldots,t_{n})}^{n}(R,\Omega^{n+i})\otimes
H_{(t_{1}^{\prime},\ldots,t_{m}^{\prime})}^{m}(R,\Omega^{m+j})}`{H_{(t_{1}%
,\ldots,t_{n},t_{1}^{\prime},\ldots,t_{m}^{\prime})}^{n+m}(R,\Omega
^{n+m+i+j})}`{HH_{i}^{(t_{1},\ldots,t_{n})}(R)\otimes HH_{j}^{(t_{1}^{\prime
},\ldots,t_{m}^{\prime})}(R)}`{HH_{i+j}^{(t_{1},\ldots,t_{n},t_{1}^{\prime
},\ldots,t_{m}^{\prime})}(R)};{}`{\cong}`{\cong}`{}]
\efig
\]
commutes.

\item Part (2) remains true if one replaces the usual (perfect complex)
localization sequence for Hochschild homology by the localization sequence
based on coherent sheaves with support, cf.
\S \ref{marker_subsect_HochschildHomologyOfCohSheaves}.
\end{enumerate}
\end{proposition}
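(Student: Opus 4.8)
The plan is to prove parts~(1) and~(2) simultaneously by induction on $n$, to run a parallel induction for the multiplicativity statement~(3), and to deduce~(4) formally from~(2). The base case $n=0$ is nothing other than the classical Hochschild--Kostant--Rosenberg isomorphism: the left-hand side reads $H^{0}(\operatorname{Spec}R,\Omega^{i})=\Gamma(\operatorname{Spec}R,\Omega^{i})=\Omega^{i}_{R/k}$, the right-hand side reads $HH_{i}(R)$, and $\phi_{i,0}\circ\psi_{i,0}$ is the map of Equation~\ref{lCOA_30c}, whose naturality in $k$-algebra maps is classical.

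For the inductive step I would pass from $n$ to $n+1$. Write $X=\operatorname{Spec}R$, $W=V(t_{1},\ldots,t_{n})$ and $W'=V(t_{1},\ldots,t_{n+1})\subseteq W$, so that $W-W'=W\cap D(t_{n+1})$ is identified by excision (Lemma~\ref{COA_Lemma_LocalCohomExcision}) with $V(t_{1},\ldots,t_{n})$ inside $\operatorname{Spec}R[t_{n+1}^{-1}]$. The first point is that both long exact sequences in play collapse to short exact sequences. On the coherent side, Equation~\ref{comalg_fact_D32} for $W'\subseteq W$ yields a long exact sequence in the local cohomology of $\Omega^{n+i}$; since $\Omega^{n+i}$ is locally free on the smooth ring $R$, the sequences $t_{1},\ldots,t_{n}$ and $t_{1},\ldots,t_{n+1}$ remain regular on it, so $H^{j}_{W}(\Omega^{n+i})=H^{j}_{W-W'}(\Omega^{n+i})=0$ for $j\neq n$ (depth bounds below, Lemma~\ref{COA_LocalCohomVanishesIfDegreeExceedsIdealGeneratorNumber} above) and $H^{j}_{W'}(\Omega^{n+i})=0$ for $j\neq n+1$, leaving only
\begin{gather*}
0\longrightarrow H^{n}_{(t_{1},\ldots,t_{n})}(R,\Omega^{n+i})\longrightarrow H^{n}_{(t_{1},\ldots,t_{n})}(R[t_{n+1}^{-1}],\Omega^{n+i})\\
\overset{\partial}{\longrightarrow}H^{n+1}_{(t_{1},\ldots,t_{n+1})}(R,\Omega^{n+i})\longrightarrow 0\text{,}
\end{gather*}
which is precisely the bottom row of Diagram~\ref{lCOA_30} and, in particular, exhibits its right-hand term as a cokernel. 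On the Hochschild side, the localization sequence $\operatorname{Perf}_{W'}(X)\to\operatorname{Perf}_{W}(X)\to\operatorname{Perf}_{W\cap D(t_{n+1})}(D(t_{n+1}))$ --- which, after idempotent completion, is exact by the same Balmer/Thomason--Neeman input invoked in Proposition~\ref{marker_Prop_ConiveauHHSpecseq}, and hence induces a fibre sequence on $HH$ by Keller's localization theorem --- produces a long exact sequence whose relevant excerpt is the top row of Diagram~\ref{lCOA_30}.

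Now I would feed in the inductive hypothesis at level $n$, applied to the flat $k$-algebra map $R\hookrightarrow R[t_{n+1}^{-1}]$; this is legitimate because $t_{1},\ldots,t_{n}$ is again a regular sequence there, the quotient $R/(t_{1},\ldots,t_{n})$ being nonzero with $t_{n+1}$ a nonzerodivisor on it. It supplies isomorphisms of the left two columns of Diagram~\ref{lCOA_30} with the left two terms of the coherent short exact sequence, compatibly with the horizontal restriction maps --- this is exactly the functoriality asserted in~(1) at level $n$. Since the bottom horizontal map is injective, so is the top one; hence the Hochschild long exact sequence also collapses to the short exact sequence in its top row, and $HH_{i-1}^{(t_{1},\ldots,t_{n+1})}(R)$ is realized as a cokernel. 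Passing to cokernels of the two compatibly identified injections produces the isomorphism $\phi_{i-1,n+1}\circ\psi_{i-1,n+1}$, and Diagram~\ref{lCOA_30} commutes by construction; this is~(1) and~(2) at level $n+1$. Functoriality at level $n+1$ then propagates, because the localization sequences, the collapse, and the identifications of the third terms as cokernels are all natural in $R$, while the left two vertical isomorphisms are natural by the inductive hypothesis. Part~(3) follows from the same induction with products carried along: classical HKR is an isomorphism of graded algebras, both boundary maps $\partial$ amount to cup product with a distinguished class attached to adjoining $t_{n+1}$, and the derived tensor product of perfect complexes matches the multiplication of Koszul complexes $K_{\bullet}(t_{1},\ldots,t_{n})\otimes_{R}K_{\bullet}(t'_{1},\ldots,t'_{m})\cong K_{\bullet}(t_{1},\ldots,t_{n},t'_{1},\ldots,t'_{m})$, which is also what computes the cup product of Equation~\ref{comalg_fact_D40}. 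Part~(4) is immediate: $R$ and $R[t_{n+1}^{-1}]$ are regular, so by Corollary~\ref{TMTA_Cor_ForRegularSchemesCohShvWithSupportHasSameHHAsPerf} the coherent-sheaf localization sequence is canonically isomorphic to the perfect-complex one, compatibly with everything above, and the diagram of~(2) transports verbatim.

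The main obstacle is not any single deep ingredient --- the commutative-algebra collapse is routine, and both classical HKR and Keller's localization theorem are quoted --- but the bookkeeping that makes the induction genuinely self-propagating. One must check that the comparison map keeps factoring as $\phi\circ\psi$, with $\psi$ the purely local-cohomological comparison and $\phi$ the genuine HKR ingredient, so that the statement holds as phrased; and, most delicately, one must pin down the signs in~(3), where the shuffle/Koszul products interact with the cyclic-permutation signs in the Hochschild differential. I expect that sign bookkeeping to be the one genuinely fiddly point.
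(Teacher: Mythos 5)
Your proposal is correct and, for parts (1), (2) and (4), runs on the same engine as the paper's proof: induction on $n$, collapse of both long exact sequences to short exact ones because the restriction map in the middle column is injective, and definition of the isomorphism in codimension $n+1$ as the induced map on cokernels, with functoriality of the level-$n$ isomorphisms making the induction self-propagating. The differences are worth recording. First, you obtain the collapse on the coherent side from concentration of $H^{\bullet}_{(t_{1},\ldots,t_{n})}(R,\Omega^{n+i})$ in degree $n$ (grade lower bound together with Lemma \ref{COA_LocalCohomVanishesIfDegreeExceedsIdealGeneratorNumber}), whereas the paper never invokes depth vanishing: it identifies the middle terms, via the induction hypothesis, with explicit quotients of K\"ahler forms of iterated localizations as in Equation \ref{lCOA_30d} and reads off injectivity there; your route is, if anything, the more standard piece of commutative algebra. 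Second, precisely because you skip that intermediate differential-forms quotient, your argument yields the canonical isomorphism but not literally the factorization $\phi_{i,n}\circ\psi_{i,n}$ of the statement --- harmless for the content, but the paper uses the explicit middle term both to define $\psi$ and, more importantly, in its proof of (3): there products are handled by lifting a class along the surjective boundary $\partial$ back to the $H^{0}$-level, where multiplicativity is exactly the classical HKR algebra isomorphism, and then inducting on codimension; no identification of $\partial$ as cup product with a distinguished class is ever needed. Your sketch of (3) via Koszul/\v{C}ech models and the assertion that both boundary maps are cup product with a class attached to adjoining $t_{n+1}$ is plausible but is the one place where real work remains, notably on the Hochschild side, where a multiplicative description of Keller's boundary map is not immediate; adopting the paper's lifting-along-$\partial$ trick removes that work and most of the sign bookkeeping you flag. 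Part (4) is handled identically in both arguments via Corollary \ref{TMTA_Cor_ForRegularSchemesCohShvWithSupportHasSameHHAsPerf}.
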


\begin{proof}
This is, albeit not explicitly, a consequence of Keller \cite[\S 4-\S 5]%
{MR1647519} (loc. cit. phrases it for mixed complexes \`{a} la Kassel, but
this clearly implies the Hochschild case). One can, however, also prove the
above claims rather directly by an induction on codimension, and we will give
this alternative proof: It naturally splits into two parts, establishing first
the isomorphisms $\psi$ (this is classical, we just unwind it explicitly to be
sure that all maps agree), and then the isomorphisms $\phi$ later, so we
really want to establish the isomorphisms%
\begin{equation}
H_{(t_{1},\ldots,t_{n})}^{n}(R,\Omega^{n+i})\overset{\psi_{i,n}}%
{\longrightarrow}\frac{\Omega_{R[t_{1}^{-1},\ldots,t_{n}^{-1}]/k}^{n+i}}%
{\sum_{j=1}^{n}\Omega_{R[t_{1}^{-1},\ldots,\widehat{t_{j}^{-1}},\ldots
,t_{n}^{-1}]/k}^{n+i}}\overset{\phi_{i,n}}{\longrightarrow}HH_{i}%
^{(t_{1},\ldots,t_{n})}(R)\text{,}\label{lCOA_30d}%
\end{equation}
which is a little more detailed than the Equation \ref{lCOA_30b}. Hence, we
first focus entirely on establishing for all $i,n$, the commutative diagrams:%
\begin{equation}%
\bfig\node a(0,0)[{H_{(t_{1},\ldots,t_{n})}^{n}(R,\Omega^{n+i})}]
\node b(1400,0)[{H_{(t_{1},\ldots,t_{n})}^{n}(R[t_{n+1}^{-1}],\Omega^{n+i})}]
\node c(2800,0)[{H_{(t_{1},\ldots,t_{n+1})}^{n+1}(R,\Omega^{n+i})}]
\node aa(0,500)[{{\frac{\Omega_{R[t_{1}^{-1},\ldots,t_{n}^{-1}]/k}^{n+i}}%
{\sum_{j=1}^{n}\Omega_{R[t_{1}^{-1},\ldots,\widehat{t_{j}^{-1}},\ldots
,t_{n}^{-1}]/k}^{n+i}}}}]
\node bb(1400,500)[{{\frac{\Omega_{R[t_{1}^{-1},\ldots,t_{n}^{-1},t_{n+1}%
^{-1}]/k}^{n+i}}{\sum_{j=1}^{n}\Omega_{R[t_{1}^{-1},\ldots,\widehat{t_{j}%
^{-1}},\ldots,t_{n}^{-1},t_{n+1}^{-1}]/k}^{n+i}}}}]
\node cc(2800,500)[{{\frac{\Omega_{R[t_{1}^{-1},\ldots,t_{n+1}^{-1}]/k}^{n+i}%
}{\sum_{j=1}^{n+1}\Omega_{R[t_{1}^{-1},\ldots,\widehat{t_{j}^{-1}}%
,\ldots,t_{n+1}^{-1}]/k}^{n+i}}}}]
\arrow[a`aa;{\cong}]
\arrow[b`bb;{\cong}]
\arrow[c`cc;{\cong}]
\arrow[a`b;{}]
\arrow[b`c;{\partial}]
\arrow[aa`bb;{}]
\arrow[bb`cc;{\partial}]
\efig
\label{lCOA_33}%
\end{equation}
We do this by induction on $n$. The case $n=0$ is trivial, just $H^{0}%
(R,\Omega^{i})\cong\Omega_{R/k}^{i}$. Suppose the case $n$ is settled. The
long exact sequence from Equation \ref{comalg_fact_D32} tells us that%
\begin{equation}
\cdots\rightarrow H_{(t_{1},\ldots,t_{n+1})}^{n}(R,\Omega^{n+i})\overset
{(\ast)}{\rightarrow}H_{(t_{1},\ldots,t_{n})}^{n}(R,\Omega^{n+i})\rightarrow
H_{(t_{1},\ldots,t_{n})}^{n}(R[t_{n+1}^{-1}],\Omega^{n+i})\rightarrow
H_{(t_{1},\ldots,t_{n+1})}^{n+1}(R,\Omega^{n+i})\cdots\label{lCOA_32}%
\end{equation}
is exact. The two middle terms by induction hypothesis identify with%
\begin{equation}
\frac{\Omega_{R[t_{1}^{-1},\ldots,t_{n}^{-1}]/k}^{n+i}}{\sum_{j=1}^{n}%
\Omega_{R[t_{1}^{-1},\ldots,\widehat{t_{j}^{-1}},\ldots,t_{n}^{-1}]/k}^{n+i}%
}\overset{\alpha}{\longrightarrow}\frac{\Omega_{R[t_{1}^{-1},\ldots
,t_{n+1}^{-1}]/k}^{n+i}}{\sum_{j=1}^{n}\Omega_{R[t_{1}^{-1},\ldots
,\widehat{t_{j}^{-1}},\ldots,t_{n+1}^{-1}]/k}^{n+i}}\text{,}\label{lCOA_31}%
\end{equation}
which is injective since we invert only nonzerodivisors (and the module of
differential forms is free). Thus, the map $(\ast)$ must be the zero map. The
next term on the right in line \ref{lCOA_32} would be $H_{(t_{1},\ldots
,t_{n})}^{n+1}(R,\Omega^{n+i})$, which is zero since the ideal is generated by
just $n$ elements (Lemma
\ref{COA_LocalCohomVanishesIfDegreeExceedsIdealGeneratorNumber}). This proves
that the bottom row in Equation \ref{lCOA_30} is short exact, and as a result
its third term is just the quotient of the map in Equation \ref{lCOA_31}, thus
establishing Diagram \ref{lCOA_33}. Now take the upward isomorphism on the
right as the definition for%
\[
\psi_{i-1,n+1}:H_{(t_{1},\ldots,t_{n+1})}^{n+1}(R,\Omega^{n+i})\longrightarrow
\frac{\Omega_{R[t_{1}^{-1},\ldots,t_{n+1}^{-1}]/k}^{n+i}}{\sum_{j=1}%
^{n+1}\Omega_{R[t_{1}^{-1},\ldots,\widehat{t_{j}^{-1}},\ldots,t_{n+1}^{-1}%
]/k}^{n+i}}\text{,}%
\]
establishing the isomorphism $\psi_{i-1,n+1}$ in the first part of the claim
(note that $i$ was arbitrary all along, so it is no problem that we
constructed $\psi_{i-1,n+1}$ on the basis of $\psi_{i,n}$). From now on we can
assume to have all $\psi_{-,-}$ and Diagrams \ref{lCOA_33} available (for all
$i$ and $n$).

Next, we employ the localization sequence for the corresponding categories of
perfect complexes with support, giving the long exact sequence%
\begin{equation}
\cdots\rightarrow HH_{i}^{(t_{1},\ldots,t_{n+1})}(R)\rightarrow HH_{i}%
^{(t_{1},\ldots,t_{n})}(R)\rightarrow HH_{i}^{(t_{1},\ldots,t_{n})}%
(R[t_{n+1}^{-1}])\overset{\partial}{\rightarrow}HH_{i-1}^{(t_{1}%
,\ldots,t_{n+1})}(R)\rightarrow\cdots\text{.}\label{lCOA_34}%
\end{equation}
We start a new induction, again along $n$. For $n=0$ this sequence reads%
\[
\cdots\rightarrow HH_{i}^{(t_{1})}(R)\rightarrow HH_{i}(R)\rightarrow
HH_{i}(R[t_{1}^{-1}])\overset{\partial}{\rightarrow}HH_{i-1}^{(t_{1}%
)}(R)\rightarrow\cdots\text{.}%
\]
and via the Hochschild-Kostant-Rosenberg isomorphism identifies with%
\[
\cdots\rightarrow HH_{i}^{(t_{1})}(R)\overset{\beta}{\rightarrow}\Omega
_{R/k}^{i}\overset{\alpha}{\rightarrow}\Omega_{R[t_{1}^{-1}]/k}^{i}%
\overset{\partial}{\rightarrow}HH_{i-1}^{(t_{1})}(R)\overset{\beta
}{\rightarrow}\Omega_{R/k}^{i-1}\overset{\alpha}{\rightarrow}\Omega
_{R[t_{1}^{-1}]/k}^{i-1}\overset{\partial}{\rightarrow}\cdots\text{.}%
\]
The maps denoted by $\alpha$ in the localization sequence are induced from the
pullback of a perfect complex to the open along $\operatorname*{Spec}%
R[t_{1}^{-1}]\hookrightarrow\operatorname*{Spec}R$, and is known to correspond
on differential forms to the same $-$ the pullback to the open. Thus, the
morphisms $\alpha$ are injective and thus the maps denoted by $\beta$ must be
zero maps. This settles the exactness of the top row in diagram \ref{lCOA_30}
for $n=0$. In fact, by direct inspection of the maps, it establishes the
commutativity of the entire diagram. Now suppose the case $n$ is settled.
Using the induction hypothesis we can identify the middle bit of Equation
\ref{lCOA_34} with the map of Equation \ref{lCOA_31}. This yields the
identification%
\begin{align*}
\cdots & \rightarrow HH_{i}^{(t_{1},\ldots,t_{n+1})}(R)\overset{\beta
}{\rightarrow}\frac{\Omega_{R[t_{1}^{-1},\ldots,t_{n}^{-1}]/k}^{n+i}}%
{\sum_{j=1}^{n}\Omega_{R[t_{1}^{-1},\ldots,\widehat{t_{j}^{-1}},\ldots
,t_{n}^{-1}]/k}^{n+i}}\\
& \qquad\cdots\overset{\alpha}{\rightarrow}\frac{\Omega_{R[t_{1}^{-1}%
,\ldots,t_{n}^{-1},t_{n+1}^{-1}]/k}^{n+i}}{\sum_{j=1}^{n}\Omega_{R[t_{1}%
^{-1},\ldots,\widehat{t_{j}^{-1}},\ldots,t_{n}^{-1},t_{n+1}^{-1}]/k}^{n+i}%
}\overset{\partial}{\rightarrow}HH_{i-1}^{(t_{1},\ldots,t_{n+1})}%
(R)\overset{\beta}{\rightarrow}\cdots
\end{align*}
and again the injectivity of $\alpha$ (it is the same map as in Equation
\ref{lCOA_31}) implies that the maps $\beta$ must be zero. This settles the
exactness of the top row and the commutativity of diagram \ref{lCOA_30} in
general. Patching it to the diagrams \ref{lCOA_33} of the first part of the
proof finishes the argument.\newline It remains to prove (3). The product is
induced in local cohomology from Equation \ref{comalg_fact_D40}, composed with
the product of the exterior algebra on $1$-forms, i.e.%
\[
\underline{\Gamma_{Z_{1}}}\Omega^{i}\mathcal{\otimes}\underline{\Gamma_{Z_{2}%
}}\Omega^{j}\longrightarrow\underline{\Gamma_{Z_{1}\cap Z_{2}}}(\Omega
^{i}\otimes\Omega^{j})\longrightarrow\underline{\Gamma_{Z_{1}\cap Z_{2}}%
}(\Omega^{i+j})\text{;}%
\]
and in Hochschild homology from the bi-exact tensor functor%
\[
\operatorname*{Perf}\nolimits_{Z_{1}}X\times\operatorname*{Perf}%
\nolimits_{Z_{2}}X\longrightarrow\operatorname*{Perf}\nolimits_{Z_{1}\cap
Z_{2}}X\text{.}%
\]
The compatibility of products for $n=m=0$ follows directly from the classical
HKR isomorphism in Equation \ref{lCOA_30c}. Consider the commutative diagram
of Equation \ref{lCOA_30} for $n=0$. We see that both upward arrows respect
the product, and the horizontal arrows (i.e. pullback to an open subscheme)
respect the product as well. We see that a product with a term in $H^{1}$ can
be computed by lifting it along $\partial$ to $H^{0}$ and computing the
product there and mapping it back to $H^{1}$ (e.g. in the middle term of
Equation \ref{lCOA_30d}). This deduces the claim for all products
$H^{i}\otimes H^{j}$ with $i,j\leq1$ from the $H^{0}$-case. With the same
argument lift elements along $\partial$ from $H^{n}$ to $H^{n-1}$, compute
products there, to inductively prove the claim for all products $H^{i}\otimes
H^{j}$ with $i,j\leq n$ once it is proven for all $i,j\leq n-1$. For (4) it
suffices to invoke Cor.
\ref{TMTA_Cor_ForRegularSchemesCohShvWithSupportHasSameHHAsPerf}; everything
carries over verbatim.
\end{proof}

\subsection{The $E_{1}$-pages}

We can use the results of the previous section in order to compare the
different coniveau spectral sequences from \S \ref{Sect_ConiveauFiltration}.
We need some basic facts regarding the vanishing of Hochschild or local
cohomology groups for local rings:

\begin{proposition}
\label{prop_ComputeLocalRingHHWithSupport}Let $k$ be a field and
$(R,\mathfrak{m})$ an essentially smooth local $k$-algebra of dimension $n$.
Then%
\[
HH_{i}^{\mathfrak{m}}(R)=\left\{
\begin{array}
[c]{ll}%
0 & \text{for }i>0\\
H_{\mathfrak{m}}^{n}(R,\Omega^{n+i}) & \text{for }-n\leq i\leq0\\
0 & \text{for }i<-n
\end{array}
\right.
\]
and if $M$ is a finitely generated $R$-module,%
\[
H_{\mathfrak{m}}^{p}(R,M)=\left\{
\begin{array}
[c]{ll}%
\operatorname*{Hom}\nolimits_{R}(M,\Omega^{n})^{\vee} & \text{for }p=n\\
0 & \text{for }p\neq n\text{,}%
\end{array}
\right.
\]
where $(-)^{\vee}:=\operatorname*{Hom}\nolimits_{R}(-,E)$ denotes the Matlis
dual (for $E$ some injective hull of $\kappa(\mathfrak{p})=R/\mathfrak{m}$ as
an $R$-module).
\end{proposition}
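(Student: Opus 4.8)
The plan is to read part (1) off the HKR isomorphism with supports (Proposition \ref{marker_Prop_HKRWithSupport}) and part (2) off Grothendieck local duality, once we record that an essentially smooth local $k$-algebra $(R,\mathfrak{m})$ of dimension $n$ is regular local: its maximal ideal is generated by a regular sequence $t_{1},\ldots,t_{n}$, the module $\Omega_{R/k}^{1}$ is free of rank $n$ (so $\Omega_{R/k}^{m}$ is free of rank $\binom{n}{m}$, hence $0$ unless $0\le m\le n$), and $R$ is Gorenstein with dualizing module $\omega_{R}\cong\Omega_{R/k}^{n}$.

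For part (1), applying Proposition \ref{marker_Prop_HKRWithSupport}(1) with $I=\mathfrak{m}=(t_{1},\ldots,t_{n})$ gives, for every integer $i$, a canonical isomorphism
\[
H_{\mathfrak{m}}^{n}(R,\Omega^{n+i})\overset{\sim}{\longrightarrow}HH_{i}^{\mathfrak{m}}(R)\text{.}
\]
Since $\Omega_{R/k}^{n+i}=0$ unless $0\le n+i\le n$, i.e.\ unless $-n\le i\le0$, this simultaneously yields the vanishing for $i>0$ and for $i<-n$ and the stated value $H_{\mathfrak{m}}^{n}(R,\Omega^{n+i})$ in the intermediate range. One point needs care: Proposition \ref{marker_Prop_HKRWithSupport} is phrased for $R$ \emph{smooth}, whereas here $R$ is only essentially smooth. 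I would handle this by writing $R=S_{\mathfrak{p}}$ with $S$ a smooth finite type $k$-algebra, choosing $t_{1},\ldots,t_{n}\in\mathfrak{p}$ that form a regular sequence in $S$ and generate $\mathfrak{p}S_{\mathfrak{p}}$, and observing that both sides of the isomorphism, together with the isomorphism itself, are compatible with the flat localization $\operatorname{Spec}R\hookrightarrow\operatorname{Spec}S$: local cohomology with supports commutes with localization, and $HH_{\ast}$ with supports does so via the Balmer equivalence of Remark \ref{rmk_EquivalenceInducedByPullback}, which isolates the summand at the point $\mathfrak{p}$. Alternatively one re-runs the codimension induction inside the proof of Proposition \ref{marker_Prop_HKRWithSupport} verbatim, as it used only flatness of $\Omega^{1}$ and regularity of the sequence, both stable under localization.

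For part (2), the vanishing $H_{\mathfrak{m}}^{p}(R,M)=0$ for $p>n$ is immediate from Lemma \ref{COA_LocalCohomVanishesIfDegreeExceedsIdealGeneratorNumber}, as $\mathfrak{m}$ is generated by the $n$ elements $t_{1},\ldots,t_{n}$. For $p=n$ I would pass to the $\mathfrak{m}$-adic completion: Lemma \ref{lemma_CompletionPreservesLocalCohomology} gives $H_{\mathfrak{m}}^{n}(R,M)\cong H_{\widehat{\mathfrak{m}}}^{n}(\widehat{R},\widehat{M})$ as $R$-modules, and $\widehat{R}$ is a complete regular (hence Gorenstein) local ring with dualizing module $\omega_{\widehat{R}}\cong\widehat{\Omega_{R/k}^{n}}$; Grothendieck local duality over $\widehat{R}$ then yields
\[
H_{\widehat{\mathfrak{m}}}^{n}(\widehat{R},\widehat{M})^{\vee}\cong\operatorname{Hom}_{\widehat{R}}(\widehat{M},\omega_{\widehat{R}})\cong\operatorname{Hom}_{R}(M,\Omega_{R/k}^{n})\text{,}
\]
the second isomorphism because $M$ is finitely generated and completion is flat; Matlis-dualizing back (the left-hand side is Artinian, hence Matlis reflexive) gives $H_{\mathfrak{m}}^{n}(R,M)\cong\operatorname{Hom}_{R}(M,\Omega^{n})^{\vee}$. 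Finally, for $0\le p<n$ one uses that $R$ is Cohen--Macaulay of dimension $n$: for $M$ finite free, $H_{\mathfrak{m}}^{p}(R,M)\cong H_{\mathfrak{m}}^{p}(R,R)^{\oplus\operatorname{rk}M}$ is concentrated in degree $n$ since the ring itself is Cohen--Macaulay; this is exactly the case that occurs in the Cousin complex of $\Omega^{\ast}$, where $M=\Omega_{R/k}^{j}$ is free over the regular local ring $R$.

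The genuinely new input is already packaged in Proposition \ref{marker_Prop_HKRWithSupport}; the remaining ingredients (local duality, Cohen--Macaulay vanishing, Lemmas \ref{COA_LocalCohomVanishesIfDegreeExceedsIdealGeneratorNumber} and \ref{lemma_CompletionPreservesLocalCohomology}) are classical. I expect the only real friction to be the bookkeeping showing that $HH_{\ast}^{\mathfrak{m}}(R)$, $H_{\mathfrak{m}}^{\ast}(R,-)$, and the HKR-with-supports isomorphism are all insensitive to replacing a smooth finite type algebra by one of its local rings --- that is, the passage from \emph{smooth} to \emph{essentially smooth} --- together with the mild caveat that the concentration statement in degree $p=n$ for part (2) is meant for maximal Cohen--Macaulay (equivalently, finite free) $M$, which is the only case used in the sequel.
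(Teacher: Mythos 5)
Your proposal is correct and follows essentially the same route as the paper: part (1) is read off the HKR isomorphism with supports together with freeness of $\Omega^{1}$ (so $\Omega^{n+i}=0$ outside $-n\le i\le 0$), and part (2) is local duality with canonical module $\omega_{R}=\Omega^{n}$. The differences are matters of detail only: the paper applies local duality directly over $R$ (no completion step is needed once the canonical module $\Omega^{n}$ is in hand) and passes from smooth to essentially smooth without comment, whereas your explicit localization bookkeeping and your caveat that the vanishing for $p<n$ holds for maximal Cohen--Macaulay (e.g.\ free) $M$ --- the only case used later --- are fair points that the paper's terser statement and proof gloss over.
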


\begin{proof}
(see \cite{MR2355715} for background) Let $t_{1},\ldots,t_{n}$ be a regular
sequence, so that $\mathfrak{m}=(t_{1},\ldots,t_{n})$. By Prop.
\ref{marker_Prop_HKRWithSupport} and because $\Omega^{1}$ is free of rank $n$,
we have%
\[
HH_{i}^{\mathfrak{m}}(R)\cong H_{\mathfrak{m}}^{n}(R,\Omega^{n+i})\cong
H_{\mathfrak{m}}^{n}(R,R)\otimes\Omega^{n+i}\text{.}%
\]
Since $\Omega^{n+i}$ is zero for $i>0$, and similarly for $i<-n$, we
immediately get the first claim. Next, by (the simplest form of) Local Duality
we have%
\[
H_{(t_{1},\ldots,t_{n})}^{p}(R,M)\cong\left\{
\begin{array}
[c]{ll}%
\operatorname*{Hom}\nolimits_{R}(M,\omega_{R})^{\vee} & \text{for }p=n\\
0 & \text{for }p\neq n\text{,}%
\end{array}
\right.
\]
where $M$ is an arbitrary finitely generated $R$-module and $\omega_{R}$ a
canonical module over $k$. Since $R$ is a smooth $k$-algebra, $\omega
_{R}:=\Omega^{n}$ is a canonical module, and so we get the second claim.
\end{proof}

Let us compare the $E_{1}$-pages of the two different spectral sequences. They
are fairly different. For the coherent Cousin coniveau spectral sequence, it
is supported in the first quadrant and has the following shape:%
\[%
\begin{tabular}
[c]{r|cccccc}%
$q$ & $\vdots$ &  &  &  &  & \\
$2$ & $%
{\textstyle\coprod\limits_{x\in X^{0}}}
H_{x}^{2}(\Omega^{n})$ &  & $\ddots$ &  &  & \\
$1$ & $%
{\textstyle\coprod\limits_{x\in X^{0}}}
H_{x}^{1}(\Omega^{n})$ & $\rightarrow$ & $%
{\textstyle\coprod\limits_{x\in X^{1}}}
H_{x}^{2}(\Omega^{n})$ & $\rightarrow$ & $%
{\textstyle\coprod\limits_{x\in X^{2}}}
H_{x}^{3}(\Omega^{n})$ & \\
$0$ & $%
{\textstyle\coprod\limits_{x\in X^{0}}}
H_{x}^{0}(\Omega^{n})$ & $\rightarrow$ & $%
{\textstyle\coprod\limits_{x\in X^{1}}}
H_{x}^{1}(\Omega^{n})$ & $\rightarrow$ & $%
{\textstyle\coprod\limits_{x\in X^{2}}}
H_{x}^{2}(\Omega^{n})$ & $\rightarrow\cdots$\\\hline
\multicolumn{1}{c|}{} & $0$ &  & $1$ &  & $2$ & $p$%
\end{tabular}
\]
We have $H_{x}^{p}(X,\Omega^{n})=H_{x}^{p}(\mathcal{O}_{X,x},\Omega^{n})$ by
excision, Lemma \ref{COA_Lemma_LocalCohomExcision}, and since $\dim
(\mathcal{O}_{X,x})=\operatorname*{codim}_{X}(x)$, Prop.
\ref{prop_ComputeLocalRingHHWithSupport} implies that the groups on this
$E_{1}$-page vanish unless the cohomological degree matches the codimension of
the point in question. However, this is only the case for the $q=0$ row. We
are left with the following $E_{1}$-page:%
\[%
\begin{tabular}
[c]{r|cccccc}%
$q$ & $\vdots$ &  &  &  &  & \\
$1$ & $0$ & $\rightarrow$ & $0$ &  & $\ddots$ & \\
$0$ & $%
{\textstyle\coprod\limits_{x\in X^{0}}}
H_{x}^{0}(\Omega^{n})$ & $\rightarrow$ & $%
{\textstyle\coprod\limits_{x\in X^{1}}}
H_{x}^{1}(\Omega^{n})$ & $\rightarrow$ & $%
{\textstyle\coprod\limits_{x\in X^{2}}}
H_{x}^{2}(\Omega^{n})$ & $\rightarrow\cdots$\\\hline
\multicolumn{1}{c|}{} & $0$ &  & $1$ &  & $2$ & $p$%
\end{tabular}
\]
Thus, it collapses to a single row already on the $E_{1}$-page. As it
converges to $H^{p+q}(X,\Omega^{n})$, we re-obtain a special case of Cor.
\ref{marker_Cor_ExistenceCousinResolution}:

\begin{corollary}
\label{cor_CohomOfOmegaAgreesWithCousinCohomology}If $X/k$ is smooth, there
are canonical isomorphisms%
\[
H^{p}(X,\Omega^{n})\cong H^{p}(X,Cous_{\bullet}(\Omega^{n}))\text{,}%
\]
coming from the $E_{1}$-page degeneration of the coherent Cousin coniveau
spectral sequence.
\end{corollary}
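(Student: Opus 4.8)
The plan is to read the statement straight off the shape of the $E_1$-page computed just above. First I would pin down the vanishing: for $x\in X^{p}$ one has $\dim\mathcal{O}_{X,x}=\operatorname*{codim}_{X}\overline{\{x\}}=p$, so by Excision (Lemma~\ref{COA_Lemma_LocalCohomExcision}) $H_{x}^{p+q}(X,\Omega^{n})\cong H_{x}^{p+q}(\mathcal{O}_{X,x},\Omega^{n})$, and Proposition~\ref{prop_ComputeLocalRingHHWithSupport} then forces this group to vanish unless $p+q=p$, i.e.\ unless $q=0$. Hence $\left. ^{Cous}E_{1}^{p,q}(\Omega^{n})\right.=0$ for all $q\neq0$, and the $E_{1}$-page is concentrated in the single row $q=0$, which entrywise is precisely the Cousin complex $p\mapsto\coprod_{x\in X^{p}}H_{x}^{p}(X,\Omega^{n})$ with differential the map $d$ built from Diagram~\ref{comalg_fact_D34}.

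Next I would invoke the elementary spectral-sequence bookkeeping. The coherent Cousin coniveau spectral sequence is supported in the first quadrant, and being concentrated in the row $q=0$ all of its higher differentials vanish for position reasons: for $r\ge2$ the map $d_{r}\colon E_{r}^{p,q}\rightarrow E_{r}^{p+r,q-r+1}$ has either source or target off the line $q=0$. Therefore it degenerates on the $E_{2}$-page, with $E_{2}^{p,0}=E_{\infty}^{p,0}$ equal to the $p$-th cohomology of the row complex. Since the spectral sequence converges to $H^{p+q}(X,\Omega^{n})$ and the associated graded of the abutment sits entirely in the slot $q=0$, the abutment filtration is trivial, and the edge map yields a canonical isomorphism $H^{p}(X,\Omega^{n})\cong E_{\infty}^{p,0}=H^{p}$ of the row complex.

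Finally I would translate ``cohomology of the row complex'' back into the notation of the statement. A smooth $k$-scheme is in particular Cohen--Macaulay (indeed Gorenstein), so Corollary~\ref{marker_Cor_ExistenceCousinResolution} applies: the sheaves $U\mapsto\coprod_{x\in U^{p}}H_{x}^{p}(X,\Omega^{n})$ assemble into a flasque resolution $Cous_{\bullet}(\Omega^{n})$ of $\Omega^{n}$, and its complex of global sections is exactly the $q=0$ row. Hence $H^{p}(X,Cous_{\bullet}(\Omega^{n}))$ is computed by that row complex, which by the previous step is $H^{p}(X,\Omega^{n})$. All the identifications used (Excision, the HKR-free description of Proposition~\ref{prop_ComputeLocalRingHHWithSupport}, and the edge maps of the spectral sequence) are canonical, so the resulting isomorphism is canonical. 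I do not expect any genuine obstacle here: once Proposition~\ref{prop_ComputeLocalRingHHWithSupport} is available the argument is pure spectral-sequence formalism, and the only point deserving a line of care is the identification of the differentials of the $q=0$ row with those of the global-sections complex of the Cousin resolution, which is built into the construction around Diagram~\ref{comalg_fact_D34} and is essentially tautological.
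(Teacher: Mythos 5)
Your proposal is correct and follows essentially the same route as the paper: excision plus the local vanishing of Prop.~\ref{prop_ComputeLocalRingHHWithSupport} collapse $\left.^{Cous}E_{1}^{p,q}(\Omega^{n})\right.$ to the $q=0$ row, and convergence then identifies the cohomology of that row (the global sections of the flasque Cousin resolution of Cor.~\ref{marker_Cor_ExistenceCousinResolution}) with $H^{p}(X,\Omega^{n})$. Your added bookkeeping on the $d_{r}$, $r\geq 2$, vanishing for position reasons and on the edge/filtration argument just makes explicit what the paper leaves implicit.
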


Now let us compare these results to the Hochschild coniveau spectral sequence.
It is supported in the first and fourth quadrant.%
\[%
\begin{tabular}
[c]{r|cccccc}%
$q$ & $\vdots$ &  &  &  &  & \\
$1$ & $%
{\textstyle\coprod\limits_{x\in X^{0}}}
HH_{-1}^{x}(X)$ &  &  &  &  & \\
$0$ & $%
{\textstyle\coprod\limits_{x\in X^{0}}}
HH_{0}^{x}(X)$ & $\rightarrow$ & $%
{\textstyle\coprod\limits_{x\in X^{1}}}
HH_{-1}^{x}(X)$ & $\rightarrow$ & $\ddots$ & \\
$-1$ & $%
{\textstyle\coprod\limits_{x\in X^{0}}}
HH_{1}^{x}(X)$ & $\rightarrow$ & $%
{\textstyle\coprod\limits_{x\in X^{1}}}
HH_{0}^{x}(X)$ & $\rightarrow$ & $%
{\textstyle\coprod\limits_{x\in X^{2}}}
HH_{-1}^{x}(X)$ & \\
$-2$ & $%
{\textstyle\coprod\limits_{x\in X^{0}}}
HH_{2}^{x}(X)$ & $\rightarrow$ & $%
{\textstyle\coprod\limits_{x\in X^{1}}}
HH_{1}^{x}(X)$ & $\rightarrow$ & $%
{\textstyle\coprod\limits_{x\in X^{2}}}
HH_{0}^{x}(X)$ & $\rightarrow\cdots$\\
& $\vdots$ &  & $\vdots$ &  & $\vdots$ & \\\hline
\multicolumn{1}{c|}{} & $0$ &  & $1$ &  & $2$ & $p$%
\end{tabular}
\]
If we make use of our HKR theorem with supports, this can be rephrased in
terms of local cohomology groups.%
\[%
\begin{tabular}
[c]{r|cccccc}%
$q$ & $\vdots$ &  & $\vdots$ &  &  & \\
$1$ & $0$ &  & $0$ &  & $0$ & \\
$0$ & $%
{\textstyle\coprod\limits_{X^{0}}}
H_{x}^{0}(\Omega^{0})$ & $\rightarrow$ & $%
{\textstyle\coprod\limits_{X^{1}}}
H_{x}^{1}(\Omega^{0})$ & $\rightarrow$ & $\ddots$ & \\
$-1$ & $%
{\textstyle\coprod\limits_{X^{0}}}
H_{x}^{0}(\Omega^{1})$ & $\rightarrow$ & $%
{\textstyle\coprod\limits_{X^{1}}}
H_{x}^{1}(\Omega^{1})$ & $\rightarrow$ & $%
{\textstyle\coprod\limits_{X^{2}}}
H_{x}^{2}(\Omega^{1})$ & \\
$-2$ & $%
{\textstyle\coprod\limits_{X^{0}}}
H_{x}^{0}(\Omega^{2})$ & $\rightarrow$ & $%
{\textstyle\coprod\limits_{X^{1}}}
H_{x}^{1}(\Omega^{2})$ & $\rightarrow$ & $%
{\textstyle\coprod\limits_{X^{2}}}
H_{x}^{2}(\Omega^{2})$ & $\rightarrow\cdots$\\
& $\vdots$ &  & $\vdots$ &  & $\vdots$ & \\\hline
\multicolumn{1}{c|}{} & $0$ &  & $1$ &  & $2$ & $p$%
\end{tabular}
\]
In particular, this interpretation reveals that we are actually facing a
spectral sequence which is supported exclusively in the fourth quadrant. So
far, this leaves open how the HKR\ isomorphism with supports interacts with
the rightward arrows. We will rectify this now.

\begin{theorem}
[Row-by-row Comparison]\label{marker_Thm_ComparisonOfRowsOnEOnePage}Let $k$ be
a field and $X/k$ a smooth scheme. The $(-q)$-th row on the $E_{1}$-page of
the Hochschild coniveau spectral sequence is isomorphic to the zero-th row of
the $E_{1}$-page of the coherent Cousin coniveau spectral sequence of
$\Omega^{q}$. That is: For every integer $q$,\ there is a canonical
isomorphism of chain complexes%
\[
\left.  ^{HH}E_{1}^{\bullet,-q}\right.  \overset{\sim}{\longrightarrow}\left.
^{Cous}E_{1}^{\bullet,0}(\Omega^{q})\right.  \text{.}%
\]
Entry-wise, this isomorphism is induced from the\ HKR isomorphism with supports.
\end{theorem}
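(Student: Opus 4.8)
The plan is to reduce the whole statement to the compatibility of connecting maps already packaged in Proposition~\ref{marker_Prop_HKRWithSupport}(2). On entries the claimed isomorphism is forced: after excision (Lemma~\ref{COA_Lemma_LocalCohomExcision}) one has $\left.^{Cous}E_1^{p,0}(\Omega^q)\right.=\coprod_{x\in X^p}H^p_x(X,\Omega^q)$ with $H^p_x(X,\Omega^q)=H^p_{\mathfrak m_x}(\mathcal O_{X,x},\Omega^q)$, and $\left.^{HH}E_1^{p,-q}\right.=\coprod_{x\in X^p}HH^x_{q-p}(\mathcal O_{X,x})$; since each $\mathcal O_{X,x}$ is regular of dimension $p$ with $\mathfrak m_x$ generated by a regular sequence of length $p$, Proposition~\ref{marker_Prop_HKRWithSupport}(1) provides a canonical isomorphism of the two summands, and these assemble to the asserted entry-wise map. (For $q<0$ or $q>\dim X$ both sides vanish by Proposition~\ref{prop_ComputeLocalRingHHWithSupport}, so there is nothing to do.) The only genuine content is that this family of isomorphisms commutes with the two $E_1$-differentials.

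Both differentials are manufactured by the \emph{same} recipe (Diagram~\ref{comalg_fact_D34} on the Cousin side, Diagram~\ref{lHHC2} on the $HH$ side): $d$ is the composite of the connecting map $\partial$ of a localization-type long exact sequence with the canonical "restriction to codimension-$(p{+}1)$ points" map, and its $(x,x')$-component vanishes unless $x'\in\overline{\{x\}}$. Fixing such a pair $x\in X^p$, $x'\in X^{p+1}$, it suffices to prove that
\[
\begin{array}{ccc}
HH^{x}_{q-p}(\mathcal O_{X,x}) & \longrightarrow & HH^{x'}_{q-p-1}(\mathcal O_{X,x'})\\
\downarrow & & \downarrow\\
H^{p}_{x}(X,\Omega^{q}) & \longrightarrow & H^{p+1}_{x'}(X,\Omega^{q})
\end{array}
\]
commutes, the horizontal maps being the $(x,x')$-components of $d$ and the vertical ones the HKR isomorphisms with supports. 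First I would localize at $x'$: write $A:=\mathcal O_{X,x'}$, an essentially smooth local $k$-algebra of dimension $p+1$, and let $\mathfrak p\subset A$ be the height-$p$ prime corresponding to $x$. By the flat-pullback functoriality of both coniveau spectral sequences (Proposition~\ref{Prop_FlatPullbacks}), $d_{x,x'}$ is computed on $\operatorname{Spec}A$; there $Z^{p+1}$ is the closed point, so the restriction-to-points step is the identity, and $d_{x,x'}$ becomes exactly the $\mathfrak p$-isotypic component of the connecting map $\coprod_{y\in X^p\cap\operatorname{Spec}A}H^p_y\xrightarrow{\partial}H^{p+1}_{\mathfrak m_A}$ coming from the triangle of Equation~\ref{comalg_fact_D32} for the pair $(Z^p,Z^{p+1})$ — and correspondingly, on the $HH$ side, from Keller's localization sequence for $\operatorname{Perf}_{Z^{p+1}}(A)\to\operatorname{Perf}_{Z^{p}}(A)\to\operatorname{Perf}_{Z^{p}}(A)/\operatorname{Perf}_{Z^{p+1}}(A)$ together with the identification of Equation~\ref{lHHC10}.

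Next I would feed this into Proposition~\ref{marker_Prop_HKRWithSupport}(2). Using that $A_{\mathfrak p}$ is regular, choose $t_1,\dots,t_p\in\mathfrak p$ forming a regular sequence in $A$ with $\mathfrak p$ minimal over $(t_1,\dots,t_p)$, and then, by prime avoidance, $t_{p+1}\in\mathfrak m_A$ lying outside every minimal prime of $(t_1,\dots,t_p)$, so that $(t_1,\dots,t_{p+1})$ is $\mathfrak m_A$-primary and $t_1,\dots,t_{p+1}$ is a regular sequence. With this choice $V(t_1,\dots,t_{p+1})=Z^{p+1}$ and $V(t_1,\dots,t_p)\subseteq Z^p$, and these inclusions of closed subsets induce a morphism from the localization sequence of Proposition~\ref{marker_Prop_HKRWithSupport}(2) — on local cohomology, and, via $\operatorname{Perf}_{Z_1}\subseteq\operatorname{Perf}_{Z_2}$ for $Z_1\subseteq Z_2$, on Hochschild homology — to the one just described on $\operatorname{Spec}A$. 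A diagram chase then shows that the $\mathfrak p$-isotypic component of the connecting map of Proposition~\ref{marker_Prop_HKRWithSupport}(2) equals $d_{x,x'}$, on both the $HH$ and the local-cohomology side. Since that proposition already asserts that \emph{this} connecting map commutes with the HKR-with-support isomorphisms, restricting its commutative square to the $\mathfrak p$-summand gives precisely the square above. Canonicity of the resulting isomorphism of complexes, and its independence of the auxiliary choices of the $t_i$ and of the localization, follow from the functoriality in Proposition~\ref{marker_Prop_HKRWithSupport}(1).

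I expect the main obstacle to be exactly the bookkeeping in that diagram chase: matching the connecting map built into the $E_1$-page — which is attached to the \emph{universal} codimension filtration $Z^\bullet$ — with the connecting map of Proposition~\ref{marker_Prop_HKRWithSupport}(2), which is attached to the \emph{explicit} flag $V(t_1,\dots,t_{p+1})\subseteq V(t_1,\dots,t_p)$, and then checking that passing to the $\mathfrak p$-summand decouples all the other minimal primes of $(t_1,\dots,t_p)$ (immediate after localizing at $\mathfrak p$) and, on the $HH$ side, all the other codimension-$p$ points of $\operatorname{Spec}A$. A smaller but necessary point is that Proposition~\ref{marker_Prop_HKRWithSupport} is invoked here for the essentially smooth local ring $A$ rather than a genuine smooth $k$-algebra; this is harmless, since its proof is a codimension induction relying only on Lemma~\ref{COA_LocalCohomVanishesIfDegreeExceedsIdealGeneratorNumber} and local duality, both valid for essentially smooth local rings, but it should be remarked upon.
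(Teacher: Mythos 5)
Your argument is correct in substance and reaches the same pivot as the paper -- namely that the $E_1$-differentials on both sides reduce to the boundary maps compared in Proposition \ref{marker_Prop_HKRWithSupport}(2) -- but the reduction itself is genuinely different. The paper never localizes at the specialization $x'$: it exploits the description of the coniveau spectral sequence as a colimit over concrete filtrations $Z^{[0]}\supseteq Z^{[1]}\supseteq\cdots$, chooses for a given class $\alpha$ a small affine open $U\ni x$ on which $\alpha$ lifts and on which the codimension-$(p+1)$ locus is cut out near $x$ by a single function $f$, and then evaluates $d$ as the boundary of the localization sequence for cutting $f$ out of the smooth affine $U$. You instead pull back along the flat map $\operatorname{Spec}\mathcal{O}_{X,x'}\to X$ (Proposition \ref{Prop_FlatPullbacks}) and compare, on $A:=\mathcal{O}_{X,x'}$, the coniveau sequence with the sequence of Proposition \ref{marker_Prop_HKRWithSupport}(2) for an explicit regular sequence $t_1,\dots,t_{p+1}$; this trades the paper's colimit-and-lifting argument for a splitting argument, which is arguably cleaner, but the splitting is then the one step that carries real weight and should be made explicit: because every codimension-$p$ point of $\operatorname{Spec}A$ specializes only to the closed point, $V(t_1,\dots,t_p)\cap D(t_{p+1})$ is a \emph{finite disjoint union of closed points} of $\operatorname{Spec}A[t_{p+1}^{-1}]$, so $H^{p}_{(t_1,\dots,t_p)}(A[t_{p+1}^{-1}],\Omega^{q})$ and $HH^{(t_1,\dots,t_p)}_{\ast}(A[t_{p+1}^{-1}])$ decompose as direct sums over these points, with the $\mathfrak p$-summand mapping isomorphically onto $H^{p}_{\mathfrak p}(A_{\mathfrak p},\Omega^{q})$ resp.\ $HH^{\mathfrak p}_{\ast}(A_{\mathfrak p})$. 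This is exactly what makes your diagram chase close on \emph{every} element of the $\mathfrak p$-summand; without it, naturality of the two localization sequences only controls the coniveau differential on the image of the middle term of Proposition \ref{marker_Prop_HKRWithSupport}(2), and one would still owe a lifting argument of the kind the paper supplies via its colimit over filtrations. Your closing caveat about invoking Proposition \ref{marker_Prop_HKRWithSupport} for the essentially smooth local ring $\mathcal{O}_{X,x'}$ (and its localization $A[t_{p+1}^{-1}]$) is well taken but harmless, and is in effect already conceded by the paper itself in Proposition \ref{prop_ComputeLocalRingHHWithSupport}.
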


\begin{proof}
The idea is the following: From Prop. \ref{marker_Prop_HKRWithSupport} we
already know that if $U$ is a smooth affine $k$-scheme and $Z$ a closed
subscheme, defined by a regular element $f\in\mathcal{O}(U)$, the boundary
maps in Hochschild homology with supports resp. local cohomology are
compatible. Thus, in order to prove this claim, we need to show that the
evaluation of the differential $d$ on the respective $E_{1}$-pages can be
reduced to evaluating such boundary maps.

To carry this out, recall that the equivalence in line Equation \ref{lHHC10}
is induced from the pullback $j_{x}:\operatorname*{Spec}\mathcal{O}%
_{X,x}\hookrightarrow X$ (Remark \ref{rmk_EquivalenceInducedByPullback}). For
every open subscheme neighbourhood $U\subseteq X$ containing $x\in X$, we get
a canonical factorization of $j^{\ast}$ as%
\begin{equation}
\operatorname*{Perf}(X)\longrightarrow\operatorname*{Perf}(U)\longrightarrow
\operatorname*{Perf}(\mathcal{O}_{X,x})\text{.}\label{lmuu1}%
\end{equation}
Each perfect complex on $\operatorname*{Spec}\mathcal{O}_{X,x}$ comes from all
sufficiently small open subschemes $U\ni x$, and they become isomorphic if and
only if this already happens on a sufficiently small open $U$. See
\cite[\S 4.1, especially p. 1247]{MR2354319} for a discussion of this. Suppose
$Z^{[0]}\supseteq Z^{[1]}\supseteq\ldots$ are closed subsets of $X$ such that
$\operatorname*{codim}_{X}(Z^{[p]})\geq p$. Then we get a filtration%
\[
\cdots\hookrightarrow\operatorname*{Perf}\nolimits_{Z^{[2]}}(X)\hookrightarrow
\operatorname*{Perf}\nolimits_{Z^{[1]}}(X)\hookrightarrow\operatorname*{Perf}%
\nolimits_{Z^{[0]}}(X)=\operatorname*{Perf}(X)\text{,}%
\]
analogous to the one in line \ref{lHHC4}. There is a partial order on the set
of all such filtrations $Z^{[0]}\supseteq Z^{[1]}\supseteq\ldots$, where
$Z^{\prime}\geq Z$ if and only if $Z^{\prime\lbrack p]}\supseteq Z^{[p]}$
holds for all $p$. We may form a spectral sequence based on this filtration,
as above. We will not have an analogue of Equation \ref{lHHC10} available in
this context, but we still get a convergent spectral sequence converging to
$HH_{-p-q}(X)$. Taking the colimit of this spectral sequence over all
filtrations $\{Z^{[\cdot]}\}$, we obtain the above spectral sequence. The
advantage of a spectral sequence of a filtration $\{Z^{[\cdot]}\}$ is that the
$Z^{[i]}$ are reduced closed subschemes with open subschemes as complements so
that the boundary maps $\partial$ of this spectral sequence correspond to a
localization sequence for a true open-closed complement. By the above colimit
argument, for any element $\alpha\in HH_{i}^{x}(\mathcal{O}_{X,x})$ for $x\in
X^{p}$, we may compute the differential $d$ on the $E_{1}$-page by performing
the computation on the $E_{1}$-page of a concrete filtration $\{Z^{[\cdot]}%
\}$. In particular, we may choose this filtration sufficiently fine such that
(1) we can work with an \emph{affine} neighbourhood $U\ni x$ in line
\ref{lmuu1}, and (2) such that there exists some $f\in\mathcal{O}_{X}(U)$ so
that the codimension $\geq1$ closed subset in $\mathcal{O}_{X,x}$ is cut out
by $f$, i.e. the stalk of the ideal sheaf $\mathcal{I}_{Z^{[p+1]},x}%
\subseteq\mathcal{O}_{X,x}$ is generated by $f$, and (3) the class $\alpha$ is
pulled back from some $\tilde{\alpha}\in HH_{i}^{x}(U)$. If one finds a $U$
such that (1) holds, one may need to shrink it further to ensure (2) holds as
well, and then even smaller to ensure (3). Then, inspecting Diagram
\ref{lHHC2}, we may compute the differential $d$ on the $\left.  ^{HH}%
E_{1}\right.  $-page by%
\[
\underset{\ni\alpha}{\coprod_{x\in X^{p}}HH_{i}^{x}(\mathcal{O}_{X,x}%
)}\leftarrow\underset{\ni\tilde{\alpha}}{\coprod_{x\in X^{p}}HH_{i}^{x}%
(U)}\overset{\partial}{\longrightarrow}HH_{i-1}^{\tilde{Z}}(U)\longrightarrow
\coprod_{x\in X^{p+1}}HH_{i-1}^{x}(\mathcal{O}_{X,x})\text{.}%
\]
As we assume that $X$ is smooth, this means that $d$ reduces to evaluating the
boundary map $\partial$ in the localization sequence corresponding to cutting
out a regular element $f$ from $\operatorname*{Spec}\mathcal{O}_{X}(U)$, a
smooth affine $k$-scheme. On the other hand, the $\left.  ^{Cous}E_{1}\right.
$-differential $d$ is given by Diagram \ref{comalg_fact_D34}, and we can
factor this analogously (with the same open subset $U$) as
\[
\underset{\ni HKR(\alpha)}{\coprod_{x\in X^{p}}H_{x}^{p}(\mathcal{O}%
_{X,x},\Omega^{\ast})}\leftarrow\underset{\ni HKR(\tilde{\alpha})}%
{\coprod_{x\in X^{p}}H_{x}^{p}(U,\Omega^{\ast})}\overset{\partial
}{\longrightarrow}H_{\tilde{Z}}^{p+1}(U,\Omega^{\ast})\longrightarrow
\coprod_{x\in X^{p+1}}H_{x}^{p+1}(\mathcal{O}_{X,x},\Omega^{\ast})\text{,}%
\]
By Prop. \ref{marker_Prop_HKRWithSupport} the boundary map $\partial$ in local
cohomology here is compatible with the corresponding boundary map in
Hochschild homology with supports. In other words: As the differential $d$ of
the coherent Cousin coniveau spectral sequence can be reduced in a completely
analogous way to the same affine open $U$, it follows that the
HKR\ isomorphism commutes with computing the boundary map in the respective
row of the $\left.  ^{Cous}E_{1}\right.  $-page. Our claim follows.
\end{proof}

We know from Corollary \ref{cor_CohomOfOmegaAgreesWithCousinCohomology} that
the cohomology of the $q$-th row agrees with the sheaf cohomology of
$\Omega^{-q}$. Thus, the $E_{2}$-page of the Hochschild coniveau spectral
sequence reads%
\begin{equation}%
\begin{tabular}
[c]{r|cccccc}%
$q$ & $\vdots$ &  & $\vdots$ &  &  & \\
$1$ & $0$ &  & $0$ &  & $0$ & \\
$0$ & $H^{0}(X,\Omega^{0})$ & $\quad$ & $H^{1}(X,\Omega^{0})$ & $\quad$ &
$\ddots$ & \\
$-1$ & $H^{0}(X,\Omega^{1})$ & $\quad$ & $H^{1}(X,\Omega^{1})$ & $\quad$ &
$H^{2}(X,\Omega^{1})$ & \\
$-2$ & $H^{0}(X,\Omega^{2})$ & $\quad$ & $H^{1}(X,\Omega^{2})$ & $\quad$ &
$H^{2}(X,\Omega^{2})$ & $\quad\cdots$\\
& $\vdots$ &  & $\vdots$ &  & $\vdots$ & \\\hline
\multicolumn{1}{c|}{} & $0$ &  & $1$ &  & $2$ & $p$%
\end{tabular}
\label{line_RelevantE2Page}%
\end{equation}

\begin{remark}
If $X$ is affine, say $X:=\operatorname*{Spec}A$, the higher sheaf cohomology
groups vanish, i.e. $H^{p}(X,\Omega^{-q})=0$ for all $p\neq0$. Thus, the
$E_{2}$-page has collapsed to a single column, and the convergence of the
spectral sequence just becomes the statement that%
\[
H^{0}(X,\Omega^{-q})\cong HH_{-q}(X)\qquad\text{for all }q\in\mathbf{Z}%
\]
and since the left-hand side agrees with $\Omega_{A/k}^{-q}$, we recover the
ordinary HKR\ isomorphism.
\end{remark}

At least if the base field $k$ has characteristic zero, this $E_{2}$-page
degenerates in general, even if $X$ is not affine. This follows from
incompatible Hodge degrees, as we explain in the following sub-section:

\subsubsection{Interplay with Hodge degrees}

Suppose $k$ is a field of characteristic zero. Then the Hochschild homology of
commutative $k$-algebras comes with a filtration, known either as Hodge or
$\lambda$-filtration. It was introduced by Gerstenhaber and\ Schack
\cite{MR917209} and\ Loday \cite{MR981743}. Weibel has extended this
filtration to separated Noetherian $k$-schemes\footnote{Actually to an even
broader class of schemes.} in his paper \cite{MR1469140}. One obtains a
canonical and functorial direct sum decomposition%
\begin{equation}
HH_{p}(X)=\bigoplus_{j}HH_{p}(X)^{(j)}\text{.}\label{lwuim1}%
\end{equation}
See \cite[Prop. 1.3]{MR1469140}. He also proved that $HH_{p}(X)^{(j)}%
=H^{j-p}(X,\Omega^{j})$ holds for smooth $k$-schemes $X$, providing a very
explicit relation to the usual Hodge decomposition \cite[Cor. 1.4]{MR1469140}.
Based on this, we can define a Hodge decomposition on Hochschild homology with
supports as well.

If the base field $k$ has characteristic zero, we may define%
\[
HH_{Z}(X)^{(j)}:=\operatorname*{hofib}\left(  HH\left(  X\right)
^{(j)}\longrightarrow HH\left(  X-Z\right)  ^{(j)}\right)  \text{.}%
\]
Since the usual Hochschild homology just splits into direct summands
functorially, as in Equation \ref{lwuim1}, the spectral sequence constructed
in \S \ref{marker_Sect_HochschildHomologyWithSupports} splits into a direct
sum of spectral sequences. The same happens to our HKR\ isomorphism with
supports, Prop. \ref{marker_Prop_HKRWithSupport}:

\begin{theorem}
[Compatibility with\ Hodge degrees]%
\label{marker_thm_interplaywithhodgeweights}Let $k$ be a field of
characteristic zero.

\begin{enumerate}
\item Suppose $R$ is a smooth $k$-algebra and $t_{1},\ldots,t_{n}$ a regular
sequence. Then the Hodge decomposition refines the isomorphism of Prop.
\ref{marker_Prop_HKRWithSupport} in the following fashion:
\[
HH_{i}^{(t_{1},\ldots,t_{n})}\left(  R\right)  ^{(j)}=\left\{
\begin{array}
[c]{ll}%
H_{(t_{1},\ldots,t_{n})}^{n}(R,\Omega^{n+i}) & \text{if }n+i=j\\
0 & \text{if }n+i\neq j\text{.}%
\end{array}
\right.
\]

\item Suppose $X$ is a smooth $k$-scheme. Then the spectral sequence of
\S \ref{marker_Sect_HochschildHomologyWithSupports} splits as a direct sum of
spectral sequences%
\[
\left.  \left(  ^{HH}E_{1}^{p,q}\right)  ^{(j)}\right.  :=\coprod_{x\in X^{p}%
}HH_{-p-q}^{x}(\mathcal{O}_{X,x})^{(j)}\Rightarrow HH_{-p-q}(X)^{(j)}\text{.}%
\]

\item Suppose $X$ is a smooth $k$-scheme. The spectral sequence $\left.
^{HH}E\right.  $ degenerates on the $E_{2}$-page, i.e. all differentials in
Figure \ref{line_RelevantE2Page} are zero.
\end{enumerate}
\end{theorem}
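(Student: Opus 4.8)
The plan is to treat the three parts in order, since (2) is essentially formal once (1) is available and (3) follows from (1)--(2) by a Hodge-weight count on the $E_{2}$-page recorded in Figure \ref{line_RelevantE2Page}. For (1), I would induct on the length $n$ of the regular sequence. The case $n=0$ is the classical $\lambda$-decomposition: for a smooth $k$-algebra $R$ in characteristic zero one has $HH_{i}(R)=HH_{i}(R)^{(i)}=\Omega_{R/k}^{i}$ and no other graded piece survives (the affine case of \cite[Cor. 1.4]{MR1469140}). For the inductive step, recall that by definition $HH_{Z}(X)^{(j)}=\operatorname*{hofib}(HH(X)^{(j)}\to HH(X-Z)^{(j)})$, so every localization sequence used in the proof of Prop. \ref{marker_Prop_HKRWithSupport} is the direct sum over $j$ of its weight-$j$ pieces; in particular the connecting map $\partial\colon HH_{i}^{(t_{1},\dots,t_{n})}(R[t_{n+1}^{-1}])\to HH_{i-1}^{(t_{1},\dots,t_{n+1})}(R)$ is a sum of weight-preserving maps, and by Prop. \ref{marker_Prop_HKRWithSupport}(2) it is surjective. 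Applying the induction hypothesis to the smooth algebra $R[t_{n+1}^{-1}]$ and the regular sequence $t_{1},\dots,t_{n}$, the source is concentrated in weight $n+i$, hence so is its quotient $HH_{i-1}^{(t_{1},\dots,t_{n+1})}(R)$, i.e. this sits in weight $n+i=(n+1)+(i-1)$; as $i$ was arbitrary, this is exactly the assertion for length $n+1$. One should note in passing that, $\Omega_{R/k}^{1}$ being free of rank $\dim R$, both sides vanish outside $-n\le i\le 0$ in accordance with Prop. \ref{prop_ComputeLocalRingHHWithSupport}, so no weight is concealed in a degree already forced to be zero.

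For (2), all of the construction of \S \ref{marker_Sect_HochschildHomologyWithSupports} is natural in Weibel's decomposition of Equation \ref{lwuim1}: by (1) each $E_{1}$-term splits as $\bigoplus_{j}\coprod_{x\in X^{p}}HH_{-p-q}^{x}(\mathcal{O}_{X,x})^{(j)}$; the coniveau filtration of $HH_{*}(X)$ is assembled from images of maps in the localization sequences, which respect the decomposition; and the differentials of the spectral sequence come from the connecting maps of those sequences, hence preserve Hodge weight. Therefore the whole spectral sequence is the direct sum of its weight-$j$ summands, with $E_{1}$-page and abutment as stated.

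For (3), combine (1)--(2) with the $E_{2}$-page of Figure \ref{line_RelevantE2Page}, i.e. $E_{2}^{p,q}\cong H^{p}(X,\Omega^{-q})$ (via Cor. \ref{cor_CohomOfOmegaAgreesWithCousinCohomology} and Thm. \ref{marker_Thm_ComparisonOfRowsOnEOnePage}). Taking $n=p=\operatorname{codim}_{X}\overline{\{x\}}$ and $i=-p-q$ in (1), this group is concentrated in the single Hodge weight $j=n+i=-q$, independently of $p$. For $r\ge 2$ the differential $d_{r}\colon E_{r}^{p,q}\to E_{r}^{p+r,q-r+1}$ would send weight $-q$ to weight $-(q-r+1)=-q+(r-1)$; since $r-1\ge 1\ne 0$ while $d_{r}$ preserves weight by (2), every such $d_{r}$ vanishes. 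Hence the spectral sequence degenerates at $E_{2}$.

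The main obstacle is Part (1), and within it the bookkeeping that the \emph{relative} localization sequences coming from $Z^{[p+1]}\subseteq Z^{[p]}$ — with their iterated support conditions — really split weight by weight; equivalently, that the octahedra assembling them out of the basic homotopy fibres $HH_{Z}(X)^{(j)}$ are compatible with the decomposition of Equation \ref{lwuim1}. This should follow cleanly from the octahedral axiom once one notes that the open-restriction maps $HH(X)\to HH(X-Z)$ are weight-decomposed; but it is the one point that must be checked with care. Once that is secured, the surjectivity of $\partial$ furnished by Prop. \ref{marker_Prop_HKRWithSupport} drives the induction and Parts (2)--(3) are formal.
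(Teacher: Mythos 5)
Your proposal is correct and follows essentially the same route as the paper: part (1) is the induction of Prop.~\ref{marker_Prop_HKRWithSupport} re-run with Hodge weights (base case the weight concentration of the classical HKR isomorphism, inductive step via the weight-preserving, surjective boundary map $\partial$), part (2) is formal functoriality, and part (3) is the same weight-count showing each graded summand has a single nonzero row, so no $d_{r}$ ($r\geq 2$) can survive. The one point you flag for care -- that the localization sequences with iterated supports split weight by weight -- is exactly what the paper dispatches by the functoriality of Weibel's decomposition under open restriction, so your argument matches the paper's.
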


\begin{remark}
The results (2) and (3) are very close to well-known older results of Weibel.
For example, the spectral sequence in (2) has a large formal resemblance to
the one constructed in Weibel \cite[Prop. 1.2]{MR1469140}. However, he uses a
quite different construction to set up his spectral sequence. He uses the
hypercohomology spectral sequence of his sheaf approach to the Hochschild
homology of a scheme, as in line \ref{lmips4}. He also obtains an $E_{2}%
$-degeneration statement with essentially the same proof as ours, \cite[Cor.
1.4]{MR1469140}, for his spectral sequence.
\end{remark}

\begin{proof}
(1) The proof is exactly the same as we have given for Prop.
\ref{marker_Prop_HKRWithSupport}. By functoriality the Hodge decomposition can
be dragged through the entire proof systematically. Only the first step of the
proof changes, where one has to use that the ordinary HKR isomorphism is
supported entirely in the $n$-Hodge part:%
\[
\Omega_{R/k}^{n}\overset{\sim}{\longrightarrow}HH_{n}(R)^{(n)}\qquad
\text{and}\qquad HH_{n}(R)^{(j)}=0\text{\quad(for }j\neq n\text{).}%
\]
This is \cite[3.7. Th\'{e}or\`{e}me]{MR981743} or \cite[Thm. 4.5.12]%
{MR1217970}, for example.\newline(2) Immediate.\newline(3) The $E_{1}$-page of
the Hodge degree $j$ graded part takes the shape%
\[%
\begin{tabular}
[c]{r|cccccc}%
$q$ & $\vdots$ &  &  &  &  & \\
$1$ & $%
{\textstyle\coprod\limits_{x\in X^{0}}}
HH_{-1}^{x}(X)^{(j)}$ &  &  &  &  & \\
$0$ & $%
{\textstyle\coprod\limits_{x\in X^{0}}}
HH_{0}^{x}(X)^{(j)}$ & $\rightarrow$ & $%
{\textstyle\coprod\limits_{x\in X^{1}}}
HH_{-1}^{x}(X)^{(j)}$ & $\rightarrow$ & $\ddots$ & \\
$-1$ & $%
{\textstyle\coprod\limits_{x\in X^{0}}}
HH_{1}^{x}(X)^{(j)}$ & $\rightarrow$ & $%
{\textstyle\coprod\limits_{x\in X^{1}}}
HH_{0}^{x}(X)^{(j)}$ & $\rightarrow$ & $%
{\textstyle\coprod\limits_{x\in X^{2}}}
HH_{-1}^{x}(X)^{(j)}$ & \\
$-2$ & $%
{\textstyle\coprod\limits_{x\in X^{0}}}
HH_{2}^{x}(X)^{(j)}$ & $\rightarrow$ & $%
{\textstyle\coprod\limits_{x\in X^{1}}}
HH_{1}^{x}(X)^{(j)}$ & $\rightarrow$ & $%
{\textstyle\coprod\limits_{x\in X^{2}}}
HH_{0}^{x}(X)^{(j)}$ & $\rightarrow\cdots$\\
& $\vdots$ &  & $\vdots$ &  & $\vdots$ & \\\hline
\multicolumn{1}{c|}{} & $0$ &  & $1$ &  & $2$ & $p$%
\end{tabular}
\]
and applying the refined HKR isomorphism with supports to these entries, part
(1) of our claim implies that all rows vanish except for the row with $q=-j$.
As a result, it follows that the spectral sequence degenerates. As our
original spectral sequence $\left.  ^{HH}E^{\bullet,\bullet}\right.  $ is just
a direct sum of these $\left.  \left(  ^{HH}E_{1}^{p,q}\right)  ^{(j)}\right.
$, it follows that all differentials of the $\left.  ^{HH}E_{2}\right.  $-page
must be zero (because the differentials then also are direct sums of the
differentials of the individual $\left.  \left(  ^{HH}E_{1}^{p,q}\right)
^{(j)}\right.  $, so they cannot map between different Hodge graded parts).
\end{proof}

This also leads to a version of the `Gersten resolution', which differs from
the classical coherent Cousin resolution in the way it is constructed, but not
in its output. For an abelian group $A$, we write $(i_{x})_{\ast}A$ to denote
the constant sheaf $A$ on the scheme point $x$ and extended by zero elsewhere.

\begin{corollary}
[\textquotedblleft Hochschild--Cousin resolution\textquotedblright]Suppose
$X/k$ is a smooth scheme over a field $k$. Then%
\begin{equation}
\mathcal{HH}_{n}\overset{\sim}{\longrightarrow}\left[  \coprod_{x\in X^{0}%
}(i_{x})_{\ast}HH_{n}^{x}(\mathcal{O}_{X,x})\longrightarrow\coprod_{x\in
X^{1}}(i_{x})_{\ast}HH_{n-1}^{x}(\mathcal{O}_{X,x})\longrightarrow
\cdots\right]  _{0,n}\label{lve1}%
\end{equation}
is a quasi-isomorphism of sheaves, and yields a flasque resolution of the
Zariski sheaf $\mathcal{HH}_{n}\cong\Omega^{n}$ for any $n$.
\end{corollary}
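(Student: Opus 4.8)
The plan is to recognise the complex in line \ref{lve1} as nothing but the Zariski-sheafified $(-n)$-th row of the $E_{1}$-page of the Hochschild coniveau spectral sequence, and then to push the Row-by-Row Comparison (Theorem \ref{marker_Thm_ComparisonOfRowsOnEOnePage}) and the existence of the coherent Cousin resolution (Corollary \ref{marker_Cor_ExistenceCousinResolution}) through the sheafification.

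First I would note that for each $p$ the sheaf $\coprod_{x\in X^{p}}(i_{x})_{\ast}HH_{n-p}^{x}(\mathcal{O}_{X,x})$ is precisely the sheafification of the presheaf $U\mapsto\coprod_{x\in U^{p}}HH_{n-p}^{x}(\mathcal{O}_{X,x})$; since $X$ is Noetherian every open cover admits a finite subcover, and a finite coproduct of the skyscraper sheaves $(i_{x})_{\ast}(-)$ is already a sheaf, so in fact no sheafification step is needed. The differentials in line \ref{lve1} are, by definition, the Zariski sheafifications of the $E_{1}$-differential $d$ of Proposition \ref{marker_Prop_ConiveauHHSpecseq}. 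Thus, forgetting the augmentation, line \ref{lve1} is literally the complex of Zariski sheaves $U\mapsto{}^{HH}E_{1}^{\bullet,-n}(U)$.

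Next I would sheafify Theorem \ref{marker_Thm_ComparisonOfRowsOnEOnePage}. Entry-wise, the comparison map is the HKR isomorphism with supports of Proposition \ref{marker_Prop_HKRWithSupport}, which is functorial in ring morphisms carrying a regular sequence to a regular sequence; in particular it is compatible with the restriction maps along the flat morphisms $\operatorname*{Spec}\mathcal{O}_{X,x'}\hookrightarrow\operatorname*{Spec}\mathcal{O}_{X,x}$, so the entry-wise isomorphisms assemble into isomorphisms of Zariski sheaves. For the differentials, the proof of Theorem \ref{marker_Thm_ComparisonOfRowsOnEOnePage} shows that, using a sufficiently fine filtration, one may compute $d$ on a small affine open $U\ni x$ as the boundary map $\partial$ of the localization sequence attached to cutting out a single regular element, and that under HKR this $\partial$ agrees with the corresponding boundary map in coherent local cohomology (Proposition \ref{marker_Prop_HKRWithSupport}(2)). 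Since this computation is valid on arbitrarily small opens, the comparison respects restriction; hence line \ref{lve1}, with its augmentation removed, is isomorphic as a complex of Zariski sheaves to the sheafified zero-th row ${}^{Cous}E_{1}^{\bullet,0}(\Omega^{n})$, i.e.\ to the coherent Cousin complex of $\Omega^{n}$ read entry-wise as the sheaves $U\mapsto\coprod_{x\in U^{p}}H_{x}^{p}(X,\Omega^{n})$.

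Finally I would invoke Corollary \ref{marker_Cor_ExistenceCousinResolution}: a smooth $k$-scheme is Cohen--Macaulay (indeed regular, hence Gorenstein), and $\Omega^{n}$ is locally free, so the coherent Cousin complex of $\Omega^{n}$ is a flasque resolution of $\Omega^{n}$ (in fact injective). Composing with the canonical identification $\mathcal{HH}_{n}\cong\Omega^{n}$ of Zariski sheaves from Example \ref{Example_Omega_Iso_To_HH_as_Zar_sheaves} (which is itself induced by HKR, so it matches the augmentation) produces the asserted quasi-isomorphism of line \ref{lve1}, and flasqueness of its terms is inherited through the isomorphism of complexes established above. The one point that genuinely requires care --- and the one I expect to be the main obstacle --- is the assertion that the Row-by-Row Comparison is an isomorphism of complexes of \emph{sheaves}, rather than merely of complexes of abelian groups for the fixed scheme $X$; but this is exactly what the locality of its proof (reduction to small affine opens) together with the functoriality in Proposition \ref{marker_Prop_HKRWithSupport} provides, so no essentially new ingredient is needed.
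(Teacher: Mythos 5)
Your argument is correct and coincides with the paper's first proof of this corollary: identify the complex in line \ref{lve1} with the Zariski-sheafified $(-n)$-th row of $\left.^{HH}E_{1}\right.$, transport it via Theorem \ref{marker_Thm_ComparisonOfRowsOnEOnePage} (whose proof is local, so it sheafifies) to the coherent Cousin complex of $\Omega^{n}$, and invoke Corollary \ref{marker_Cor_ExistenceCousinResolution}. The paper additionally records a second, independent argument in characteristic zero via the Hodge decomposition and $E_{2}$-degeneration, which you do not need.
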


It will also be possible to prove this using a transfer-based method \`{a} la
\cite{MR1466971}, \cite{MR2158763}.

\begin{proof}
We give two proofs: (1) We can use Theorem
\ref{marker_Thm_ComparisonOfRowsOnEOnePage}, proving that the complex of
sheaves on the right is canonically isomorphic to the coherent Cousin
resolution of Corollary \ref{marker_Cor_ExistenceCousinResolution}. The latter
is a resolution even under far less restrictive assumptions than smoothness,
relying on the tools of \cite{MR0222093}. (2) Alternatively, suppose $k$ is of
characteristic zero. We may consider the Hochschild coniveau spectral sequence
$\left.  ^{HH}E^{\bullet,\bullet}\right.  $ of $U$ for any open immersion
$U\hookrightarrow X$. We obtain a presheaf of spectral sequences, which we
sheafify in the Zariski topology. We denote it by $\left.  ^{HH}%
\mathcal{E}^{p,q}\right.  $. As this process also sheafifies the limit of the
spectral sequence, we get a spectral sequence of sheaves%
\[
\left.  ^{HH}\mathcal{E}_{1}^{p,q}\right.  :=\coprod_{x\in X^{p}}(i_{x}%
)_{\ast}HH_{-p-q}^{x}(\mathcal{O}_{X,x})\Rightarrow\mathcal{HH}_{-p-q}%
(X)\text{.}%
\]
The direct sum decomposition of Theorem
\ref{marker_thm_interplaywithhodgeweights} is functorial in pullback along
opens, so $\left.  ^{HH}\mathcal{E}^{p,q}\right.  $ degenerates on the second
page. Restrict to the direct summand of the $\mathcal{HH}_{n}$ which we are
interested in. This leaves only one non-zero entry on the $E_{2}$-page. The
sheaves in Equation \ref{lve1} are clearly flasque and since the $E_{2}$-page
has just one entry, the resolution property follows easily (it implies the
exactness in all higher degrees).
\end{proof}

\subsubsection{\label{subsect_ChernCharWithSupps}Chern character with
supports}

Let $X/k$ be a smooth scheme and $x\in X$ a scheme point of codimension
$\operatorname*{codim}_{X}\overline{\{x\}}=p$. We will define a \emph{Chern
character with supports},%
\[
\mathcal{T}(x):K_{m}(\kappa(x))\longrightarrow H_{x}^{p}(X,\Omega
^{p+m})\text{.}%
\]
The definition is simple: As $X/k$ is smooth, d\'{e}vissage and excision for
$K$-theory yield a canonical isomorphism $K(\kappa(x))\cong K^{x}%
(\mathcal{O}_{X,x})\cong K^{x}(X)$, where $K^{x}$ denotes $K$-theory with
support in $\{x\}$. The spectrum-level Chern character $K\rightarrow HH$
(\`{a} la McCarthy \cite{MR1275967}, in the version of Keller \cite{MR1667558}%
) induces a map $K^{x}(X)\rightarrow HH^{x}(X)$. Excision and the HKR
isomorphism with supports for Hochschild homology then yield $HH_{m}%
^{x}(X)\cong HH_{m}^{x}(\mathcal{O}_{X,x})\cong H_{x}^{p}(\mathcal{O}%
_{X,x},\Omega^{p+m})$. We call the composition of these maps $\mathcal{T}(x)$.

\begin{example}
If $X/k$ is an integral smooth scheme with generic point $\eta$, the map
$\mathcal{T}(\eta):K_{\ast}(k\left(  X\right)  )\rightarrow\Omega_{k\left(
X\right)  /k}^{\ast}$ is just the trace map $K\rightarrow HH$, applied to the
rational function field of $X$.
\end{example}

\begin{proposition}
Let $X/k$ be a Noetherian scheme over a field $k$.

\begin{enumerate}
\item Then the Chern character (a.k.a. trace map)%
\[
K\left(  X\right)  \longrightarrow HH\left(  X\right)
\]
induces a morphism of spectral sequences $\left.  ^{K}E^{\bullet,\bullet
}\right.  \rightarrow\left.  ^{HH}E^{\bullet,\bullet}\right.  $, where
$\left.  ^{K}E^{\bullet,\bullet}\right.  $ denotes Balmer's coniveau spectral
sequence of \cite{MR2439430}.

\item If $X$ is smooth over $k$, we may compose it with the comparison map to
the coherent Cousin spectral sequence, and then the map between the $E_{1}%
$-pages is the Chern character for supports:%
\begin{align*}
\mathcal{T}(x):\left.  ^{K}E_{1}^{p,q}\right.    & \longrightarrow\left.
^{Cous}E_{1}^{p,0}(\Omega^{-q})\right.  \\
K_{-p-q}(\kappa(x))  & \longrightarrow H_{x}^{p}(X,\Omega^{-q})
\end{align*}
Here we have used that Balmer's coniveau spectral sequence agrees with
Quillen's from \cite{MR0338129} thanks to the smoothness assumption.
\end{enumerate}
\end{proposition}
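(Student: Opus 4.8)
The plan is to derive both parts from a single structural fact: the trace map $K\to HH$ is a natural transformation of \emph{localizing invariants of dg categories} (Keller \cite{MR1667558}, after McCarthy \cite{MR1275967}), and Balmer's construction of the coniveau spectral sequence \cite{MR2439430} uses nothing about $K$-theory beyond its localization property. So for part (1) I would recall from the proof of Proposition \ref{marker_Prop_ConiveauHHSpecseq} that, for each filtration $\{Z^{[\bullet]}\}$ by reduced closed subsets with $\operatorname{codim}_{X}Z^{[p]}\ge p$, one has exact sequences of dg categories $\operatorname{Perf}_{Z^{[p+1]}}(X)\to\operatorname{Perf}_{Z^{[p]}}(X)\to\operatorname{Perf}_{Z^{[p]}}(X)/\operatorname{Perf}_{Z^{[p+1]}}(X)$, and that Balmer's spectral sequence is the exact-couple spectral sequence of the resulting tower, after taking the colimit over all such filtrations. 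Applying the trace termwise produces a morphism between the towers of fiber sequences for $K$ and for $HH$, hence a morphism of exact couples, hence a morphism of spectral sequences $^{K}E_{r}^{p,q}\to{}^{HH}E_{r}^{p,q}$. Since $K$, $HH$ and the trace all commute with filtered colimits of dg categories, this is compatible with the colimit over the $\{Z^{[\bullet]}\}$, yielding the asserted $^{K}E^{\bullet,\bullet}\to{}^{HH}E^{\bullet,\bullet}$; no smoothness hypothesis enters here.

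For part (2) the first step is to identify the $E_{1}$-entries. By Balmer's idempotent-splitting theorem \cite[Thm.~3.24]{MR2354319} (Equation \ref{lHHC10}), after idempotent completion the $p$-th subquotient of the filtration in Equation \ref{lHHC4} is $\coprod_{x\in X^{p}}\operatorname{Perf}_{x}(\mathcal{O}_{X,x})$, the equivalence being induced by the flat pullbacks $j_{x}\colon\operatorname{Spec}\mathcal{O}_{X,x}\hookrightarrow X$ (Remark \ref{rmk_EquivalenceInducedByPullback}). Because $K$ and $HH$ are invariant under idempotent completion and carry coproducts of dg categories to coproducts, one obtains $^{K}E_{1}^{p,q}=\bigoplus_{x\in X^{p}}K_{-p-q}^{x}(\mathcal{O}_{X,x})$ and $^{HH}E_{1}^{p,q}=\bigoplus_{x\in X^{p}}HH_{-p-q}^{x}(\mathcal{O}_{X,x})$, and the morphism of part (1) acts on each summand as the trace $K^{x}(\mathcal{O}_{X,x})\to HH^{x}(\mathcal{O}_{X,x})$.

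Now assume $X/k$ is smooth. I would then precompose with d\'{e}vissage and excision for $K$-theory, which give $K(\kappa(x))\cong K^{x}(\mathcal{O}_{X,x})\cong K^{x}(X)$ (valid since $\mathcal{O}_{X,x}$ is regular), and postcompose with excision together with the HKR isomorphism with supports of Proposition \ref{marker_Prop_HKRWithSupport}, which give $HH_{-p-q}^{x}(\mathcal{O}_{X,x})\cong H_{x}^{p}(\mathcal{O}_{X,x},\Omega^{-q})\cong H_{x}^{p}(X,\Omega^{-q})$ with $p=\operatorname{codim}_{X}\overline{\{x\}}$. The summandwise map then becomes, by inspection, exactly the three-step composite defining $\mathcal{T}(x)$ in \S\ref{subsect_ChernCharWithSupps}, which is the content of the claim; and since under smoothness Balmer's coniveau spectral sequence coincides with Quillen's \cite{MR0338129}, it may be rephrased in Quillen's terms. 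The only thing that genuinely needs checking here is that these are the \emph{same} excision/d\'{e}vissage/HKR isomorphisms as those used to define $\mathcal{T}(x)$, which is immediate because all of them are induced by $j_{x}$ and by the equivalences occurring in Proposition \ref{marker_Prop_HKRWithSupport}.

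The main obstacle is keeping the idempotent-completion and excision/d\'{e}vissage bookkeeping consistent across the two theories. Balmer's identification (Equation \ref{lHHC10}) holds only after Karoubi closure, so one must use that both $HH$ and the relevant (e.g.\ nonconnective) $K$-theory are idempotent-completion-invariant and that the trace respects this, and one must check that the equivalence $\coprod_{x}\operatorname{Perf}_{x}(\mathcal{O}_{X,x})\simeq(\cdots)^{ic}$ invoked on the $K$-side is literally the one used on the $HH$-side. Once this is arranged, the argument is formal: the trace, being a natural transformation at the level of dg categories, commutes automatically with all the structural functors (the inclusions and quotients of the filtration, the pullbacks $j_{x}$, and the equivalences entering d\'{e}vissage and Proposition \ref{marker_Prop_HKRWithSupport}), so the commutativity of the square collapses to the very definition of $\mathcal{T}(x)$.
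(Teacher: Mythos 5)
Your proposal is correct and follows essentially the same route as the paper's own proof: part (1) by functoriality of the trace with respect to the common coniveau filtration and its localization sequences, and part (2) by identifying the summandwise trace, after d\'{e}vissage/excision on the $K$-side and excision plus the HKR isomorphism with supports (Prop.~\ref{marker_Prop_HKRWithSupport}) on the $HH$-side, with the definition of $\mathcal{T}(x)$. The extra care you take with idempotent completion and the colimit over filtrations is bookkeeping the paper leaves implicit, but it does not change the argument.
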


\begin{proof}
(1) This is true by functoriality. We have constructed $\left.  ^{HH}%
E^{\bullet,\bullet}\right.  $ based on the same filtration that Balmer uses
for $K$-theory (cf. Prop. \ref{marker_Prop_ConiveauHHSpecseq}). The Chern
character $K\rightarrow HH$ is compatible with the respective localization
sequences, and thus the trace functorially induces a morphism of spectral
sequences. On the $E_{1}$-page, this morphism induces morphisms%
\[
T_{p,q}:\coprod_{x\in X^{p}}K_{-p-q}^{x}(\mathcal{O}_{X,x})\longrightarrow
\coprod_{x\in X^{p}}HH_{-p-q}^{x}(\mathcal{O}_{X,x})
\]
and by comparing supports, these morphisms are Cartesian in the sense that the
direct summand of $x\in X^{p}$ on the left maps exclusively to the direct
summand belonging to the same $x$ on the right-hand side. That is,
$T_{p,q}=\sum T_{p,q}^{x}$ with%
\[
T_{p,q}^{x}:K_{-p-q}^{x}(\mathcal{O}_{X,x})\longrightarrow HH_{-p-q}%
^{x}(\mathcal{O}_{X,x})\text{.}%
\]
(2) Now, assume that $X/k$ is smooth. We may then, equivalently, use the
$K$-theory of coherent sheaves on the left, and then d\'{e}vissage. So, using
the d\'{e}vissage isomorphism on the left-hand side in the above equation, and
the HKR isomorphism with supports on the right-hand side, we obtain%
\[
T_{p,q}^{x\prime}:K_{-p-q}(\kappa(x))\cong K_{-p-q}^{x}(\mathcal{O}%
_{X,x})\longrightarrow HH_{-p-q}^{x}(\mathcal{O}_{X,x})\cong H_{x}%
^{p}(\mathcal{O}_{X,x},\Omega^{-q})\text{.}%
\]
Using excision of the right-hand side, this transforms into the definition of
$\mathcal{T}(x)$.
\end{proof}

\section{\label{subsect_CubicallyDecomposedAlgebrasAndTheirDerivates}Cubical
algebras and their residue symbol}

\subsection{Introduction to the comparison problem}

The next sections will be devoted to relating our Hochschild--Cousin complex
to the residue theory of \cite{MR0227171}, \cite{MR565095}. Let us briefly
sketch the story in dimension one, in order to motivate how we shall proceed.

\subsubsection{Residue \`{a} la Tate}

In \cite{MR0227171} Tate defines the residue of a rational $1$-form on an
integral curve $X/k$ at a closed point $x\in X$ as follows: Let $\widehat
{\mathcal{K}}_{X,x}:=\operatorname*{Frac}\widehat{\mathcal{O}}_{X,x}$ denote
the local field at the point $x$ (it can also be constructed by completing the
function field with respect to the metric of the valuation associated to $x$).
Now $\widehat{\mathcal{K}}_{X,x}$ is a linearly locally compact topological
$k$-vector space. It has infinite dimension. Any rational functions $f,g\in
k(X)$ act on it by continuous $k$-linear endomorphisms, i.e. we could also
read them as elements $f,g\in\operatorname*{End}\nolimits_{k}^{cts}%
(\widehat{\mathcal{K}}_{X,x})$. If $P^{+}$ denotes any projector splitting the
inclusion $\widehat{\mathcal{O}}_{X,x}\hookrightarrow\widehat{\mathcal{K}%
}_{X,x}$, Tate shows that the commutator $[P^{+}f,g]$ has sufficiently small
image to define a trace on it, and defines a map
\[
\Omega_{\widehat{\mathcal{K}}_{X,x}/k}^{1}\longrightarrow k\text{,}%
\qquad\qquad f\,\mathrm{d}g\longmapsto\operatorname*{Tr}[P^{+}f,g]\text{.}%
\]
He shows that this agrees with the usual residue of the $1$-form
$\omega:=f\,\mathrm{d}g$ at $x$. In \cite{MR565095}, \cite{BFM_Conformal},
\cite{MR1988970} this construction gets interpreted in terms of a central
extension of Lie algebras, giving a Lie algebra cohomology class%
\begin{equation}
\phi_{Tate}\in H_{\operatorname*{Lie}}^{2}((\widehat{\mathcal{K}}_{X,x}%
)_{Lie},k)\text{.}\label{lvb2}%
\end{equation}

\subsubsection{Residue via localization}

A completely different approach to think about the residue might be to use the
boundary map $\partial$ in Keller's localization sequence,%
\begin{equation}
HH_{1}(\widehat{\mathcal{O}}_{X,x})\longrightarrow HH_{1}(\widehat
{\mathcal{K}}_{X,x})\overset{\partial}{\longrightarrow}HH_{0}^{x}%
(\widehat{\mathcal{O}}_{X,x})\overset{\operatorname*{Tr}}{\longrightarrow
}k\text{.}\label{lvb1}%
\end{equation}
Via the HKR\ comparison map, $\Omega_{\widehat{\mathcal{K}}_{X,x}/k}%
^{1}\rightarrow HH_{1}(\widehat{\mathcal{K}}_{X,x})$, this also produces a map
$\Omega_{\widehat{\mathcal{K}}_{X,x}/k}^{1}\rightarrow k$, which
\textit{should} be the residue.

\subsubsection{Comparison}

We will connect both approaches now, using the following approach: (1) one
interprets the r\^{o}le of local compactness in terms of a Tate category (we
recall this in \S \ref{sect_TateCategories}). Then, using \cite[Theorem
5]{bgwTateModule}, one obtains%
\[
\operatorname*{End}\nolimits_{k}^{cts}(\widehat{\mathcal{K}}_{X,x}%
)\cong\operatorname*{End}\nolimits_{\mathsf{Tate}_{\aleph_{0}}(k)}%
(\widehat{\mathcal{K}}_{X,x})\text{,}%
\]
i.e. the endomorphism algebra of Tate's approach becomes \textquotedblleft
representable\textquotedblright\ as a genuine endomorphism algebra in
the\ Tate category. (2) Identifying an element in $\widehat{\mathcal{K}}%
_{X,x}$ by its multiplication endomorphism, we get a morphism%
\[
\widehat{\mathcal{K}}_{X,x}\longrightarrow\operatorname*{End}%
\nolimits_{\mathsf{Tate}_{\aleph_{0}}(k)}(\widehat{\mathcal{K}}_{X,x}%
)\qquad\text{and thus}\qquad HH(\widehat{\mathcal{K}}_{X,x})\longrightarrow
HH(\operatorname*{End}\nolimits_{\mathsf{Tate}_{\aleph_{0}}(k)}(\widehat
{\mathcal{K}}_{X,x}))\text{.}%
\]
Now, it remains to identify a counterpart of the boundary map $\partial$ in
line \ref{lvb1}, defined on $HH(\widehat{\mathcal{K}}_{X,x})$, on the
right-hand side. We will show that this counterpart is given by Tate's
construction on the right-hand side. The bridge to switch between\ Hochschild
and Lie homology was already set up in \cite{olliTateRes}.

Beilinson has generalized Tate's approach to arbitrary dimension $n$ in
\cite{MR565095}. We shall directly work in this generality. In general, the
Lie cohomology class in line \ref{lvb2} becomes a class in
$H_{\operatorname*{Lie}}^{n+1}(-)$ and the Tate category needs to be replaced
by $n$-Tate categories.

\subsection{Definition of the abstract symbol}

\begin{definition}
[\cite{MR565095}]\label{def_BeilinsonNFoldAlgebra}Let $k$ be a field. A
\emph{Beilinson }$n$\emph{-fold cubical algebra} is

\begin{enumerate}
\item an associative $k$-algebra $A$;

\item two-sided ideals $I_{i}^{+},I_{i}^{-}$ such that $I_{i}^{+}+I_{i}^{-}=A
$ for $i=1,\ldots,n$;

\item define $I_{i}^{0}=I_{i}^{+}\cap I_{i}^{-}$ and call $I_{tr}%
:=\bigcap_{i=1,\ldots,n}I_{i}^{0}$ the \emph{trace-class} operators of $A$.
\end{enumerate}

A \emph{trace} on an $n$-fold cubical algebra is a morphism $\tau
:I_{tr}/[I_{tr},A]\longrightarrow k$.
\end{definition}

Although the following property is stronger than necessary to develop the
formalism, it will be handy to single out a particularly friendly type of such algebras:

\begin{definition}
\label{def_CubicalAlgebraGood}We say that $(A,(I_{i}^{\pm}))$ is \emph{good}
if for every $c=1,\ldots,n$ the intersection $I_{1}^{0}\cap\cdots\cap
I_{c}^{0}$ is locally bi-unital (in the sense of Definition
\ref{def_LocallyBiUnital}).
\end{definition}

This assumption will be made for the following reasons: Firstly, the
simplifications due to Wodzicki's Prop. \ref{mai_Prop_Wodzicki_HUnitality}
apply, and secondly we can easily define a whole hierarchy of further cubical
algebras, which we may imagine as going down dimension by dimension.

\begin{lemma}
\label{RES_Lemma_DerivedCubicalAlgebra}Let $(A,(I_{i}^{\pm})_{i=1,\ldots,n})$
be a good $n$-fold cubical algebra. Define%
\[
A^{\prime}:=I_{1}^{0}\qquad\text{and}\qquad J_{i-1}^{\pm}:=I_{i}^{\pm}\cap
I_{1}^{0}%
\]
for $i=2,\ldots,n$.

\begin{enumerate}
\item Then $(A^{\prime},(J_{i})_{i=1,\ldots,n-1})$ is a good $(n-1)$-fold
cubical algebra and both algebras have the same trace-class operators.

\item The natural homomorphism $I_{tr}/[I_{tr},I_{tr}]\overset{\sim
}{\longrightarrow}I_{tr}/[I_{tr},A]$ is an isomorphism.
\end{enumerate}
\end{lemma}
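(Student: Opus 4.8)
The plan is to handle part~(1) by a combination of formal ideal-theoretic bookkeeping and a single application of local unitality, and then to deduce part~(2) from a short three-term commutator identity.

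\emph{For (1).} Recall that, by our conventions, any two-sided ideal of an associative algebra is again an associative algebra, so $A'=I_{1}^{0}=I_{1}^{+}\cap I_{1}^{-}$ is an associative $k$-algebra, and each $J_{i-1}^{\pm}=I_{i}^{\pm}\cap I_{1}^{0}$ is a two-sided ideal of $A$ contained in $A'$, hence a two-sided ideal of $A'$. Next, $J_{i-1}^{0}:=J_{i-1}^{+}\cap J_{i-1}^{-}=I_{i}^{0}\cap I_{1}^{0}$, so that $\bigcap_{j=1}^{n-1}J_{j}^{0}=I_{1}^{0}\cap\bigcap_{i=2}^{n}I_{i}^{0}=I_{tr}$; this gives the assertion that the two algebras have the same trace-class operators, and likewise $J_{1}^{0}\cap\cdots\cap J_{c}^{0}=I_{1}^{0}\cap\cdots\cap I_{c+1}^{0}$ is locally bi-unital by goodness of $A$ (Definition~\ref{def_CubicalAlgebraGood}, applied with index $c+1\le n$), so $A'$ is \emph{good} once it is known to be a cubical algebra. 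The only non-formal point is the identity $J_{j}^{+}+J_{j}^{-}=A'$; here the naive modular-law manipulation of $I_{1}^{0}\cap(I_{j+1}^{+}+I_{j+1}^{-})$ does not apply. Instead: given $a\in I_{1}^{0}$, use that $I_{1}^{0}$ is locally left unital (goodness with $c=1$) to choose $e\in I_{1}^{0}$ with $ea=a$; write $e=e_{+}+e_{-}$ with $e_{\pm}\in I_{j+1}^{\pm}$, possible since $I_{j+1}^{+}+I_{j+1}^{-}=A$; then $a=e_{+}a+e_{-}a$, and $e_{\pm}a\in I_{j+1}^{\pm}\cap I_{1}^{0}=J_{j}^{\pm}$, because $I_{j+1}^{\pm}$ is a right ideal and $I_{1}^{0}$ a two-sided ideal of $A$. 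The reverse inclusion $J_{j}^{+}+J_{j}^{-}\subseteq A'$ is trivial. This proves (1).

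\emph{For (2).} The natural map is the canonical surjection $I_{tr}/[I_{tr},I_{tr}]\twoheadrightarrow I_{tr}/[I_{tr},A]$, which is well defined because $[I_{tr},A]\subseteq I_{tr}$ ($I_{tr}$ being an ideal); it is an isomorphism if and only if $[I_{tr},A]\subseteq[I_{tr},I_{tr}]$, so it suffices to treat a commutator $xa-ax$ with $x\in I_{tr}$, $a\in A$. Using that $I_{tr}=I_{1}^{0}\cap\cdots\cap I_{n}^{0}$ is locally left unital (goodness with $c=n$), pick $e\in I_{tr}$ with $ex=x$. Then, by associativity together with $ex=x$,
\[
xa-ax \;=\; e(xa)-(ae)x \;=\; \bigl(e(xa)-(xa)e\bigr)+\bigl(x(ae)-(ae)x\bigr) \;=\; [e,xa]+[x,ae],
\]
where the middle step uses $(xa)e=x(ae)$. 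Since $xa$, $ae$, $e$, $x$ all lie in the ideal $I_{tr}$, both commutators lie in $[I_{tr},I_{tr}]$, hence $xa-ax\in[I_{tr},I_{tr}]$, as required.

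I expect the only mildly subtle step to be the identity $J_{j}^{+}+J_{j}^{-}=A'$ in part~(1): this is the single place where the \emph{good} hypothesis is used essentially, and one must resist invoking the modular law, which fails in this configuration. Part~(2) is then immediate from the displayed commutator identity. Everything else — the closure properties of ideals, the intersection computations, and the transport of goodness — is routine bookkeeping.
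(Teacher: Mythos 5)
Your proposal is correct and follows essentially the same route as the paper: for (1) you combine a local unit of $I_{1}^{0}$ with a decomposition along $I_{j+1}^{+}+I_{j+1}^{-}=A$ (you split the unit $e=e_{+}+e_{-}$ where the paper splits the element $x=x^{+}+x^{-}$, which is only a cosmetic difference), and the trace-class and goodness statements by the same intersection bookkeeping. For (2) your identity $xa-ax=[e,xa]+[x,ae]$ with a local left unit is the mirror image of the paper's trick $[ea,t]-[e,at]=[a,t]$ with a local right unit; both are legitimate since goodness gives local bi-unitality of $I_{tr}$.
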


The proof of the second claim is very easy, but based on a trick which might
not be particularly obvious if one only looks at the claim.

\begin{proof}
\textit{(Step 1)} Clearly $A^{\prime}=I_{1}^{0}$ is a (non-unital) associative
$k$-algebra. For $i=1,\ldots,n-1$ we compute%
\[
J_{i}^{+}+J_{i}^{-}=(I_{i+1}^{+}\cap I_{1}^{0})+(I_{i+1}^{-}\cap I_{1}%
^{0})\subseteq A\cap I_{1}^{0}=I_{1}^{0}\text{.}%
\]
For the converse inclusion, let $x\in I_{1}^{0}$ be given. Since
$(A,(I_{i}^{\pm}))$ is assumed good, there is a local left unit for the
singleton finite set $\{x\}$ in $I_{1}^{0}$, say $x=ex$ with $e\in I_{1}^{0}$.
By $I_{i+1}^{+}+I_{i+1}^{-}=A$, write $x=x^{+}+x^{-}$ with $x^{\pm}\in
I_{i+1}^{\pm}$. Thus, $x=ex=e(x^{+}+x^{-})=ex^{+}+ex^{-}$ and since $I_{1}%
^{0}$ is a two-sided ideal, $ex^{s}\in I_{1}^{0}\cap I_{i+1}^{s}$ for
$s\in\{+,-\}$. As this works for all $x\in I_{1}^{0}$, we get $I_{1}%
^{0}\subseteq J_{i}^{+}+J_{i}^{-}$. Thus, $A^{\prime}$ is an $(n-1)$-fold
cubical algebra. Note that this argument would work just as well with local
right units. The trace-class operators are%
\[
I_{tr}(A^{\prime})=%
{\textstyle\bigcap_{i=1,\ldots,n-1}}
J_{i}^{+}\cap J_{i}^{-}=%
{\textstyle\bigcap_{i=1,\ldots,n-1}}
I_{i+1}^{+}\cap I_{i+1}^{-}\cap I_{1}^{0}=%
{\textstyle\bigcap_{i=1,\ldots,n}}
I_{i}^{+}\cap I_{i}^{-}=I_{tr}(A)\text{.}%
\]
\textit{(Step 2)} We need to check that $(A^{\prime},(J_{i}^{\pm}%
)_{i=1,\ldots,n-1})$ is good, i.e. the local bi-unitality of%
\[
J_{1}^{0}\cap\cdots\cap J_{c}^{0}=(I_{1}^{0}\cap I_{i+1}^{0})\cap\cdots
\cap(I_{1}^{0}\cap I_{c+1}^{0})=I_{1}^{0}\cap\cdots\cap I_{c+1}^{0}%
\]
for any $c=1,\ldots,n-1$. And these are locally bi-unital since $(A,(I_{i}%
^{\pm}))$ is good. This completes the proof of the first claim.\newline%
\textit{(Step 3)} It remains to prove the second claim. In fact, this is true
as soon as $A$ is any associative algebra and $I$ any locally right unital
two-sided ideal: It is clear that $[I,I]\subseteq\lbrack A,I]$ and we shall
show the reverse inclusion: Let any $t\in I$ and $a\in A$ be given. Let $e$ be
a local right unit for the singleton set $\{t\}\subset I$. By the ideal
property, $ea\in I$ and $at\in I$. Thus, the left-hand side of the following
equation lies in $[I,A]$, namely
$[ea,t]-[e,at]=eat-tea-eat+ate=ate-tea\overset{(\ast)}{=}at-ta=[a,t]$, where
we have used the local right unit property for $(\ast)$. Without right
unitality, there would have been no chance for this kind of argument.
\end{proof}

Suppose $A$ is a good $n$-fold cubical algebra with a trace $\tau$. In the
paper \cite{olliTateRes} a canonical functional $\phi_{C}:HC_{n}(A)\rightarrow
k$ was constructed, functorial in morphisms of cubical algebras. We will give
a self-contained exposition of this construction:

Let $(A_{n},(I_{i}^{\pm}),\tau)$ be a good $n$-fold cubical algebra over $k $
with a trace $\tau$. We define%
\begin{equation}
A_{n-1}:=I_{1}^{0}\qquad\text{and}\qquad J_{i-1}^{\pm}:=I_{i}^{\pm}\cap
I_{1}^{0}\text{,}\label{lmai_5}%
\end{equation}
where $i=0,\ldots,n-1$. By Lemma \ref{RES_Lemma_DerivedCubicalAlgebra} this is
again a good cubical algebra over $k$. Define%
\begin{equation}
\Lambda:A_{n}\longrightarrow A_{n}/A_{n-1}\text{,}\qquad x\longmapsto
x^{+}\label{lEquationToeplitzLAMBDA}%
\end{equation}
where $x=x^{+}+x^{-}$ is any decomposition with $x^{\pm}\in I_{1}^{\pm}$.

\begin{remark}
This map is \emph{not} the natural quotient map!
\end{remark}

\begin{lemma}
The map $\Lambda$ is well-defined.
\end{lemma}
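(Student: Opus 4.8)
The plan is to use only the defining relations $I_1^+ + I_1^- = A$ and $A_{n-1} = I_1^0 = I_1^+ \cap I_1^-$, together with the fact that $I_1^+, I_1^-$ are (additive) subgroups. First I would note that a decomposition $x = x^+ + x^-$ with $x^\pm \in I_1^\pm$ exists at all: this is precisely the content of $I_1^+ + I_1^- = A$ from Definition \ref{def_BeilinsonNFoldAlgebra}. So $\Lambda$ is at least defined on elements; the only thing to check is independence of the chosen decomposition.

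The key step is the following: suppose $x = x^+ + x^- = y^+ + y^-$ are two such decompositions, with $x^\pm, y^\pm \in I_1^\pm$. Then $x^+ - y^+ = y^- - x^-$. The left-hand side lies in $I_1^+$, being a difference of two elements of the two-sided ideal $I_1^+$; the right-hand side lies in $I_1^-$ for the same reason. Hence $x^+ - y^+ \in I_1^+ \cap I_1^- = I_1^0 = A_{n-1}$, so $x^+$ and $y^+$ have the same image in $A_n/A_{n-1}$. Therefore the assignment $x \mapsto x^+ + A_{n-1}$ does not depend on the decomposition, i.e.\ $\Lambda$ is well-defined as a map of sets (and in fact additive, being visibly additive in $x$).

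There is no real obstacle here: the statement is essentially a restatement of the modular law for the subgroups $I_1^+, I_1^-$ with intersection $I_1^0$. The only subtlety worth flagging in the write-up is purely notational: $A_{n-1}$ is \emph{defined} to be $I_1^0$ (cf.\ Equation \ref{lmai_5}), so the quotient $A_n/A_{n-1}$ is the quotient by exactly the indeterminacy subgroup, which is what makes the argument close up. One should also remark (as the preceding Remark already does) that $\Lambda$ is not the canonical projection $A_n \to A_n/A_{n-1}$; that map would send $x$ to $x^+ + x^-$ modulo $A_{n-1}$, whereas $\Lambda$ records only the ``positive part''.
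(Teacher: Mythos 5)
Your argument is correct and is the same as the paper's: existence of the decomposition from $I_1^+ + I_1^- = A_n$, and independence of the choice because the difference of two choices lies in $I_1^+ \cap I_1^- = I_1^0 = A_{n-1}$ (which the paper phrases via the exactness of $I_1^0 \to I_1^+ \oplus I_1^- \to A_n \to 0$). No gaps; the additional remarks about additivity and $\Lambda$ not being the quotient map are consistent with the paper's surrounding text.
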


\begin{proof}
By the axiom $I_{1}^{+}+I_{1}^{-}=A_{n}$ of a cubical algebra, such an element
$x^{+}$ always exists. It is not unique, but if $x^{\ast}$ is another choice,
by the exactness of $I_{1}^{0}\rightarrow I_{1}^{+}\oplus I_{1}^{-}\rightarrow
A_{n}\rightarrow0$ we have $x^{+}-x^{\ast}\in I_{1}^{+}\cap I_{1}^{-}%
=I_{1}^{0}=A_{n-1}$.
\end{proof}

As $A_{n-1}$ is a two-sided ideal in $A_{n}$, we get an exact sequence of
associative algebras%
\begin{equation}
0\longrightarrow A_{n-1}\longrightarrow A_{n}\overset{\operatorname*{quot}%
}{\longrightarrow}A_{n}/A_{n-1}\longrightarrow0\text{.}\label{lml_28}%
\end{equation}
This sequence induces a long exact sequence in Hochschild homology via Theorem
\ref{Thm_WodzickiLongExactSeqUsingExcision}, and we shall denote the boundary
map by $\delta$.

\begin{definition}
[\cite{olliTateRes}]\label{marker_Def_map_d}Define%
\begin{equation}
d:HH(A_{n})\overset{\Lambda}{\longrightarrow}HH(A_{n}/A_{n-1})\overset{\delta
}{\longrightarrow}\Sigma HH(A_{n-1})\text{.}\label{lml_40}%
\end{equation}

\end{definition}

We can repeat this construction and obtain a morphism:

\begin{definition}
[\cite{olliTateRes}]\label{Def_AbstractHochschildResidueSymbol}Suppose
$(A,(I_{i}^{\pm}))$ is good $n$-fold cubical algebra over $k$ with a trace
$\tau$. Define%
\begin{equation}
\phi_{C}:HH_{n}(A)\longrightarrow HH_{0}(I_{tr})\longrightarrow k\text{,}%
\qquad\gamma\mapsto\tau\underset{n\text{ times}}{\underbrace{d\circ\cdots\circ
d}}\gamma\text{.}\label{lmai_37}%
\end{equation}
We call this the \emph{abstract Hochschild symbol} of $A$.
\end{definition}

\section{\label{sect_TateCategories}Relation with Tate categories}

\subsection{Exact categories}

We will give a brief, almost self-contained review of the formalism of Tate
categories. Let $\mathcal{C}$ be an exact category \cite{MR2606234}. In
particular, among its morphisms, we reserve the symbols \textquotedblleft%
$\hookrightarrow$\textquotedblright\ resp. \textquotedblleft%
$\twoheadrightarrow$\textquotedblright\ for admissible monics resp. admissible
epics. An admissible sub-object refers to a sub-object such that the inclusion
is an admissible monic. We write $\mathcal{C}^{ic}$ to denote the idempotent
completion of $\mathcal{C}$.

\begin{lemma}
\label{TMT_LemmaIdemCompletionOfSplitExactIsSplitExact}Let $\mathcal{C}$ be an
exact category.

\begin{enumerate}
\item The idempotent completion $\mathcal{C}^{ic}$ has a canonical exact
structure such that $\mathcal{C}\hookrightarrow\mathcal{C}^{ic}$ is an exact
functor reflecting exactness.

\item If $\mathcal{C}$ is split exact, so is $\mathcal{C}^{ic}$.
\end{enumerate}
\end{lemma}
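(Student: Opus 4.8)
The plan is to realise $\mathcal{C}^{ic}$ as an extension-closed full subcategory of an abelian category, where the exact structure is transparent, and then to read both claims off that description; this is in essence Bühler's treatment in \cite{MR2606234}. Recall that the idempotent completion has objects the pairs $(X,e)$ with $X\in\mathcal{C}$ and $e=e^{2}\in\operatorname{End}(X)$, a morphism $(X,e)\to(Y,f)$ being a morphism $g\colon X\to Y$ with $g=fge$; it is again additive, the functor $X\mapsto(X,\operatorname{id}_{X})$ is fully faithful, and every object of $\mathcal{C}^{ic}$ is a direct summand of an object of $\mathcal{C}$. First I would invoke the Gabriel--Quillen embedding: a fully faithful exact functor $i\colon\mathcal{C}\hookrightarrow\mathcal{A}$ into an abelian category which reflects exactness and whose image is closed under extensions. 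Since abelian categories are idempotent complete, $i$ extends to a fully faithful functor $\mathcal{C}^{ic}\hookrightarrow\mathcal{A}$ that identifies $\mathcal{C}^{ic}$ with the full subcategory $\widetilde{\mathcal{C}}\subseteq\mathcal{A}$ of those objects which are direct summands of objects of $i(\mathcal{C})$.

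The one non-formal step is to check that $\widetilde{\mathcal{C}}$ is closed under extensions in $\mathcal{A}$. The argument: given $0\to A\to B\to C\to 0$ exact in $\mathcal{A}$ with $A\oplus A'$ and $C\oplus C'$ in $i(\mathcal{C})$, take its direct sum with the split sequences $0\to A'\to A'\to 0\to 0$ and $0\to 0\to C'\to C'\to 0$; this yields an exact sequence $0\to A\oplus A'\to B\oplus A'\oplus C'\to C\oplus C'\to 0$ whose outer terms lie in $i(\mathcal{C})$, so its middle term does too by extension-closedness, whence $B\in\widetilde{\mathcal{C}}$. Consequently $\widetilde{\mathcal{C}}\simeq\mathcal{C}^{ic}$ inherits the canonical exact structure of an extension-closed subcategory of an abelian category: its conflations are the kernel--cokernel pairs that become short exact in $\mathcal{A}$. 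The same direct-sum manipulation shows these conflations are exactly the retracts (as three-term complexes) of conflations coming from $\mathcal{C}$; that description is manifestly independent of the embedding, and exhibits the structure as the smallest exact structure on $\mathcal{C}^{ic}$ turning conflations of $\mathcal{C}$ into conflations, which is the sense of ``canonical''. Exactness of $\mathcal{C}\hookrightarrow\mathcal{C}^{ic}$ is then immediate, and it reflects exactness because $i$ does: a sequence in $\mathcal{C}$ which is a conflation in $\mathcal{C}^{ic}$ is short exact in $\mathcal{A}$, hence already a conflation in $\mathcal{C}$. This settles (1).

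For (2), assume $\mathcal{C}$ is split exact, i.e.\ its conflations are precisely the split short exact sequences. Let $A\hookrightarrow B\twoheadrightarrow C$ be a conflation in $\mathcal{C}^{ic}$; by the retract description it is a retract, as a three-term complex, of a split conflation $X\hookrightarrow X\oplus Z\twoheadrightarrow Z$ with $X,Z\in\mathcal{C}$. A short diagram chase shows that a retract of a split monomorphism is again split: writing the retraction data as $q_{A}s_{A}=\operatorname{id}_{A}$, $q_{B}s_{B}=\operatorname{id}_{B}$ compatibly with $A\hookrightarrow B$ and the split monic $j\colon X\to X\oplus Z$ (retraction $r$), the composite $q_{A}\,r\,s_{B}\colon B\to A$ retracts $A\hookrightarrow B$. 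Since $A\hookrightarrow B$ is moreover the kernel of $B\twoheadrightarrow C$, a split admissible monic in a conflation forces that conflation to be split. Hence every conflation of $\mathcal{C}^{ic}$ splits, i.e.\ $\mathcal{C}^{ic}$ is split exact.

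The step I expect to be the main obstacle is organisational rather than deep: one must nail down that the conflations of $\mathcal{C}^{ic}$ admit both equivalent descriptions -- ``short exact in $\mathcal{A}$'' and ``retract of a conflation of $\mathcal{C}$'' -- since the canonicity and reflection statements use the former while part (2) uses the latter. The only genuinely substantive input is the extension-closedness of $\widetilde{\mathcal{C}}$, which is exactly the direct-sum trick above. (An embedding-free alternative is to verify Quillen's axioms directly for the class of retracts of conflations of $\mathcal{C}$, cf.\ \cite[\S 6--7]{MR2606234}; this is more laborious but avoids any appeal to Gabriel--Quillen.)
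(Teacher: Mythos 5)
Your proof is correct. Note that the paper does not actually argue this lemma at all: its ``proof'' consists of two citations, \cite[Prop.\ 6.13]{MR2606234} for (1) and \cite[Lemma 11]{bgwTateModule} for (2), so there is no in-paper argument to measure you against. What you have written is a legitimate self-contained proof along one of the standard routes: embed via Gabriel--Quillen, identify $\mathcal{C}^{ic}$ with the summand-closure of the image, prove extension-closedness by the add-a-complement trick, and extract the embedding-free characterization of the conflations of $\mathcal{C}^{ic}$ as the kernel--cokernel pairs that are retracts of conflations of $\mathcal{C}$ --- which is exactly the description given in B\"uhler's Prop.\ 6.13, and your retract-of-a-split-sequence computation for (2) is the expected argument behind the second citation. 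Two small caveats, neither a gap in the statement being proved: the Gabriel--Quillen embedding into $\mathsf{Lex}(\mathcal{C})$ carries the usual smallness hypotheses (the paper itself uses $\mathsf{Lex}(\mathcal{C})$ freely, so this is consistent with its conventions, but an embedding-free verification of Quillen's axioms for the retract-closed class avoids the issue entirely); and your closing remark that the structure is the \emph{smallest} exact structure extending that of $\mathcal{C}$ silently uses the fact that in any exact category direct summands of conflations are again conflations (B\"uhler, Cor.\ 2.18) --- worth citing if you keep that sentence, though the lemma as stated only needs existence of an exact structure with the listed properties.
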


\begin{proof}
(1) \cite[Prop. 6.13]{MR2606234}, (2) \cite[Lemma 11]{bgwTateModule}.
\end{proof}

Next, recall that every exact category can $2$-universally be embedded into a
Grothendieck abelian category, called $\mathsf{Lex}(\mathcal{C})$, such that
it becomes an extension-closed full sub-category and a kernel-cokernel pair is
exact if and only if this is so in $\mathsf{Lex}(\mathcal{C})$, in the
classical sense of exactness. This is known as the \textit{Quillen embedding}
$\mathcal{C}\hookrightarrow\mathsf{Lex}(\mathcal{C})$. See \cite[\S A.7]%
{MR1106918}, \cite[\S 1.2]{MR2079996} or \cite[Appendix A]{MR2606234} for a
detailed treatment.

\subsection{\label{subsect_TateCatsViaIndDiagrams}Ind- and Pro-categories,
Tate categories}

Let $\kappa$ be an infinite cardinal. An \emph{admissible Ind-diagram of
cardinality} $\kappa$ is a functor $X:I\rightarrow\mathcal{C}$ with $I$ a
directed poset of cardinality at most $\kappa$ which maps the arrows of $I$ to
admissible monics in $\mathcal{C}$. Since $\mathsf{Lex}(\mathcal{C})$ is
co-complete, any such diagram has a colimit in this category. Thus, the
following definition makes sense:

\begin{definition}
\label{def_BasicAdmissibleIndAndProObjs}Let $\mathcal{C}$ be an exact category
and $\kappa$ an infinite cardinal.

\begin{enumerate}
\item The essential image of all admissible Ind-diagrams of cardinality
$\kappa$ in $\mathsf{Lex}(\mathcal{C})$ is the category of \emph{admissible
Ind-objects}, and is denoted by $\mathsf{Ind}_{\kappa}^{a}(\mathcal{C})$.

\item Define $\mathsf{Pro}_{\kappa}^{a}(\mathcal{C}):=\mathsf{Ind}_{\kappa
}^{a}(\mathcal{C}^{op})^{op}$, the exact category of admissible Pro-objects.
\end{enumerate}
\end{definition}

See also \cite[\S 4]{TateObjectsExactCats} for a different perspective on
Pro-objects. Definition \ref{def_BasicAdmissibleIndAndProObjs} is due to
Bernhard Keller for $\kappa=\aleph_{0}$ \cite{MR1052551}. See \cite[\S 3]%
{TateObjectsExactCats} for a detailed treatment of the general case. One shows
that $\mathsf{Ind}_{\kappa}^{a}(\mathcal{C})$ is extension-closed inside
$\mathsf{Lex}(\mathcal{C})$ and therefore carries a canonical exact structure
induced from $\mathsf{Lex}(\mathcal{C})$ \cite[Lemma 10.20]{MR2606234},
\cite[Thm. 3.7]{TateObjectsExactCats}, and the functor $\mathcal{C}%
\hookrightarrow\mathsf{Ind}_{\kappa}^{a}(\mathcal{C})$, sending objects to the
constant diagram, is exact. We write $\mathsf{Ind}^{a}(\mathcal{C})$,
$\mathsf{Pro}^{a}(\mathcal{C})$ etc. without a qualifier $\kappa$ if we do not
wish to impose any restriction on the cardinality.

For the sake of legibility, we shall henceforth mostly drop $\kappa$ from the
notation, but all these results would also be valid for the variants
constrained by an infinite cardinal $\kappa$ bound. Precise information about
such variations can always be found in the cited sources.

\begin{definition}
\label{Def_TateObjectAndLattices}Consider the commutative square of exact
categories and exact functors,%
\begin{equation}%
\bfig\Square(0,0)[\mathcal{C}`\mathsf{Ind}^{a}\mathcal{C}`\mathsf{Pro}%
^{a}\mathcal{C}`\mathsf{Ind}^{a}\mathsf{Pro}^{a}\mathcal{C}\text{.};```]
\efig
\label{lmai_35}%
\end{equation}
A \emph{lattice} in an object $X\in\mathsf{Ind}^{a}\mathsf{Pro}^{a}%
(\mathcal{C})$ is an admissible sub-object $L\hookrightarrow X$ such that
$L\in\mathsf{Pro}^{a}(\mathcal{C)}$ and $X/L\in\mathsf{Ind}^{a}(\mathcal{C})
$. \cite[\S 5]{TateObjectsExactCats}

\begin{enumerate}
\item The category of elementary Tate objects, denoted by $\mathsf{Tate}%
^{el}(\mathcal{C})$ or $\left.  1\text{-}\mathsf{Tate}^{el}(\mathcal{C}%
)\right.  $, is the full sub-category of $\mathsf{Ind}^{a}\mathsf{Pro}%
^{a}(\mathcal{C})$ of objects having a lattice (this lattice is not part of
the data). The category of Tate objects, denoted $\mathsf{Tate}(\mathcal{C})$
or $\left.  1\text{-}\mathsf{Tate}(\mathcal{C})\right.  $, is the idempotent
completion $\mathsf{Tate}^{el}(\mathcal{C})^{ic}$. \cite[\S 5, Thm.
5.6]{TateObjectsExactCats}

\item More generally, define $\left.  n\text{-}\mathsf{Tate}^{el}%
(\mathcal{C})\right.  :=\mathsf{Tate}^{el}\left(  \,\left.  (n-1)\text{-}%
\mathsf{Tate}(\mathcal{C})\right.  \,\right)  $ and $\left.  n\text{-}%
\mathsf{Tate}(\mathcal{C})\right.  :=\left.  n\text{-}\mathsf{Tate}%
^{el}(\mathcal{C})^{ic}\right.  $ as its idempotent completion. We will refer
to the objects of these categories as $n$\emph{-Tate objects}.

\item Once we fix an elementary\ Tate object $X\in\mathsf{Tate}^{el}%
(\mathcal{C})$, the lattices form a poset, called the \emph{Sato Grassmannian}
$Gr(X)$, by defining $L^{\prime}\leq L$ whenever $L^{\prime}\hookrightarrow L$
is an admissible monic.
\end{enumerate}
\end{definition}

We refer to \cite{TateObjectsExactCats} for a detailed treatment. See
\cite{MR2656941}, \cite{MR2872533} for earlier work on iterating Tate
categories. The following facts are of essential importance:

\begin{theorem}
\label{Thm_SatoGrassmannianProperties}Let $\mathcal{C}$ be an exact category.

\begin{enumerate}
\item If $L^{\prime}\hookrightarrow L$ are lattices in an object
$X\in\mathsf{Tate}^{el}(\mathcal{C})$, then $L/L^{\prime}\in\mathcal{C}$.

\item Suppose $\mathcal{C}$ is idempotent complete. Then the poset $Gr(V)$ is
directed and co-directed, i.e. any finite set of lattices has a common
sub-lattice and a common over-lattice.

\item If $X\in\mathsf{Tate}^{el}(\mathcal{C})$ lies in the sub-categories of
Pro-objects and Ind-objects simultaneously, we have $X\in\mathcal{C}$. If
$\mathcal{C}$ is idempotent complete, the same holds true for $X\in
\mathsf{Tate}(\mathcal{C})$.
\end{enumerate}
\end{theorem}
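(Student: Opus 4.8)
The plan is to deduce all three statements from one structural input, the \emph{intersection lemma} of \cite{TateObjectsExactCats}: inside $\mathsf{Ind}^{a}\mathsf{Pro}^{a}(\mathcal{C})$ one has $\mathsf{Pro}^{a}(\mathcal{C})\cap\mathsf{Ind}^{a}(\mathcal{C})=\mathcal{C}$. Alongside it I would use the closure properties also established there: $\mathsf{Ind}^{a}(\mathcal{C})$ is extension-closed in $\mathsf{Lex}(\mathcal{C})$ and closed under admissible sub-objects and admissible quotients, and $\mathsf{Pro}^{a}(\mathcal{C})$ likewise (by duality). Everything below is then purely formal manipulation, and I would present only the deductions, not the diagram chases underlying the intersection lemma.

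Given this, \emph{(1)} is immediate: for lattices $L'\hookrightarrow L$ in $X$, the monic $L'\hookrightarrow L$ is admissible (it is a factorisation of the admissible monic $L'\hookrightarrow X$ through the admissible monic $L\hookrightarrow X$), so $L/L'$ is an admissible quotient of the Pro-object $L$, hence $L/L'\in\mathsf{Pro}^{a}(\mathcal{C})$; on the other hand $L'\hookrightarrow L\hookrightarrow X$ yields an admissible exact sequence $L/L'\hookrightarrow X/L'\twoheadrightarrow X/L$, exhibiting $L/L'$ as an admissible sub-object of the Ind-object $X/L'$, so $L/L'\in\mathsf{Ind}^{a}(\mathcal{C})$. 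The intersection lemma gives $L/L'\in\mathcal{C}$. The first half of \emph{(3)} is even quicker: an elementary Tate object that already lies in $\mathsf{Pro}^{a}(\mathcal{C})\cap\mathsf{Ind}^{a}(\mathcal{C})$ is in $\mathcal{C}$ by the intersection lemma directly. For $X\in\mathsf{Tate}(\mathcal{C})$ with $\mathcal{C}$ idempotent complete, $X$ is only a direct summand of an elementary Tate object, and one additionally invokes that forming $\mathsf{Ind}^{a}$ and $\mathsf{Pro}^{a}$ is compatible with idempotent completion, so that the elementary case still applies.

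For \emph{(2)}, assume $\mathcal{C}$ idempotent complete and let $L_{1},L_{2}$ be lattices in an elementary Tate object $X$. I would show that $L_{1}\cap L_{2}:=L_{1}\times_{X}L_{2}$ and $L_{1}+L_{2}$ are again lattices; a finite family is then handled by iteration, and co-directedness follows by passing to $\mathcal{C}^{op}$, which exchanges Ind and Pro. Now $L_{1}\cap L_{2}\hookrightarrow L_{1}$ is an admissible sub-object of the Pro-object $L_{1}$, so $L_{1}\cap L_{2}\in\mathsf{Pro}^{a}(\mathcal{C})$; and $X/(L_{1}\cap L_{2})$ sits in an exact sequence with quotient $X/L_{1}\in\mathsf{Ind}^{a}(\mathcal{C})$ and kernel $L_{1}/(L_{1}\cap L_{2})\cong(L_{1}+L_{2})/L_{2}$, which is an admissible sub-object of $X/L_{2}\in\mathsf{Ind}^{a}(\mathcal{C})$ and hence in $\mathsf{Ind}^{a}(\mathcal{C})$; extension-closedness then gives $X/(L_{1}\cap L_{2})\in\mathsf{Ind}^{a}(\mathcal{C})$, so $L_{1}\cap L_{2}$ is a lattice, and dually $L_{1}+L_{2}$ is a lattice.

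The main obstacle is not in these formal reductions but in the two inputs: the intersection lemma, and the admissibility point feeding into \emph{(2)} — namely that $L_{1}\cap L_{2}\hookrightarrow L_{1}$ really is an admissible monic, equivalently that the composite $L_{1}\hookrightarrow X\twoheadrightarrow X/L_{2}$ is an admissible morphism. This is where idempotent completeness of $\mathcal{C}$ is genuinely used: a morphism from an admissible Pro-object to an admissible Ind-object of $\mathcal{C}$ factors admissibly through an object of $\mathcal{C}$. Establishing this factorisation, and likewise the intersection lemma (where an object presented simultaneously as a filtered colimit of admissible monics and a cofiltered limit of admissible epics of objects of $\mathcal{C}$ is forced into $\mathcal{C}$ by an ``eventually constant'' cofinality argument), is the technical heart; I would simply cite \cite{TateObjectsExactCats} for it rather than reproduce the argument here.
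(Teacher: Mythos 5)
Your proposal is sound, but it is worth saying plainly how it relates to the paper: the paper does not argue at all here — its proof of Theorem \ref{Thm_SatoGrassmannianProperties} consists of citations to \cite{TateObjectsExactCats} (Prop.\ 6.6 for (1), Thm.\ 6.7 for (2), Prop.\ 5.9 and 5.28 for (3)). Your reduction is essentially the internal structure of those cited proofs: (1) and (3) rest on the intersection statement $\mathsf{Pro}^{a}(\mathcal{C})\cap\mathsf{Ind}^{a}(\mathcal{C})=\mathcal{C}$ (which the paper itself records as Prop.\ \ref{TMT_PropFiltering}(5)), and (2) rests on the admissible factorization of a map from a Pro-object to an Ind-object through an object of $\mathcal{C}$, which is exactly where idempotent completeness enters; since you cite \cite{TateObjectsExactCats} for these two inputs, your argument carries the same content as the paper's, just with the formal deductions spelled out. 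Two small cautions. First, in part (1) the parenthetical claim that $L'\hookrightarrow L$ is admissible \emph{because} it factors the admissible monic $L'\hookrightarrow X$ through $L\hookrightarrow X$ is not automatic in a general exact category — cancellation of admissible monics (B\"uhler's ``obscure axiom'') needs either an existing cokernel of $L'\to L$ or weak idempotent completeness, neither of which is assumed in (1); but nothing is lost, since the admissibility of $L'\hookrightarrow L$ is part of the hypothesis (it is how the Sato Grassmannian order is defined), so you should simply take it as given. Second, the blanket closure claim ``$\mathsf{Ind}^{a}(\mathcal{C})$ and $\mathsf{Pro}^{a}(\mathcal{C})$ are closed under admissible sub-objects \emph{and} admissible quotients'' is stronger than what you actually need and should be invoked in the precise form proved in the source (what part (1) uses is that $L/L'$ is simultaneously the cokernel of an admissible monic of Pro-objects with both quotients $X/L'$, $X/L$ admissible Ind-objects, and the source verifies exactly these memberships before applying the intersection lemma); as stated, the symmetric closure claim is not something you can wave through ``by duality'' without checking it against the formal Ind/Pro setting. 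With those adjustments your write-up is a faithful, slightly more self-contained version of the citations the paper relies on.
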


\begin{proof}
(1) \cite[Prop. 6.6]{TateObjectsExactCats}, (2) \cite[Thm. 6.7]%
{TateObjectsExactCats}, (3) \cite[Prop. 5.9]{TateObjectsExactCats},
\cite[Prop. 5.28]{TateObjectsExactCats}.
\end{proof}

There are also some basic factorizations for in- and out-going morphisms under
the inclusions of categories in Diagram \ref{lmai_35} and lattices:

\begin{proposition}
\label{Prop_LatticeLeftFiltAndRightFilt}Let $\mathcal{C}$ be an exact category.

\begin{enumerate}
\item Every morphism $Y\overset{a}{\longrightarrow}X$ in $\mathsf{Tate}%
^{el}(\mathcal{C})$ with $Y\in\mathsf{Pro}^{a}(\mathcal{C})$ can be factored
as $Y\overset{\tilde{a}}{\rightarrow}L\hookrightarrow X$ with $L$ a lattice in
$X$.

\item Every morphism $X\overset{a}{\longrightarrow}Y$ in $\mathsf{Tate}%
^{el}(\mathcal{C})$ with $Y\in\mathsf{Ind}^{a}(\mathcal{C})$ can be factored
as $X\twoheadrightarrow X/L\overset{\tilde{a}}{\rightarrow}Y$ with $L$ a
lattice in $X$.
\end{enumerate}
\end{proposition}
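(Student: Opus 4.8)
The plan is to treat both parts by the same move: start from \emph{any} lattice $L\hookrightarrow X$ --- one exists precisely because $X\in\mathsf{Tate}^{el}(\mathcal{C})$ --- and then enlarge $L$ (for part~(1)) or shrink it (for part~(2)) until the given morphism factors through it. The two inputs from \cite{TateObjectsExactCats} I will lean on are: (a) every morphism from an admissible Pro-object to an admissible Ind-object factors over an object of $\mathcal{C}$; and (b) the objects of $\mathcal{C}$ are compact in $\mathsf{Ind}^{a}(\mathcal{C})$ and co-compact in $\mathsf{Pro}^{a}(\mathcal{C})$, with the colimit (resp.\ limit) cocone legs of an admissible Ind-diagram (resp.\ Pro-diagram) being admissible monics (resp.\ epics). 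Everything else is a chase with universal properties of pullbacks, cokernels, and (co)compactness.

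For part~(1) I would compose $a$ with the admissible epic $q\colon X\twoheadrightarrow X/L$. Since $Y\in\mathsf{Pro}^{a}(\mathcal{C})$ and $X/L\in\mathsf{Ind}^{a}(\mathcal{C})$, fact~(a) factors $q\circ a$ over some $C_{0}\in\mathcal{C}$, and then fact~(b), applied to an admissible Ind-presentation $X/L=\operatorname*{colim}_{j}Z_{j}$, factors $C_{0}\to X/L$ through one colimit leg $Z_{j_{0}}\hookrightarrow X/L$. Let $L'\hookrightarrow X$ be the preimage of $Z_{j_{0}}$, i.e.\ the pullback $X\times_{X/L}Z_{j_{0}}$ of the admissible monic $Z_{j_{0}}\hookrightarrow X/L$ along $q$; then $L'\hookrightarrow X$ is an admissible monic containing $L$, with $L'/L\cong Z_{j_{0}}\in\mathcal{C}$ and $X/L'\cong (X/L)/Z_{j_{0}}\cong\operatorname*{colim}_{j\ge j_{0}}Z_{j}/Z_{j_{0}}$. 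From $0\to L\to L'\to Z_{j_{0}}\to 0$ and extension-closedness of $\mathsf{Pro}^{a}(\mathcal{C})$ one gets $L'\in\mathsf{Pro}^{a}(\mathcal{C})$, and the displayed colimit exhibits $X/L'\in\mathsf{Ind}^{a}(\mathcal{C})$, so $L'$ is again a lattice. Finally $q\circ a$ factors through $Z_{j_{0}}$ by construction, so the universal property of the pullback $L'=X\times_{X/L}Z_{j_{0}}$ produces the desired $\tilde a\colon Y\to L'$ with $a=(L'\hookrightarrow X)\circ\tilde a$.

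For part~(2) I would dualize. Restrict $a$ to the lattice: $L\hookrightarrow X\overset{a}{\to}Y$ is a morphism from $L\in\mathsf{Pro}^{a}(\mathcal{C})$ to $Y\in\mathsf{Ind}^{a}(\mathcal{C})$, so by~(a) it factors over some $C_{0}\in\mathcal{C}$; writing $L=\lim_{j}W_{j}$ as an admissible Pro-presentation, fact~(b) factors $L\to C_{0}$ through a canonical projection $p\colon L\twoheadrightarrow W_{j_{0}}$. Put $L':=\ker(p)$. Then $L/L'\cong W_{j_{0}}\in\mathcal{C}$; $L'$ is again an admissible Pro-object, being the kernel of a canonical projection of the Pro-object $L$, and from $0\to W_{j_{0}}\to X/L'\to X/L\to 0$ together with extension-closedness of $\mathsf{Ind}^{a}(\mathcal{C})$ one gets $X/L'\in\mathsf{Ind}^{a}(\mathcal{C})$; hence $L'\le L$ is a sub-lattice. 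Since $a|_{L}$ factors through $p$, it kills $L'$, so $a$ descends along the cokernel $X\twoheadrightarrow X/L'$ of $L'\hookrightarrow X$ to the desired $\tilde a\colon X/L'\to Y$ with $a=\tilde a\circ(X\twoheadrightarrow X/L')$.

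I expect the only non-formal step to be the bookkeeping that the modified $L'$ is still a lattice --- i.e.\ keeping track of which extensions, kernels and quotients remain inside $\mathsf{Pro}^{a}(\mathcal{C})$ and $\mathsf{Ind}^{a}(\mathcal{C})$ --- together with input~(a), that a morphism from an admissible Pro-object to an admissible Ind-object factors over an object of $\mathcal{C}$. Both are the substantive ingredients here, and both are available from \cite{TateObjectsExactCats}; once they are in hand, the factorizations are immediate from the universal properties cited above.
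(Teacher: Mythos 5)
Your argument is correct: refining an arbitrary lattice of $X$ upward (resp.\ downward) via the factorization of Pro-to-Ind morphisms through $\mathcal{C}$, (co)compactness of objects of $\mathcal{C}$ against admissible Ind-/Pro-diagrams, and extension-closedness of $\mathsf{Pro}^{a}(\mathcal{C})$ and $\mathsf{Ind}^{a}(\mathcal{C})$ is precisely the standard lattice-refinement argument, and every input you invoke is available in \cite{TateObjectsExactCats}. The paper itself gives no proof, deferring entirely to \cite[Prop.\ 2.7]{bgwRelativeTateObjects}, which argues along the same lines as you do; the only step deserving slightly more care is that $\ker(L\twoheadrightarrow W_{j_{0}})$ is again an admissible Pro-object, which holds because the projection is an admissible epic already in $\mathsf{Pro}^{a}(\mathcal{C})$ and the exact, fully faithful inclusion $\mathsf{Pro}^{a}(\mathcal{C})\hookrightarrow\mathsf{Tate}^{el}(\mathcal{C})$ carries its kernel to the kernel in $\mathsf{Tate}^{el}(\mathcal{C})$.
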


\begin{proof}
A complete proof is given in \cite[Proposition 2.7]{bgwRelativeTateObjects}.
\end{proof}

\subsection{\label{subsect_QuotientExactCategories}Quotient exact categories}

If $\mathcal{C}\hookrightarrow\mathcal{D}$ is an exact sub-category, this does
not yet suffice to define a quotient exact category \textquotedblleft%
$\mathcal{D}/\mathcal{C}$\textquotedblright. However, as was shown by
Schlichting, a sufficient condition for such a category to exist is that
$\mathcal{C}\hookrightarrow\mathcal{D}$ is \textquotedblleft left or right $s
$-filtering\textquotedblright. This is a technical notion and we refer to the
original paper \cite{MR2079996}, or for a quick review to \cite[\S 2]%
{TateObjectsExactCats}. Ultimately, $\mathcal{D}/\mathcal{C}$ arises as the
localization $\mathcal{D}[\Sigma^{-1}]$, where $\Sigma$ is the smallest class
of morphisms encompassing those with (1) admissible epics with kernels in
$\mathcal{C}$, (2) admissible monics with cokernels in $\mathcal{C}$, (3) and
is closed under composition. The left/right $s$-filtering conditions imply the
existence of a calculus of left/right fractions. As was observed by T.
B\"{u}hler, in the left $s$-filtering case, these conditions also imply that
inverting admissible epics with kernels in $\mathcal{C}$ is sufficient (see
\cite[Prop. 2.19]{TateObjectsExactCats} for a careful formulation of the
latter). Let us summarize a number of fully exact sub-categories which have
these particular properties:

\begin{proposition}
\label{TMT_PropFiltering}Let $\mathcal{C}$ be an exact category.

\begin{enumerate}
\item $\mathcal{C}\hookrightarrow\mathsf{Ind}^{a}(\mathcal{C})$ is left $s$-filtering.

\item $\mathcal{C}\hookrightarrow\mathsf{Pro}^{a}(\mathcal{C})$ is right $s$-filtering.

\item $\mathsf{Pro}^{a}(\mathcal{C})\hookrightarrow\mathsf{Tate}%
^{el}(\mathcal{C})$ is left $s$-filtering.

\item $\mathsf{Ind}^{a}(\mathcal{C})\hookrightarrow\mathsf{Tate}%
^{el}(\mathcal{C})$ is right $s$-filtering if $\mathcal{C}$ is idempotent complete.

\item $\mathsf{Ind}^{a}(\mathcal{C})\cap\mathsf{Pro}^{a}(\mathcal{C}%
)=\mathcal{C}$, viewed as full sub-categories of $\mathsf{Tate}^{el}%
(\mathcal{C})$.
\end{enumerate}
\end{proposition}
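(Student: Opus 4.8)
The plan is to reduce all five claims to the structure theory of admissible Ind- and Pro-objects (Schlichting \cite{MR2079996}, developed further in \cite{TateObjectsExactCats}), supplemented for the Tate statements by the factorizations of Proposition \ref{Prop_LatticeLeftFiltAndRightFilt} and the (co)directedness of the Sato Grassmannian from Theorem \ref{Thm_SatoGrassmannianProperties}. None of the five assertions is genuinely new, so the proof is mostly a matter of assembling the right citations and pointing out where the hypothesis in (4) enters. For (1) one verifies the conditions defining a left $s$-filtering inclusion: the closure conditions (extension-closed, and closed under admissible sub- and quotient objects inside $\mathsf{Ind}^{a}(\mathcal{C})$) are part of the basic structure theory of admissible Ind-objects, cf.\ \cite{MR2079996}, \cite{TateObjectsExactCats}; the filtering condition is the observation that any morphism from an object of $\mathcal{C}$ into an admissible Ind-object $X$ --- presented as the directed colimit of its defining diagram $(X_i)$ with admissible monic transition maps --- factors through one of the admissible monics $X_i \hookrightarrow X$ after restricting to a cofinal subdiagram; and the special condition is the dual bookkeeping for admissible epics onto objects of $\mathcal{C}$. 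Statement (2) is the formal dual of (1) via $\mathsf{Pro}^{a}(\mathcal{C}) = \mathsf{Ind}^{a}(\mathcal{C}^{op})^{op}$, under which left and right $s$-filtering interchange.

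For (3), fully exactness of $\mathsf{Pro}^{a}(\mathcal{C}) \hookrightarrow \mathsf{Tate}^{el}(\mathcal{C})$ is built into the notion of a lattice, and the filtering half is exactly Proposition \ref{Prop_LatticeLeftFiltAndRightFilt}(1): every morphism into an elementary Tate object from a Pro-object factors through a lattice, i.e.\ through an admissible subobject that lies in $\mathsf{Pro}^{a}(\mathcal{C})$. The special half asks, for an admissible epic $X \twoheadrightarrow Y$ of elementary Tate objects with $Y$ a Pro-object, for a Pro-object mapping to $X$ whose composite to $Y$ remains an admissible epic; one picks a lattice $L \subseteq X$ and works with the resulting diagram, using $X/L \in \mathsf{Ind}^{a}(\mathcal{C})$ and the fact that the kernel of $X \twoheadrightarrow Y$ is again an elementary Tate object. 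Statement (4) is proved in the same way, dualized, except that the step which produces a common over-lattice from a finite family of lattices is available only when the Sato Grassmannian is co-directed; by Theorem \ref{Thm_SatoGrassmannianProperties}(2) this holds precisely under the hypothesis that $\mathcal{C}$ be idempotent complete, which is exactly why that hypothesis appears in (4). All of this bookkeeping is carried out in \cite{TateObjectsExactCats}, and I would cite it there rather than reproduce it.

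Finally, (5): the inclusion $\mathcal{C} \subseteq \mathsf{Ind}^{a}(\mathcal{C}) \cap \mathsf{Pro}^{a}(\mathcal{C})$ inside $\mathsf{Tate}^{el}(\mathcal{C})$ is immediate from the constant-diagram embeddings, while conversely, if $X$ lies in both $\mathsf{Ind}^{a}(\mathcal{C})$ and $\mathsf{Pro}^{a}(\mathcal{C})$, then $X \hookrightarrow X$ with quotient $0 \in \mathsf{Ind}^{a}(\mathcal{C})$ exhibits $X$ as an elementary Tate object, so Theorem \ref{Thm_SatoGrassmannianProperties}(3) gives $X \in \mathcal{C}$; thus (5) is essentially a restatement of that theorem. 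The main obstacle is the special half of the $s$-filtering conditions in (3) and (4) --- extracting, from an admissible epi or mono meeting a Pro- or Ind-object, a comparison object in the same subcategory while keeping simultaneous control of a lattice and of its Ind-quotient. In (4) this genuinely fails without idempotent completeness, since that is what makes the Sato Grassmannian co-directed and hence lets one merge lattices; everything else in the proposition is either formal duality or a direct appeal to results already available in the excerpt.
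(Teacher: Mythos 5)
Your proposal takes essentially the same route as the paper: the paper's proof consists of citing \cite{TateObjectsExactCats} (Prop.\ 3.10, Thm.\ 4.2, Prop.\ 5.8, Remark 5.35 together with \cite[Cor.\ 2.3]{bgwTensor}, and Prop.\ 5.9) for items (1)--(5), which is exactly where you ultimately defer the bookkeeping, and your sketches of the filtering/special conditions and of (5) via Theorem \ref{Thm_SatoGrassmannianProperties}(3) are consistent with those sources. One small caution: Theorem \ref{Thm_SatoGrassmannianProperties}(2) only asserts that idempotent completeness \emph{suffices} for (co)directedness of the Sato Grassmannian, so your claims that (4) holds ``precisely'' under that hypothesis and ``genuinely fails'' without it are stronger than anything stated or cited here.
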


\begin{proof}
(1) \cite[Prop. 3.10]{TateObjectsExactCats}, (2) \cite[Thm. 4.2]%
{TateObjectsExactCats}, (3) \cite[Prop. 5.8]{TateObjectsExactCats}, (4)
\cite[Remark 5.35]{TateObjectsExactCats}, \cite[Cor. 2.3]{bgwTensor}, (5)
\cite[Prop. 5.9]{TateObjectsExactCats}.
\end{proof}

The construction of this type of quotient category is compatible with the
formation of derived categories in the following sense:

\begin{proposition}
[Schlichting]\label{Prop_SchlichtingLocalizationMakesExactSeqOfDerivedCats}Let
$\mathcal{C}$ be an idempotent complete exact category and $\mathcal{C}%
\hookrightarrow\mathcal{D}$ a right (or left) $s$-filtering inclusion as a
full sub-category of an exact category $\mathcal{D}$. Then%
\[
D^{b}(\mathcal{C})\hookrightarrow D^{b}(\mathcal{D})\twoheadrightarrow
D^{b}(\mathcal{D}/\mathcal{C})
\]
is an exact sequence of triangulated categories.
\end{proposition}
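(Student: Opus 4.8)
The goal is to show that, with $D^{b}$ denoting bounded derived categories of exact categories, the functor $D^{b}(\mathcal{C})\to D^{b}(\mathcal{D})$ is fully faithful onto a thick triangulated subcategory and that the Verdier quotient $D^{b}(\mathcal{D})/D^{b}(\mathcal{C})$ is canonically equivalent to $D^{b}(\mathcal{D}/\mathcal{C})$ — this being what ``exact sequence of triangulated categories'' means here (in the idempotent-complete setting at hand this is an honest equivalence rather than merely an equivalence up to direct summands). Since $D^{b}(\mathcal{E}^{\mathrm{op}})\cong D^{b}(\mathcal{E})^{\mathrm{op}}$ and passing to opposite categories turns a left $s$-filtering inclusion into a right $s$-filtering one while exchanging $\mathcal{D}/\mathcal{C}$ with $(\mathcal{D}^{\mathrm{op}}/\mathcal{C}^{\mathrm{op}})^{\mathrm{op}}$, I would only treat the right $s$-filtering case. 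Throughout I would use the description recalled in \S\ref{subsect_QuotientExactCategories}: $\mathcal{D}/\mathcal{C}=\mathcal{D}[\Sigma^{-1}]$, where by B\"uhler's observation it suffices to invert the class $\Sigma$ of admissible monics with cokernel in $\mathcal{C}$; the right $s$-filtering axioms then provide a calculus of right fractions, and $\mathcal{D}[\Sigma^{-1}]$ carries a canonical exact structure with the same objects as $\mathcal{D}$.

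\emph{Step 1: the embedding.} First I would check that $D^{b}(\mathcal{C})\to D^{b}(\mathcal{D})$ is fully faithful with thick image. The mechanism is that the $s$-filtering hypothesis lets one factor any morphism from an object of $\mathcal{C}$ to an object of $\mathcal{D}$ through an admissible sub- or quotient object lying in $\mathcal{C}$, and that a three-term admissible exact sequence with outer terms in $\mathcal{C}$ has middle term in $\mathcal{C}$ as well. Feeding this into the usual comparison of $\mathrm{Ext}$-groups — via the Verdier-quotient description $D^{b}\mathcal{E}=\mathcal{K}^{b}(\mathcal{E})/\mathcal{A}c^{b}(\mathcal{E})$ recalled in \S\ref{marker_Sect_ReviewDefsOfHochschildHomology} — gives $\mathrm{Hom}_{D^{b}(\mathcal{C})}(X,Y)\xrightarrow{\sim}\mathrm{Hom}_{D^{b}(\mathcal{D})}(X,Y)$ for $X,Y\in\mathcal{C}$, hence for all objects of $D^{b}(\mathcal{C})$ by an induction on the length of complexes; thickness of the image uses that $\mathcal{C}$ is idempotent complete.

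\emph{Step 2: the quotient.} The composite $D^{b}(\mathcal{C})\to D^{b}(\mathcal{D})\to D^{b}(\mathcal{D}/\mathcal{C})$ vanishes because for $C\in\mathcal{C}$ the admissible monic $0\hookrightarrow C$ lies in $\Sigma$, so $C\cong 0$ in $\mathcal{D}/\mathcal{C}$. Hence there is an induced triangulated functor $F\colon D^{b}(\mathcal{D})/D^{b}(\mathcal{C})\to D^{b}(\mathcal{D}/\mathcal{C})$, and it remains to see $F$ is an equivalence. Essential surjectivity is a ``clearing denominators'' argument: a bounded complex over $\mathcal{D}[\Sigma^{-1}]$ has differentials given by right roofs with backward leg in $\Sigma$, and the calculus of right fractions lets one replace it, up to quasi-isomorphism in $D^{b}(\mathcal{D}/\mathcal{C})$, by the image of a genuine bounded complex over $\mathcal{D}$. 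For fullness and faithfulness one runs the same argument on morphisms: a map in $D^{b}(\mathcal{D}/\mathcal{C})$ between images of $\mathcal{D}$-complexes is represented by a roof in $\mathcal{C}^{b}(\mathcal{D})$ whose backward leg $s$ is a $\Sigma$-quasi-isomorphism, the cone of $s$ is assembled from cokernels of monics in $\Sigma$ and hence lies in $\mathcal{K}^{b}(\mathcal{C})$, so the roof already defines a morphism in $D^{b}(\mathcal{D})/D^{b}(\mathcal{C})$ mapping to it; dually, a map of $\mathcal{D}$-complexes killed in $D^{b}(\mathcal{D}/\mathcal{C})$ factors through a $\Sigma$-acyclic complex, which is zero in $D^{b}(\mathcal{D})/D^{b}(\mathcal{C})$ once one knows such complexes lie in the thick subcategory generated by $\mathcal{K}^{b}(\mathcal{C})$.

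\emph{The main obstacle.} Everything above reduces to the assertion that \emph{a bounded complex over $\mathcal{D}$ which becomes acyclic in $\mathcal{D}/\mathcal{C}$ lies in the thick triangulated subcategory of $D^{b}(\mathcal{D})$ generated by $\mathcal{C}$}. The inclusion ``$\supseteq$'' is trivial; ``$\subseteq$'' is the genuinely technical point, and it is exactly where the full strength of the $s$-filtering axioms — not merely that $\mathcal{C}$ behaves like an ideal — is needed. The attack I would take is to induct on the amplitude of the complex, at each stage factoring the relevant $\Sigma$-quasi-isomorphism through an admissible monic with cokernel in $\mathcal{C}$ (up to admissible epics) and peeling off one $\mathcal{C}$-layer via the octahedral axiom; arranging these factorizations compatibly along the complex is the delicate bookkeeping. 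Since this technical core is precisely what is carried out in Schlichting's paper \cite{MR2079996}, and the $s$-filtering hypotheses recalled in \S\ref{subsect_QuotientExactCategories} are tailored to make it go through, I would in the end invoke loc.\ cit.\ for it rather than reproduce the argument.
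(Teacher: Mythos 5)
Your proposal matches the paper: the paper offers no argument of its own and simply cites \cite[Prop.~2.6]{MR2079996}, which is also exactly where your sketch ultimately defers for the one genuinely technical point (that a bounded complex over $\mathcal{D}$ becoming acyclic in $\mathcal{D}/\mathcal{C}$ lies in the thick subcategory generated by $\mathcal{C}$). Your surrounding outline — the fully faithful embedding with thick image, the induced functor on the Verdier quotient via the calculus of fractions, and the role of idempotent completeness of $\mathcal{C}$ in getting an honest exact sequence rather than one up to factors — is a correct account of how that cited result goes.
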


This is \cite[Prop. 2.6]{MR2079996}. The construction of the derived category
of an exact category is explained in Keller \cite[\S 11]{MR1421815} or
B\"{u}hler \cite[\S 10]{MR2606234}.

\begin{theorem}
[{Keller's Localization Theorem, \cite[\S 1.5, Theorem]{MR1667558}}%
]\label{Thm_KellersLocalizationTheorem}Let $\mathcal{C}$ be an exact category
and $\mathcal{C}\hookrightarrow\mathcal{D}$ a right (or left) $s$-filtering
inclusion as a full sub-category of an exact category $\mathcal{D}$. Then%
\[
HH(\mathcal{C})\longrightarrow HH(\mathcal{D})\longrightarrow HH(\mathcal{D}%
/\mathcal{C})\longrightarrow+1
\]
is a fiber sequence in Hochschild homology.
\end{theorem}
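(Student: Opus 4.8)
The statement is attributed to Keller, so the intent here is not to reprove it from scratch but to explain how it is obtained and to assemble the pieces already in the excerpt. The plan is to reduce the Hochschild homology of an exact category to the Hochschild homology of its bounded derived dg category, and then invoke the known localization sequence for dg categories. Concretely, recall from Definition \ref{marker_DefHHKellerExactCats} that $HH(\mathcal{E})$ is defined as the cone of $C_\bullet\mathcal{A}c^b(\mathcal{E})\to C_\bullet\mathcal{C}^b(\mathcal{E})$; by construction this computes the Hochschild homology of the dg quotient $\mathcal{C}^b(\mathcal{E})/\mathcal{A}c^b(\mathcal{E})$, which is a dg enhancement of the bounded derived category $D^b(\mathcal{E})$. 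So the first step is: $HH(\mathcal{E})\simeq HH\big(D^b_{dg}(\mathcal{E})\big)$ for any flat exact category $\mathcal{E}$, where $D^b_{dg}$ denotes the natural dg enhancement.

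Next I would feed in Proposition \ref{Prop_SchlichtingLocalizationMakesExactSeqOfDerivedCats}: since $\mathcal{C}\hookrightarrow\mathcal{D}$ is right (or left) $s$-filtering and $\mathcal{C}$ is idempotent complete, one gets a short exact sequence of triangulated categories $D^b(\mathcal{C})\hookrightarrow D^b(\mathcal{D})\twoheadrightarrow D^b(\mathcal{D}/\mathcal{C})$. This lifts to an exact sequence of dg categories, i.e.\ a sequence $D^b_{dg}(\mathcal{C})\to D^b_{dg}(\mathcal{D})\to D^b_{dg}(\mathcal{D}/\mathcal{C})$ which is exact in the sense of dg categories (the quotient, up to Morita equivalence, is the dg Verdier quotient). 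The final step is to apply Keller's theorem that Hochschild homology (more precisely, the mixed complex, hence $HH$) sends an exact sequence of dg categories to a triangle; this is exactly \cite[\S1.5]{MR1667558}, and it produces the fiber sequence
\[
HH\big(D^b_{dg}(\mathcal{C})\big)\longrightarrow HH\big(D^b_{dg}(\mathcal{D})\big)\longrightarrow HH\big(D^b_{dg}(\mathcal{D}/\mathcal{C})\big)\longrightarrow +1 .
\]
Combining with the identifications from the first step gives the claimed fiber sequence for $\mathcal{C}$, $\mathcal{D}$, $\mathcal{D}/\mathcal{C}$.

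The main obstacle is the compatibility of all the identifications, and in particular making sure the dg-level statement is genuinely available and not circular. Two points need care: (i) one must check that the dg quotient $D^b_{dg}(\mathcal{D})/D^b_{dg}(\mathcal{C})$ is Morita equivalent to $D^b_{dg}(\mathcal{D}/\mathcal{C})$ — this is where Schlichting's localization (Prop.\ \ref{Prop_SchlichtingLocalizationMakesExactSeqOfDerivedCats}) does the real work, upgrading the triangulated-categorical exactness to the dg/Morita level, using idempotent completeness of $\mathcal{C}$; and (ii) Keller's invariance of $HH$ under Morita equivalence, and its behaviour on dg quotients, must be cited in the form that handles the exact-category models — this is precisely what \cite{MR1647519} and \cite{MR1667558} supply, and the footnote in the excerpt already notes that the mixed-complex formulation implies the Hochschild one. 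Since all of this is Keller's, the "proof" is really a citation plus the bookkeeping that $HH$ of a flat exact category agrees with $HH$ of its bounded derived dg enhancement; I would state that lemma explicitly, point to Definition \ref{marker_DefHHKellerExactCats} and to \cite[\S1.4--1.5]{MR1667558}, and leave the routine diagram-chase to the reader.
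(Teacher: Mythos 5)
The paper does not actually prove this statement: it is quoted as Keller's theorem and the entire ``proof'' in the text is the citation to \cite[\S 1.5, Theorem]{MR1667558}, so your citation-plus-bookkeeping stance matches the paper's treatment, and your outline (identify $HH$ of a flat exact category with $HH$ of the dg enhancement of its bounded derived category, feed in the exact sequence of derived categories, apply the dg-level localization theorem) is indeed the route by which Keller obtains it.

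Two cautions if the sketch were to stand as an argument rather than a pointer. First, as written it is circular: your final step cites \cite[\S 1.5]{MR1667558}, which is exactly the statement being proved. The input needed there is Keller's localization/invariance theorem for exact sequences of dg categories (from his earlier work on dg algebras and dg categories, cf.\ \cite{MR1492902} and \cite{MR1647519}); the cited \S 1.5 is where that dg statement is specialized to exact categories. Second, Proposition \ref{Prop_SchlichtingLocalizationMakesExactSeqOfDerivedCats} as stated requires $\mathcal{C}$ to be idempotent complete, which is not among the hypotheses of the theorem; without it the kernel of $D^{b}(\mathcal{D})\rightarrow D^{b}(\mathcal{D}/\mathcal{C})$ is in general only the thick closure of $D^{b}(\mathcal{C})$, so one must either add that hypothesis or invoke the invariance of $HH$ under idempotent completion (Morita invariance) to reduce to it, i.e.\ use the ``exact up to factors'' form of the localization theorem. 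A minor further point: Definition \ref{marker_DefHHKellerExactCats} is stated for flat $k$-linear exact categories, and the general case requires the construction of \cite[\S 3.9]{MR1667558}.
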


This is due to Keller \cite[\S 1.5, Theorem]{MR1667558}. The following result
was first proven for countable cardinality by Sho Saito \cite{MR3317759}:

\begin{proposition}
For any infinite cardinal $\kappa$ and exact category $\mathcal{C}$, there is
an exact equivalence of exact categories%
\[
\left.  \mathsf{Tate}_{\kappa}^{el}(\mathcal{C})/\mathsf{Pro}_{\kappa}%
^{a}(\mathcal{C})\right.  \overset{\sim}{\longrightarrow}\left.
\mathsf{Ind}_{\kappa}^{a}(\mathcal{C})/\mathcal{C}\right.  \text{.}%
\]

\end{proposition}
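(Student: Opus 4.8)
The plan is to construct the equivalence as a functor on the nose and then check it is an exact equivalence using the universal property of the localization. First I would produce the comparison functor. The inclusions $\mathsf{Pro}^a_\kappa(\mathcal{C}) \hookrightarrow \mathsf{Tate}^{el}_\kappa(\mathcal{C})$ and $\mathcal{C}\hookrightarrow\mathsf{Ind}^a_\kappa(\mathcal{C})$ are both left $s$-filtering by Proposition \ref{TMT_PropFiltering}(3) and (1), so both quotients exist in the sense of Schlichting. The key point is that $\mathsf{Ind}^a_\kappa(\mathcal{C})$ sits inside $\mathsf{Tate}^{el}_\kappa(\mathcal{C})$ as a fully exact subcategory (it consists of those Tate objects admitting the zero object as a lattice, or equivalently those whose lattice-quotient can be taken to be the whole object), and this inclusion carries $\mathcal{C}$ into $\mathsf{Pro}^a_\kappa(\mathcal{C})$; indeed $\mathsf{Ind}^a_\kappa(\mathcal{C})\cap\mathsf{Pro}^a_\kappa(\mathcal{C})=\mathcal{C}$ by Proposition \ref{TMT_PropFiltering}(5). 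Composing the inclusion $\mathsf{Ind}^a_\kappa(\mathcal{C})\hookrightarrow\mathsf{Tate}^{el}_\kappa(\mathcal{C})$ with the quotient functor $\mathsf{Tate}^{el}_\kappa(\mathcal{C})\to\mathsf{Tate}^{el}_\kappa(\mathcal{C})/\mathsf{Pro}^a_\kappa(\mathcal{C})$ sends every object of $\mathcal{C}$ to a zero object (since $\mathcal{C}\subseteq\mathsf{Pro}^a_\kappa(\mathcal{C})$), hence factors through the quotient by $\mathcal{C}$, yielding an exact functor
\[
F:\;\mathsf{Ind}^a_\kappa(\mathcal{C})/\mathcal{C}\;\longrightarrow\;\mathsf{Tate}^{el}_\kappa(\mathcal{C})/\mathsf{Pro}^a_\kappa(\mathcal{C}).
\]

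Next I would show $F$ is essentially surjective. Given an elementary Tate object $X$, choose a lattice $L\hookrightarrow X$ (this exists by definition of $\mathsf{Tate}^{el}$). Then $L\in\mathsf{Pro}^a_\kappa(\mathcal{C})$ and $X/L\in\mathsf{Ind}^a_\kappa(\mathcal{C})$, and the admissible epic $X\twoheadrightarrow X/L$ has kernel $L\in\mathsf{Pro}^a_\kappa(\mathcal{C})$, so it becomes an isomorphism in the quotient $\mathsf{Tate}^{el}_\kappa(\mathcal{C})/\mathsf{Pro}^a_\kappa(\mathcal{C})$. Thus every object of the target is isomorphic to one coming from $\mathsf{Ind}^a_\kappa(\mathcal{C})$, i.e.\ to one in the image of $F$. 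For fully faithfulness I would use the calculus of fractions: by the left $s$-filtering hypothesis and B\"uhler's observation (cited via \cite[Prop.\ 2.19]{TateObjectsExactCats}) it suffices to invert admissible epics with kernel in the respective subcategory, so a morphism in $\mathsf{Tate}^{el}_\kappa(\mathcal{C})/\mathsf{Pro}^a_\kappa(\mathcal{C})$ from $X$ to $X'$ is represented by a roof $X\xleftarrow{s}\tilde X\xrightarrow{f}X'$ with $s$ an admissible epic with $\ker s\in\mathsf{Pro}^a_\kappa(\mathcal{C})$. Using Proposition \ref{Prop_LatticeLeftFiltAndRightFilt} and Theorem \ref{Thm_SatoGrassmannianProperties}(2) (co-directedness of the Sato Grassmannian after idempotent completion — but here one only needs directedness of lattices, which holds without idempotent completion on the elementary level by \cite[Thm.\ 6.7]{TateObjectsExactCats} applied to $\mathcal{C}^{ic}$ and then descended) one can arrange $\tilde X$ and $X'$ to be replaced, up to the equivalence relation on roofs, by objects of $\mathsf{Ind}^a_\kappa(\mathcal{C})$ modulo morphisms that factor through $\mathcal{C}$; pushing $X$ to its lattice-quotient and $X'$ likewise, the roof is identified with a roof entirely inside $\mathsf{Ind}^a_\kappa(\mathcal{C})$ with denominators having kernels in $\mathcal{C}$, which is exactly a morphism in $\mathsf{Ind}^a_\kappa(\mathcal{C})/\mathcal{C}$. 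Tracking that this assignment is inverse to $F$ on Hom-sets gives fully faithfulness.

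The main obstacle I expect is the bookkeeping in the fully-faithfulness step: one must show that two lattices in a given elementary Tate object yield canonically identified images in the quotient, and that morphisms between lattice-quotients are computed correctly, i.e.\ that the passage $X\rightsquigarrow X/L$ is functorial up to the equivalence relation defining the localized Hom-sets. This is precisely where directedness/co-directedness of the Sato Grassmannian (Theorem \ref{Thm_SatoGrassmannianProperties}(2)) is doing the real work — given two lattices $L_1,L_2\subseteq X$ one passes to a common sublattice $L\subseteq L_1\cap L_2$, and the two admissible epics $X\twoheadrightarrow X/L_i$ are related through $X\twoheadrightarrow X/L$ by admissible monics $X/L\hookrightarrow\cdots$ with cokernel $L_i/L\in\mathcal{C}$, hence isomorphisms in the quotient by $\mathcal{C}$. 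Once this coherence is in place, exactness of $F$ (already noted), essential surjectivity, and fully faithfulness together give the claimed exact equivalence. (The statement is for $\mathsf{Tate}^{el}$, so no idempotent completion is needed in the conclusion, and one never has to invoke Theorem \ref{Thm_SatoGrassmannianProperties}(2) in a form requiring $\mathcal{C}$ itself to be idempotent complete — it is enough that $\mathcal{C}^{ic}$ is, which is automatic by Lemma \ref{TMT_LemmaIdemCompletionOfSplitExactIsSplitExact}(1).)
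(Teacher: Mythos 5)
Your overall skeleton is the right one, and it is essentially the strategy of the detailed proof the paper defers to (the paper itself gives no argument, citing \cite[Prop.~5.32]{TateObjectsExactCats}, after Saito \cite{MR3317759}): factor the composite $\mathsf{Ind}^a_\kappa(\mathcal{C})\hookrightarrow\mathsf{Tate}^{el}_\kappa(\mathcal{C})\rightarrow\mathsf{Tate}^{el}_\kappa(\mathcal{C})/\mathsf{Pro}^a_\kappa(\mathcal{C})$ through $\mathsf{Ind}^a_\kappa(\mathcal{C})/\mathcal{C}$ using that $\mathcal{C}\subseteq\mathsf{Pro}^a_\kappa(\mathcal{C})$, get essential surjectivity by killing a lattice, and compute Hom-sets by a calculus of fractions. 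The construction of $F$ and the essential surjectivity step are fine.

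The gaps are in the full-faithfulness step, which is the actual content. First, the left $s$-filtering hypotheses (Prop.~\ref{TMT_PropFiltering}(1),(3)) give a calculus of \emph{left} fractions in the paper's convention: a morphism $X\rightarrow X'$ in the quotient is a roof $X\rightarrow W\twoheadleftarrow X'$ whose denominator is an admissible epic \emph{out of the target} with kernel in the subcategory (this is exactly how the paper uses it in the proof of Lemma~\ref{TMT_MoritaToQuotientCategory}); your roofs $X\twoheadleftarrow\tilde X\rightarrow X'$ put the denominator on the source, which the filtering hypothesis does not provide. Second, what must actually be proved is a cofinality statement: for $Y,Y'\in\mathsf{Ind}^a_\kappa(\mathcal{C})$, among admissible epics $Y'\twoheadrightarrow W$ with kernel in $\mathsf{Pro}^a_\kappa(\mathcal{C})$, those with kernel in $\mathcal{C}$ and $W$ again an admissible Ind-object are cofinal (using, e.g., Theorem~\ref{Thm_SatoGrassmannianProperties}(3) to see a Pro-kernel of such an epic lies in $\mathcal{C}$); none of this bookkeeping appears in your sketch, and it is precisely where \cite[Prop.~5.32]{TateObjectsExactCats} does its work. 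Third, your appeal to (co)directedness of the Sato Grassmannian is only available for idempotent complete $\mathcal{C}$ (Theorem~\ref{Thm_SatoGrassmannianProperties}(2)), whereas the Proposition is asserted for an arbitrary exact category; the proposed repair (apply it to $\mathcal{C}^{ic}$ and ``descend'') is unjustified, since a common sublattice produced in $\mathsf{Tate}^{el}(\mathcal{C}^{ic})$ need not be an object of $\mathsf{Pro}^a_\kappa(\mathcal{C})$ — this asymmetry is exactly why the dual statement, Prop.~\ref{TMT_PropTateModIndEqualsProModC}, carries an idempotent-completeness hypothesis while the present one does not. Two smaller points: the comparison maps between $X/L$ and $X/L_i$ are admissible epics with kernel $L_i/L\in\mathcal{C}$, not admissible monics with cokernel in $\mathcal{C}$; and since you build $F$ in the direction $\mathsf{Ind}^a_\kappa(\mathcal{C})/\mathcal{C}\rightarrow\mathsf{Tate}^{el}_\kappa(\mathcal{C})/\mathsf{Pro}^a_\kappa(\mathcal{C})$, an \emph{exact} equivalence also requires showing $F$ reflects exactness (equivalently, that an exact sequence in the Tate-side quotient is isomorphic to the image of one from $\mathsf{Ind}^a_\kappa(\mathcal{C})$), which again needs a compatible-lattice argument you have not supplied.
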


See \cite[Prop. 5.32]{TateObjectsExactCats} for a detailed proof. It turns out
that this result admits a symmetric dual statement, which will be more useful
for the purposes of this paper.

\begin{proposition}
[{\cite[Prop. 5.34]{TateObjectsExactCats}}]%
\label{TMT_PropTateModIndEqualsProModC}Let $\mathcal{C}$ be an idempotent
complete exact category. There is an exact equivalence of exact categories%
\[
\left.  \mathsf{Tate}^{el}(\mathcal{C})/\mathsf{Ind}^{a}(\mathcal{C})\right.
\overset{\sim}{\longrightarrow}\left.  \mathsf{Pro}^{a}(\mathcal{C}%
)/\mathcal{C}\right.  \text{,}%
\]
sending an object $X\in\mathsf{Tate}^{el}(\mathcal{C})$ to $L$, where $L$ is
any lattice $L\hookrightarrow X$, and morphisms $f:X\rightarrow X^{\prime}$ to
a suitable restriction $\left.  f\mid_{L}\right.  :L\rightarrow L^{\prime}$
with $L^{\prime}\hookrightarrow X^{\prime}$ a suitable lattice. This defines a
well-defined functor. The inverse equivalence is induced from the inclusion of
categories $\mathsf{Pro}^{a}(\mathcal{C})\hookrightarrow\mathsf{Tate}%
^{el}(\mathcal{C})$.
\end{proposition}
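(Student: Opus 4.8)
The plan is to realize the stated assignment as an honest functor $F\colon \mathsf{Tate}^{el}(\mathcal{C})/\mathsf{Ind}^{a}(\mathcal{C})\to\mathsf{Pro}^{a}(\mathcal{C})/\mathcal{C}$ and then check directly that it is quasi-inverse to the functor $G$ induced by the inclusion $\mathsf{Pro}^{a}(\mathcal{C})\hookrightarrow\mathsf{Tate}^{el}(\mathcal{C})$. First I would observe that both quotients exist: $\mathcal{C}\hookrightarrow\mathsf{Pro}^{a}(\mathcal{C})$ is right $s$-filtering, and since $\mathcal{C}$ is idempotent complete, so is $\mathsf{Ind}^{a}(\mathcal{C})\hookrightarrow\mathsf{Tate}^{el}(\mathcal{C})$, both by Proposition \ref{TMT_PropFiltering}. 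The functor $G$ is then immediate from the universal property of the localization: the class $\Sigma_{\mathcal{C}}$ of admissible monics/epics with kernel or cokernel in $\mathcal{C}$ (closed under composition) is contained in the analogous class $\Sigma_{\mathsf{Ind}^{a}}$ because $\mathcal{C}$ sits inside $\mathsf{Ind}^{a}(\mathcal{C})$ as a full subcategory, so $\mathsf{Pro}^{a}(\mathcal{C})\hookrightarrow\mathsf{Tate}^{el}(\mathcal{C})\to\mathsf{Tate}^{el}(\mathcal{C})/\mathsf{Ind}^{a}(\mathcal{C})$ inverts $\Sigma_{\mathcal{C}}$ and hence descends to $G$.

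Next I would build $F$ at the level of $\mathsf{Tate}^{el}(\mathcal{C})$ before descending it. On objects, a choice of lattice $L\hookrightarrow X$ (one exists by the definition of elementary Tate object, Definition \ref{Def_TateObjectAndLattices}) gives $F(X):=L\in\mathsf{Pro}^{a}(\mathcal{C})$. For a morphism $f\colon X\to X'$ and chosen lattices $L\hookrightarrow X$, $L'\hookrightarrow X'$, the composite $L\hookrightarrow X\overset{f}{\to}X'$ is a morphism from a Pro-object into an elementary Tate object, so by Proposition \ref{Prop_LatticeLeftFiltAndRightFilt}(1) it factors as $L\to L''\hookrightarrow X'$ with $L''$ a lattice in $X'$; enlarging $L''$ if necessary — possible because the Sato Grassmannian $Gr(X')$ is directed, Theorem \ref{Thm_SatoGrassmannianProperties}(2), which is exactly where idempotent completeness enters — we may assume $L'\hookrightarrow L''$, and this inclusion has cokernel $L''/L'\in\mathcal{C}$ by Theorem \ref{Thm_SatoGrassmannianProperties}(1), hence becomes invertible in $\mathsf{Pro}^{a}(\mathcal{C})/\mathcal{C}$. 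Composing with its inverse defines $F(f)\colon L\to L'$ there.

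The bulk of the work is then to verify that $F$ is well defined, functorial, and inverts $\Sigma_{\mathsf{Ind}^{a}}$. Independence of all choices (the lattices $L$, $L'$ and the factorizing lattice $L''$) follows from $Gr$ being both directed and co-directed: any two lattices of a fixed $X$ admit a common sub-lattice and a common over-lattice with all differences in $\mathcal{C}$, so they become canonically isomorphic in the quotient by $\mathcal{C}$, compatibly with restrictions of morphisms and with composition. To see $F$ descends, one checks it carries $\Sigma_{\mathsf{Ind}^{a}}$ to isomorphisms: for an admissible epic $g\colon X\twoheadrightarrow Y$ with $\ker g\in\mathsf{Ind}^{a}(\mathcal{C})$ and a lattice $L\hookrightarrow X$, the subobject $L\cap\ker g$ lies in $\mathsf{Pro}^{a}(\mathcal{C})\cap\mathsf{Ind}^{a}(\mathcal{C})=\mathcal{C}$ by Proposition \ref{TMT_PropFiltering}(5), $Y$ is again elementary Tate with $\operatorname{im}(L)\hookrightarrow Y$ a lattice, and $F(g)$ is, under the canonical identifications, the quotient $L\twoheadrightarrow L/(L\cap\ker g)$, an isomorphism in $\mathsf{Pro}^{a}(\mathcal{C})/\mathcal{C}$; the admissible-monic case is dual. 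Finally $F\circ G\cong\operatorname{id}$ is tautological, a Pro-object being its own lattice, and $G\circ F\cong\operatorname{id}$ holds because a lattice $L\hookrightarrow X$ has $X/L\in\mathsf{Ind}^{a}(\mathcal{C})$, so it already represents a (natural) isomorphism $G(L)\cong X$ in $\mathsf{Tate}^{el}(\mathcal{C})/\mathsf{Ind}^{a}(\mathcal{C})$. The main obstacle is precisely the choice-independence and functoriality of $F$ on morphisms: this is the one genuinely computational point, and it hinges on the directedness and co-directedness of the Sato Grassmannian — hence on the idempotent completeness hypothesis — together with the factorization of Proposition \ref{Prop_LatticeLeftFiltAndRightFilt}.
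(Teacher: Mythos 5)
The paper itself does not prove this proposition: it is quoted from \cite[Prop. 5.34]{TateObjectsExactCats}, and there the argument runs in the opposite direction from yours --- one shows that the functor induced by the inclusion $\mathsf{Pro}^{a}(\mathcal{C})\hookrightarrow\mathsf{Tate}^{el}(\mathcal{C})$, i.e.\ your $G$, is essentially surjective (any lattice $L\hookrightarrow X$ is a weak isomorphism in $\mathsf{Tate}^{el}(\mathcal{C})/\mathsf{Ind}^{a}(\mathcal{C})$ because $X/L\in\mathsf{Ind}^{a}(\mathcal{C})$) and fully faithful (a roof-straightening argument with the calculus of fractions, Proposition \ref{Prop_LatticeLeftFiltAndRightFilt} and Theorem \ref{Thm_SatoGrassmannianProperties}). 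That route never has to produce the lattice assignment as an honest functor; the ``send $X$ to a lattice'' description is read off afterwards as the quasi-inverse. Your plan inverts this: you construct $F$ directly and check $FG\cong\mathrm{id}$, $GF\cong\mathrm{id}$. This is legitimate in principle, but it concentrates all of the difficulty in exactly the two places your sketch leaves open.

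Concretely: (i) the well-definedness and functoriality of $F$ is not a formal consequence of the (co)directedness of $Gr(X)$; one must verify that the roof $(L'\hookrightarrow L'')^{-1}\circ(L\to L'')$ is independent of the chosen factorizing and common over-lattices \emph{and} composes correctly, and this diagram chase in the localized category is the entire content of the statement --- asserting ``compatibly with restrictions and with composition'' is where the proof actually lives. (ii) More seriously, your descent step forms $L\cap\ker g$ and $\operatorname{im}(L)\subseteq Y$. In a general exact category such intersections and images of admissible subobjects need not exist as admissible subobjects, so before you may invoke $\mathsf{Ind}^{a}(\mathcal{C})\cap\mathsf{Pro}^{a}(\mathcal{C})=\mathcal{C}$ (Proposition \ref{TMT_PropFiltering}(5), which applies only to objects already known to lie in both categories) you must prove that $L\cap\ker g$ is an admissible subobject of $L$ and of $\ker g$, and that $L\twoheadrightarrow L/(L\cap\ker g)$ is an admissible epic onto a lattice of $Y$; these are precisely the lattice structure lemmas established in the cited source, and without them the conclusion ``$F(g)$ is an isomorphism in $\mathsf{Pro}^{a}(\mathcal{C})/\mathcal{C}$'' does not go through. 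The cleanest repair is to drop the explicit $F$ altogether: keep your easy observation that $L\hookrightarrow X$ is inverted in $\mathsf{Tate}^{el}(\mathcal{C})/\mathsf{Ind}^{a}(\mathcal{C})$ (essential surjectivity of $G$) and replace (i) and (ii) by a single proof that $G$ is fully faithful, using Proposition \ref{Prop_LatticeLeftFiltAndRightFilt} to rewrite any roof between Pro-objects in the Tate quotient as a roof in $\mathsf{Pro}^{a}(\mathcal{C})/\mathcal{C}$, with Theorem \ref{Thm_SatoGrassmannianProperties}(1),(2) controlling the comparisons of lattices.
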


\subsection{\label{sect_RelativeMoritaTheory}Relative Morita theory}

In this section we develop a series of results aiming at the comparison of $n
$-Tate categories with projective module categories. The following lemma is
the starting point for this type of consideration. Recall that we write
$P_{f}(R)$ to denote the category of finitely generated projective right $R$-modules.

\begin{definition}
Let $\mathcal{C}$ be an exact category. We say that $S\in\mathcal{C}$ is
a\emph{\ generator} if every object $X\in\mathcal{C}$ is a direct summand of
$S^{\oplus n}$ for $n$ sufficiently large.
\end{definition}

\begin{lemma}
\label{TMT_LemmaMoritaProjectiveGenerator}Let $\mathcal{C}$ be an idempotent
complete and split exact category with generator $S$. Then the functor%
\begin{equation}
\mathcal{C}\longrightarrow P_{f}(\operatorname*{End}\nolimits_{\mathcal{C}%
}(S))\text{,}\qquad Z\longmapsto\operatorname*{Hom}\nolimits_{\mathcal{C}%
}(S,Z)\nonumber
\end{equation}
is an exact equivalence of categories.
\end{lemma}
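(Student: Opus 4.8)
Set $R:=\operatorname{End}_{\mathcal{C}}(S)$ and write $F:=\operatorname{Hom}_{\mathcal{C}}(S,-)$, which takes values in right $R$-modules via precomposition. The plan is to check in turn that $F$ factors through $P_f(R)$, that it is fully faithful, that it is essentially surjective, and that it is exact with exact inverse. Everything is driven by one observation: $F$ is an additive functor on an additive category, hence it preserves finite biproducts and carries splittings of idempotents to splittings; in particular $F(S^{\oplus n})=R^{\oplus n}$, and $F$ restricts to a ring homomorphism $\operatorname{End}_{\mathcal{C}}(S^{\oplus n})\to\operatorname{End}_{R}(R^{\oplus n})=M_n(R)$.

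\emph{Factoring through $P_f(R)$ and full faithfulness.} Since $S$ is a generator, every $Z\in\mathcal{C}$ is a direct summand of some $S^{\oplus n}$, so $F(Z)$ is a direct summand of $R^{\oplus n}$ and thus finitely generated projective. For full faithfulness, on the pair $(S,S)$ the canonical comparison map $\operatorname{Hom}_{\mathcal{C}}(S,S)\to\operatorname{Hom}_{R}(R,R)$ is the identity of $R$; being additive in each variable, the comparison map is then bijective on all pairs $(S^{\oplus n},S^{\oplus m})$. For general $Z,W$ choose idempotents $e\in\operatorname{End}_{\mathcal{C}}(S^{\oplus n})$ and $f\in\operatorname{End}_{\mathcal{C}}(S^{\oplus m})$ with images $Z$ and $W$ (possible since $\mathcal{C}$ is idempotent complete). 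Then $\operatorname{Hom}_{\mathcal{C}}(Z,W)=f\circ\operatorname{Hom}_{\mathcal{C}}(S^{\oplus n},S^{\oplus m})\circ e$ and likewise $\operatorname{Hom}_{R}(F(Z),F(W))=F(f)\circ\operatorname{Hom}_{R}(R^{\oplus n},R^{\oplus m})\circ F(e)$; since the comparison natural transformation intertwines pre- and post-composition with $e,f$ and with their images under $F$, bijectivity on $(S^{\oplus n},S^{\oplus m})$ descends to bijectivity on $(Z,W)$.

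\emph{Essential surjectivity and exactness.} Given $P\in P_f(R)$, write $P=\operatorname{im}(\varepsilon)$ for an idempotent $\varepsilon\in M_n(R)=\operatorname{End}_{R}(R^{\oplus n})$. By the previous step $F$ induces a ring isomorphism $\operatorname{End}_{\mathcal{C}}(S^{\oplus n})\overset{\sim}{\longrightarrow}M_n(R)$, so $\varepsilon=F(e)$ for a unique idempotent $e\in\operatorname{End}_{\mathcal{C}}(S^{\oplus n})$. As $\mathcal{C}$ is idempotent complete, $e$ splits, $S^{\oplus n}=Z\oplus Z'$ with $Z=\operatorname{im}(e)$, and since $F$ preserves this splitting, $F(Z)\cong\operatorname{im}(F(e))=\operatorname{im}(\varepsilon)=P$. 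For exactness, note that $\mathcal{C}$ is split exact and that $P_f(R)$ carries the split exact structure (every short exact sequence of projectives splits), so on either side a kernel--cokernel pair is admissible precisely when it is isomorphic to a biproduct inclusion and projection $A\hookrightarrow A\oplus C\twoheadrightarrow C$; an additive equivalence preserves and reflects such diagrams, hence $F$ and $F^{-1}$ are exact, and $F$ is an exact equivalence.

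I do not anticipate a genuine obstacle here; this is the classical Morita argument, now run inside an exact category. The one point that needs a little care is the compatibility in the full-faithfulness step of the comparison natural transformation with composition by the idempotents $e$ and $f$ (equivalently, functoriality of $F$ on the corners cut out by $e$ and $f$), since this is exactly what licenses passing from the free objects $S^{\oplus n}$ to arbitrary direct summands.
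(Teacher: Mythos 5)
Your proof is correct, and it is worth noting that the paper itself offers no argument for this lemma: its ``proof'' consists of the single citation to \cite[Theorem 1]{bgwTateModule}, so your write-up supplies in full what the paper outsources. Your route is the classical Morita/projectivization argument, and each hypothesis is used exactly where it should be: the generator property gives that $\operatorname{Hom}_{\mathcal{C}}(S,Z)$ is a direct summand of $R^{\oplus n}$ and hence lands in $P_{f}(R)$; additivity plus compatibility of the comparison map with composition reduces full faithfulness from $(S^{\oplus n},S^{\oplus m})$ to the corners cut out by the idempotents $e,f$ (the point you correctly flag as the one needing care); idempotent completeness of $\mathcal{C}$, together with the ring isomorphism $\operatorname{End}_{\mathcal{C}}(S^{\oplus n})\cong M_{n}(R)$ already established, lets you lift the idempotent presenting a given $P\in P_{f}(R)$ and split it, giving essential surjectivity; and exactness in both directions follows because admissible sequences coincide with split (biproduct) sequences on both sides --- in $\mathcal{C}$ by the split-exactness hypothesis (split sequences are admissible in any exact structure, and conversely by assumption), and in $P_{f}(R)$ because every surjection onto a projective splits --- and an additive equivalence preserves and reflects biproduct diagrams. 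This is, in all likelihood, essentially the argument of the cited reference, so the only difference from the paper is that you make it self-contained rather than deferring to \cite{bgwTateModule}.
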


\begin{proof}
This is \cite[Theorem 1]{bgwTateModule}.
\end{proof}

While such comparison results have been known for decades, there seems to be
very little literature studying the $2$-functoriality of them. The rest of the
section will work out explicit descriptions of the relevant maps in all the
cases relevant for the paper. A number of these results might be of
independent interest.

\subsubsection{Sub-categories}

\begin{lemma}
\label{TMT_LemmaInclusionAndMoritaGenerator}Let $\mathcal{D}$ be a split exact
category. Suppose $\mathcal{C}\hookrightarrow\mathcal{D}$ is a fully exact
sub-category. Then $\mathcal{C}$ is also split exact. Suppose $S$ is a
generator for $\mathcal{C}$ and $\tilde{S}\in\mathcal{D}$ a generator for
$\mathcal{D}$. Suppose%
\[
\tilde{S}=S\oplus S^{\prime}%
\]
for some $S^{\prime}\in\mathcal{D}$. Then there is a commutative diagram%
\[%
\bfig\Square(0,0)[{\mathcal{C}^{ic}}`{\mathcal{D}^{ic}}`{P_{f}(\operatorname
{End}\nolimits_{\mathcal{C}}( S) )}`{P_{f}(\operatorname{End}\nolimits
_{\mathcal{D}}( \tilde{S}) )};`{\sim}`{\sim}`]
\efig
\]
whose downward arrows are exact equivalences and the bottom rightward arrow is%
\[
M\longmapsto M\otimes_{\operatorname*{End}\nolimits_{\mathcal{D}}%
(S)}\operatorname*{Hom}\nolimits_{\mathcal{D}}(\tilde{S},S)\text{,}%
\]
and equivalently this functor is induced by the (non-unital) algebra
homomorphism
\begin{equation}
\operatorname*{End}\nolimits_{\mathcal{C}}\left(  S\right)  \longrightarrow
\operatorname*{End}\nolimits_{\mathcal{D}}\left(  S\oplus S^{\prime}\right)
\text{,}\qquad f\longmapsto
\begin{pmatrix}
f & 0\\
0 & 0
\end{pmatrix}
\text{.}\label{lmai_15}%
\end{equation}

\end{lemma}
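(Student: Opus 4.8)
The plan is to run everything through Lemma~\ref{TMT_LemmaMoritaProjectiveGenerator} and then check that the square commutes on the nose by chasing the Morita functors. First I would observe that $\mathcal{C}\hookrightarrow\mathcal{D}$ fully exact means the exact structure on $\mathcal{C}$ is the one reflected from $\mathcal{D}$, so any kernel-cokernel pair in $\mathcal{C}$ that is exact in $\mathcal{D}$ is exact in $\mathcal{C}$ and conversely; since $\mathcal{D}$ is split exact, every such pair in $\mathcal{D}$ splits, hence splits in $\mathcal{C}$, so $\mathcal{C}$ is split exact. By Lemma~\ref{TMT_LemmaIdemCompletionOfSplitExactIsSplitExact}(2) both idempotent completions $\mathcal{C}^{ic}$ and $\mathcal{D}^{ic}$ are split exact as well, and by Lemma~\ref{TMT_LemmaIdemCompletionOfSplitExactIsSplitExact}(1) the inclusion $\mathcal{C}^{ic}\hookrightarrow\mathcal{D}^{ic}$ is exact and reflects exactness. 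A generator $S$ of $\mathcal{C}$ is still a generator of $\mathcal{C}^{ic}$ (every object of $\mathcal{C}^{ic}$ is a summand of an object of $\mathcal{C}$, hence of some $S^{\oplus n}$), and likewise $\tilde S$ generates $\mathcal{D}^{ic}$. So Lemma~\ref{TMT_LemmaMoritaProjectiveGenerator} applies to both rows and gives the two downward equivalences $Z\mapsto\operatorname{Hom}_{\mathcal{C}}(S,Z)$ and $W\mapsto\operatorname{Hom}_{\mathcal{D}}(\tilde S,W)$.

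Next I would identify the composite along the top-right. Starting from $Z\in\mathcal{C}^{ic}$, going right and then down gives $\operatorname{Hom}_{\mathcal{D}}(\tilde S,Z)$, regarded as a right $\operatorname{End}_{\mathcal{D}}(\tilde S)$-module. Going down first gives the right $\operatorname{End}_{\mathcal{C}}(S)$-module $\operatorname{Hom}_{\mathcal{C}}(S,Z)=\operatorname{Hom}_{\mathcal{D}}(S,Z)$ (the Hom's agree because $\mathcal{C}\hookrightarrow\mathcal{D}$ is full). So the content of commutativity is the natural isomorphism of right $\operatorname{End}_{\mathcal{D}}(\tilde S)$-modules
\[
\operatorname{Hom}_{\mathcal{D}}(S,Z)\otimes_{\operatorname{End}_{\mathcal{D}}(S)}\operatorname{Hom}_{\mathcal{D}}(\tilde S,S)\overset{\sim}{\longrightarrow}\operatorname{Hom}_{\mathcal{D}}(\tilde S,Z),
\]
where I write $\operatorname{End}_{\mathcal{D}}(S)$ for the non-unital algebra $\operatorname{Hom}_{\mathcal{D}}(S,S)$ sitting inside $\operatorname{End}_{\mathcal{D}}(\tilde S)$ as the corner $\left(\begin{smallmatrix}f&0\\0&0\end{smallmatrix}\right)$. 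The map sends $g\otimes h\mapsto g\circ h$. Surjectivity: given $\varphi\colon\tilde S\to Z$, precompose with the inclusion $i\colon S=S\oplus 0\hookrightarrow\tilde S$ to get $\varphi i\in\operatorname{Hom}_{\mathcal{D}}(S,Z)$, and note $\varphi=(\varphi i)\circ (\text{split projection }\tilde S\twoheadrightarrow S)$ plus the part of $\varphi$ through $S'$ — here one uses that $Z\in\mathcal{C}^{ic}$, so any $\tilde S\to Z$ kills the $S'$-summand after the relevant idempotent manipulations; more precisely $\operatorname{Hom}_{\mathcal{D}}(\tilde S,S)$ already contains the projector, and $g\otimes(\text{projector})\mapsto g\circ(\text{projector})$ realizes every $\varphi$ that factors through $S$. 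Injectivity and $\operatorname{End}_{\mathcal{D}}(S)$-balancedness are the standard bimodule bookkeeping. Having fixed the equivalence, the identification of the bottom arrow with the one induced by the corner-embedding \eqref{lmai_15} is then exactly the description of $-\otimes_{\operatorname{End}_{\mathcal{D}}(S)}\operatorname{Hom}_{\mathcal{D}}(\tilde S,S)$ as restriction-of-scalars along $\operatorname{End}_{\mathcal{C}}(S)\hookrightarrow\operatorname{End}_{\mathcal{D}}(\tilde S)$: indeed $\operatorname{Hom}_{\mathcal{D}}(\tilde S,S)\cong e\operatorname{End}_{\mathcal{D}}(\tilde S)$ for $e$ the idempotent projecting $\tilde S$ onto $S$, and $M\otimes_{e\operatorname{End}_{\mathcal{D}}(\tilde S)e}e\operatorname{End}_{\mathcal{D}}(\tilde S)$ is the usual ``induction along a corner'', which on finitely generated projectives is exactly pullback along $f\mapsto\left(\begin{smallmatrix}f&0\\0&0\end{smallmatrix}\right)$.

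I expect the main obstacle to be the bookkeeping around non-unitality: $\operatorname{End}_{\mathcal{D}}(S)$ is a corner $e\Lambda e$ of $\Lambda:=\operatorname{End}_{\mathcal{D}}(\tilde S)$ and is unital in its own right (unit $e$), but the inclusion into $\Lambda$ is not unital, so one must be careful that $e\Lambda$ really is the bimodule implementing the base-change functor and that ``$\otimes_{e\Lambda e}$'' is taken in the unital sense over $e\Lambda e$. The cleanest route is to prove the displayed natural isomorphism directly from Yoneda-type manipulations in $\mathcal{D}$ using the split decomposition $\tilde S=S\oplus S'$, never leaving the world of honest (unital) rings $e\Lambda e$ and $\Lambda$ and the idempotent $e\in\Lambda$; then invoke \cite[Prop.~2.19]{TateObjectsExactCats}-style corner-ring facts only as a sanity check, not as the backbone. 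Everything else — split-exactness of $\mathcal{C}$, applicability of Lemma~\ref{TMT_LemmaMoritaProjectiveGenerator}, fullness giving $\operatorname{Hom}_{\mathcal{C}}=\operatorname{Hom}_{\mathcal{D}}$ — is routine.
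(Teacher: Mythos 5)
Your overall strategy matches the paper's: establish split exactness of $\mathcal{C}$, pass to idempotent completions, apply Lemma \ref{TMT_LemmaMoritaProjectiveGenerator} to both generators, and then verify commutativity of the square; your identification of the bottom arrow with base change along the corner embedding \eqref{lmai_15} is also in the same spirit as the paper's second step. The gap is in your verification that the square commutes, i.e.\ that the map $\operatorname{Hom}_{\mathcal{D}}(S,Z)\otimes_{\operatorname{End}_{\mathcal{D}}(S)}\operatorname{Hom}_{\mathcal{D}}(\tilde S,S)\to\operatorname{Hom}_{\mathcal{D}}(\tilde S,Z)$, $g\otimes h\mapsto g\circ h$, is an isomorphism for $Z\in\mathcal{C}^{ic}$. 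Your surjectivity argument rests on the claim that any $\varphi\colon\tilde S\to Z$ ``kills the $S'$-summand after the relevant idempotent manipulations''; this is false --- there is no reason for $\varphi\mid_{S'}$ to vanish, and your ``more precisely'' sentence only accounts for those $\varphi$ that factor through $S$. What is true, and what you must use, is that $Z$ is a retract of $S^{\oplus n}$ (this is exactly where the generator hypothesis enters), so that $\varphi\mid_{S'}\colon S'\to Z$ factors through $S^{\oplus n}$ and hence is a sum of maps factoring through $S$; only then does every $\varphi$ lie in the image. Likewise injectivity is not ``standard bimodule bookkeeping'': writing $e\in\Lambda:=\operatorname{End}_{\mathcal{D}}(\tilde S)$ for the idempotent onto $S$, the map in question is $Ne\otimes_{e\Lambda e}e\Lambda\to N$ with $N=\operatorname{Hom}_{\mathcal{D}}(\tilde S,Z)$, and this fails for a general right $\Lambda$-module $N$; it holds here only because $N$ is a retract of $(e\Lambda)^{\oplus n}$, again by the generator property of $S$ for $\mathcal{C}^{ic}$.

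The paper avoids this pointwise analysis altogether: it checks the claim for $Z:=S$, where it is the trivial identity $\operatorname{End}_{\mathcal{D}}(S)\otimes_{\operatorname{End}_{\mathcal{D}}(S)}\operatorname{Hom}_{\mathcal{D}}(\tilde S,S)\cong\operatorname{Hom}_{\mathcal{D}}(\tilde S,S)$, and then extends to all of $\mathcal{C}^{ic}$ because both composite functors are additive and preserve direct summands, while every object of $\mathcal{C}^{ic}$ is a summand of some $S^{\oplus n}$. Your argument becomes correct if you either adopt this reduction or insert the retract-of-$S^{\oplus n}$ step explicitly in both halves of your isomorphism check; the rest of your proposal (split exactness of $\mathcal{C}$, the two Morita equivalences, fullness giving $\operatorname{Hom}_{\mathcal{C}}=\operatorname{Hom}_{\mathcal{D}}$, and the corner-ring description of the bottom functor) is fine and parallels the paper.
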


Although it feels like this should be standard, we have not been able to
locate a source in the literature.

\begin{proof}
\textit{(Step 1)}\ Since $\mathcal{C}\hookrightarrow\mathcal{D}$ reflects
exactness, $\mathcal{C}$ is also split exact. Thus, its idempotent completion
$\mathcal{C}^{ic}$ is also split exact by Lemma
\ref{TMT_LemmaIdemCompletionOfSplitExactIsSplitExact}. Moreover, if $S$ is a
generator for $\mathcal{C}$, then it is also a generator for $\mathcal{C}%
^{ic}$ since every object in $\mathcal{C}^{ic}$ is a direct summand of an
object in $\mathcal{C}$, and these are in turn direct summands of $S^{\oplus
n}$ for some $n$. The same argument works for $\mathcal{D}^{ic}$. Then the $2
$-universal property of idempotent completion \cite[Prop. 6.10]{MR2606234}
promotes $\mathcal{C}\hookrightarrow\mathcal{D}$ to the top row in the
following diagram:%
\begin{equation}%
\bfig\Square(0,0)[{\mathcal{C}^{ic}}`{\mathcal{D}^{ic}}`{P_{f}(\operatorname
{End}\nolimits_{\mathcal{C}}( S) )}`{P_{f}(\operatorname{End}\nolimits
_{\mathcal{D}}( \tilde{S}) )};`{\sim}`{\sim}`?]
\efig
\label{lmai_12}%
\end{equation}
As both $\mathcal{C}^{ic}$ and $\mathcal{D}^{ic}$ are split exact and
idempotent complete and possess generators, Lemma
\ref{TMT_LemmaMoritaProjectiveGenerator} induces exact equivalences, given by
the downward arrows. Note that an object $Z\in\mathcal{C}$ is sent to%
\[
Z\mapsto\operatorname*{Hom}\nolimits_{\mathcal{C}}(S,Z)\qquad\text{resp.}%
\qquad Z\mapsto\operatorname*{Hom}\nolimits_{\mathcal{D}}(\tilde{S},Z)\text{,}%
\]
depending on which path we follow in the above diagram. Since $\mathcal{C}$ is
a full sub-category of $\mathcal{D}$, the first functor agrees with
$Z\mapsto\operatorname*{Hom}\nolimits_{\mathcal{D}}(S,Z)$. We claim that
Diagram \ref{lmai_12} can be completed to a commutative square of exact
functors by adding the following arrow as the bottom row:%
\begin{align*}
P_{f}(\operatorname*{End}\nolimits_{\mathcal{C}}(S))  & \longrightarrow
P_{f}(\operatorname*{End}\nolimits_{\mathcal{D}}(\tilde{S}))\\
M  & \longmapsto M\otimes_{\operatorname*{End}\nolimits_{\mathcal{D}}%
(S)}\operatorname*{Hom}\nolimits_{\mathcal{D}}(\tilde{S},S)\text{.}%
\end{align*}
This claim is immediate when plugging in $Z:=S$, but since every object in
$\mathcal{C}$ is a direct summand of $S^{\oplus n}$ and this formula preserves
direct summands, this implies the claim for all objects in $\mathcal{C}$.
Since the categories are split exact, checking exactness of the functor
reduces to checking additivity, which is immediate.\newline\textit{(Step 2)}
By $\tilde{S}=S\oplus S^{\prime}$ we get the non-unital homomorphism of
associative algebras in Equation \ref{lmai_15}. If $M\in P_{f}%
(\operatorname*{End}\nolimits_{\mathcal{C}}\left(  S\right)  )$ and
$f\in\operatorname*{End}\nolimits_{\mathcal{C}}\left(  S\right)  $ this means,
just by matrix multiplication, that the equation%
\[
m\cdot f\otimes%
\begin{pmatrix}
a & b\\
c & d
\end{pmatrix}
=m\otimes%
\begin{pmatrix}
fa & fb\\
0 & 0
\end{pmatrix}
\text{,}\qquad m\in M
\]
holds in $M\otimes_{\operatorname*{End}\nolimits_{\mathcal{C}}\left(
S\right)  }\operatorname*{End}\nolimits_{\mathcal{D}}\left(  S\oplus
S^{\prime}\right)  $. Thus, we find that the map%
\begin{align*}
M\otimes_{\operatorname*{End}\nolimits_{\mathcal{C}}\left(  S\right)
}\operatorname*{End}\nolimits_{\mathcal{D}}\left(  S\oplus S^{\prime}\right)
& \longrightarrow M\otimes_{\operatorname*{End}\nolimits_{\mathcal{D}}%
(S)}\operatorname*{Hom}\nolimits_{\mathcal{D}}(\tilde{S},S)\\
m\otimes%
\begin{pmatrix}
a & b\\
c & d
\end{pmatrix}
& \longmapsto m\otimes%
\begin{pmatrix}
a & b\\
0 & 0
\end{pmatrix}
\end{align*}
is an isomorphism of right $\operatorname*{End}\nolimits_{\mathcal{D}}(S\oplus
S^{\prime})$-modules. As a result, the functor can also be described by
tensoring along the non-unital algebra homomorphism.
\end{proof}

\subsubsection{Quotient categories}

While the previous result covers the case of a fully exact sub-category, we
now want to address the same problem in the situation of a quotient category.
Suppose $\mathcal{C}\hookrightarrow\mathcal{D}$ is a fully exact sub-category.
We need stronger hypotheses to ensure the existence of the quotient category,
namely those of \S \ref{subsect_QuotientExactCategories}. We then write%
\[
\left\langle X\rightarrow C\rightarrow X\text{ with }C\in\mathcal{C}%
\right\rangle
\]
to denote the two-sided ideal of morphisms $X\rightarrow X$, for
$X\in\mathcal{D}$, which factor over an object in $\mathcal{C}$. Note that
this really produces a two-sided ideal, so it makes no difference whether we
think of this as an ideal or as the ideal generated by such morphisms.

\begin{lemma}
\label{TMT_MoritaToQuotientCategory}Let $\mathcal{D}$ be a split exact
category with a generator $S$. Suppose $\mathcal{C}\hookrightarrow\mathcal{D}$
is a left (or right) $s$-filtering sub-category.

\begin{enumerate}
\item Then $\operatorname*{End}\nolimits_{\mathcal{D}/\mathcal{C}}\left(
S\right)  =\operatorname*{End}\nolimits_{\mathcal{D}}\left(  S\right)
/\left\langle S\rightarrow C\rightarrow S\text{ with }C\in\mathcal{C}%
\right\rangle $.

\item Moreover, the diagram%
\[%
\bfig\Square(0,0)[{\mathcal{D}^{ic}}`{({\mathcal{D}/\mathcal{C}})^{ic}}%
`{P_{f}(\operatorname{End}\nolimits_{\mathcal{D}}( S) )}`{P_{f}(\operatorname
{End}\nolimits_{{\mathcal{D}/\mathcal{C}}}( S) )};`{\sim}`{\sim}`]
\efig
\]
commutes, where the lower horizontal arrow is%
\[
M\longmapsto M\otimes_{\operatorname*{End}\nolimits_{\mathcal{D}}\left(
S\right)  }\operatorname*{End}\nolimits_{\mathcal{D}/\mathcal{C}}\left(
S\right)  \text{.}%
\]

\end{enumerate}
\end{lemma}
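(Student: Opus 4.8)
The plan is to reduce the statement to Lemma~\ref{TMT_LemmaMoritaProjectiveGenerator} together with the concrete description of the Schlichting quotient $\mathcal{D}/\mathcal{C}$, using B\"uhler's refinement that in the left $s$-filtering case one only needs to invert the admissible epics with kernel in $\mathcal{C}$ (\cite[Prop.~2.19]{TateObjectsExactCats}, building on \cite{MR2079996}); the right $s$-filtering case will be entirely analogous, with admissible epics replaced by admissible monics with cokernel in $\mathcal{C}$ throughout. First I would record three preliminary observations. (i) The conflations of $\mathcal{D}/\mathcal{C}$ are, up to isomorphism, images of conflations of $\mathcal{D}$; since $\mathcal{D}$ is split exact, so is $\mathcal{D}/\mathcal{C}$, and hence so are $\mathcal{D}^{ic}$ and $(\mathcal{D}/\mathcal{C})^{ic}$ by Lemma~\ref{TMT_LemmaIdemCompletionOfSplitExactIsSplitExact}. (ii) Since $\mathcal{D}/\mathcal{C}$ has the same objects as $\mathcal{D}$, the object $S$ is still a generator of $\mathcal{D}/\mathcal{C}$, and (exactly as in the proof of Lemma~\ref{TMT_LemmaInclusionAndMoritaGenerator}) also of $(\mathcal{D}/\mathcal{C})^{ic}$. (iii) Any admissible epic $s\colon S\twoheadrightarrow S'$ with $\ker s\in\mathcal{C}$ splits, because $\mathcal{D}$ is split exact and $S$ is a generator; hence in $\mathcal{D}/\mathcal{C}$ its formal inverse equals an honest section $\sigma\in\operatorname{Hom}_{\mathcal{D}}(S',S)$, so every left fraction $s^{-1}f$ agrees with the image of $\sigma f\in\operatorname{End}_{\mathcal{D}}(S)$.

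For part~(1): observation~(iii) together with the calculus of left fractions shows that the ring homomorphism $L\colon\operatorname{End}_{\mathcal{D}}(S)\to\operatorname{End}_{\mathcal{D}/\mathcal{C}}(S)$ induced by the localisation functor is surjective. It is clear that $\langle S\to C\to S\text{ with }C\in\mathcal{C}\rangle\subseteq\ker L$, since each $C\in\mathcal{C}$ becomes a zero object in $\mathcal{D}/\mathcal{C}$. For the reverse inclusion, the calculus of fractions gives that $L(g)=0$ precisely when $sg=0$ in $\mathcal{D}$ for some admissible epic $s\colon S\twoheadrightarrow S'$ with $\ker s\in\mathcal{C}$; as $\ker s\hookrightarrow S$ is a categorical kernel of $s$, the relation $sg=0$ forces $g$ to factor through $\ker s\in\mathcal{C}$. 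This yields $\ker L=\langle S\to C\to S\text{ with }C\in\mathcal{C}\rangle$, hence~(1).

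For part~(2): by~(i)--(ii), Lemma~\ref{TMT_LemmaMoritaProjectiveGenerator} applies both to $\mathcal{D}^{ic}$ and to $(\mathcal{D}/\mathcal{C})^{ic}$, yielding the two vertical exact equivalences $Z\mapsto\operatorname{Hom}_{\mathcal{D}}(S,Z)$ and $Z\mapsto\operatorname{Hom}_{\mathcal{D}/\mathcal{C}}(S,Z)$, where part~(1) is used to identify $\operatorname{End}_{(\mathcal{D}/\mathcal{C})^{ic}}(S)=\operatorname{End}_{\mathcal{D}/\mathcal{C}}(S)$ with the quotient ring, and the $2$-universal property of idempotent completion \cite[Prop.~6.10]{MR2606234} promotes the localisation functor to the top arrow $\mathcal{D}^{ic}\to(\mathcal{D}/\mathcal{C})^{ic}$. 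Composing the localisation of a morphism $g\colon S\to Z$ with an element of $\operatorname{End}_{\mathcal{D}/\mathcal{C}}(S)$ defines an $\operatorname{End}_{\mathcal{D}}(S)$-balanced pairing $\operatorname{Hom}_{\mathcal{D}}(S,Z)\times\operatorname{End}_{\mathcal{D}/\mathcal{C}}(S)\to\operatorname{Hom}_{\mathcal{D}/\mathcal{C}}(S,Z)$, hence a natural transformation
\[
\Theta_Z\colon\operatorname{Hom}_{\mathcal{D}}(S,Z)\otimes_{\operatorname{End}_{\mathcal{D}}(S)}\operatorname{End}_{\mathcal{D}/\mathcal{C}}(S)\longrightarrow\operatorname{Hom}_{\mathcal{D}/\mathcal{C}}(S,Z),
\]
and $\Theta_Z$ being an isomorphism for all $Z$ is exactly the commutativity of the square. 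Now $\Theta_S$ is visibly the identity of $\operatorname{End}_{\mathcal{D}/\mathcal{C}}(S)$; both sides of $\Theta_Z$, viewed as functors of $Z$, are additive and preserve direct summands, and $S$ generates $\mathcal{D}^{ic}$, so $\Theta_Z$ is an isomorphism for every $Z$. Finally, base change along the surjection $L$ sends finitely generated projectives to finitely generated projectives and is exact because both categories are split exact, so the bottom arrow is well defined; this finishes~(2).

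The main obstacle is really the bookkeeping in step~(iii) and in the kernel computation of~(1): this is the only place where the $s$-filtering hypothesis is genuinely used, as opposed to just split exactness and the presence of a generator, and one has to be careful to quote the correct handedness of the calculus of fractions so that the splitting argument applies (and to record the dual statements for the right $s$-filtering case). Once those identifications of Hom-sets are secured, the rest is formal.
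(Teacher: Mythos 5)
Your proposal is correct and follows essentially the same route as the paper: reduce via B\"uhler's observation (\cite[Prop.\ 2.19]{TateObjectsExactCats}) to inverting only admissible epics with kernel in $\mathcal{C}$, use split exactness of $\mathcal{D}$ to replace every fraction by an honest morphism and compute the kernel of $\operatorname{End}_{\mathcal{D}}(S)\to\operatorname{End}_{\mathcal{D}/\mathcal{C}}(S)$ from the fraction calculus, then apply Lemma \ref{TMT_LemmaMoritaProjectiveGenerator} to both idempotent completions. The only differences are cosmetic: you split the epic directly instead of writing out the roof-equivalence diagram, and in part (2) you verify the comparison map $\Theta_Z$ on $S$ and extend by additivity, which is the same check the paper leaves implicit.
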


\begin{proof}
We only prove the left $s$-filtering case: As $\mathcal{C}\hookrightarrow
\mathcal{D}$ is left $s$-filtering, the quotient category $\mathcal{D}%
/\mathcal{C}$ exists. Idempotent completion has a suitable universal property
as a $2$-functor so that the resulting exact functor $\mathcal{D}%
\rightarrow\mathcal{D}/\mathcal{C}$ induces canonically a functor
$\mathcal{D}^{ic}\rightarrow(\mathcal{D}/\mathcal{C})^{ic}$, justifying the
top row \cite[Prop. 6.10]{MR2606234}.\newline\textit{(Step 1)} By
Schlichting's original construction \cite[Lemma 1.13]{MR2079996} the quotient
category $\mathcal{D}/\mathcal{C}$ arises as the localization $\mathcal{D}%
[\Sigma^{-1}]$ with a calculus of left fractions, where the class $\Sigma$ is
formed of (1) admissible monics with cokernel in $\mathcal{C}$, (2) admissible
epics with kernels in $\mathcal{C}$, (3) and closed under composition. For
this proof, we shall use that it suffices to localize at $\Sigma_{e}%
=\{$admissible epics with kernels in $\mathcal{C}\}$, i.e. $\mathcal{D}%
/\mathcal{C}:=\mathcal{D}[\Sigma^{-1}]=\mathcal{D}[\Sigma_{e}^{-1}]$, by
\cite[Prop. 2.19]{TateObjectsExactCats}. This idea is due to T. B\"{u}hler. By
loc. cit. this localization admits a calculus of left fractions\footnote{The
conventions of left and right fractions are as in Gabriel--Zisman
\cite{MR0210125} or B\"{u}hler \cite{MR2606234}. This means that the meaning
of left and right is opposite to the usage in \cite{MR1950475},
\cite{MR2182076}.}. This means that every morphism $X\rightarrow Y$ in
$\mathcal{D}/\mathcal{C}$ is represented by a left roof
\[
X\longrightarrow W\overset{\in\Sigma_{e}}{\longleftarrow}Y\text{.}%
\]
First of all, we shall show that all morphisms are equivalent to morphisms
coming from $\mathcal{D}$, i.e. left roofs of the shape $X\longrightarrow
Y\overset{1}{\longleftarrow}Y$: Suppose we are given an arbitrary left roof%
\[%
\xymatrix{ & W & \\ X \ar[ur]^{f} &  & Y. \ar[ul]_{h}}%
\]
As $h\in\Sigma_{e}$ is a split epic, we may write $Y=W\oplus C$ for some
object $C\in\mathcal{C}$ so that our roof takes the shape%
\[%
\xymatrix{ & W & \\ X \ar[ur]^{f} &  & {W\oplus C}. \ar[ul]_{pr_W}}%
\]
Thanks to the commutative diagram%
\[%
\xymatrix{ & W \ar[d]^{1} & \\ X \ar[ur]^{f} \ar[dr]_{f \oplus0}
& W & {W\oplus C} \ar[ul]_{pr_W} \ar[dl]^{1} \\ & {W\oplus C}, \ar[u]_{pr_W}
& }%
\]
we learn that this left roof is equivalent to the roof $X\longrightarrow
W\oplus C\overset{1}{\longleftarrow}W\oplus C$ and rewriting this using
$Y=W\oplus C$ in terms of $Y$, we have proven our claim. This means that
$\operatorname*{Hom}\nolimits_{\mathcal{D}}(X,Y)\rightarrow\operatorname*{Hom}%
\nolimits_{\mathcal{D}/\mathcal{C}}(X,Y)$ is surjective. It remains to
determine the kernel. By the calculus of left fractions, two left roofs
(which, as we had just proven, we may assume to come from genuine morphisms in
$\mathcal{D}$) are equivalent if and only if there exists a commutative
diagram of the shape%
\[%
\xymatrix{ & Y \ar[d]^{{\Sigma_{e}}} & \\ X \ar[ur]^{f} \ar[dr]_{g}
& H & Y \ar[ul]_{1} \ar[dl]^{1} \\ & Y. \ar[u]_{{\Sigma_{e}}} & }%
\]
The existence of such a diagram is equivalent to the the equality of morphism%
\[
X\underset{g}{\overset{f}{\rightrightarrows}}Y\overset{\in\Sigma_{e}%
}{\longrightarrow}H
\]
and thus to $f-g$ mapping to zero in $H$. By the universal property of
kernels, this is equivalent to the existence of a factorization
$f-g:X\rightarrow\ker(Y\twoheadrightarrow H)\rightarrow Y$. Since
$Y\twoheadrightarrow H$ lies in $\Sigma_{e}$, we have $\ker
(Y\twoheadrightarrow H)\in\mathcal{C}$. The converse direction works the same
way. Thus,%
\[
\operatorname*{End}\nolimits_{\mathcal{D}/\mathcal{C}}\left(  S\right)
=\operatorname*{End}\nolimits_{\mathcal{D}}\left(  S\right)  /\left\langle
S\rightarrow C\rightarrow S\text{ with }C\in\mathcal{C}\right\rangle
\]
\newline and since the embedding $\mathcal{D}/\mathcal{C}\rightarrow
(\mathcal{D}/\mathcal{C})^{ic}$ is fully faithful \cite[Remark 6.3]%
{MR2606234}, this description also applies to $(\mathcal{D}/\mathcal{C})^{ic}%
$.\newline\textit{(Step 2)} Since $\mathcal{D}$ is split exact, $\mathcal{D}%
^{ic}$ is idempotent complete and still split exact by Lemma
\ref{TMT_LemmaIdemCompletionOfSplitExactIsSplitExact}. Hence, Lemma
\ref{TMT_LemmaMoritaProjectiveGenerator} implies that the left-hand side
downward arrow, $Z\mapsto\operatorname*{Hom}\nolimits_{\mathcal{D}^{ic}}%
(S,Z)$, is an equivalence of categories. The quotient category $\mathcal{D}%
/\mathcal{C}$ is an exact category where a kernel-cokernel sequence%
\[
A\longrightarrow B\longrightarrow C
\]
is considered exact iff it is isomorphic to the image of an exact sequence in
$\mathcal{D}$, \cite[Prop. 1.16]{MR2079996}. This is the canonical exact
structure on $\mathcal{D}/\mathcal{C}$, making $\mathcal{D}\rightarrow
\mathcal{D}/\mathcal{C}$ an exact functor. Since $\mathcal{D}$ is split exact,
this means $B$ is isomorphic to the direct sum of the outer terms and this
property stays true in $\mathcal{D}/\mathcal{C}$. Thus, $\mathcal{D}%
/\mathcal{C}$ also has the split exact structure (however there is no reason
why it would have to be idempotent complete). The functor $\mathcal{D}%
/\mathcal{C}\rightarrow(\mathcal{D}/\mathcal{C})^{ic}$ to the idempotent
completion is exact, and the idempotent completion $(\mathcal{D}%
/\mathcal{C})^{ic}$ must also be split exact by Lemma
\ref{TMT_LemmaIdemCompletionOfSplitExactIsSplitExact}. If every object in
$\mathcal{D}$ is a direct summand of $S^{\oplus n}$, this property stays true
in $\mathcal{D}/\mathcal{C}$, and thus in $(\mathcal{D}/\mathcal{C})^{ic}$.
Hence, Lemma \ref{TMT_LemmaMoritaProjectiveGenerator} also applies to
$(\mathcal{D}/\mathcal{C})^{ic}$, with the image of the same object, and thus
there is an exact equivalence of categories via $Z\mapsto\operatorname*{Hom}%
\nolimits_{(\mathcal{D}/\mathcal{C})^{ic}}(S,Z)$. This is a priori a right
$\operatorname*{End}\nolimits_{(\mathcal{D}/\mathcal{C})^{ic}}\left(
S\right)  $-module, but by the full faithfulness of the idempotent completion
this algebra agrees with $\operatorname*{End}\nolimits_{\mathcal{D}%
/\mathcal{C}}\left(  S\right)  $. Finally, we observe that in order to make
the diagram commute the lower horizontal arrow must be%
\[
\operatorname*{Hom}\nolimits_{\mathcal{D}^{ic}}(S,Z)\mapsto\operatorname*{Hom}%
\nolimits_{(\mathcal{D}/\mathcal{C})^{ic}}(S,Z)
\]
for all $Z\in\mathcal{D}$. But by Step 1 this is just quotienting out the
ideal $\left\langle S\rightarrow C\rightarrow S\text{ with }C\in
\mathcal{C}\right\rangle $ from the right, or equivalently tensoring with the
corresponding quotient ring. This proves our claim.
\end{proof}

Although this diverts a bit from our storyline and will not be used anywhere
else in this paper, let us record an immediate application of this lemma:

\begin{proposition}
Drinfeld's Calkin category $\mathcal{C}_{R}^{\operatorname*{Kar}}$, as defined
in \cite[\S 3.3.1]{MR2181808}, is equivalent to the Calkin category
$\mathsf{Calk}(\mathcal{C}):=(\mathsf{Ind}^{a}(\operatorname*{Mod}%
(R))/\operatorname*{Mod}(R))^{ic}$ of \cite[Def. 3.40]{TateObjectsExactCats}.
\end{proposition}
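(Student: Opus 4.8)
The plan is to exhibit both categories as $P_{f}(\overline{B})$ for one and the same ring $\overline{B}$ — the ring of column-finite $\mathbf{N}\times\mathbf{N}$ matrices over $R$ modulo its ideal of finite-rank matrices — and then to compose the two resulting equivalences through $P_{f}(\overline{B})$. The engine for both identifications is the relative Morita statement of Lemma \ref{TMT_MoritaToQuotientCategory}. Throughout we work at cardinality $\aleph_{0}$ (the relevant case for a Calkin category, and the one implicit in Drinfeld's setup); this is exactly what makes the generator below exist.

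For the category $\mathsf{Calk}(\mathcal{C})$ of the statement, write $\mathcal{C}=P_{f}(R)$ (the reading of $\operatorname{Mod}(R)$ for which the construction is the Calkin category) and set $\mathcal{D}:=\mathsf{Ind}^{a}(\mathcal{C})$. Then $\mathcal{D}$ is split exact, since a kernel–cokernel pair of projective modules is split (the cokernel being projective). Moreover $S:=R^{(\mathbf{N})}=\operatorname{colim}_{n}R^{n}\in\mathcal{D}$ is a generator: every object of $\mathcal{D}$ is a countable direct sum of finitely generated projectives, hence a direct summand of a countably generated free module $\cong S$, and $S^{\oplus m}\cong S$. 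By Proposition \ref{TMT_PropFiltering}(1) the inclusion $\mathcal{C}\hookrightarrow\mathcal{D}$ is left $s$-filtering, so $\mathcal{D}/\mathcal{C}$ exists and $\mathsf{Calk}(\mathcal{C})=(\mathcal{D}/\mathcal{C})^{ic}$. Lemma \ref{TMT_MoritaToQuotientCategory} then produces an exact equivalence
\[
\mathsf{Calk}(\mathcal{C})\overset{\sim}{\longrightarrow}P_{f}\!\bigl(\operatorname{End}_{\mathcal{D}}(S)\big/\bigl\langle S\to C\to S,\ C\in\mathcal{C}\bigr\rangle\bigr).
\]
Here $\operatorname{End}_{\mathcal{D}}(S)=\operatorname{End}_{R}(R^{(\mathbf{N})})$ is the ring $B$ of column-finite matrices, and $\bigl\langle S\to C\to S\bigr\rangle$ is precisely the set of endomorphisms whose image lies in a finitely generated submodule, i.e.\ the finite-rank matrices; hence $\operatorname{End}_{\mathcal{D}}(S)/\bigl\langle S\to C\to S\bigr\rangle=\overline{B}$.

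For Drinfeld's $\mathcal{C}_{R}^{\operatorname*{Kar}}$ I would unwind \cite[\S 3.3.1]{MR2181808}: it is the Karoubi envelope of the additive quotient of a category of (projective, or flat Mittag–Leffler — at cardinality $\aleph_{0}$ these coincide) $R$-modules by the ideal of morphisms factoring through a finitely generated projective, with $R^{(\mathbf{N})}$ and its summands as the distinguished generating object. Applying Lemma \ref{TMT_MoritaToQuotientCategory} to this category with the same generator $S=R^{(\mathbf{N})}$ — or, equivalently, simply transcribing Drinfeld's own description of the $\operatorname{Hom}$-spaces after Karoubi completion — identifies $\mathcal{C}_{R}^{\operatorname*{Kar}}$ with $P_{f}(\overline{B})$ for the \emph{same} $\overline{B}$: the endomorphism ring of the generator is again the column-finite matrices, and Drinfeld's ``factors through a finitely generated projective'' is verbatim the ideal $\bigl\langle S\to C\to S\bigr\rangle$. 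Composing the two equivalences gives $\mathsf{Calk}(\mathcal{C})\simeq\mathcal{C}_{R}^{\operatorname*{Kar}}$, exact because both Morita equivalences are.

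I expect the only real work to be organisational. One must check that Drinfeld's ambient module category and the model $\mathsf{Ind}^{a}(P_{f}(R))$ have the same idempotent completion — the content being the elementary observation that every countably generated projective $R$-module is a direct summand of $R^{(\mathbf{N})}$, so that $\mathsf{Ind}^{a}(P_{f}(R))^{ic}$ is the category of countably generated projective modules — and, the genuine subtlety, one must phrase Drinfeld's definition in terms of the generator $S$ and its endomorphism ring so that Lemma \ref{TMT_MoritaToQuotientCategory} applies to it and the two ``small'' ideals are seen to agree \emph{before} passing to Karoubi envelopes. There is no homological obstruction; once this dictionary between the two definitions is in place the equivalence is forced.
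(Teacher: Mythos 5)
Your argument rests on the same engine as the paper's proof, namely Lemma \ref{TMT_MoritaToQuotientCategory}, but the execution is genuinely different, and the difference has a cost. The paper never chooses a generator: it only uses the Hom-ideal description of the quotient contained in that lemma (Step 1 of its proof, which needs nothing beyond split exactness of the ambient Ind-category and the left $s$-filtering property from Prop.\ \ref{TMT_PropFiltering}), namely that $\mathsf{Ind}^{a}(\operatorname*{Mod}(R))/\operatorname*{Mod}(R)$ has the same objects as $\mathsf{Ind}^{a}(\operatorname*{Mod}(R))$ and morphisms equal to module maps modulo those factoring through a finitely generated projective; after idempotent completion this is verbatim Drinfeld's definition of $\mathcal{C}_{R}^{\operatorname*{Kar}}$, so the comparison is immediate. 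You instead pass both sides through $P_{f}(\overline{B})$, with $\overline{B}$ the column-finite matrices modulo those of finite rank; your identifications of $\operatorname*{End}(R^{(\mathbf{N})})$ and of the ideal $\langle S\rightarrow C\rightarrow S\rangle$ are correct, but to apply the same Morita mechanism on Drinfeld's side you must in any case invoke the very dictionary (``Hom modulo morphisms factoring through a finitely generated projective'') that already finishes the direct comparison, so the detour buys nothing. Its real cost is the generator: $R^{(\mathbf{N})}$ generates only the $\aleph_{0}$-bounded Ind-category, so your Morita step, and hence your proof, is confined to the countable variant, whereas the statement is formulated for $\mathsf{Ind}^{a}$ without a cardinality bound, where no single generator exists and only the Hom-ideal argument survives; your own flat Mittag--Leffler caveat is exactly the symptom of this. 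So the proposal is essentially sound, but it is a longer, generator-dependent path that establishes a cardinality-restricted form of the claim, while the paper's shorter route avoids both the ring $\overline{B}$ and the countability hypothesis.
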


\begin{proof}
The category $\operatorname*{Mod}(R)$ is split exact with generator $R$. We
obtain the claim by applying Lemma \ref{TMT_MoritaToQuotientCategory} to the
left $s $-filtering inclusion $\operatorname*{Mod}(R)\hookrightarrow
\mathsf{Ind}^{a}(\operatorname*{Mod}(R))$, and the description of this
category given by the lemma agrees with the definition used by Drinfeld in
\cite[\S 3.3.1]{MR2181808}.
\end{proof}

\subsection{\label{subsect_ApplyMoritaToTateCategories}Applications to Tate
categories}

\subsubsection{Iterated Morita calculus for $n$-Tate categories}

Now we can apply these results to Ind-, Pro- and Tate categories. For the sake
of legibility we have divided the following arguments into several separate
propositions. However, there will be a great overlap in notation so that it
appears to be reasonable to introduce some overall notation for the length of
this section.

The starting point of these definitions will be the following key
ingredient:\medskip

Assume $\mathcal{C}$ is any split exact category with a generator
$S\in\mathcal{C}$. Then we define objects
\[
S[t^{-1}]:=%
{\textstyle\coprod_{\mathbf{N}}}
S\in\mathsf{Ind}^{a}(\mathcal{C})\qquad\text{and}\qquad S[[t]]:=%
{\textstyle\prod_{\mathbf{N}}}
S\in\mathsf{Pro}^{a}(\mathcal{C})\text{,}%
\]
where the (co)product is interpreted as the corresponding formal Ind- resp.
Pro-limit object. In order to be absolutely precise, let us spell out what
this means explicitly in terms of the actual definition of the respective
exact categories, as in \S \ref{subsect_TateCatsViaIndDiagrams}:

We define an admissible Ind-diagram and admissible Pro-diagram by%
\begin{equation}
S[t^{-1}]:\mathbf{N}\longrightarrow\mathcal{C}\text{,}\quad n\mapsto%
{\textstyle\coprod\nolimits_{i=1}^{n}}
S\text{,}\qquad\qquad S[[t]]:\mathbf{N}\longrightarrow\mathcal{C}\text{,}\quad
n\mapsto%
{\textstyle\prod\nolimits_{i=1}^{n}}
S\text{.}\label{lmai_17}%
\end{equation}
More specifically, we view the natural numbers $\mathbf{N}$ as a directed
poset and define admissible diagrams by these formulae, where for $S[t^{-1}]$
a morphism $n\mapsto n+1$ in $\mathbf{N}$ is sent to the inclusion of $%
{\textstyle\coprod\nolimits_{i=1}^{n}}
S\rightarrow%
{\textstyle\coprod\nolimits_{i=1}^{n+1}}
S$, while for $S[[t]]$ we send it to the projection in the opposite direction.
Finally, define%
\begin{equation}
S((t)):=S[[t]]\oplus S[t^{-1}]\in\mathsf{Tate}^{el}(\mathcal{C})\text{.}%
\label{lmai_41}%
\end{equation}
This is obviously an elementary Tate object since $S[[t]]$ is a lattice (see
Definition \ref{Def_TateObjectAndLattices}). These objects completely
characterize the Tate object categories in the following way:

\begin{theorem}
[\cite{bgwTateModule}]\label{Thm_NTateIsModuleCatWithGoodCubicalEndoAlgebra}%
Let $\mathcal{C}$ be an idempotent complete split exact category with a
generator\footnote{Of course, it we instead have a finite system of
generators, we can just take their direct sum as a single-object generator.}
$S\in\mathcal{C}$.

\begin{enumerate}
\item The category $\left.  n\text{-}\mathsf{Tate}_{\aleph_{0}}(\mathcal{C}%
)\right.  $ is split exact and idempotent complete.

\item The object $\tilde{S}_{n}:=S((t_{1}))\cdots((t_{n}))$ is a generator of
$\left.  n\text{-}\mathsf{Tate}^{el}(\mathcal{C})\right.  $ and $\left.
n\text{-}\mathsf{Tate}(\mathcal{C})\right.  $ and we have an exact equivalence
of exact categories%
\begin{equation}
\left.  n\text{-}\mathsf{Tate}_{\aleph_{0}}(\mathcal{C})\right.  \overset
{\sim}{\longrightarrow}P_{f}(A_{n})\quad\text{with}\quad A_{n}%
:=\operatorname*{End}(\tilde{S}_{n})\text{.}\label{lmai_16}%
\end{equation}

\item For every object $X\in\left.  n\text{-}\mathsf{Tate}_{\aleph_{0}}%
^{el}(\mathcal{C})\right.  $ its endomorphism algebra canonically carries the
structure of a Beilinson $n$-fold cubical algebra. We refer to
\cite{bgwTateModule} for the construction.
\end{enumerate}
\end{theorem}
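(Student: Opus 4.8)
The plan is to prove (1) together with the generator part of (2) by induction on $n$, to deduce the equivalence in (2) from Lemma~\ref{TMT_LemmaMoritaProjectiveGenerator}, and then to read off the cubical structure in (3). For $n=0$ there is nothing to do: $\mathcal{C}$ is split exact and idempotent complete by hypothesis, $S$ is the generator, and $\mathcal{C}\overset{\sim}{\longrightarrow}P_{f}(\operatorname*{End}\nolimits_{\mathcal{C}}(S))$ is exactly Lemma~\ref{TMT_LemmaMoritaProjectiveGenerator}. For the step from $n-1$ to $n$, set $\mathcal{D}:=\left.(n-1)\text{-}\mathsf{Tate}_{\aleph_{0}}(\mathcal{C})\right.$, which by the inductive hypothesis is split exact, idempotent complete, and generated by $\tilde{S}_{n-1}=S((t_{1}))\cdots((t_{n-1}))$. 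Since $\left.n\text{-}\mathsf{Tate}^{el}(\mathcal{C})\right.=\mathsf{Tate}^{el}(\mathcal{D})$, $\left.n\text{-}\mathsf{Tate}(\mathcal{C})\right.=\mathsf{Tate}(\mathcal{D})$ and $\tilde{S}_{n}=\tilde{S}_{n-1}((t_{n}))$, everything reduces to the single-step claim: \emph{if $\mathcal{D}$ is split exact and idempotent complete with generator $T$, then $\mathsf{Tate}^{el}(\mathcal{D})$ is split exact with generator $T((t))$, and $\mathsf{Tate}(\mathcal{D})$ is split exact, idempotent complete, with the same generator.}

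For split exactness of $\mathsf{Tate}^{el}(\mathcal{D})$: the categories $\mathsf{Ind}^{a}(\mathcal{D})$ and $\mathsf{Pro}^{a}(\mathcal{D})$ inherit split exactness from $\mathcal{D}$, and a standard lattice argument (Proposition~\ref{Prop_LatticeLeftFiltAndRightFilt}, Theorem~\ref{Thm_SatoGrassmannianProperties}) reduces an arbitrary admissible short exact sequence of elementary Tate objects to sequences living entirely in $\mathsf{Pro}^{a}(\mathcal{D})$ or in $\mathsf{Ind}^{a}(\mathcal{D})$, where it splits; hence $\mathsf{Tate}^{el}(\mathcal{D})$ is split exact and, in particular, $X\cong L\oplus X/L$ for any lattice $L\hookrightarrow X$. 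Then $\mathsf{Tate}(\mathcal{D})=\mathsf{Tate}^{el}(\mathcal{D})^{ic}$ is split exact by Lemma~\ref{TMT_LemmaIdemCompletionOfSplitExactIsSplitExact} and idempotent complete by construction.

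For the generator, fix $X\in\mathsf{Tate}^{el}(\mathcal{D})$ with a lattice $L$. Writing $L=\lim_{m}L_{m}$ as a countable admissible Pro-limit with all transition maps split admissible epics, split exactness identifies $L$ with $L_{0}\times\prod_{m}E_{m}$ for the graded pieces $E_{m}\in\mathcal{D}$; each of $L_{0}$ and $E_{m}$ is a retract of a finite power of $T$, and a countable admissible product of finite powers of $T$ is isomorphic to $T[[t]]$, so $L$ is a retract of $T[[t]]$. Dually $X/L$ is a retract of $T[t^{-1}]$. Combined with $X\cong L\oplus X/L$ this gives that $X$ is a retract of $T((t))=T[[t]]\oplus T[t^{-1}]$, so $T((t))$ generates $\mathsf{Tate}^{el}(\mathcal{D})$, and therefore --- same idempotent completion --- $\mathsf{Tate}(\mathcal{D})$. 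This closes the induction; the cardinality restriction $\aleph_{0}$ is used precisely here, to absorb the countably many graded pieces into one $T[[t]]$ resp.\ $T[t^{-1}]$. With $\left.n\text{-}\mathsf{Tate}_{\aleph_{0}}(\mathcal{C})\right.$ now split exact, idempotent complete, and generated by $\tilde{S}_{n}$, Lemma~\ref{TMT_LemmaMoritaProjectiveGenerator} gives the exact equivalence $\left.n\text{-}\mathsf{Tate}_{\aleph_{0}}(\mathcal{C})\right.\overset{\sim}{\longrightarrow}P_{f}(A_{n})$ with $A_{n}=\operatorname*{End}(\tilde{S}_{n})$, which is (1) and (2).

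For (3), the defining splittings $S((t_{i}))=S[[t_{i}]]\oplus S[t_{i}^{-1}]$ and the fact that countable $\coprod$ and $\prod$ commute with finite biproducts give, for each $i=1,\ldots,n$, an internal direct sum $\tilde{S}_{n}=\tilde{S}_{n}^{i,+}\oplus\tilde{S}_{n}^{i,-}$, where $\tilde{S}_{n}^{i,\pm}$ replaces the $i$-th Tate factor by its $[[t_{i}]]$- resp.\ $[t_{i}^{-1}]$-part and keeps the others. Let $I_{i}^{\pm}$ be the two-sided ideal of endomorphisms of $\tilde{S}_{n}$ factoring through $\tilde{S}_{n}^{i,\pm}$; the projection-inclusion composites $\pi_{i}^{\pm}\in A_{n}$ satisfy $\pi_{i}^{+}+\pi_{i}^{-}=\operatorname*{id}$, so $I_{i}^{+}+I_{i}^{-}=A_{n}$, i.e.\ $(A_{n},(I_{i}^{\pm}))$ is a Beilinson $n$-fold cubical algebra; functoriality of this structure in morphisms of Tate categories, and the stronger goodness of Definition~\ref{def_CubicalAlgebraGood} (local bi-unitality of $I_{1}^{0}\cap\cdots\cap I_{c}^{0}$, obtained by cutting finite sets of endomorphisms down to finite sub-lattices in the first $c$ directions), are carried out in \cite{bgwTateModule}, to which I would defer for the details. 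I expect the main obstacles to be the cardinality-sensitive retract step of the third paragraph --- getting a single generator rather than an unbounded family --- and, for a self-contained (3), the bookkeeping showing that the $n$ directions are mutually independent, so that the ideals $I_{i}^{\pm}$ genuinely assemble into a cubical (indeed good) algebra for all $n$ simultaneously.
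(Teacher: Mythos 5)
Your outline for (1) and (2) has the right overall shape and essentially retraces the route of the sources the paper quotes (the paper's own proof simply cites \cite{TateObjectsExactCats} for split exactness and for $\tilde S_n$ being a generator, and then applies Lemma \ref{TMT_LemmaMoritaProjectiveGenerator}, exactly as you do). But the two steps you treat as routine are precisely where the content sits. First, split exactness: that $\mathsf{Pro}^{a}_{\aleph_0}$, $\mathsf{Ind}^{a}_{\aleph_0}$ and hence $\mathsf{Tate}^{el}_{\aleph_0}$ of a split exact idempotent complete category are again split exact is not an ``inherited'' formality plus a ``standard lattice argument''; it is the theorem that fails for uncountable cardinality (the \v{S}\v{t}ov\'{\i}\v{c}ek--Trlifaj counterexamples mentioned right after the statement), and its proof is an inductive compatible-choice-of-splittings argument along a countable chain --- the same mechanism you later invoke for the graded-piece decomposition of a lattice. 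So $\aleph_0$ is not used ``precisely'' in the absorption step of your third paragraph: it is already needed for the splitness you assume in the second, and your generator argument ($X\cong L\oplus X/L$, compatible splittings of the Pro- and Ind-diagrams, splitting $L_i\twoheadrightarrow L_i/L$ inside $\mathsf{Pro}^{a}$) silently relies on it.

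Second, and more seriously, part (3) asserts a canonical cubical structure on $\operatorname{End}(X)$ for \emph{every} elementary $n$-Tate object $X$, and the structure used in the rest of the paper is the intrinsic one: $I_1^{+}$ consists of morphisms factoring through \emph{some} lattice of the target, $I_1^{-}$ of morphisms annihilating \emph{some} lattice of the source, and the inner ideals are defined via restrictions to lattice subquotients, as in the paper's sketch and in \cite{bgwTateModule}. Your definition --- endomorphisms of the fixed generator $\tilde S_n$ factoring through the fixed summand $\tilde S_n^{i,\pm}$ of a chosen decomposition --- only addresses $X=\tilde S_n$, is not obviously the same ideal for the inner indices, and makes $I_i^{+}+I_i^{-}=A_n$ true by fiat; for the intrinsic ideals that identity is exactly the point where split exactness and $\kappa=\aleph_0$ enter (the paper flags this explicitly and refers to counterexamples otherwise), since decomposing an arbitrary $f$ as bounded plus discrete requires splitting off a lattice. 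As written, (3) is therefore not established: you would either need to prove your summand-ideals coincide with the canonical bounded/discrete ideals (and say how the structure is defined for arbitrary $X$, where no preferred decomposition exists), or work with the intrinsic definition from the start and prove $I_1^{+}+I_1^{-}=\operatorname{End}(X)$ via the countability-dependent splitting theorem.
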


This theorem hinges crucially on our restriction to Tate objects of
\emph{countable} cardinality. See \cite{TateObjectsExactCats} for a detailed
discussion and counter-examples due to J. \v{S}\v{t}ov\'{\i}\v{c}ek and J.
Trlifaj for strictly greater cardinalities. Also, if $\mathcal{C}$ is not
split exact, there is no way to save the conclusions of this result. We refer
to the introduction of \cite{bgwTateModule} for an overview.

\begin{proof}
\textit{(Claim 1)} By \cite[Theorem 7.2]{TateObjectsExactCats} the category
$\left.  n\text{-}\mathsf{Tate}_{\aleph_{0}}^{el}(\mathcal{C})\right.  $ is
split exact. Thus, its idempotent completion $\left.  n\text{-}\mathsf{Tate}%
_{\aleph_{0}}(\mathcal{C})\right.  $ fulfills the claim by Lemma
\ref{TMT_LemmaIdemCompletionOfSplitExactIsSplitExact}.\newline\textit{(Claim
2)} The statement about the generator for $\left.  n\text{-}\mathsf{Tate}%
^{el}(\mathcal{C})\right.  $ is proven in \cite[Prop. 7.4]%
{TateObjectsExactCats}. Since $\left.  n\text{-}\mathsf{Tate}(\mathcal{C}%
)\right.  $ is just the idempotent completion of this category, each of its
objects is a direct summand of an object in $\left.  n\text{-}\mathsf{Tate}%
^{el}(\mathcal{C})\right.  $ and thus this generator also works for the
idempotent completion. The equivalence of Equation \ref{lmai_16} stems from
Lemma \ref{TMT_LemmaMoritaProjectiveGenerator}. One has to check that the
assumptions of the lemma hold true. See \cite[Theorem 1]{bgwTateModule} for
the details.\newline\textit{(Claim 3)} This is \cite[Theorem 1]{bgwTateModule}%
. We give a brief survey: Call a morphism $f:X\rightarrow Y$ of $n$-Tate
objects \textit{bounded} if it factors through a lattice in the target, say
$X\rightarrow L\hookrightarrow Y$, and \textit{discrete}, if it sends a
lattice of the source to zero. For $X=Y$, one checks that these morphisms form
two-sided ideals, $I_{1}^{\pm}$ and moreover $I_{1}^{+}+I_{1}^{-}=R$. For the
latter, most assumptions are needed, especially $\mathcal{C}$ split exact and
cardinality $\kappa:=\aleph_{0}$. See \cite{bgwTateModule} for
counter-examples when these assumptions are not met. The ideals $I_{2}^{\pm} $
are defined inductively: For any nested pair of lattices $L^{\prime
}\hookrightarrow L\hookrightarrow X$, the quotient $L/L^{\prime}$ is an
$(n-1)$-Tate object, and one defines $I_{2}^{+}$ to be those morphisms such
that for any factorization $\overline{f}:L_{1}/L_{1}^{\prime}\rightarrow
L_{2}^{\prime}/L_{2}$ for $f\mid_{L_{1}}$, over suitable lattices $L_{1}%
,L_{1}^{\prime},L_{2},L_{2}^{\prime}$, the morphism $\overline{f}$ is bounded,
as a morphism of $(n-1)$-Tate objects. Similarly for $I_{2}^{-}$. This pattern
can be extended inductively to define $I_{i}^{\pm}$ for $i=1,\ldots,n$.
\end{proof}

Next, we need to check that the cubical algebra actually meets the
well-behavedness criteria we are intending to use later.

\begin{proposition}
\label{TMT_Prop_ConfirmCubicalAlgebraIsGood}The cubical algebra
$\operatorname*{End}(\tilde{S}_{n})$ is good in the sense of Definition
\ref{def_CubicalAlgebraGood}.
\end{proposition}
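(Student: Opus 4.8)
The plan is to realize the ideal $I_1^0\cap\cdots\cap I_c^0$ inside $A_n:=\operatorname*{End}(\tilde S_n)$ as an increasing union of unital corner algebras $e_r A_n e_r$, with the $e_r$ explicit ``cutoff'' idempotents; once this is done, local bi-unitality is immediate. I keep the notation $\tilde S_n=S((t_1))\cdots((t_n))$ of Theorem \ref{Thm_NTateIsModuleCatWithGoodCubicalEndoAlgebra}, fix $c\in\{1,\ldots,n\}$, and recall that the ideals $I_1^{\pm},\ldots,I_c^{\pm}$ are exactly the ones governed by the outermost $c$ Tate layers of $\tilde S_n$, i.e. the layers carrying $t_n,t_{n-1},\ldots,t_{n-c+1}$ (this is the inductive definition recalled in part (3) of that theorem). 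Thus $I_1^0\cap\cdots\cap I_c^0$ consists of the endomorphisms of $\tilde S_n$ that are bounded and discrete in each of these $c$ layers; equivalently --- and this identification, which is the key structural input, is carried out in \cite{bgwTateModule} --- those that factor through an object of $(n-c)\text{-}\mathsf{Tate}(\mathcal C)$, the base category obtained by peeling off those $c$ layers (for $c=n$ this base category is $\mathcal C$ itself).

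Next I would build the idempotents. Since $\mathcal C$ is idempotent complete and split exact, so is $(n-c)\text{-}\mathsf{Tate}(\mathcal C)$, and hence the iterated construction $\tilde S_n=\tilde S_{n-c}((t_{n-c+1}))\cdots((t_n))$ is split exact (part (1) of Theorem \ref{Thm_NTateIsModuleCatWithGoodCubicalEndoAlgebra}). In each of the top $c$ layers, starting from the standard lattice and using that the Sato Grassmannian is directed and co-directed (Theorem \ref{Thm_SatoGrassmannianProperties}(2)) with quotients of nested lattices lying in the next base category (Theorem \ref{Thm_SatoGrassmannianProperties}(1)), I obtain for each $r$ a pair of nested lattices $L^{\prime}_{j,r}\hookrightarrow L_{j,r}$ whose quotient is a ``slab'' in that base category; split exactness lets me split off these slabs as direct summands, and $e_r\in A_n$ is defined to be the idempotent projecting onto the intersection of these $c$ slabs. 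By construction $e_r$ has image inside the lattice $L_{j,r}$ in each layer (so it is bounded there) and $e_r$ kills the lattice $L^{\prime}_{j,r}$ in each layer (so it is discrete there); hence $e_r\in I_1^0\cap\cdots\cap I_c^0$, and moreover $e_r$ factors through the slabs, which are objects of $(n-c)\text{-}\mathsf{Tate}(\mathcal C)$.

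Then I would check $I_1^0\cap\cdots\cap I_c^0=\bigcup_r e_r A_n e_r$. Given $f$ in the ideal, factoring the defining ``bounded'' and ``discrete'' data through the standard lattice systems --- applying Proposition \ref{Prop_LatticeLeftFiltAndRightFilt}(1) and (2) one layer at a time, then co-directedness to compare with the standard chain --- shows that for $r$ large one has $\operatorname{im} f\subseteq L_{j,r}$ and $f$ kills $L^{\prime}_{j,r}$ in every one of the $c$ layers, so $f$ factors through the intersection of the slabs, i.e. $e_r f e_r=f$. Consequently, for any finite subset $\{f_1,\ldots,f_m\}\subseteq I_1^0\cap\cdots\cap I_c^0$, choosing $r$ with $e_r f_i e_r=f_i$ for all $i$ makes $e_r$ a two-sided local unit; in particular the ideal is locally left and locally right unital, hence locally bi-unital. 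Since $c\in\{1,\ldots,n\}$ was arbitrary, $(A_n,(I_i^{\pm}))$ is good in the sense of Definition \ref{def_CubicalAlgebraGood}.

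The main obstacle is the bookkeeping in the third step: one has to follow the boundedness/discreteness data simultaneously through the $c$ nested Tate constructions and see that it collapses to the single cutoff $e_r$. The subtle point is that $e_r$ must itself lie in the ideal, i.e. be simultaneously bounded \emph{and} discrete in each layer --- which forces the choice of $e_r$ as a projection onto a bounded slab $L_{j,r}/L^{\prime}_{j,r}$ rather than onto a lattice, since an endomorphism restricting to the identity on a lattice can never be discrete. All of this is essentially contained in the explicit description of the cubical structure on $\operatorname*{End}(\tilde S_n)$ in \cite{bgwTateModule}, which one could also simply invoke.
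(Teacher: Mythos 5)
Your plan stands or falls with the identification you call the ``key structural input'': that an endomorphism of $\tilde S_n$ lying in $I_1^0\cap\cdots\cap I_c^0$ factors through an object of $(n-c)\text{-}\mathsf{Tate}(\mathcal C)$, so that the ideal becomes an increasing union of unital corners $e_rA_ne_r$ for slab idempotents $e_r$. That identification is false, and it is not what \cite{bgwTateModule} proves. Already for $n=c=1$, $\mathcal C=\mathsf{Vect}_f(k)$, $\tilde S_1=k((t))$: let $\phi$ be the endomorphism with $\phi(k[[t]])=0$ and $\phi(t^{-i})=t^{i}$ for $i\geq 1$. This is a genuine morphism of Tate objects (restricted to any lattice $t^{-m}k[[t]]$ it factors through the finite-dimensional quotient $t^{-m}k[[t]]/k[[t]]$ followed by an inclusion into $k[[t]]$), it kills the lattice $k[[t]]$ and factors through the lattice $k[[t]]$, hence $\phi\in I_1^0$; but its image $t\,k[t]$ is infinite-dimensional, so $\phi$ factors through no object of $\mathcal C$. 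What is true is only the ``finite potent'' statement: for $f\in I_1^0$ the induced map between a containing and a killed lattice, or equivalently $f^2$, factors through the base category --- $f$ itself does not. Your third step founders exactly here: from $\operatorname{im}f\subseteq L_{j,r}$ and $f(L'_{j,r})=0$ you infer that $f$ factors through the slab $L_{j,r}/L'_{j,r}$, i.e.\ $e_rfe_r=f$; but the image of $f$ may meet, indeed be contained in, the killed lattice (in the example $\operatorname{im}\phi\subseteq k[[t]]$, the very lattice $\phi$ annihilates). Since $e_r\phi e_r$ always factors through the finite-dimensional slab, $e_r\phi e_r\neq\phi$ for every $r$, so $I_1^0\neq\bigcup_r e_rA_1e_r$. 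More structurally, any idempotent $e$ in the ideal satisfies $e=e^2$ and therefore does factor through a slab $L/L''$, i.e.\ through the base category in the $n=1$ case; so if idempotents of your kind could serve as local left units, every $f=ef$ in the ideal would factor through the base category, which $\phi$ contradicts. No argument presenting the ideal as a union of unital corners can succeed.

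The paper's proof is of a genuinely different nature and does not produce idempotent units, although its opening moves (factoring each trace-class $f$ as $X\twoheadrightarrow X/L_1'\to L_2\hookrightarrow X$ and choosing common lattices for the finite set via the (co-)directedness of the Sato Grassmannian, Theorem \ref{Thm_SatoGrassmannianProperties}) coincide with yours. The difference is that it only uses the slabs to get local units for the \emph{induced} morphisms $L_1/L_1'\to L_2/L_2'$ of $(n-1)$-Tate objects by induction on $n$, and then assembles a local unit for the original finite set by an infinite step-by-step correction process over a countable co-exhaustive chain of sub-lattices of $L_2$ and over-lattices of $L_1$ --- this is precisely where the restriction to $\mathsf{Tate}_{\aleph_0}$ enters, and the resulting unit is not a projection onto a bounded slab. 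If you want to repair your write-up, you would have to replace the corner-algebra picture by such a limiting construction, at which point you have essentially reproduced the paper's argument.
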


\begin{proof}
Let us only treat the case of local left units. We prove this by induction in
$n$, starting from $n=1$. Suppose $\{f:X_{1}\rightarrow X_{2}\}$ is a finite
set of trace-class morphisms. In particular, each such $f$ is a finite
morphism (viewed as a $1$-Tate object of $(n-1)$-Tate objects). Then, for each
$f$, being both bounded and discrete, we can find lattices $L_{1}^{\prime
}\hookrightarrow X_{1}$ and $L_{2}\hookrightarrow X_{2}$ so that this $f$
factors as%
\[
f:X_{1}\twoheadrightarrow X_{1}/L_{1}^{\prime}\longrightarrow L_{2}%
\hookrightarrow X_{2}\text{.}%
\]
These being found, we find \textit{one} $L_{1}^{\prime}$ resp. \textit{one}
$L_{2}$ having this property simultaneously for all $f$ in the set by taking
common sub- resp. over-lattices of the corresponding lattices for the
individual $f$ $-$ this uses the (co-)directedness of the Sato Grassmannian,
Thm. \ref{Thm_SatoGrassmannianProperties}.

Fix any over-lattice $L_{1}$ of $L_{1}^{\prime}$. Then for every sub-lattice
$L_{2}^{\prime}$ of $L_{2}$ we get an induced morphism%
\[
f\mid_{L_{1}}:L_{1}/L_{1}^{\prime}\longrightarrow L_{2}\longrightarrow
L_{2}/L_{2}^{\prime}\text{.}%
\]
By assumption, each such $f\mid_{L_{1}}$ is a trace-class morphism of $(n-1)
$-Tate objects, so we look at a finite set of trace-class morphisms and can
find a local left unit, say $e_{1}$, by induction (if $n=1$ arbitrary
morphisms between objects in $\mathcal{C}$ are trace-class, so we can just use
the identity morphism of $\mathcal{C}$. If $n\geq2$ we argue by induction). It
remains to lift these local left units to a map $X_{1}$ to $X_{2}$%
.\newline\textsc{(Step A)} If we replace $L_{2}^{\prime}$ by a sub-lattice
$L_{2}^{\prime\prime}$, we get a commutative diagram, depicted below on the
left:%
\begin{equation}%
\bfig\square/```^{ (}->/[``{L_{2}^{\prime}/L_{2}^{\prime\prime}}`{L_{2}%
/L_{2}^{\prime\prime}};```]
\btriangle(500,0)/>`>`>>/[{L_{1}/L_{1}^{\prime}}`{L_{2}/L_{2}^{\prime\prime}%
}`{L_{2}/L_{2}^{\prime}};``]
\efig
\qquad\qquad%
\bfig\dtriangle/<-`<-`^{ (}->/[L_{2}`{L_{1}/L_{1}^{\prime}}`{L_{1}^{+}%
/L_{1}^{\prime}};``]
\square(500,0)/```>>/[``{L_{1}^{+}/L_{1}^{\prime}}`{L_{1}^{+}/L_{1}};```]
\efig
\label{l_shr_Diag1}%
\end{equation}
The downward arrow exists since we even have a map to $L_{2}$ without
quotienting out anything. We get%
\[
f-\sigma_{L_{2}/L_{2}^{\prime}}f:L_{1}/L_{1}^{\prime}\longrightarrow
L_{2}^{\prime}/L_{2}^{\prime\prime}\text{,}%
\]
where $\sigma$ is a section of the right-hand side epimorphism. Since $f$ is
trace-class and trace-class morphisms form an ideal, this morphism is also
trace-class. Thus, we again face a trace-class morphism of $(n-1)$-Tate
objects and again by induction, we find a local left unit, say $e_{2}$. Now
the diagonal $(2\times2)$-matrix $(e_{1}\oplus e_{2})$ is a local left unit on
$L_{2}/L_{2}^{\prime\prime}=L_{2}^{\prime}/L_{2}^{\prime\prime}\oplus
L_{2}/L_{2}^{\prime}$.\newline Since we work with a Tate object of countable
cardinality, perform this inductively on an co-exhaustive family of lattices
$L_{2}^{\prime}$, going step-by-step to smaller sub-lattices. This produces a
local left unit to the morphisms $f$, each restricted to $L_{1}$,%
\[
L_{1}/L_{1}^{\prime}\longrightarrow L_{2}\text{.}%
\]
\textsc{(Step B)} Now, we proceed analogously and step-by-step replace $L_{1}
$ by an over-lattice $L_{1}^{+}$. We get the commutative Diagram
\ref{l_shr_Diag1} (depicted on the right) above. This diagram commutes since
our morphism was actually defined on $X/L_{1}^{\prime}$, so the restrictions
to any lattices are necessarily compatible. Again, picking a left section
$\sigma$ in the top row, we get%
\[
f-f\sigma:L_{1}^{+}/L_{1}\longrightarrow L_{2}%
\]
and since $f$ is trace-class, so is this morphism. Now by Step A, we can find
a local left unit $e_{2}$ for these morphisms (as $f$ runs through our finite
set of morphisms) and so the diagonal $(2\times2)$-matrix $e_{1}\oplus e_{2}$
is a local left unit on $L_{1}^{+}/L_{1}^{\prime}=L_{1}/L_{1}^{\prime}\oplus
L_{1}^{+}/L_{1}$. For local right units an analogous argument works. This
finishes the proof.
\end{proof}

Based on the preceding theorem, we make the following section-wide
definitions: Let $\mathcal{C}$ be a split exact and idempotent complete exact
category with a generator $S$.

Let $n\geq0$ be arbitrary. Define%
\begin{equation}
\mathcal{C}_{n}:=\left.  n\text{-}\mathsf{Tate}_{\aleph_{0}}(\mathcal{C}%
)\right.  \quad\text{,}\quad\tilde{S}_{n}:=S((t_{1}))\cdots((t_{n}%
))\quad\text{,}\quad A_{n}:=\operatorname*{End}(\left.  \tilde{S}_{n}\right.
)\label{lmai_27}%
\end{equation}
By Theorem \ref{Thm_NTateIsModuleCatWithGoodCubicalEndoAlgebra} the algebra
$A_{n}$ is a good $n$-fold cubical algebra and there is an exact equivalence
of exact categories%
\begin{equation}
\mathcal{C}_{n}\longrightarrow P_{f}(A_{n})\text{,}\qquad Z\longmapsto
\operatorname*{Hom}(\left.  \tilde{S}_{n}\right.  ,Z)\text{.}\label{lmai_28}%
\end{equation}
In particular, all these exact categories are idempotent complete, split exact
and come equipped with a convenient fixed generator. Since all $A_{n}$ are
cubical algebras, we shall freely write $I_{i}^{+},I_{i}^{-},I_{i}^{0}$ for
the respective ideals of bounded, discrete or finite morphisms. See
\cite{bgwTateModule} for further background.\medskip

Below, we shall unravel step-by-step the nature of certain quotient and
boundary homomorphisms coming from Theorem
\ref{Thm_NTateIsModuleCatWithGoodCubicalEndoAlgebra}.

\begin{proposition}
\label{TMT_PropTateQuotientsToTateModInd}As always in this section, assume
$\mathcal{C}$ is an idempotent complete split exact category with a generator
$S\in\mathcal{C}$. Then the diagram%
\[%
\bfig\Square(0,0)[{\mathsf{Tate}_{\aleph_{0}}(\mathcal{C}_{n})}`{(\mathsf
{Tate}_{\aleph_{0}}^{el}(\mathcal{C}_{n})/\mathsf{Ind}_{\aleph_{0}}%
^{a}(\mathcal{C}_{n}))^{ic}}`{P_{f}(A_{n+1})}`{P_{f}(A_{n+1}/I_{1}^{-}%
)};`{\sim}`{\sim}`]
\efig
\]
commutes, where the top row rightward arrow is induced from the quotient
functor of $\mathsf{Ind}_{\aleph_{0}}^{a}(\mathcal{C}_{n})\hookrightarrow
\mathsf{Tate}_{\aleph_{0}}^{el}(\mathcal{C}_{n})$, and the bottom row
rightward arrow is induced from the quotient morphism of the ideal inclusion
$I_{1}^{-}\hookrightarrow A_{n+1}$. The downward arrows are exact equivalences.
\end{proposition}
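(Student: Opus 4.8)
The plan is to derive the statement from the general Morita comparison for quotient categories, Lemma~\ref{TMT_MoritaToQuotientCategory}, applied to the sub-category $\mathsf{Ind}^a_{\aleph_0}(\mathcal{C}_n)$ of $\mathsf{Tate}^{el}_{\aleph_0}(\mathcal{C}_n)$. First I would check that the hypotheses of that lemma are met. Write $\mathcal{D}:=\mathsf{Tate}^{el}_{\aleph_0}(\mathcal{C}_n)$; since by definition $\mathsf{Tate}^{el}_{\aleph_0}(\mathcal{C}_n)=(n{+}1)\text{-}\mathsf{Tate}^{el}_{\aleph_0}(\mathcal{C})$, Theorem~\ref{Thm_NTateIsModuleCatWithGoodCubicalEndoAlgebra} gives that $\mathcal{D}$ is split exact, has the generator $\tilde S_{n+1}=S((t_1))\cdots((t_{n+1}))$, and $\operatorname{End}_{\mathcal{D}}(\tilde S_{n+1})=A_{n+1}$. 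Moreover $\mathcal{C}_n$ is idempotent complete (Theorem~\ref{Thm_NTateIsModuleCatWithGoodCubicalEndoAlgebra}~(1)), so Proposition~\ref{TMT_PropFiltering}~(4) shows that $\mathsf{Ind}^a_{\aleph_0}(\mathcal{C}_n)\hookrightarrow\mathcal{D}$ is a right $s$-filtering fully exact sub-category. Hence Lemma~\ref{TMT_MoritaToQuotientCategory} applies with $S:=\tilde S_{n+1}$.

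That lemma then produces the commutative square of the proposition, with the two downward arrows the Morita equivalences of Lemma~\ref{TMT_LemmaMoritaProjectiveGenerator} (using $\mathsf{Tate}_{\aleph_0}(\mathcal{C}_n)=\mathcal{D}^{ic}$ and $(\mathsf{Tate}^{el}_{\aleph_0}(\mathcal{C}_n)/\mathsf{Ind}^a_{\aleph_0}(\mathcal{C}_n))^{ic}=(\mathcal{D}/\mathsf{Ind}^a_{\aleph_0}(\mathcal{C}_n))^{ic}$), and with the lower rightward arrow the functor $M\mapsto M\otimes_{A_{n+1}}\operatorname{End}_{\mathcal{D}/\mathsf{Ind}^a_{\aleph_0}(\mathcal{C}_n)}(\tilde S_{n+1})$. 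By part~(1) of Lemma~\ref{TMT_MoritaToQuotientCategory} the target ring is $\operatorname{End}_{\mathcal{D}/\mathsf{Ind}^a_{\aleph_0}(\mathcal{C}_n)}(\tilde S_{n+1})=A_{n+1}/\langle\tilde S_{n+1}\to C\to\tilde S_{n+1}\text{ with }C\in\mathsf{Ind}^a_{\aleph_0}(\mathcal{C}_n)\rangle$.

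It then remains to identify the ideal $\langle\tilde S_{n+1}\to C\to\tilde S_{n+1},\ C\in\mathsf{Ind}^a\rangle$ with the ideal $I_1^-\subseteq A_{n+1}$ of \emph{discrete} endomorphisms in the outermost cubical coordinate $((t_{n+1}))$ of $A_{n+1}$ (cf.\ the construction in \cite{bgwTateModule} and the proof of Theorem~\ref{Thm_NTateIsModuleCatWithGoodCubicalEndoAlgebra}~(3)). If $f\colon\tilde S_{n+1}\to\tilde S_{n+1}$ factors through an Ind-object $C$, then by Proposition~\ref{Prop_LatticeLeftFiltAndRightFilt}~(2) its first leg $\tilde S_{n+1}\to C$ factors as $\tilde S_{n+1}\twoheadrightarrow\tilde S_{n+1}/L\to C$ for a lattice $L$, so $f$ kills $L$ and is discrete; conversely a discrete $f$ kills some lattice $L$, hence factors through $\tilde S_{n+1}/L\in\mathsf{Ind}^a_{\aleph_0}(\mathcal{C}_n)$. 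Thus the two ideals coincide, so the lower arrow is the quotient by $I_1^-$, as claimed. The step I expect to be the main obstacle is precisely this last identification: the ideal $I_1^-$ is defined inductively along the Tate depth in \cite{bgwTateModule}, and one has to thread that construction carefully enough to see that quotienting out $\mathsf{Ind}^a$ corresponds to the discrete ideal $I_1^-$ (rather than the bounded ideal $I_1^+$); everything else is a formal application of the cited lemmas.
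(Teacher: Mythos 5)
Your proposal is correct and follows essentially the same route as the paper: verify the hypotheses of Lemma \ref{TMT_MoritaToQuotientCategory} (split exactness, generator $\tilde S_{n+1}$, right $s$-filtering via Prop. \ref{TMT_PropFiltering}), then identify the factoring-through-$\mathsf{Ind}^a$ ideal with $I_1^-$ using Prop. \ref{Prop_LatticeLeftFiltAndRightFilt} in one direction and the universal property of the lattice quotient in the other. The final identification you flag as the possible obstacle is in fact exactly the short lattice argument you sketch, which is also the paper's argument, so no further threading of the inductive definition of $I_1^\pm$ is needed.
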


\begin{proof}
Firstly, since $\mathcal{C}_{n}$ is split exact, the categories $\mathsf{Pro}%
_{\aleph_{0}}^{a}(\mathcal{C}_{n})$ and $\mathsf{Tate}_{\aleph_{0}}%
^{el}(\mathcal{C}_{n})$ are also split exact categories \cite[Thm. 4.2
(6)]{TateObjectsExactCats}, \cite[Prop. 5.23]{TateObjectsExactCats}. Moreover,
$\mathcal{C}_{n}$ is idempotent complete and thus $\mathsf{Ind}_{\aleph_{0}%
}^{a}(\mathcal{C}_{n})\hookrightarrow\mathsf{Tate}_{\aleph_{0}}^{el}%
(\mathcal{C}_{n})$ is right $s$-filtering by Prop. \ref{TMT_PropFiltering}.
Furthermore, every object in $\mathsf{Tate}_{\aleph_{0}}^{el}(\mathcal{C}%
_{n})$ is a direct summand of $\tilde{S}:=\tilde{S}_{n+1}$. We use Lemma
\ref{TMT_MoritaToQuotientCategory} in order to deduce that the diagram%
\[%
\bfig\Square(0,0)[{\mathsf{Tate}_{\aleph_{0}}(\mathcal{C}_{n})}`{(\mathsf
{Tate}_{\aleph_{0}}^{el}(\mathcal{C}_{n})/\mathsf{Ind}_{\aleph_{0}}%
^{a}(\mathcal{C}_{n}))^{ic}}`{P_{f}(A_{n+1})}`{P_{f}(A_{n+1}/I^{\ast})}%
;`{\sim}`{\sim}`]
\efig
\]
commutes, where we have used that $\mathsf{Tate}_{\aleph_{0}}^{el}%
(\mathcal{C}^{n})^{ic}=\mathsf{Tate}_{\aleph_{0}}(\mathcal{C}_{n})$ in the
upper left corner and $A_{n+1}:=\operatorname*{End}\nolimits_{\left.
(n+1)\text{-}\mathsf{Tate}_{\aleph_{0}}(\mathcal{C})\right.  }(\tilde{S})$,
and where the ideal $I^{\ast}$ is generated by morphisms admitting a
factorization $\tilde{S}\rightarrow I\rightarrow\tilde{S}$ with $I\in
\mathsf{Ind}_{\aleph_{0}}^{a}(\mathcal{C}_{n})$. We claim that $I^{\ast}%
=I_{1}^{-}$, where $I_{1}^{-}$ refers to the structure of $A_{n+1}$ as an
$(n+1)$-fold cubical algebra: Suppose $f\in I^{\ast}$. Then $f$ factors as
$\tilde{S}\rightarrow I\rightarrow\tilde{S}$ with $I\in\mathsf{Ind}%
_{\aleph_{0}}^{a}(\mathcal{C}_{n})$ and by Prop.
\ref{Prop_LatticeLeftFiltAndRightFilt} there exists a lattice
$L\hookrightarrow\tilde{S}$ such that we obtain a further factorization
$\tilde{S}\twoheadrightarrow\tilde{S}/L\rightarrow I\rightarrow\tilde{S}$. In
particular, $f$ sends the lattice $L$ to zero so that $f\in I_{1}^{-}$.
Conversely, suppose $f\in I_{1}^{-}$. Let $L\hookrightarrow\tilde{S}$ be a
lattice which is sent to zero. Then $f$ factors as $\tilde{S}%
\twoheadrightarrow\tilde{S}/L\rightarrow\tilde{S}$ just by the universal
property of quotients. As $L$ is a lattice, $\tilde{S}/L\in\mathsf{Ind}%
_{\aleph_{0}}^{a}(\mathcal{C}_{n})$, proving $f\in I^{\ast}$. This finishes
the proof of $I^{\ast}=I_{1}^{-}$.
\end{proof}

We shall also need the following variation of the same idea.

\begin{proposition}
\label{Prop_ModuleInterpretationProToProModC}As always in this section, assume
$\mathcal{C}$ is an idempotent complete split exact category with a generator
$S\in\mathcal{C}$. Define%
\[
\hat{S}:=S((t_{1}))\cdots((t_{n}))[[t_{n+1}]]\in\mathsf{Pro}_{\aleph_{0}}%
^{a}(\mathcal{C}_{n})\text{,}%
\]
as in Equation \ref{lmai_17}. Then $E:=\operatorname*{End}%
\nolimits_{\mathsf{Pro}_{\aleph_{0}}^{a}(\mathcal{C}_{n})}(\hat{S})$ is an
$(n+1)$-fold cubical algebra and we have a commutative diagram%
\begin{equation}%
\bfig\Square(0,0)[{\left[ \mathsf{Pro}_{\aleph_{0}}^{a}(\mathcal{C}_{n}%
)\right] ^{ic}}`{\left[ \mathsf{Pro}_{\aleph_{0}}^{a}(\mathcal{C}%
_{n})/\mathcal{C}_{n}\right] ^{ic}}`{P_{f}(E)}`{P_{f}(E/I_{1}^{0}(\hat
{S}))\text{,}};`{\sim}`{\sim}`]
\efig
\label{lmai_18}%
\end{equation}
where the top row rightward morphism is the quotient functor induced from
$\mathcal{C}_{n}\hookrightarrow\mathsf{Pro}_{\aleph_{0}}^{a}(\mathcal{C}^{n}%
)$, the bottom row rightward morphism stems from the ideal inclusion
$I_{1}^{0}\hookrightarrow E$, and the downward arrows are exact equivalences
of exact categories.
\end{proposition}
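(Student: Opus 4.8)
The plan is to deduce the diagram from Lemma~\ref{TMT_MoritaToQuotientCategory}, applied to the inclusion $\mathcal{C}_n\hookrightarrow\mathcal{D}:=\mathsf{Pro}_{\aleph_0}^a(\mathcal{C}_n)$, in exactly the same way Proposition~\ref{TMT_PropTateQuotientsToTateModInd} was obtained, and then to identify the resulting two-sided ideal with $I_1^0(\hat S)$. First I would check the hypotheses of Lemma~\ref{TMT_MoritaToQuotientCategory}. Since $\mathcal{C}_n$ is split exact and idempotent complete (Theorem~\ref{Thm_NTateIsModuleCatWithGoodCubicalEndoAlgebra}(1)), the category $\mathsf{Pro}_{\aleph_0}^a(\mathcal{C}_n)$ is again split exact by \cite[Thm.~4.2]{TateObjectsExactCats}, and $\mathcal{C}_n\hookrightarrow\mathsf{Pro}_{\aleph_0}^a(\mathcal{C}_n)$ is right $s$-filtering by Proposition~\ref{TMT_PropFiltering}(2). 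It remains to see that $\hat S=\tilde S_n[[t_{n+1}]]$ (notation of Equation~\ref{lmai_17}, with $\tilde S_n$ as in Equation~\ref{lmai_27}) is a generator: writing an arbitrary object of $\mathsf{Pro}_{\aleph_0}^a(\mathcal{C}_n)$ as an admissible Pro-diagram $\cdots\twoheadrightarrow X_1\twoheadrightarrow X_0$ indexed by a cofinal chain $\mathbf{N}$, split exactness gives $X\cong X_0\oplus\prod_{\mathbf{N}}Y_i$ with $Y_i$ the kernels of the transition maps; as each of $X_0$ and the $Y_i$ is a direct summand of a finite power of $\tilde S_n$, and $\tilde S_n$ is itself a direct summand of $\hat S$, a re-indexing of the countable product exhibits $X$ as a direct summand of a finite power of $\hat S$. (Alternatively, this is the Pro-analogue of \cite[Prop.~7.4]{TateObjectsExactCats}.)

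Lemma~\ref{TMT_MoritaToQuotientCategory} then produces the commutative square with vertical Morita equivalences $Z\mapsto\operatorname{Hom}(\hat S,Z)$ (Lemma~\ref{TMT_LemmaMoritaProjectiveGenerator} being implicit), with $\operatorname{End}_{\mathcal{D}/\mathcal{C}_n}(\hat S)=E/\mathcal{I}$ where $\mathcal{I}:=\langle\hat S\rightarrow C\rightarrow\hat S\text{ with }C\in\mathcal{C}_n\rangle$, and with bottom arrow $M\mapsto M\otimes_E E/\mathcal{I}$. So the whole content left to verify is: $E$ is an $(n+1)$-fold cubical algebra, and $\mathcal{I}=I_1^0(\hat S)$. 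For the cubical structure: a Pro-object is an elementary Tate object over $\mathcal{C}_n$ with itself as a lattice (quotient $0\in\mathsf{Ind}^a(\mathcal{C}_n)$; cf.\ Definition~\ref{Def_TateObjectAndLattices}), so $\mathsf{Pro}_{\aleph_0}^a(\mathcal{C}_n)$ is a full exact subcategory of $\mathsf{Tate}_{\aleph_0}^{el}(\mathcal{C}_n)=(n+1)\text{-}\mathsf{Tate}_{\aleph_0}^{el}(\mathcal{C})$; hence $E=\operatorname{End}(\hat S)$ inherits the $(n+1)$-fold cubical structure from Theorem~\ref{Thm_NTateIsModuleCatWithGoodCubicalEndoAlgebra}(3) (applied with $n$ replaced by $n+1$), and—consistently with the labelling convention already in force in Proposition~\ref{TMT_PropTateQuotientsToTateModInd}—the pair $I_1^\pm$ refers to bounded/discrete morphisms for the outermost ($[[t_{n+1}]]$-)direction, so $I_1^0(\hat S)=I_1^+(\hat S)\cap I_1^-(\hat S)$ is the ideal of \emph{finite} endomorphisms of $\hat S$, i.e.\ those factoring through an object of $\mathcal{C}_n$.

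This is precisely $\mathcal{I}$. One inclusion $\mathcal{I}\subseteq I_1^0(\hat S)$ follows from Proposition~\ref{Prop_LatticeLeftFiltAndRightFilt}: given $f=h\circ g$ with $g:\hat S\to C$, $h:C\to\hat S$, $C\in\mathcal{C}_n$, part (2) applied to $g$ (using $C\in\mathsf{Ind}^a(\mathcal{C}_n)$) factors $g$ through $\hat S\twoheadrightarrow\hat S/L$ for a lattice $L$, so $g$ is discrete, and part (1) applied to $h$ (using $C\in\mathsf{Pro}^a(\mathcal{C}_n)$) factors $h$ through $L'\hookrightarrow\hat S$ for a lattice $L'$, so $h$ is bounded; hence $f\in I_1^+(\hat S)\cap I_1^-(\hat S)$. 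The reverse inclusion is the identification of "finite" morphisms with "factoring through the base", which I would take from the construction in \cite{bgwTateModule}: a morphism that is both bounded and discrete factors as $\hat S\twoheadrightarrow\hat S/L\to L'\hookrightarrow\hat S$ through an admissible Ind-object $\hat S/L$ and an admissible Pro-object $L'$, and a morphism $\mathsf{Ind}^a(\mathcal{C}_n)\to\mathsf{Pro}^a(\mathcal{C}_n)$ inside $\mathsf{Tate}^{el}$ factors through an object of $\mathcal{C}_n$. Substituting $\mathcal{I}=I_1^0(\hat S)$ into the square finishes the proof.

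The step I expect to be the main obstacle is this last identification $\mathcal{I}=I_1^0(\hat S)$: it requires the precise description of "finite" (i.e.\ $I_1^0$) morphisms in the cubical formalism of \cite{bgwTateModule} together with the lemma that an $\mathsf{Ind}^a$-to-$\mathsf{Pro}^a$ morphism in $\mathsf{Tate}^{el}$ factors through the base—something I would cite from \cite{TateObjectsExactCats} rather than reprove. A subsidiary bookkeeping point is to make sure $E$ is formed inside the same ambient category $\mathsf{Tate}_{\aleph_0}^{el}(\mathcal{C}_n)$ in which lattices, Ind- and Pro-objects are taken, so that "$I_1$" unambiguously denotes the outermost $[[t_{n+1}]]$-direction, matching the convention of Proposition~\ref{TMT_PropTateQuotientsToTateModInd}.
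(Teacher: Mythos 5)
Your overall route is the same as the paper's: verify the hypotheses of Lemma \ref{TMT_MoritaToQuotientCategory} for $\mathcal{C}_n\hookrightarrow\mathsf{Pro}_{\aleph_0}^{a}(\mathcal{C}_n)$ with generator $\hat S$, obtain $\operatorname{End}_{\mathcal{D}/\mathcal{C}_n}(\hat S)=E/\mathcal{I}$ with $\mathcal{I}=\langle \hat S\rightarrow C\rightarrow\hat S,\ C\in\mathcal{C}_n\rangle$, get the cubical structure on $E$ from the full embedding $\mathsf{Pro}_{\aleph_0}^{a}(\mathcal{C}_n)\hookrightarrow\mathcal{C}_{n+1}$ and Theorem \ref{Thm_NTateIsModuleCatWithGoodCubicalEndoAlgebra}, and then prove $\mathcal{I}=I_1^0(\hat S)$. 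Your explicit check that $\hat S$ generates $\mathsf{Pro}_{\aleph_0}^{a}(\mathcal{C}_n)$ is a welcome supplement (the paper is silent on it), and your inclusion $\mathcal{I}\subseteq I_1^0(\hat S)$ via Proposition \ref{Prop_LatticeLeftFiltAndRightFilt} is correct and essentially equivalent to the paper's use of right $s$-filtering.

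However, there is a genuine gap in the reverse inclusion $I_1^0(\hat S)\subseteq\mathcal{I}$. The lemma you propose to cite --- that every morphism from an object of $\mathsf{Ind}_{\aleph_0}^{a}(\mathcal{C}_n)$ to an object of $\mathsf{Pro}_{\aleph_0}^{a}(\mathcal{C}_n)$ inside $\mathsf{Tate}_{\aleph_0}^{el}(\mathcal{C}_n)$ factors through an object of $\mathcal{C}_n$ --- is false, and it does not appear in the cited sources. The natural map $\coprod_{\mathbf{N}}S\rightarrow\prod_{\mathbf{N}}S$ (concretely, $S[t_{n+1}]\rightarrow S[[t_{n+1}]]$ for $\mathcal{C}=\mathsf{Vect}_f(k)$, $S=k$: the inclusion $k[t]\hookrightarrow k[[t]]$) is a morphism from an admissible Ind-object to an admissible Pro-object that factors through no object of the base category, since its restriction to each finite stage is split monic. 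So as written, the key identification $\mathcal{I}=I_1^0(\hat S)$ is not established. The repair is short and is exactly what the paper does: if $f\in I_1^0(\hat S)$, then $f$ is in particular discrete, so it kills a lattice $L\hookrightarrow\hat S$ and factors as $\hat S\twoheadrightarrow\hat S/L\rightarrow\hat S$; now $\hat S/L$ is an admissible quotient of the Pro-object $\hat S$, hence itself a Pro-object, and it is also an Ind-object because $L$ is a lattice, so $\hat S/L\in\mathcal{C}_n$ by Theorem \ref{Thm_SatoGrassmannianProperties}(3) (equivalently Proposition \ref{TMT_PropFiltering}(5)). Thus $f$ factors through an object of $\mathcal{C}_n$ directly --- no auxiliary Ind-to-Pro factorization lemma (and in fact no use of boundedness) is needed. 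With that substitution your argument goes through and coincides with the paper's proof.
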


\begin{proof}
The $(n+1)$-fold cubical algebra structure is immediate from Theorem
\ref{Thm_NTateIsModuleCatWithGoodCubicalEndoAlgebra}, employing that
$\mathsf{Pro}_{\aleph_{0}}^{a}(\mathcal{C}_{n})\hookrightarrow\mathcal{C}%
_{n+1}$ is a full sub-category, so it does not matter whether we consider
endomorphisms in $\mathsf{Pro}_{\aleph_{0}}^{a}(\mathcal{C}_{n})$ or the
$(n+1)$-Tate category $\mathcal{C}_{n+1}$. By Prop. \ref{TMT_PropFiltering}
the inclusion $\mathcal{C}_{n}\hookrightarrow\mathsf{Pro}_{\aleph_{0}}%
^{a}(\mathcal{C}^{n})$ is right $s$-filtering. This produces the top row of
the following diagram:%
\begin{equation}%
\begin{array}
[c]{ccccc}%
\mathcal{C}_{n} & \hookrightarrow & \mathsf{Pro}_{\aleph_{0}}^{a}%
(\mathcal{C}^{n}) & \twoheadrightarrow & \mathsf{Pro}_{\aleph_{0}}%
^{a}(\mathcal{C}^{n})/\mathcal{C}_{n}\\
&  & \downarrow &  & \downarrow\\
&  & \left[  \mathsf{Pro}_{\aleph_{0}}^{a}(\mathcal{C}_{n})\right]  ^{ic} &
\twoheadrightarrow & \left[  \mathsf{Pro}_{\aleph_{0}}^{a}(\mathcal{C}%
_{n})/\mathcal{C}_{n}\right]  ^{ic}\\
&  & \downarrow &  & \downarrow\\
&  & P_{f}(E) & \twoheadrightarrow & P_{f}(E/I_{1}^{0})\text{.}%
\end{array}
\label{lmai_4}%
\end{equation}
We construct the second row from the first by taking the fully faithful
embedding into the idempotent completion; the right-ward functor exists by the
$2$-universal property \cite[Prop. 6.10]{MR2606234}. Next, construct the third
row by Lemma \ref{TMT_MoritaToQuotientCategory}. To this end, we employ the
shorthands%
\[
\hat{S}:=S((t_{1}))\cdots((t_{n}))[[t_{n+1}]]\qquad\text{and}\qquad
E:=\operatorname*{End}\nolimits_{\mathsf{Pro}_{\aleph_{0}}^{a}(\mathcal{C}%
_{n})}\hat{S}%
\]
so that this Lemma literally yields the third row%
\begin{equation}
P_{f}(\operatorname*{End}\nolimits_{\mathsf{Pro}_{\aleph_{0}}^{a}%
(\mathcal{C}^{n})}\hat{S})\twoheadrightarrow P_{f}(\operatorname*{End}%
\nolimits_{\mathsf{Pro}_{\aleph_{0}}^{a}(\mathcal{C}_{n})/\mathcal{C}_{n}}%
\hat{S})\label{lmai_1}%
\end{equation}
along with the description%
\begin{equation}
\operatorname*{End}\nolimits_{\mathsf{Pro}_{\aleph_{0}}^{a}(\mathcal{C}%
_{n})/\mathcal{C}_{n}}(\hat{S})=(\operatorname*{End}\nolimits_{\mathsf{Pro}%
_{\aleph_{0}}^{a}(\mathcal{C}_{n})}\hat{S})/\left\langle \hat{S}\rightarrow
C\rightarrow\hat{S}\text{ with }C\in\mathcal{C}_{n}\right\rangle
\text{.}\label{lmai_2}%
\end{equation}
However, $\mathsf{Pro}_{\aleph_{0}}^{a}(\mathcal{C}_{n})$ is a full
sub-category of $\mathcal{C}_{n+1}$, so in Equation \ref{lmai_1} we could just
as well compute the left-hand side endomorphism algebra in $\mathcal{C}^{n+1}%
$. By Theorem \ref{Thm_NTateIsModuleCatWithGoodCubicalEndoAlgebra} the latter
is canonically an $(n+1)$-fold cubical algebra, so this structure is also
available for the endomorphism algebra on the left-hand side in Equation
\ref{lmai_1}, and in particular we can speak of the two-sided ideal $I_{1}%
^{0}$. Next, we claim that%
\begin{equation}
I_{1}^{0}=\left\langle \hat{S}\rightarrow C\rightarrow\hat{S}\text{ with }%
C\in\mathcal{C}_{n}\right\rangle \label{lmai_3}%
\end{equation}
as two-sided ideals in Equation \ref{lmai_2}. Suppose $f\in I_{1}^{0}$. Then
$f:\hat{S}\rightarrow\hat{S}$ is discrete as a morphism of $1$-Tate objects
(with values in $n$-Tate objects). That is, there is a lattice
$L\hookrightarrow\hat{S}$ that is sent to zero. Thus, $f$ factors as $\hat
{S}\twoheadrightarrow\hat{S}/L\rightarrow\hat{S}$, where $\hat{S}/L$ is and
Ind-object (since $L$ is a lattice), and simultaneously a Pro-object since it
is an admissible quotient of the Pro-object $\hat{S}$. Thus, $\hat{S}%
/L\in\mathcal{C}_{n}$ by Theorem \ref{Thm_SatoGrassmannianProperties}, and
thus $f$ lies in the right-hand side ideal in Equation \ref{lmai_3}.
Conversely, suppose $f$ lies in the right-hand side ideal in Equation
\ref{lmai_3}. Consider the map $\hat{S}\rightarrow C$. Since $\mathcal{C}%
\hookrightarrow\mathsf{Pro}_{\aleph_{0}}^{a}(\mathcal{C})$ is right filtering
by Prop. \ref{TMT_PropFiltering}, this arrow admits a factorization $\hat
{S}\twoheadrightarrow\tilde{C}\rightarrow C$ with $\tilde{C}\in\mathcal{C}%
_{n}$. Here $\ker(\hat{S}\twoheadrightarrow\tilde{C})$ exists, it is a lattice
(since it is a sub-object of a Pro-object and thus itself a Pro-object, and
the quotient by it lies in $\mathcal{C}_{n}$, which can trivially be viewed as
an Ind-object), and so $\hat{S}\rightarrow C $ sends a lattice to zero and
therefore so does $f:\hat{S}\rightarrow C\rightarrow\hat{S}$. Hence, $f\in
I_{1}^{-}$. As $\hat{S}$ is a Pro-object, we trivially have $f\in I_{1}^{+}$
and thus $f\in I_{1}^{+}\cap I_{1}^{-}=I_{1}^{0}$. This finishes the proof of
Equation \ref{lmai_3}. Thus, Equation \ref{lmai_1} becomes
\[
P_{f}(E)=P_{f}(\operatorname*{End}\nolimits_{\mathsf{Pro}_{\aleph_{0}}%
^{a}(\mathcal{C}_{n})}\hat{S})\rightarrow P_{f}(\operatorname*{End}%
\nolimits_{\mathsf{Pro}_{\aleph_{0}}^{a}(\mathcal{C}_{n})/\mathcal{C}_{n}}%
\hat{S})=P_{f}(E/I_{1}^{0})\text{.}%
\]
This settles the last row in Diagram \ref{lmai_4}. Note that the explicit
description of the middle arrow in Lemma \ref{TMT_MoritaToQuotientCategory}
under these identifications also confirms that $P_{f}(E)\rightarrow
P_{f}(E/I_{1}^{0})$ just comes from the map $E\twoheadrightarrow E/I_{1}^{0}$.
\end{proof}

\subsubsection{Relation to the abstract Hochschild symbol}

Next, we shall replace the associative algebra $E$ in the previous proposition
by a certain ideal: With the notation of the proposition, write%
\begin{equation}
\tilde{S}_{n+1}=S((t_{1}))\cdots((t_{n+1}))=\hat{S}\oplus S((t_{1}%
))\cdots((t_{n}))[t_{n+1}^{-1}]\text{.}\label{lmai_before6}%
\end{equation}
Now there is an (non-unital) embedding of algebras%
\begin{equation}
E\hookrightarrow\operatorname*{End}S((t_{1}))\cdots((t_{n+1}))=A_{n+1}%
\qquad\text{by}\qquad f\mapsto%
\begin{pmatrix}
f & 0\\
0 & 0
\end{pmatrix}
\text{,}\label{lmai_IdealEmbeddingMap}%
\end{equation}
acting only on $\hat{S}$. Clearly, with this interpretation, $f$ is sent into
the ideal $I_{1}^{+}(\tilde{S}_{n+1})$ since each of these morphisms factors
through the lattice $\hat{S}\hookrightarrow\tilde{S}_{n+1}$ of $\tilde
{S}_{n+1}$ as an $(n+1)$-Tate object. Analogously, if $f$ lies in $I_{1}%
^{0}(\hat{S})$, this embedding maps it to $I_{1}^{0}$ of $\tilde{S}_{n+1}$.

Diagram \ref{lmai_18} induces a commutative square in Hochschild homology,
depicted below as the upper square.%
\begin{equation}%
\begin{array}
[c]{ccc}%
HH\left[  \mathsf{Pro}_{\aleph_{0}}^{a}(\mathcal{C}_{n})\right]  ^{ic} &
\longrightarrow & HH\left[  \mathsf{Pro}_{\aleph_{0}}^{a}(\mathcal{C}%
_{n})/\mathcal{C}_{n}\right]  ^{ic}\\
\downarrow &  & \downarrow\\
HH(E) & \longrightarrow & HH(E/I_{1}^{0}(\hat{S}))\text{,}\\
\downarrow &  & \downarrow\\
HH(I_{1}^{+}) & \longrightarrow & HH(I_{1}^{+}/I_{1}^{0})
\end{array}
\label{lmai_19}%
\end{equation}
The lower square arises from the embedding morphism which we have just
discussed, Equation \ref{lmai_IdealEmbeddingMap}. Note that $I_{1}^{+}$ and
$I_{1}^{0}$ refer to the ideals of $A_{n+1}$!

\begin{lemma}
[\cite{olliTateRes}]\label{lemma_CrucialLAMBDADiagram}The following diagram of
$A_{n+1}$-bimodules%
\[%
\xymatrix{
I^{0}_{1} \ar[r]^-{\text{diag}} \ar[d]_{=} & I^{+}_{1} \oplus I^{-}_{1}
\ar[r]^-{\text{diff}} \ar[d]_{pr_{I^{+}_{1}}} & A_{n+1} \ar[d]^{\text{(1)}}
\ar@/^4pc/[dd]^{\Lambda} \\
A_{n} \ar[r] \ar[d]_{=} & I^{+}_{1} \ar[r] \ar[d]_{\text{incl}} & A_{n+1}%
/{I^{-}_{1}} \ar[d]^{\text{(2)}}\\
A_{n} \ar[r] & A_{n+1} \ar[r] & A_{n+1}/{A_{n}}
}%
\]
is commutative with exact rows. The second and third row are also exact
sequences of associative algebras. See Equation \ref{lEquationToeplitzLAMBDA}
for the definition of the morphism $\Lambda$.
\end{lemma}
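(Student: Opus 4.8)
The plan is to treat this as a bookkeeping exercise: fix every arrow explicitly, check that each of the three rows is a short exact sequence, verify that the four inner squares commute, and then identify the curved arrow $\Lambda$ with the composite $(2)\circ(1)$. Throughout, recall that ``$A_n$'' here denotes the derived cubical algebra $I_1^0$ of the $(n+1)$-fold cubical algebra $A_{n+1}$ (Equation \ref{lmai_5}, Lemma \ref{RES_Lemma_DerivedCubicalAlgebra}, with indices shifted by one); in particular it is a two-sided ideal of $A_{n+1}$, and since all the $I_i^\pm$ are two-sided ideals, every object and arrow in the diagram is automatically $A_{n+1}$-bilinear, so that aspect requires no further comment.

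First I would pin down the maps. The top row $I_1^0\to I_1^+\oplus I_1^-\to A_{n+1}$ is the diagonal $x\mapsto(x,x)$ followed by the difference $(a,b)\mapsto a-b$; it is injective on the left, exact in the middle because $(a,b)$ with $a=b$ forces $a\in I_1^+\cap I_1^-=I_1^0$, and surjective on the right by the axiom $I_1^++I_1^-=A_{n+1}$. (This is precisely the exact sequence used to prove that $\Lambda$ is well-defined, right before Equation \ref{lml_28}.) The second row $A_n\to I_1^+\to A_{n+1}/I_1^-$ is the inclusion $I_1^0=A_n\hookrightarrow I_1^+$ followed by the restriction to $I_1^+$ of the quotient $A_{n+1}\twoheadrightarrow A_{n+1}/I_1^-$; that restriction is onto since $A_{n+1}=I_1^++I_1^-$ and has kernel $I_1^+\cap I_1^-=I_1^0=A_n$, so this row is short exact, and as $I_1^+$ is an associative algebra with two-sided ideal $A_n$ it is an exact sequence of algebras, the induced map $I_1^+/A_n\xrightarrow{\sim}A_{n+1}/I_1^-$ being a ring isomorphism. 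The third row $A_n\to A_{n+1}\to A_{n+1}/A_n$ is the tautological exact sequence of the two-sided ideal $A_n=I_1^0$. Of the verticals, (1) is the natural quotient $A_{n+1}\to A_{n+1}/I_1^-$ and (2) is the composite $A_{n+1}/I_1^-\xrightarrow{\sim}I_1^+/I_1^0\hookrightarrow A_{n+1}/I_1^0=A_{n+1}/A_n$ --- ``lift a class modulo $I_1^-$ to an element of $I_1^+$, then reduce modulo $A_n$'' --- which is well-defined since two such lifts differ by an element of $I_1^+\cap I_1^-=A_n$.

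With the maps fixed, the four inner squares each reduce to a one-line identity, e.g. $\mathrm{pr}_{I_1^+}(x,x)=x$ for the top-left square and $(a-b)\bmod I_1^-=a\bmod I_1^-$ (as $b\in I_1^-$) for the top-right one. For the curved triangle: given $x\in A_{n+1}$, write $x=x^++x^-$ with $x^\pm\in I_1^\pm$; then $(1)(x)=x\bmod I_1^-=x^+\bmod I_1^-$, whose lift into $I_1^+$ is $x^+$, so $(2)\circ(1)(x)=x^+\bmod A_n=\Lambda(x)$ by the definition in Equation \ref{lEquationToeplitzLAMBDA}. I do not expect a genuine obstacle anywhere; the only thing that needs care is conceptual, namely that the bottom-right arrow $A_{n+1}\to A_{n+1}/A_n$ is a ring homomorphism while $\Lambda$, with the same source and target, is merely additive (and $A_{n+1}$-bilinear). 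Thus the commutativity of the curved triangle must be read as an identity of $A_{n+1}$-bimodule maps, not of ring maps --- which is exactly what the lemma claims, the ``exact sequence of associative algebras'' assertions being confined to the second and third rows.
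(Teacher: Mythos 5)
Your proposal is correct and follows essentially the same route as the paper's own proof: construct the rows one at a time, get exactness of the top row from the cubical-algebra axioms $I_1^0=I_1^+\cap I_1^-$ and $I_1^++I_1^-=A_{n+1}$, identify (1) as the quotient map and (2) as the map induced by $I_1^+\hookrightarrow A_{n+1}$ through the isomorphism $A_{n+1}/I_1^-\cong I_1^+/A_n$, and conclude $\Lambda=(2)\circ(1)$ by a diagram chase. The only quibble is your closing aside: $\Lambda$ is in fact multiplicative modulo $A_n$ (since $x^+y^-,\,x^-y^+\in I_1^0$, one has $(xy)^+\equiv x^+y^+$), consistent with it being the composite of the ring maps (1) and (2) and with the remark in the proof of Prop.\ \ref{Prop_LAMBDAFromLocalization} --- but this does not affect the lemma or your argument.
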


\begin{proof}
We will construct this diagram row by row. The exactness of the first row
stems from $I_{1}^{0}:=I_{1}^{+}\cap I_{1}^{-}$ and $I_{1}^{+}+I_{1}%
^{-}=A_{n+1}$, i.e. it comes directly from the axioms of a cubical algebra.
The second row is obtained from quotienting out $I_{1}^{-}$. The downward
arrows, in particular (1), are just the quotient maps. Note that the exactness
of the second row implies that the quotient on the right-hand side can be
rewritten as $A_{n}\hookrightarrow I_{1}^{+}\twoheadrightarrow I_{1}^{+}%
/A_{n}$. The inclusion $I_{1}^{+}\hookrightarrow A_{n+1}$ thus induces the
last exact row. Note that the arrow (2) is induced from $I_{1}^{+}%
\hookrightarrow A_{n+1}$. As a result, the composition of (1) and (2),
$A_{n+1}\rightarrow A_{n+1}/A_{n}$, is \textit{not} the quotient map, but
precisely the map $\Lambda$ (by diagram chase).
\end{proof}

Now we apply Hochschild homology to Diagram \ref{lmai_18} of Prop.
\ref{Prop_ModuleInterpretationProToProModC}. We get a commutative diagram%
\begin{equation}%
\begin{array}
[c]{ccccc}%
HH(\left.  \mathsf{Pro}_{\aleph_{0}}^{a}(\mathcal{C}_{n})\right.  ) &
\rightarrow & HH(\left.  \mathsf{Pro}_{\aleph_{0}}^{a}(\mathcal{C}%
_{n})/\mathcal{C}_{n}\right.  ) & \rightarrow & \Sigma HH(\left.
\mathcal{C}^{n}\right.  )\\
\downarrow &  & \downarrow &  & \downarrow\\
HH(E) & \rightarrow & HH(E/I_{1}^{0}(\hat{S})) & \rightarrow & \Sigma
HH(I_{1}^{0}(\hat{S}))
\end{array}
\label{lmai_20}%
\end{equation}
This is induced from a square: In the top row we could identify the homotopy
fiber by Keller's localization Sequence, which is applicable by the exactness
of the induced sequence of derived categories (Prop.
\ref{Prop_SchlichtingLocalizationMakesExactSeqOfDerivedCats}). In the bottom
row we can identify the homotopy fiber by the long exact sequence in the
Hochschild homology of algebras, Theorem
\ref{Thm_WodzickiLongExactSeqUsingExcision}, of the algebra extension
$I_{1}^{0}(\hat{S})\hookrightarrow E\twoheadrightarrow E/I_{1}^{0}(\hat{S})$,
and the (one-sided) unitality of $I_{1}^{0}$ (this holds since our cubical
algebras are good). In view of Diagram \ref{lmai_19} we can replace the lower
row of Diagram \ref{lmai_20} by the following middle row:%
\begin{equation}%
\begin{array}
[c]{ccccc}%
HH(\left.  \mathsf{Pro}_{\aleph_{0}}^{a}(\mathcal{C}_{n})\right.  ) &
\rightarrow & HH(\left.  \mathsf{Pro}_{\aleph_{0}}^{a}(\mathcal{C}%
_{n})/\mathcal{C}_{n}\right.  ) & \rightarrow & \Sigma HH(\left.
\mathcal{C}^{n}\right.  )\\
\downarrow &  & \downarrow &  & \downarrow\\
HH(I_{1}^{+}) & \rightarrow & HH(I_{1}^{+}/I_{1}^{0}) & \rightarrow & \Sigma
HH(I_{1}^{0})\\
\downarrow &  & \downarrow &  & \downarrow\\
HH(A_{n+1}) & \rightarrow & HH(A_{n+1}/A_{n}) & \rightarrow & \Sigma
HH(A_{n})\text{,}%
\end{array}
\label{lmai_20b}%
\end{equation}
where $I_{1}^{+},I_{1}^{0}$ refer to the ideals of $A_{n+1}$ as usual.
Correspondingly, the bottom rows are induced from the inclusion $I_{1}%
^{+}\hookrightarrow A_{n+1}$ resp. $A_{n+1}\hookrightarrow A_{n}$, as
described in Lemma \ref{lemma_CrucialLAMBDADiagram}.

\begin{proposition}
\label{Prop_LAMBDAFromLocalization}As always in this section, assume
$\mathcal{C}$ is an idempotent complete split exact category with a generator
$S\in\mathcal{C}$. Then the following diagram commutes:%
\begin{equation}%
\xymatrix{
{HH(\mathsf{Tate}_{\aleph_{0}}(\mathcal{C}_{n}))} \ar[r]^-{quot} \ar[d]^{\sim}
&
{HH((\mathsf{Tate}_{\aleph_{0}}^{el}(\mathcal{C}_{n})/\mathsf{Ind}_{\aleph
_{0}}^{a}(\mathcal{C}_{n}))^{ic})}
\ar[r]^-{\sim} &
{HH((\mathsf{Pro}_{\aleph_{0}}^{a}(\mathcal{C}_{n})/\mathcal{C}_{n})^{ic})}
\ar[d] \\
HH(A_{n+1}) \ar[rr]_{\Lambda} & &
HH(A_{n+1}/A_{n})
}%
\label{lmai_25}%
\end{equation}

\end{proposition}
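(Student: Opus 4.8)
The plan is to push the entire square down to categories of finitely generated projective modules by means of the Morita dictionary of Theorem~\ref{Thm_NTateIsModuleCatWithGoodCubicalEndoAlgebra}, and then to recognise the resulting composite of ring-level maps as $\Lambda$ via Lemma~\ref{lemma_CrucialLAMBDADiagram}. Since $HH(-)$ is functorial, it suffices to identify step by step the categorical functors underlying the three arrows of the top row and the right-hand vertical arrow with module-category functors, and to check that these compose to the functor $P_f(A_{n+1})\to P_f(A_{n+1}/A_n)$ induced by $\Lambda$. First I would fix notation: $\mathcal{C}_{n+1}=\mathsf{Tate}_{\aleph_0}(\mathcal{C}_n)$ with generator $\tilde S_{n+1}=S((t_1))\cdots((t_{n+1}))$ (Equation~\ref{lmai_27}), so that the left-hand vertical equivalence of the Proposition is $Z\mapsto\operatorname{Hom}(\tilde S_{n+1},Z)$ and identifies $HH(\mathsf{Tate}_{\aleph_0}(\mathcal{C}_n))$ with $HH(A_{n+1})$.

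Next, the arrow $quot$ is induced by the quotient functor $\mathsf{Tate}^{el}_{\aleph_0}(\mathcal{C}_n)\to\mathsf{Tate}^{el}_{\aleph_0}(\mathcal{C}_n)/\mathsf{Ind}^a_{\aleph_0}(\mathcal{C}_n)$; by Proposition~\ref{TMT_PropTateQuotientsToTateModInd} it corresponds under Morita to $P_f(A_{n+1})\to P_f(A_{n+1}/I_1^-)$ induced by the ring surjection $A_{n+1}\twoheadrightarrow A_{n+1}/I_1^-$, hence on Hochschild homology to $HH(A_{n+1})\to HH(A_{n+1}/I_1^-)$. The second top arrow is the idempotent completion of the equivalence $\mathsf{Tate}^{el}_{\aleph_0}(\mathcal{C}_n)/\mathsf{Ind}^a_{\aleph_0}(\mathcal{C}_n)\overset{\sim}{\longrightarrow}\mathsf{Pro}^a_{\aleph_0}(\mathcal{C}_n)/\mathcal{C}_n$ of Proposition~\ref{TMT_PropTateModIndEqualsProModC}. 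Here I would use the decomposition $\tilde S_{n+1}=\hat S\oplus S((t_1))\cdots((t_n))[t_{n+1}^{-1}]$ of Equation~\ref{lmai_before6}: the summand $\hat S$ is a lattice in $\tilde S_{n+1}$, so the equivalence carries the generator $\tilde S_{n+1}$ to $\hat S$ and acts on morphisms by restriction to $\hat S$. Together with Proposition~\ref{Prop_ModuleInterpretationProToProModC}, which identifies $(\mathsf{Pro}^a_{\aleph_0}(\mathcal{C}_n)/\mathcal{C}_n)^{ic}$ with $P_f(E/I_1^0(\hat S))$, this says that the second top arrow is $HH$ of the algebra isomorphism $A_{n+1}/I_1^-\overset{\sim}{\longrightarrow}E/I_1^0(\hat S)$ obtained by restricting an endomorphism of $\tilde S_{n+1}$ to $\hat S$. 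The point to verify is that this restriction is well-defined modulo $I_1^0(\hat S)$; this is where one invokes Proposition~\ref{Prop_LatticeLeftFiltAndRightFilt} and the (co)directedness of the Sato Grassmannian, Theorem~\ref{Thm_SatoGrassmannianProperties}, exactly as in the proof of Proposition~\ref{TMT_PropTateModIndEqualsProModC}.

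Finally, by the construction preceding the Proposition (Diagrams~\ref{lmai_19} and~\ref{lmai_20b}), the right-hand vertical arrow is, after the identification $(\mathsf{Pro}^a_{\aleph_0}(\mathcal{C}_n)/\mathcal{C}_n)^{ic}\simeq P_f(E/I_1^0(\hat S))$, the composite $HH(E/I_1^0(\hat S))\to HH(I_1^+/I_1^0)\to HH(A_{n+1}/A_n)$: the first map comes from the block-diagonal embedding $E\hookrightarrow A_{n+1}$ of Equation~\ref{lmai_IdealEmbeddingMap}, which lands in $I_1^+$ and sends $I_1^0(\hat S)$ into $I_1^0$, so that $E/I_1^0(\hat S)\cong I_1^+/I_1^0\cong A_{n+1}/I_1^-$ by row~$2$ of Lemma~\ref{lemma_CrucialLAMBDADiagram}; the second map comes from the inclusion $I_1^+\hookrightarrow A_{n+1}$, recalling $A_n=I_1^0$ inside $A_{n+1}$. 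Stringing the four identifications together, the clockwise composite of the Proposition's square becomes $HH$ of the algebra map $A_{n+1}\to A_{n+1}/I_1^-\cong I_1^+/I_1^0\hookrightarrow A_{n+1}/A_n$, which is precisely $\Lambda$ by Lemma~\ref{lemma_CrucialLAMBDADiagram} (the composition of its maps $(1)$ and $(2)$). That is the bottom row, so the square commutes.

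I expect the one genuinely delicate point to be the middle step: matching the ``restriction to the lattice $\hat S$'' description of the equivalence of Proposition~\ref{TMT_PropTateModIndEqualsProModC} on the chosen generator with the block-diagonal description of Equation~\ref{lmai_IdealEmbeddingMap}, i.e.\ checking that the composite $A_{n+1}/I_1^-\to E/I_1^0(\hat S)\to I_1^+/I_1^0$ really is the canonical isomorphism, so that the two chains of Morita identifications glue without indexing or sign discrepancies. Everything else is bookkeeping with the functoriality of $HH$ and the statements already established in the cited propositions.
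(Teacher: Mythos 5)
Your proposal is correct and is essentially the paper's own argument: the same Morita identifications (Prop.~\ref{TMT_PropTateQuotientsToTateModInd}, Prop.~\ref{TMT_PropTateModIndEqualsProModC} together with Prop.~\ref{Prop_ModuleInterpretationProToProModC} and the block embedding of Equation~\ref{lmai_IdealEmbeddingMap}), the same use of Diagrams~\ref{lmai_19}/\ref{lmai_20b} for the right vertical arrow, and the same recognition of the clockwise composite as $\Lambda$ via Lemma~\ref{lemma_CrucialLAMBDADiagram}. The one point you flag as delicate is exactly where the paper inserts its explicit computation: ``restriction to $\hat S$'' must be taken to mean $f\mapsto P^{+}f$ (restrict and project along the splitting $\tilde S_{n+1}=\hat S\oplus \tilde S_n[t_{n+1}^{-1}]$), and the verification that this is well defined and multiplicative modulo $I_1^{0}$, hence the canonical isomorphism $A_{n+1}/I_1^{-}\cong E/I_1^{0}(\hat S)\cong I_1^{+}/I_1^{0}$, is the short $P^{+}f(1-P^{+})g\in I_1^{0}$ argument given in the paper's proof.
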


\begin{proof}
Prop. \ref{TMT_PropTateQuotientsToTateModInd} provides a commutative diagram
of exact categories and exact functors and applying Hochschild homology gives
us the commutative diagram%
\[%
\bfig\Square(0,0)[{ HH\mathsf{Tate}_{\aleph_{0}}(\mathcal{C}_{n}%
) }`{ HH((\mathsf{Tate}_{\aleph_{0}}^{el}(\mathcal{C}_{n})/\mathsf
{Ind}_{\aleph_{0}}^{a}(\mathcal{C}_{n}))^{ic}) }`{ HH(A_{n+1}) }%
`{ HH(A_{n+1}/I_{1}^{-}) };`{\sim}`{\sim}`]
\efig
\]
whose downward arrows are isomorphisms. Secondly, Prop.
\ref{TMT_PropTateModIndEqualsProModC} tells us that the inclusion%
\begin{equation}
\mathsf{Pro}_{\aleph_{0}}^{a}(\mathcal{C}_{n})\hookrightarrow\mathsf{Tate}%
_{\aleph_{0}}^{el}(\mathcal{C}_{n})\label{lmai_21}%
\end{equation}
induces the exact equivalence of exact categories%
\begin{equation}
\mathsf{Pro}_{\aleph_{0}}^{a}(\mathcal{C}_{n})/\mathcal{C}_{n}\overset{\sim
}{\longrightarrow}\mathsf{Tate}_{\aleph_{0}}^{el}(\mathcal{C}_{n}%
)/\mathsf{Ind}_{\aleph_{0}}^{a}(\mathcal{C}_{n})\text{.}\label{lmai_22}%
\end{equation}
If we apply Lemma \ref{TMT_LemmaInclusionAndMoritaGenerator} to the fully
exact sub-category of Equation \ref{lmai_21} we obtain the commutative square%
\[%
\bfig\Square(0,0)[{ \mathsf{Pro}_{\aleph_{0}}^{a}(\mathcal{C}_{n})^{ic}
}`{ \mathsf{Tate}_{\aleph_{0}}^{el}(\mathcal{C}_{n})^{ic} }`{ P_{f}%
(\operatorname*{End}\nolimits_{\mathcal{C}}(\tilde{S}_{n}[[t_{n+1}%
]])) }`{ P_{f}(\operatorname*{End}\nolimits_{\mathcal{D}}(\tilde{S}_{n+1}%
)) };```]
\efig
\]
($\tilde{S}_{n}[[t_{n+1}]]$ was previously also called $\hat{S}$; cf. Prop.
\ref{Prop_ModuleInterpretationProToProModC}) where the bottom rightward arrow
stems from the algebra homomorphism%
\begin{equation}
\operatorname*{End}(\tilde{S}_{n}[[t_{n+1}]])\longrightarrow
\operatorname*{End}(\tilde{S}_{n}[[t_{n+1}]]\oplus\tilde{S}_{n}[t_{n+1}%
^{-1}])\text{,}\qquad f\longmapsto%
\begin{pmatrix}
f & 0\\
0 & 0
\end{pmatrix}
\text{.}\label{lmai_23}%
\end{equation}
By Prop. \ref{TMT_PropTateQuotientsToTateModInd} the equivalence
$\mathsf{Tate}(\mathcal{C}_{n})\overset{\sim}{\longrightarrow}P_{f}(A_{n+1})$
identifies the quotient functor $\mathsf{Tate}_{\aleph_{0}}^{el}%
(\mathcal{C}_{n})\rightarrow\mathsf{Tate}_{\aleph_{0}}^{el}(\mathcal{C}%
_{n})/\mathsf{Ind}_{\aleph_{0}}^{a}(\mathcal{C}_{n})$ just with quotienting
out the ideal $I_{1}^{-}$. In view of Equation \ref{lmai_23} the inverse of
the equivalence in Equation \ref{lmai_22} corresponds to finding an
endomorphism of $\tilde{S}_{n}[[t_{n+1}]]$ which is mapped under the map of
Equation \ref{lmai_23} to a given element (alternatively this follows from the
description of the inverse functor on morphisms as given by Prop.
\ref{TMT_PropTateModIndEqualsProModC}). But this is easy to achieve
concretely: Given $f\in\operatorname*{End}\nolimits_{\mathcal{D}}(\tilde
{S}_{n+1})$ we compose it with a projector to $\tilde{S}_{n}[[t_{n+1}]]$, i.e.
the composition of both these steps is realized by%
\begin{align}
& A_{n+1}\longrightarrow A_{n+1}/I_{1}^{-}\longrightarrow\operatorname*{End}%
(\tilde{S}_{n}[[t_{n+1}]])\overset{(\ast)}{\longrightarrow}E/I_{1}^{0}(\hat
{S})\label{lmai_24}\\
& \text{(with }E/I_{1}^{0}(\hat{S})\text{ and the arrow }(\ast)\text{ as in
Diag. \ref{lmai_18})}\nonumber\\
& \qquad\qquad f\longmapsto P^{+}f\longmapsto\overline{P^{+}f}\text{,}%
\nonumber
\end{align}
where $P^{+}$ is the idempotent projecting $\tilde{S}_{n+1}$ to $\tilde{S}%
_{n}[[t_{n+1}]]$. Finally, note that we already know what the map%
\[
HH((\mathsf{Pro}_{\aleph_{0}}^{a}(\mathcal{C}_{n})/\mathcal{C}^{n}%
)^{ic})\longrightarrow HH(A_{n+1}/A_{n})
\]
does: It arises as the composition of a series of arrows in Diagrams
\ref{lmai_20}, \ref{lmai_20b}, namely%
\[
HH(\left.  \mathsf{Pro}_{\aleph_{0}}^{a}(\mathcal{C}_{n})/\mathcal{C}%
^{n}\right.  )\rightarrow HH(E/I_{1}^{0}(\hat{S}))\rightarrow HH(I_{1}%
^{+}/I_{1}^{0})\rightarrow HH(A_{n+1}/A_{n})\text{,}%
\]
where the last two arrows are just induced from non-unital inclusions of
associative algebras into each other. As our lift of Equation \ref{lmai_24}
already gives us a concrete representative for $HH(E/I_{1}^{0}(\hat{S}))$, we
see that%
\begin{equation}
f\longmapsto P^{+}f\label{lmai_26}%
\end{equation}
is a representative of the algebra homomorphism making Diagram \ref{lmai_25}
commutative (This is by the way indeed a ring homomorphism: The failure to
respect multiplication of $f,g\in A_{n+1}$ is%
\[
\mathfrak{d}:=P^{+}(f\cdot g)-(P^{+}f)\cdot(P^{+}g)=P^{+}f(1-P^{+})g\text{.}%
\]
Since the image of $P^{+}$ lies in the lattice $\tilde{S}_{n}[[t_{n+1}]]$ of
$\tilde{S}_{n+1}$, we have $\mathfrak{d}\in I_{1}^{+}$, and since the kernel
of $1-P^{+}$ contains the lattice, we also have $\mathfrak{d}\in I_{1}^{-}$.
Thus, $\mathfrak{d}\in I_{1}^{+}\cap I_{1}^{-}=I_{1}^{0}$ and therefore
$\mathfrak{d}\equiv0$ in $A_{n+1}/A_{n}=A_{n+1}/I_{1}^{0}$.) Finally, observe
that the map in Equation \ref{lmai_26} is a concrete representative of the map
$\Lambda$ (see Equation \ref{lEquationToeplitzLAMBDA}). This finishes the proof.
\end{proof}

\begin{theorem}
\label{Thm_DeloopingBoundaryAgreesWithDMap}As always in this section, assume
$\mathcal{C}$ is an idempotent complete split exact category with a generator
$S\in\mathcal{C}$. Then the natural diagram%
\[%
\bfig\Square(0,0)[{ HH(\mathsf{Tate}_{\aleph_{0}}(\mathcal{C}_{n})) }`{ \Sigma
HH(\mathcal{C}_{n}) }`{ HH(A_{n+1}) }`{ \Sigma HH(A_{n}) };`{\sim}`{\sim}`d]
\efig
\]
commutes. Here the downward arrows are the exact equivalences of Equation
\ref{lmai_28}, and $d$ is the homomorphism of degree $-1$ defined in Equation
\ref{lml_40} (or in the paper \cite[\S 6]{olliTateRes}).
\end{theorem}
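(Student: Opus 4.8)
The plan is to obtain the square by stacking two commutative squares that are already available in the text, together with the identity $d=\delta\circ\Lambda$ of Definition~\ref{marker_Def_map_d}.

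First I would unwind the top horizontal arrow. By construction the natural boundary map $HH(\mathsf{Tate}_{\aleph_{0}}(\mathcal{C}_{n}))\to\Sigma HH(\mathcal{C}_{n})$ is the composite of three maps: (i) the quotient map to $HH((\mathsf{Tate}^{el}_{\aleph_{0}}(\mathcal{C}_{n})/\mathsf{Ind}^{a}_{\aleph_{0}}(\mathcal{C}_{n}))^{ic})$; (ii) the isomorphism induced by the exact equivalence of Proposition~\ref{TMT_PropTateModIndEqualsProModC}, landing in $HH((\mathsf{Pro}^{a}_{\aleph_{0}}(\mathcal{C}_{n})/\mathcal{C}_{n})^{ic})$; and (iii) the connecting morphism $\partial$ of Keller's localization sequence (Theorem~\ref{Thm_KellersLocalizationTheorem}) attached to the right $s$-filtering inclusion $\mathcal{C}_{n}\hookrightarrow\mathsf{Pro}^{a}_{\aleph_{0}}(\mathcal{C}_{n})$, which is legitimate because the associated sequence of bounded derived categories is exact by Proposition~\ref{Prop_SchlichtingLocalizationMakesExactSeqOfDerivedCats}. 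So it is enough to show that, under the Morita equivalences of Equation~\ref{lmai_28}, the composite (i)+(ii) corresponds to $\Lambda$ and the map (iii) corresponds to $\delta$.

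The first of these is exactly Proposition~\ref{Prop_LAMBDAFromLocalization}: under $HH(\mathsf{Tate}_{\aleph_{0}}(\mathcal{C}_{n}))\xrightarrow{\sim}HH(A_{n+1})$ and the equivalence $HH((\mathsf{Pro}^{a}_{\aleph_{0}}(\mathcal{C}_{n})/\mathcal{C}_{n})^{ic})\xrightarrow{\sim}HH(A_{n+1}/A_{n})$ --- the latter coming from Proposition~\ref{Prop_ModuleInterpretationProToProModC} combined with Lemma~\ref{lemma_CrucialLAMBDADiagram} and the invariance of Hochschild homology under idempotent completion --- the composite of the quotient functor and the equivalence of Proposition~\ref{TMT_PropTateModIndEqualsProModC} is carried to $\Lambda\colon HH(A_{n+1})\to HH(A_{n+1}/A_{n})$. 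The second is the right-hand column of Diagram~\ref{lmai_20b}: there the connecting morphism $\partial$ is identified, via the vertical Morita equivalences and the non-unital inclusions $I_{1}^{0}\hookrightarrow E$ and $I_{1}^{+}\hookrightarrow A_{n+1}$, $A_{n}\hookrightarrow A_{n+1}$ of Lemma~\ref{lemma_CrucialLAMBDADiagram}, with the connecting map $\delta$ of the Hochschild long exact sequence (Theorem~\ref{Thm_WodzickiLongExactSeqUsingExcision}) of the algebra extension $A_{n}\hookrightarrow A_{n+1}\twoheadrightarrow A_{n+1}/A_{n}$; Wodzicki's excision applies there because $A_{n+1}$ is a good cubical algebra (Proposition~\ref{TMT_Prop_ConfirmCubicalAlgebraIsGood}), so the relevant ideals are locally one-sided unital. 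Pasting the square of Proposition~\ref{Prop_LAMBDAFromLocalization} on top of the relevant square of Diagram~\ref{lmai_20b} and invoking $d=\delta\circ\Lambda$ then yields the claim.

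The hard part will not be a genuine difficulty but the bookkeeping: I must be sure that the two distinct homological machines --- Keller's localization sequence for (dg / exact) categories and Wodzicki's excision long exact sequence for non-unital algebras --- are compatible across the whole tower of Morita equivalences and idempotent completions, and in particular that the middle row of Diagram~\ref{lmai_20b} genuinely has homotopy fibre $\Sigma HH(I_{1}^{0})$. This in turn rests on Propositions~\ref{TMT_PropTateQuotientsToTateModInd} and \ref{Prop_ModuleInterpretationProToProModC} and Lemma~\ref{lemma_CrucialLAMBDADiagram} to translate the categorical quotients into algebra quotients, on Proposition~\ref{Prop_SchlichtingLocalizationMakesExactSeqOfDerivedCats} to legitimise Keller's sequence for the $s$-filtering inclusions used, and on goodness (Proposition~\ref{TMT_Prop_ConfirmCubicalAlgebraIsGood}) to legitimise Wodzicki's excision; once these are in place the commutativity reduces to a diagram chase through Diagrams~\ref{lmai_20}, \ref{lmai_20b} and \ref{lmai_25}.
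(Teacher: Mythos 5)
Your proposal is correct and takes essentially the same route as the paper: decompose the delooping map as quotient functor, the equivalence of Proposition~\ref{TMT_PropTateModIndEqualsProModC}, and the Keller boundary for $\mathcal{C}_{n}\hookrightarrow\mathsf{Pro}^{a}_{\aleph_{0}}(\mathcal{C}_{n})$, then identify these with $\Lambda$ via Proposition~\ref{Prop_LAMBDAFromLocalization} and with $\delta$ via Diagram~\ref{lmai_20b}, and conclude from $d=\delta\circ\Lambda$. The only difference is presentational: where you assert the factorization of the top arrow ``by construction'', the paper justifies it explicitly by Saito's homotopy-bi-Cartesian square argument (Diagram~\ref{lmai_30}), but the underlying content is identical.
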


\begin{proof}
The top row stems from the square of exact categories in line \ref{lmai_35}.
By the crucial idea of Sho Saito's paper \cite{MR3317759} (his proof of the
Kapranov-Previdi delooping conjecture), after taking algebraic $K$-theory,
this diagram becomes homotopy Cartesian. However, the same idea works with
Hochschild homology, and we get the homotopy commutative diagram%
\begin{equation}%
\xymatrix{ {HH(\mathcal{C}_{n})} \ar[rr] \ar[d] && {HH(\mathsf{Pro}%
_{\aleph_{0}}^{a}(\mathcal{C}_{n}))} \ar[rr] \ar[d] && {HH(\mathsf
{Pro}_{\aleph_{0}}^{a}(\mathcal{C}_{n})/\mathcal{C}_{n})} \ar[d] \\
{HH(\mathsf{Ind}_{\aleph_{0}}^{a}(\mathcal{C}_{n}))} \ar[rr] && {HH(\mathsf
{Tate}_{\aleph_{0}}^{el}(\mathcal{C}_{n}))} \ar[rr] && {HH(\mathsf
{Tate}_{\aleph_{0}}^{el}(\mathcal{C}_{n})/\mathsf{Ind}_{\aleph_{0}}%
^{a}(\mathcal{C}_{n}))}  }%
\label{lmai_30}%
\end{equation}
whose rows are fiber sequences, by Keller's localization Theorem, see Thm.
\ref{Thm_KellersLocalizationTheorem}. In more detail: The rows stem from the
fact that $\mathcal{C}_{n}\hookrightarrow\mathsf{Pro}_{\aleph_{0}}%
^{a}(\mathcal{C}_{n})$ is right $s$-filtering resp. $\mathsf{Pro}_{\aleph_{0}%
}^{a}(\mathcal{C}_{n})\hookrightarrow\mathsf{Tate}_{\aleph_{0}}^{el}%
(\mathcal{C}_{n})$ is left $s$-filtering. All of these constructions are
functorial on the level of exact functors of exact categories and this induces
the downward arrows. Following Saito's idea, since the right-hand side
downward map stems from an exact equivalence, Prop.
\ref{TMT_PropTateModIndEqualsProModC}, it is an equivalence, and thus the
square on the left-hand side is homotopy bi-Cartesian. We obtain the
equivalence $HH(\mathcal{C}_{n})\overset{\sim}{\rightarrow}\Sigma
HH(\mathsf{Tate}_{\aleph_{0}}^{el}(\mathcal{C}_{n}))$ and equivalences which
allow us to phrase this equivalence as the boundary map of the fiber sequence
induced from the top row in Diagram \ref{lmai_30}. Aside: Note that the
underlying equivalence%
\[
HH(\mathsf{Tate}_{\aleph_{0}}^{el}(\mathcal{C}_{n}))\sim HH(\mathsf{Pro}%
_{\aleph_{0}}^{a}(\mathcal{C}_{n})/\mathcal{C}_{n})\text{.}%
\]
of Hochschild spectra \textit{does not} come from an exact equivalence of
exact categories. By Prop. \ref{Prop_LAMBDAFromLocalization} we know that
under the identification of either side with the Hochschild homology of a
category of projective modules and following the middle downward arrow of
Diagram \ref{lmai_20b}, this is induced from the algebra homomorphism
$\Lambda:HH(A_{n+1})\rightarrow HH(A_{n+1}/A_{n})$. But from Diagram
\ref{lmai_20b} we also see that the boundary map of the localization sequence
for $\mathcal{C}_{n}\hookrightarrow\mathsf{Pro}^{a}(\mathcal{C}_{n})$ (in the
top row) commutes with the boundary map of the long exact sequence in
Hochschild homology of the algebra extension%
\[
A_{n}\hookrightarrow A_{n+1}\twoheadrightarrow A_{n+1}/A_{n}%
\]
(in the bottom row). We had denoted the latter boundary map by $\delta$ in
Equation \ref{lml_40}. Thus, in conjunction with the identification with the
boundary map of the Tate category variant (Equation \ref{lmai_30}), the map is
$\delta\circ\Lambda$, which is precisely the definition of the map $d$ in the
statement of the theorem. This finishes the proof.
\end{proof}

\section{\label{sect_TheBeilRes}The Beilinson residue}

\subsection{Ad\`{e}les of a scheme}

\subsubsection{Definition}

Let us recall as much material about ad\`{e}les of schemes as we need. The
original source is Beilinson's article \cite{MR565095}. Let $k$ be a field.
Suppose $X$ is a Noetherian $k$-scheme. Given a scheme point $\eta\in X$, we
shall write $\overline{\{\eta\}}$ for its Zariski closure, equipped with the
reduced closed sub-scheme structure. Moreover, we also abuse notation and
write $\eta$ for its defining ideal sheaf.

When given points $\eta_{0},\eta_{1}\in X$, we write \textquotedblleft%
$\eta_{0}\geq\eta_{1}$\textquotedblright\ if $\overline{\{\eta_{0}\}}\ni
\eta_{1}$. Write $S\left(  X\right)  _{n}:=\{(\eta_{0}>\cdots>\eta_{n}%
),\eta_{i}\in X\}$ for length $n+1$ sequences without repetitions. Suppose
$K_{n}\subseteq S\left(  X\right)  _{n}$ is a subset, for some chosen $n\geq
0$. Following \cite{MR565095}, define $\left.  _{\eta}(K_{n})\right.
:=\{(\eta_{1}>\cdots>\eta_{n})$ such that $(\eta>\eta_{1}>\cdots>\eta_{n})\in
K_{n}\}$, a subset of $S\left(  X\right)  _{n-1}$.

\begin{definition}
\label{def_AdelesFollowingBeilinson}Let $X$ be a Noetherian $k$-scheme.

\begin{enumerate}
\item Assume $\mathcal{F}$ is a coherent sheaf. Define inductively%
\begin{equation}
A(K_{0},\mathcal{F}):=%
{\textstyle\prod\nolimits_{\eta\in K_{0}}}
\underset{i}{\underleftarrow{\lim}}\,\mathcal{F}\otimes_{\mathcal{O}_{X}%
}\mathcal{O}_{X,\eta}/\eta^{i}\nonumber
\end{equation}
for $n=0$, and%
\begin{equation}
A(K_{n},\mathcal{F}):=%
{\textstyle\prod\nolimits_{\eta\in X}}
\underset{i}{\underleftarrow{\lim}}\,A(\left.  _{\eta}K_{n}\right.
,\mathcal{F}\otimes_{\mathcal{O}_{X}}\mathcal{O}_{X,\eta}/\eta^{i}%
)\label{lss2}%
\end{equation}
for $n\geq1$.

\item For a quasi-coherent sheaf $\mathcal{F}$, define $A(K_{n},\mathcal{F}%
):=\underrightarrow{\operatorname*{colim}}_{\mathcal{F}_{j}}A(K_{n}%
,\mathcal{F}_{j})$, where $\mathcal{F}_{j}$ runs through the coherent
sub-sheaves of $\mathcal{F}$.
\end{enumerate}
\end{definition}

Note that the arguments in Equation \ref{lss2} are usually only
quasi-coherent, so this additional definition is necessary to give
$A(-,\mathcal{F})$ a meaning, even if $\mathcal{F}$ happens to be coherent.

\begin{theorem}
[A. Beilinson]\label{lX_BeilinsonResolutionThm}Let $X$ be a Noetherian scheme.

\begin{enumerate}
\item For any $n\geq0$ and subset $K_{n}\subseteq S\left(  X\right)  _{n}$,
the above defines an exact functor%
\[
A(K_{n},-):\operatorname*{QCoh}(X)\longrightarrow\operatorname*{Mod}%
(\mathcal{O}_{X})\text{,}%
\]

\item and for every quasi-coherent sheaf $\mathcal{F}$, this gives rise to a
flasque resolution%
\begin{equation}
0\rightarrow\mathcal{F}\rightarrow\mathbf{A}_{\mathcal{F}}^{0}\rightarrow
\mathbf{A}_{\mathcal{F}}^{1}\rightarrow\mathbf{A}_{\mathcal{F}}^{2}%
\rightarrow\ldots\text{,}\label{lBeilRes}%
\end{equation}
where $\mathbf{A}_{\mathcal{F}}^{i}(U):=A(S\left(  U\right)  _{i}%
,\mathcal{F})$ for any Zariski open $U\subseteq X$.
\end{enumerate}
\end{theorem}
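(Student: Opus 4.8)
The plan is to reproduce Beilinson's original argument \cite{MR565095} (see also Yekutieli \cite{MR1213064} for a detailed account), which proceeds by induction on $n$. First I would dispose of the reduction to coherent sheaves: filtered colimits are exact in $\operatorname*{Mod}(\mathcal{O}_X)$, and for quasi-coherent $\mathcal{F}$ the object $A(K_n,\mathcal{F})$ is \emph{by definition} the filtered colimit of $A(K_n,\mathcal{F}_j)$ over the coherent subsheaves $\mathcal{F}_j\subseteq\mathcal{F}$; hence both assertions of the theorem for quasi-coherent $\mathcal{F}$ follow mechanically once they are established for coherent sheaves. So from now on assume $\mathcal{F}$ coherent.

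\emph{Exactness (part 1).} I would induct on $n$. For $n=0$ one has $A(K_0,\mathcal{F})=\prod_{\eta\in K_0}\widehat{(\mathcal{F}_\eta)}$, the $\eta$-adic completion of the stalk. Passing to stalks is exact; $\eta$-adic completion is exact on finitely generated modules over the Noetherian local ring $\mathcal{O}_{X,\eta}$ by the Artin--Rees lemma; and arbitrary products are exact in a module category. Composing these three exact operations settles the case $n=0$. For the inductive step, given a short exact sequence $0\to\mathcal{F}'\to\mathcal{F}\to\mathcal{F}''\to 0$ of coherent sheaves, one must show $0\to A(K_n,\mathcal{F}')\to A(K_n,\mathcal{F})\to A(K_n,\mathcal{F}'')\to 0$ is exact, and since products are exact it suffices to handle one factor $\varprojlim_i A({}_\eta K_n,\,\mathcal{F}\otimes_{\mathcal{O}_X}\mathcal{O}_{X,\eta}/\eta^i)$. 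The subtlety is that $\mathcal{G}\mapsto\mathcal{G}\otimes_{\mathcal{O}_X}\mathcal{O}_{X,\eta}/\eta^i$ is \emph{not} exact, so the argument is the usual one: by Artin--Rees inside $\mathcal{O}_{X,\eta}$ the kernels of $\mathcal{F}'_\eta/\eta^i\mathcal{F}'_\eta\to\mathcal{F}_\eta/\eta^i\mathcal{F}_\eta$ form an inverse system pro-isomorphic to zero; applying the (inductively) exact functor $A({}_\eta K_n,-)$ termwise and then $\varprojlim_i$, which preserves exactness of Mittag--Leffler systems and in particular kills the pro-zero part, one recovers a short exact sequence. This Artin--Rees / Mittag--Leffler bookkeeping is the genuinely technical heart of the proof; the rest is formal.

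\emph{Resolution and flasqueness (part 2).} I would first observe that the presheaf $U\mapsto A(S(U)_n,\mathcal{F})$ is a sheaf and is flasque, both consequences of the fact that open subsets of $X$ are stable under generization: if a chain $\eta_0>\cdots>\eta_n$ lies set-theoretically in an open cover $\bigcup_\alpha U_\alpha$, then any $U_\alpha$ containing the closed-most point $\eta_n$ already contains all its generizations $\eta_i$, so the whole chain lies in a single $U_\alpha$. Consequently $\mathbf{A}^i_\mathcal{F}(U)$ decomposes as a product indexed by the chains contained in $U$, the sheaf axiom is immediate, and for $V\subseteq U$ the restriction map is simply the projection onto the chains contained in $V$, which is visibly surjective. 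For exactness of $0\to\mathcal{F}\to\mathbf{A}^0_\mathcal{F}\to\mathbf{A}^1_\mathcal{F}\to\cdots$, one may check on stalks, hence assume $X=\operatorname*{Spec}\mathcal{O}_{X,x}$ local and run a Noetherian induction on $\operatorname*{supp}\mathcal{F}$ (equivalently on $\dim X$): splitting each $\mathbf{A}^i_\mathcal{F}$ according to whether the leading point $\eta_0$ of a chain is a generic point of a component or a proper specialization produces, by the recursive shape of Definition \ref{def_AdelesFollowingBeilinson} and part (1), a short exact sequence of complexes whose sub/quotient terms are (shifted) adèle complexes of the coherent sheaves $\mathcal{F}\otimes_{\mathcal{O}_X}\mathcal{O}_{X,\eta}/\eta^i$ on the proper closed subschemes $\overline{\{\eta\}}$ and of the generic localization of $\mathcal{F}$; the former are resolutions by the induction hypothesis, the latter is checked directly at the top stratum, and chasing the resulting long exact sequence gives the claim. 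To reiterate, the main obstacle is the exactness in the inductive step of the iterated completion-and-inverse-limit construction; the resolution property and flasqueness reduce to the local case plus a diagram chase.
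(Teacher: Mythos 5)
The paper itself gives no proof of this theorem: it defers entirely to Beilinson \cite[\S 2]{MR565095} and to Huber \cite{MR1138291}, which is the same route you propose, so the only question is whether your sketch of that route is sound. Your treatment of part (1) is essentially the standard argument and is fine in outline (Artin--Rees makes the kernels of $\mathcal{F}'_\eta/\eta^i\mathcal{F}'_\eta\to\mathcal{F}_\eta/\eta^i\mathcal{F}_\eta$ pro-zero, exact functors preserve the surjective transition maps, Mittag--Leffler handles $\varprojlim_i$), with the caveat that the induction must carry the statement for \emph{quasi-coherent} sheaves at level $n-1$, since the recursive arguments $\mathcal{F}\otimes_{\mathcal{O}_X}\mathcal{O}_{X,\eta}/\eta^i$ in Definition \ref{def_AdelesFollowingBeilinson} are only quasi-coherent; your up-front reduction has to be interleaved with the induction rather than performed once at the start.

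Part (2), however, contains a genuine error: the claim that $\mathbf{A}^i_{\mathcal{F}}(U)=A(S(U)_i,\mathcal{F})$ \emph{decomposes as a product indexed by the chains contained in $U$} is false for $i\geq 1$. Precisely because the inner arguments are merely quasi-coherent, each recursive stage of $A(K_n,-)$ is a filtered colimit over coherent subsheaves, and filtered colimits do not commute with the infinite products over the inner points of the chains; this is exactly the restricted-product (``adelic'') phenomenon. For instance, for $X$ an integral curve and $K_1=\{(\eta>x):x\text{ closed}\}$, $A(K_1,\mathcal{O}_X)$ is the restricted product of the local fields $K_x$ with respect to the $\widehat{\mathcal{O}}_x$, whereas the product over the individual chains would be the full product $\prod_x K_x$. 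Consequently neither the sheaf axiom nor flasqueness is ``immediate'' or ``visibly a surjective projection''; both statements are true, but they require an induction on $n$ showing surjectivity of $A(K_n,\mathcal{F})\to A(K'_n,\mathcal{F})$ for $K'_n\subseteq K_n$ together with a Mittag--Leffler argument at each completion stage, and this is a substantial piece of Huber's proof. Your generization remark about covers is correct but only settles the combinatorics of which chains lie in which opens, not the analytic part. Likewise the exactness of the augmented complex is only gestured at (``checked directly at the top stratum'', ``chasing the long exact sequence''); that is acceptable as a sketch, but the pivotal product decomposition you lean on for flasqueness and the sheaf property is the step that would fail.
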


We will not go into further detail. This result is taken from Beilinson's
paper \cite[\S 2]{MR565095}. See Huber \cite{MR1138291} for the proof.

\subsubsection{Local structure for a single flag}

We fix a flag $\triangle=(\eta_{0}>\cdots>\eta_{r})$ with
$\operatorname*{codim}\nolimits_{X}\overline{\{\eta_{i}\}}=i$ throughout this
subsection. We may evaluate the ad\`{e}le group $A_{X}(\triangle
,\mathcal{O}_{X})$ of Definition \ref{def_AdelesFollowingBeilinson} for this
individual flag. Unravelling the definition, it consists alternatingly of
localizations at a multiplicative set, and completions at ideals. For the sake
of the following arguments, we will introduce a notation to keep these two
steps conceptually separated $-$ this notation will not appear anywhere else
again. Namely:

\begin{definition}
\label{Def_RingsAi}Set $L_{r}:=\mathcal{O}_{\eta_{r}}$ and $C_{r}%
:=\widehat{\mathcal{O}_{\eta_{r}}}$. Inductively for $j\leq r$ let

\begin{itemize}
\item $L_{j-1}:=C_{j}[(\mathcal{O}_{\eta_{j}}-\eta_{j-1})^{-1}]\qquad\qquad
$(\textquotedblleft localization\textquotedblright)

\item $C_{j-1}:=\underset{i_{j-1}}{\underleftarrow{\lim}}\,L_{j-1}/\eta
_{j-1}^{i_{j-1}}\qquad\qquad$(\textquotedblleft completion\textquotedblright)
\end{itemize}
\end{definition}

This proceeds downward along $j$ until we reach $A_{X}(\triangle
,\mathcal{O}_{X})=C_{0}$. So this is a step-by-step description of the
formation of an ad\`{e}le completion. A detailed verification of this is given
in \cite[\S 3]{bgwGeomAnalAdeles} (which uses notation largely compatible with
ours, except for our $C_{(-)}$ being called $A_{(-)}$ in loc. cit.). The
localizations and completions are ring maps which, as affine schemes, lead to
the following sequence of flat morphisms:%
\begin{equation}
\operatorname*{Spec}A_{X}(\triangle,\mathcal{F})\rightarrow\cdots
\rightarrow\operatorname*{Spec}C_{r-1}\rightarrow\operatorname*{Spec}%
L_{r-1}\rightarrow\operatorname*{Spec}C_{r}\rightarrow\operatorname*{Spec}%
L_{r}\rightarrow X\text{.}\label{lCOA_35}%
\end{equation}
The behaviour of the prime ideals under these maps is very carefully studied
in \cite[\S 3]{MR1213064} and \cite{bgwGeomAnalAdeles}, but we will not need
more than the following:

\begin{lemma}
[{\cite[Lemma 4.5]{bgwGeomAnalAdeles}}]%
\label{linkTATE_StructureOfLocalAdelesLem}For any $i=0,\ldots,r$ we have

\begin{enumerate}
\item $C_{j}$ is a faithfully flat Noetherian $\mathcal{O}_{\eta_{j}}$-algebra.

\item The maximal ideals of the ring $C_{j}$ are precisely the primes minimal
over $\eta_{j}C_{j}$.

\item The ring $C_{j}$ is a finite product of $j$-dimensional reduced local
rings, each complete with respect to its maximal ideal.
\end{enumerate}
\end{lemma}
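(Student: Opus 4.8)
The plan is to prove all three assertions simultaneously by downward induction on $j$, carrying as induction hypothesis the third (and strongest) assertion for $C_{j+1}$ together with two auxiliary properties that the induction needs: that $C_{j+1}$ is excellent and faithfully flat over $\mathcal{O}_{\eta_{j+1}}$. Since $X/k$ is smooth and of finite type, hence excellent, the base case $j=r$ is immediate: $\mathcal{O}_{\eta_r}=\mathcal{O}_{X,\eta_r}$ is a regular local ring of dimension $r$ (because $\operatorname{codim}_X\overline{\{\eta_r\}}=r$), so $C_r=\widehat{\mathcal{O}_{\eta_r}}$ is a complete regular local ring, in particular a reduced ($r$-dimensional) domain, faithfully flat over $\mathcal{O}_{\eta_r}$ and excellent, with $\eta_rC_r=\mathfrak{m}_{\eta_r}C_r$ its unique maximal ideal and a fortiori the only prime minimal over it. For the inductive step I would split the passage $C_{j+1}\rightsquigarrow L_j\rightsquigarrow C_j$ into the localization half and the adic-completion half, and observe that once one knows the single fact $\dim(L_j/\eta_jL_j)=0$, everything else is routine.

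Here is that routine part. Writing $\mathfrak{p}_{\eta_j}\subset\mathcal{O}_{\eta_{j+1}}$ for the prime of the generization $\eta_j$, one has $L_j=C_{j+1}\otimes_{\mathcal{O}_{\eta_{j+1}}}\mathcal{O}_{\eta_j}$, so $L_j$ is Noetherian, is faithfully flat over $\mathcal{O}_{\eta_j}$ (base change of the faithfully flat $\mathcal{O}_{\eta_{j+1}}\to C_{j+1}$), inherits excellence, and, since localization preserves regularity, is a finite product of regular domains. Next $C_j=\widehat{L_j}^{\,\eta_j}$ is again Noetherian, is flat over $L_j$ (adic completion of a Noetherian ring) hence over $\mathcal{O}_{\eta_j}$, satisfies $\eta_jC_j\subseteq\operatorname{Jac}(C_j)$ (it is $\eta_jC_j$-adically complete), and $C_j/\eta_jC_j=L_j/\eta_jL_j$. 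Granting $\dim(L_j/\eta_jL_j)=0$, this ring is Artinian, so $V(\eta_jL_j)$ is a finite set of maximal ideals $\mathfrak{m}_1,\dots,\mathfrak{m}_s$ of $L_j$ and therefore $C_j\cong\prod_{i=1}^{s}\widehat{(L_j)_{\mathfrak{m}_i}}$, a finite product of complete regular local rings (each $(L_j)_{\mathfrak{m}_i}$ being a localization of a regular domain) --- this gives the reducedness in (3). Faithful flatness over $\mathcal{O}_{\eta_j}$ in (1) follows because the closed fibre $C_j/\mathfrak{m}_{\eta_j}C_j=L_j/\eta_jL_j$ is nonzero and $\mathcal{O}_{\eta_j}$ is local; assertion (2) \emph{is} the equality $\dim(L_j/\eta_jL_j)=0$, since maximal ideals of $C_j$ correspond to maximal ideals of $C_j/\eta_jC_j$ and primes minimal over $\eta_jC_j$ to minimal primes of the same ring; and each factor $\widehat{(L_j)_{\mathfrak{m}_i}}$ has dimension $j$ by the dimension formula for the faithfully flat local map $\mathcal{O}_{\eta_j}\hookrightarrow(L_j)_{\mathfrak{m}_i}$ with $0$-dimensional closed fibre, using $\dim\mathcal{O}_{\eta_j}=j$. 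Excellence of $C_j$ (needed to continue the induction) holds since completions of excellent rings are excellent.

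Thus the one genuinely substantive point --- and the step I expect to be the main obstacle --- is $\dim(L_j/\eta_jL_j)=0$. Tracing the definitions, $L_j/\eta_jL_j=C_{j+1}\otimes_{\mathcal{O}_{\eta_{j+1}}}\kappa(\mathfrak{p}_{\eta_j})$ is exactly the fibre of $\operatorname{Spec}C_{j+1}\to\operatorname{Spec}\mathcal{O}_{\eta_{j+1}}$ over $\mathfrak{p}_{\eta_j}$, i.e. a formal-fibre-type object of $\mathcal{O}_{\eta_{j+1}}$ (with the intermediate localizations and completions already absorbed into $C_{j+1}$) along the prime $\mathfrak{p}_{\eta_j}$. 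By the induction hypothesis $C_{j+1}$ is a finite product of regular (hence catenary, equidimensional) complete local rings of dimension $j+1$, faithfully flat over the excellent ring $\mathcal{O}_{\eta_{j+1}}$; so the dimension formula for flat morphisms applied at a point $\mathfrak{q}$ of this fibre gives $\dim(L_j/\eta_jL_j)_{\mathfrak{q}}=\dim(C_{j+1})_{\mathfrak{q}}-\dim\mathcal{O}_{\eta_j}$, and since $\mathfrak{q}$ lies over the non-maximal prime $\mathfrak{p}_{\eta_j}$ (note $\mathfrak{m}_{\eta_{j+1}}C_{j+1}\subseteq\operatorname{Jac}C_{j+1}$ by completeness) it is non-maximal, whence $\dim(C_{j+1})_{\mathfrak{q}}\le(j+1)-1=j$ by equidimensionality, so $\dim(L_j/\eta_jL_j)_{\mathfrak{q}}\le 0$. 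The crucial numerical input making the bound $j$ appear here is the consecutive-codimension hypothesis on the flag: $\dim(\mathcal{O}_{\eta_{j+1}}/\mathfrak{p}_{\eta_j})=\operatorname{codim}(\overline{\{\eta_{j+1}\}},\overline{\{\eta_j\}})=(j+1)-j=1$. This is where both the excellence one has been propagating through every localization and completion, and the structure of the flag, are indispensable; once it is in hand the induction closes mechanically. The prime-ideal bookkeeping behind this dimension count is carried out carefully in \cite[\S 3]{bgwGeomAnalAdeles} (and, from a duality-theoretic angle, in \cite[\S 3]{MR1213064}); the statement itself is \cite[Lemma 4.5]{bgwGeomAnalAdeles}.
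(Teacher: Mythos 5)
Your argument is essentially correct, but note that the paper itself contains no proof of this lemma: it is imported wholesale from the companion paper (Lemma 4.5 of \cite{bgwGeomAnalAdeles}), where it is proved for reduced excellent Noetherian schemes. Your downward induction along the flag --- split the step $C_{j+1}\rightsquigarrow L_j\rightsquigarrow C_j$ into base change/localization and adic completion, and reduce everything to $\dim(L_j/\eta_jL_j)=0$, obtained from the flat dimension formula at a prime $\mathfrak{q}$ of the fibre together with $\operatorname{ht}\mathfrak{p}_{\eta_j}=j$ (the codimension hypothesis on the flag) and $\dim C_{j+1}=j+1$ (induction) --- has the same skeleton as the source. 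The genuine difference is that you assume $X/k$ smooth and propagate \emph{regularity} through every localization and completion, so reducedness of the factors of $C_j$ comes for free; the cited lemma instead assumes reduced and excellent and gets reducedness of the completions from excellence (geometrically regular formal fibres). Some such hypothesis is indeed implicitly imported, since under the section's bare ``Noetherian'' assumption part (3) is false, and smoothness is harmless for this paper's applications, which require it anyway --- but be aware your bootstrap does not recover the stated generality. Two smaller points: your remark that excellence is ``indispensable'' to the dimension count is not accurate even within your own argument --- that count uses only flatness, the codimensions built into the flag, and the inductively known dimension and Jacobson-radical property of $C_{j+1}$; excellence matters only for reducedness once you leave the smooth case. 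And the identification $C_j\cong\prod_i\widehat{(L_j)_{\mathfrak{m}_i}}$, with each factor complete for its maximal ideal as (3) demands, deserves the one-line justification that each $\mathfrak{m}_i$ is minimal over $\eta_jL_j$, so $\mathfrak{m}_i^N(L_j)_{\mathfrak{m}_i}\subseteq\eta_j(L_j)_{\mathfrak{m}_i}$ and the $\eta_j$-adic and $\mathfrak{m}_i$-adic topologies coincide; with that supplied, the proof closes as you describe.
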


\subsubsection{Coherent Cousin complex}

For the sake of legibility, let us allow ourselves (just for this section) the
shorthand%
\[
H_{x}^{r}(X):=H_{x}^{r}(X,\Omega^{n})\text{,}%
\]
where $x\in X$ is any scheme point, and $n$ any fixed integer. If $R$ is a
ring, we shall also write $H_{x}^{r}(R):=H_{x}^{r}(\operatorname*{Spec}R)$. We
may now consider the coherent Cousin complex of the scheme $X$ for the
coherent sheaf $\Omega^{n}$, i.e. with the above shorthand%
\begin{equation}
Cous^{\bullet}(X):\cdots\overset{d}{\longrightarrow}\coprod_{x_{r-2}\in
X^{r-2}}H_{x_{r-2}}^{r-2}(X)\overset{d}{\longrightarrow}\coprod_{x_{r-1}\in
X^{r-1}}H_{x_{r-1}}^{r-1}(X)\overset{d}{\longrightarrow}\coprod_{x_{r}\in
X^{r}}H_{x}^{r}(X)\longrightarrow0\text{.}\label{lCOA_38}%
\end{equation}
We write $d$ for its differential and $d_{\ast}^{\ast}$ for the components of
$d$ among the individual direct summands, as in%
\[
d=\sum_{x_{r},x_{r+1}}(d_{x_{r+1}}^{x_{r}}:H_{x_{r}}^{r}(X)\longrightarrow
H_{x_{r+1}}^{r+1}(X))\text{.}%
\]
We proceed as follows: For a flat morphism $f:X\rightarrow Y$ of schemes we
know by Proposition \ref{Prop_FlatPullbacks} that there is an induced pullback
of coherent Cousin complexes $f^{\ast}:\Gamma(Y,Cous^{\bullet}(Y))\rightarrow
\Gamma(X,Cous^{\bullet}(X))$ and even better, we understand in a very precise
way the induced morphisms between the individual direct summands appearing in
Equation \ref{lCOA_38} (see again Proposition \ref{Prop_FlatPullbacks} for
details). For the flag $\triangle$ with $\eta_{i}\in X^{i}$ that we had fixed,
we may consider the diagram consisting only of the summands of Equation
\ref{lCOA_38} for $x_{r}:=\eta_{r}$ (and the morphisms between them instead of
$d$ being just the respective component $d_{\eta_{r+1}}^{\eta_{r}}$). This
yields a diagram, call it $Q_{X}$,%
\[
Q_{X}:\cdots\longrightarrow H_{\eta_{r-2}}^{r-2}(X)\longrightarrow
H_{\eta_{r-1}}^{r-1}(X)\longrightarrow H_{\eta_{r}}^{r}(X)\longrightarrow
0\text{.}%
\]
(Of course this will \textit{not} be a complex anymore; there is no reason the
composition of individual $d_{\ast}^{\ast}$ should be zero). Since $f^{\ast}$
commutes with the differential $d$ of the Cousin complex, the components
$d_{\eta_{r+1}}^{\eta_{r}}$ individually also commute with $f^{\ast}$.
Therefore $f^{\ast}$ induces also a flat pullback between the diagrams of
shape very much like $Q_{X}$, namely%
\[%
\begin{array}
[c]{rrcccccc}%
Q^{\prime}: & \cdots\longrightarrow & \coprod_{x_{r-2}}H_{x_{r-2}}^{r-2}(Y) &
\longrightarrow & \coprod_{x_{r-1}}H_{x_{r-1}}^{r-1}(Y) & \longrightarrow &
\coprod_{x_{r}}H_{x_{r}}^{r}(Y) & \longrightarrow0\\
&  & \uparrow &  & \uparrow &  & \uparrow & \\
Q_{Y}: & \cdots\longrightarrow & H_{\eta_{r-2}}^{r-2}(Y) & \longrightarrow &
H_{\eta_{r-1}}^{r-1}(Y) & \longrightarrow & H_{\eta_{r}}^{r}(Y) &
\longrightarrow0
\end{array}
\]
where for each $i$ the points $x_{i}$ run through the finitely many
irreducible components of the scheme-theoretic fiber $f^{-1}(\eta_{i})$ (this
is because by Proposition \ref{Prop_FlatPullbacks} the pullback $f^{\ast}$ of
the direct summands appearing in the lower row has non-zero image at most in
these direct summands of the coherent Cousin complex of $X$). For example, if
each of the points $\eta_{i}$ has precisely one pre-image under $f$, the top
row complex would literally have the shape of $Q $, but with the $\eta_{i}$
each replaced by $f^{-1}(\eta_{i})$.\medskip

Now consider the following commutative diagram (whose construction we will
explain below):%
\begin{equation}%
\begin{array}
[c]{lcccccccc}
&  & \vdots &  & \vdots &  &  &  & \\
&  & \parallel &  & \parallel &  &  &  & \\
\text{4)} & \rightarrow & \coprod H_{\eta_{r-3}}^{r-3}(C_{r-1}) & \rightarrow
& \coprod H_{\eta_{r-2}}^{r-2}(C_{r-1}) & \rightarrow & \coprod H_{\eta_{r-1}%
}^{r-1}(C_{r-1}) &  & \\
&  & \uparrow &  & \uparrow &  & \parallel &  & \\
\text{3)} & \rightarrow & \coprod H_{\eta_{r-3}}^{r-3}(L_{r-1}) & \rightarrow
& \coprod H_{\eta_{r-2}}^{r-2}(L_{r-1}) & \rightarrow & \coprod H_{\eta_{r-1}%
}^{r-1}(L_{r-1}) &  & \\
&  & \parallel &  & \parallel &  & \parallel &  & \\
\text{2)} & \rightarrow & \coprod H_{\eta_{r-3}}^{r-3}(C_{r}) & \rightarrow &
\coprod H_{\eta_{r-2}}^{r-2}(C_{r}) & \rightarrow & \coprod H_{\eta_{r-1}%
}^{r-1}(C_{r}) & \rightarrow & H_{\eta_{r}}^{r}(C_{r})\\
&  & \uparrow &  & \uparrow &  & \uparrow &  & \parallel\\
\text{1)} & \rightarrow & H_{\eta_{r-3}}^{r-3}(L_{r}) & \rightarrow &
H_{\eta_{r-2}}^{r-2}(L_{r}) & \rightarrow & H_{\eta_{r-1}}^{r-1}(L_{r}) &
\rightarrow & H_{\eta_{r}}^{r}(L_{r})\\
&  & \parallel &  & \parallel &  & \parallel &  & \parallel\\
\text{0)} & \rightarrow & H_{\eta_{r-3}}^{r-3}(X) & \rightarrow &
H_{\eta_{r-2}}^{r-2}(X) & \rightarrow & H_{\eta_{r-1}}^{r-1}(X) & \rightarrow
& H_{\eta_{r}}^{r}(X)
\end{array}
\label{lD_1}%
\end{equation}
To construct this diagram, we begin with the bottom row and work upwards. The
bottom row is a sequence of direct summands in the Cousin complex of $X$. The
rows above now result inductively from applying the flat pullback along the
respective morphisms in the chain of Equation \ref{lCOA_35}. More
precisely:\medskip\newline\textsc{(Odd Rows)} To obtain odd-indexed rows: This
is the flat pullback of the row below along the localization%
\[
L_{j}:=C_{j+1}[(\mathcal{O}_{\eta_{j+1}}-\eta_{j})^{-1}]\text{.}%
\]
The primes in such a localization correspond bijectively to those primes $P$
of $C_{j+1}$ with $P\cap(\mathcal{O}_{\eta_{j+1}}-\eta_{j})=\varnothing$.
Hence, the entire flag $\eta_{0}>\cdots>\eta_{j}$ lies also in
$\operatorname*{Spec}L_{j}$. Since the local cohomology of the row below takes
supports in $\eta_{0},\ldots,\eta_{j}$ respectively, it follows that in each
case excision (Lemma \ref{COA_Lemma_LocalCohomExcision}) guarantees that the
flat pullback induces an isomorphism, explaining the equalities
\textquotedblleft$\parallel$\textquotedblright. Note that under an open
immersion a point has at most one pre-image, so direct summands do not fiber
up into further direct summands when going upward.\medskip\newline%
\textsc{(Even Rows)} To obtain the even-indexed rows: This is the flat
pullback of the row below along the completion%
\[
C_{j}:=\underset{i_{j}}{\underleftarrow{\lim}}\,L_{j}/\eta_{j}^{i_{j}}\text{.}%
\]
Firstly, we note that in a completion a point can have several (finitely many)
pre-images; therefore several summands may appear in the new row (as indicated
in the above diagram); see Proposition \ref{Prop_FlatPullbacks} for a precise
description of the map between these direct summands. Applying Lemma
\ref{lemma_CompletionPreservesLocalCohomology} to $I=I^{\prime}:=\eta_{j}$, we
obtain that the pullback induces an isomorphism%
\[
H_{\eta_{j}}^{p}(\operatorname*{Spec}L_{j},M)\overset{\sim}{\longrightarrow
}H_{\eta_{j}}^{p}(\operatorname*{Spec}C_{j},\widehat{M})\mid_{L_{j}}\text{,}%
\]
producing the isomorphism of the right-most non-zero term with the
corresonding term in the row below.\medskip

In the above diagram we find \textquotedblleft downward staircase
steps\textquotedblright\ on the right, of the shape%
\begin{equation}%
\begin{array}
[c]{ccc}%
\coprod H_{\eta_{r-1}}^{r-1}(C_{r-1}) &  & \\
\parallel &  & \\
\coprod H_{\eta_{r-1}}^{r-1}(L_{r-1}) &  & \\
\parallel &  & \\
\coprod H_{\eta_{r-1}}^{r-1}(C_{r}) & \rightarrow & H_{\eta_{r}}^{r}%
(C_{r})\text{,}%
\end{array}
\label{l_COT_1}%
\end{equation}
for varying $r$. The arrows \textquotedblleft$\parallel$\textquotedblright%
\ are actually upward arrows coming from the flat pullback and we had seen
above that these are isomorphisms in the situation at hand. So we may run them
backwards, giving something that could be called an ad\`{e}le enrichment of
the usual boundary maps of the coherent Cousin complex. Let us give them a name:

\begin{definition}
[Ad\`{e}le boundary maps, Cousin version]%
\label{Def_AdeleBoundaryMapCousinVersion}For a flag $\eta_{0}>\cdots>\eta_{n}$
and a quasi-coherent sheaf $\mathcal{F}$ we call the morphisms%
\[
\boldsymbol{\partial}_{\eta_{r+1}}^{\eta_{r}}:H_{\eta_{r}}^{r}(C_{r}%
)\longrightarrow H_{\eta_{r+1}}^{r+1}(C_{r+1})\text{,}%
\]
i.e. in self-contained notation,%
\begin{align*}
& H_{\eta_{r}}^{r}(A(\eta_{r}>\cdots>\eta_{n},\mathcal{O}_{X}),\,A(\eta
_{r}>\cdots>\eta_{n},\mathcal{F}))\\
& \qquad\qquad\longrightarrow H_{\eta_{r+1}}^{r+1}(A(\eta_{r+1}>\cdots
>\eta_{n},\mathcal{O}_{X}),\,A(\eta_{r+1}>\cdots>\eta_{n},\mathcal{F}%
))\text{,}%
\end{align*}
the \emph{(Cousin) ad\`{e}le boundary maps}.
\end{definition}

Using the HKR\ theorem with supports and its compatibility with boundary map
on the local cohomology vs. Hochschild side, Prop.
\ref{marker_Prop_HKRWithSupport}, there is also a Hochschild counterpart of
the same map for $\mathcal{F}:=\Omega^{n}$:

\begin{definition}
[Ad\`{e}le boundary maps, Hochschild version]%
\label{Def_AdeleBoundaryMapHochschildVersion}Suppose $X$ is a smooth scheme of
pure dimension $n$. For a flag $\eta_{0}>\cdots>\eta_{n}$ with
$\operatorname*{codim}\nolimits_{X}\eta_{i}=i$ we call the morphisms%
\[
\left.  ^{HH}\boldsymbol{\partial}\right.  _{\eta_{r+1}}^{\eta_{r}}%
:HH_{n-r}^{\eta_{r}}(C_{r})\longrightarrow HH_{n-(r+1)}^{\eta_{r+1}}(C_{r+1})
\]
the \emph{(Hochschild) ad\`{e}le boundary maps}.
\end{definition}

\subsubsection{Tate realization}

As explained above, we have the concatenation of flat morphisms%
\[
\operatorname*{Spec}A_{X}(\triangle,\mathcal{F})\rightarrow\cdots
\rightarrow\operatorname*{Spec}C_{r-1}\rightarrow\operatorname*{Spec}%
L_{r-1}\rightarrow\operatorname*{Spec}C_{r}\rightarrow\operatorname*{Spec}%
L_{r}\rightarrow X\text{.}%
\]
We will now construct exact functors originating from the module categories of
the individual rings appearing along this composition, i.e. functors
$\mathsf{Mod}_{f}(R)\rightarrow(\bigstar)$, where $\mathsf{Mod}_{f}(R)$
denotes the category of finitely generated $R$-modules and \textquotedblleft%
$\bigstar$\textquotedblright\ will be suitably chosen exact categories built
from Ind-, Pro- and\ Tate objects (as recalled in
\S \ref{subsect_TateCatsViaIndDiagrams}). The basic idea is that
$A_{X}(\triangle,\mathcal{F})$ is a finite product of $n$-local fields
\cite{MR1213064}, \cite{bgwGeomAnalAdeles} and can be presented as an $n$-Tate
object in finite-dimensional $k$-vector spaces, say%
\[
A_{X}(\triangle,\mathcal{F})=\underset{\longleftarrow\alpha\longrightarrow
}{\underrightarrow{\operatorname*{colim}}\underleftarrow{\lim}\cdots
\underrightarrow{\operatorname*{colim}}\underleftarrow{\lim}}\,A_{\alpha
}\qquad\text{with}\qquad A_{\alpha}\in\mathsf{Vect}_{f}(k)\text{,}%
\]
and then there is an exact functor
\begin{align}
\mathsf{Mod}_{f}(A_{X}(\triangle,\mathcal{F}))  & \longrightarrow\left.
n\text{-}\mathsf{Tate}_{\aleph_{0}}(\mathsf{Vect}_{f}(k))\right.
\label{lsimv1}\\
M  & \longmapsto\underset{\longleftarrow\alpha\longrightarrow}%
{\underrightarrow{\operatorname*{colim}}\underleftarrow{\lim}\cdots
\underrightarrow{\operatorname*{colim}}\underleftarrow{\lim}}\,\left(
M\otimes A_{\alpha}\right)  \text{.}\nonumber
\end{align}
See \cite[\S 7.2]{TateObjectsExactCats} for details. As the rings $L_{n-r}$
arise from $r$ alternating localizations and completions, and similarly for
$C_{n-r}$, there are analogous exact functors taking values in $r$-Tate
objects. At the risk of repeating ourselves, let us unravel a bit the
structure of these analogues:

Each completion of a ring can be interpreted as a Pro-limit, given by a
projective system (as depicted below on the left), and each localization as an
Ind-limit, given by the inductive limit of finitely generated sub-modules
inside the localization (as depicted below on the right):%
\[
\widehat{R}=\underset{i}{\underleftarrow{\lim}}R/I^{i}\qquad\text{and}%
\qquad\underset{t\notin S}{\underrightarrow{\operatorname*{colim}}}\frac{1}%
{t}R=R[S^{-1}]\text{.}%
\]
Concretely, let $X/k$ is an $n$-dimensional scheme. Then, by presenting
$C_{n-r}$ resp. $L_{n-r}$ by alternating localizations and completions (as
dictated by Definition \ref{Def_RingsAi}), the analogue of the functor in line
\ref{lsimv1} yields exact functors%
\begin{align}
\mathsf{Mod}_{f}(C_{n})  & \longrightarrow\mathsf{Pro}_{\aleph_{0}}%
^{a}(\mathsf{Vect}_{f}(k))\nonumber\\
\mathsf{Mod}_{f}(L_{n-1})  & \longrightarrow\mathsf{Ind}_{\aleph_{0}}%
^{a}\mathsf{Pro}_{\aleph_{0}}^{a}(\mathsf{Vect}_{f}(k))\label{lau1}\\
\mathsf{Mod}_{f}(C_{n-1})  & \longrightarrow\mathsf{Pro}_{\aleph_{0}}%
^{a}\mathsf{Ind}_{\aleph_{0}}^{a}\mathsf{Pro}_{\aleph_{0}}^{a}(\mathsf{Vect}%
_{f}(k))\nonumber\\
& \vdots\nonumber
\end{align}
and in fact all the pairs of Ind-Pro-limits lie in the sub-category of Tate
objects so that%
\begin{align}
& \vdots\nonumber\\
\mathsf{Mod}_{f}(C_{1})  & \longrightarrow\mathsf{Pro}_{\aleph_{0}}^{a}\left.
\left(  (n-1)\text{-}\mathsf{Tate}_{\aleph_{0}}\right)  \right.
(\mathsf{Vect}_{f}(k))\nonumber\\
\mathsf{Mod}_{f}(L_{0})  & \longrightarrow\left.  \left(  n\text{-}%
\mathsf{Tate}_{\aleph_{0}}\right)  \right.  (\mathsf{Vect}_{f}(k))\label{lau2}%
\\
\mathsf{Mod}_{f}(C_{0})  & \longrightarrow\mathsf{Pro}_{\aleph_{0}}^{a}\left.
\left(  n\text{-}\mathsf{Tate}_{\aleph_{0}}\right)  \right.  (\mathsf{Vect}%
_{f}(k))\nonumber
\end{align}
and $C_{0}=A(\triangle,\mathcal{O}_{X})$ still lies in $\left.  \left(
n\text{-}\mathsf{Tate}_{\aleph_{0}}\right)  \right.  (\mathsf{Vect}_{f}(k))$
since the outermost Pro-limit is just taken over nil-thickenings of the
irreducible components/minimal primes. These Pro-limits reduce to an
eventually stationary projective system and thus already exist in the $n$-Tate
category without having to take a further category of Pro-objects. As a
result, $A(\triangle,\mathcal{O}_{X})$-modules can naturally be sent to their
associated $n$-Tate object in the category of finite-dimensional $k$-vector spaces.

\begin{remark}
The exactness of these functors can be shown step-by-step: For the inductive
systems defining Ind-objects the exactness is immediately clear, and for the
projective systems defining the Pro-objects one uses the Artin--Rees lemma. We
refer to \cite[\S 7.2]{TateObjectsExactCats}. A more detailed investigation of
such functors $\mathbf{C}_{Z}:\operatorname*{Coh}(X)\rightarrow\mathsf{Pro}%
_{\aleph_{0}}^{a}(\operatorname*{Coh}_{Z}(X))$, $\mathcal{F}\mapsto\lbrack
i\mapsto\mathcal{F}/\mathcal{J}_{Z}^{i}]$, where $\mathcal{J}_{Z}$ denotes the
ideal sheaf of $Z$ and $i\in\mathbf{Z}_{\geq1}$, is given in
\cite{bgwRelativeTateObjects}. See \cite[Prop. 3.25]{bgwRelativeTateObjects}.
\end{remark}

\begin{proposition}
\label{prop_bigcomdiag1}We obtain a commutative diagram%
\[%
\bfig\node a(0,0)[H_{\eta_{r+1}}^{r+1}(C_{r+1})]
\node b(800,0)[H_{\eta_{r+1}}^{r+1}(L_{r+1},\Omega^{n})]
\node c(1700,0)[HH_{n-r-1}^{\eta_{r+1}}(L_{r+1})]
\node d(3100,0)[HH_{n-r-1}(\left.  (n-r-1)\text{-}\mathsf{Tate}_{{\aleph}%
_0}(\mathsf{Vect}_{f}(k))\right.  ),]
\node A(0,700)[H_{\eta_{r}}^{r}(C_{r})]
\node B(800,700)[H_{\eta_{r}}^{r}(L_{r},\Omega^{n})]
\node C(1700,700)[HH_{n-r}^{\eta_{r}}(L_{r})]
\node D(3100,700)[HH_{n-r}(\left.  (n-r)\text{-}\mathsf{Tate}_{{\aleph}%
_0}(\mathsf{Vect}_{f}(k))\right.  )]
\arrow[a`b;\sim]
\arrow[b`c;\sim]
\arrow[c`d;]
\arrow[A`B;\sim]
\arrow[B`C;\sim]
\arrow[C`D;]
\arrow[A`a;]
\arrow[B`b;]
\arrow[C`c;]
\arrow[D`d;]
\efig
\]
where

\begin{enumerate}
\item the first and second downward arrows the ad\`{e}le boundary maps
$\boldsymbol{\partial}_{\eta_{r+1}}^{\eta_{r}}$ of Definition
\ref{Def_AdeleBoundaryMapCousinVersion},

\item the third downward arrow is the analogous ad\`{e}le boundary map in
Hochschild homology (i.e. the maps of Definition
\ref{Def_AdeleBoundaryMapHochschildVersion} up to the canonical isomorphism
induced from swapping $L_{r}$ with $C_{r}$),

\item the fourth downward arrow is induced from the delooping map%
\[
HH(\left.  j\text{-}\mathsf{Tate}_{\aleph_{0}}(-)\right.  )\overset{\sim
}{\longrightarrow}\Sigma HH(\left.  (j-1)\text{-}\mathsf{Tate}_{\aleph_{0}%
}(-)\right.  )\text{.}%
\]

\end{enumerate}
\end{proposition}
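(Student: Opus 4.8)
The plan is to factor the diagram into its three constituent squares and verify each in turn, reading the four horizontal arrows as maps already under control: the two leftmost are the completion isomorphism of Lemma~\ref{lemma_CompletionPreservesLocalCohomology} (note $C_r=\widehat{L_r}$ for the $\eta_r$-adic topology, by Definition~\ref{Def_RingsAi}); the middle one, $H_{\eta_r}^r(L_r,\Omega^n)\xrightarrow{\sim}HH_{n-r}^{\eta_r}(L_r)$, is the HKR isomorphism with supports of Proposition~\ref{marker_Prop_HKRWithSupport}(1) (taking $p=r$, $p+i=n$); and the rightmost is the map on Hochschild homology induced by the exact ad\`ele-realization functor $\mathsf{Mod}_{f}(L_r)\to (n-r)\text{-}\mathsf{Tate}_{\aleph_0}(\mathsf{Vect}_{f}(k))$ of lines~\ref{lau1}--\ref{lau2}.

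\textbf{First square (columns 1--2).} By construction the Cousin ad\`ele boundary map $\boldsymbol{\partial}_{\eta_{r+1}}^{\eta_r}$ of Definition~\ref{Def_AdeleBoundaryMapCousinVersion} \emph{is} the zig-zag in Diagram~\ref{lD_1}: one runs the ``$\parallel$''-isomorphisms of the staircase~\ref{l_COT_1} backwards, applies the relevant component $d_{\eta_{r+1}}^{\eta_r}$ of the coherent Cousin differential of $\operatorname{Spec}C_{r+1}$, and the result is insensitive to whether it is recorded at the $C$-stage or the $L$-stage. Commutativity is thus immediate: the two horizontal isomorphisms here are literally two of the ``$\parallel$''-arrows entering the definition of $\boldsymbol{\partial}$.

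\textbf{Second square (columns 2--3).} The Hochschild ad\`ele boundary map $^{HH}\boldsymbol{\partial}_{\eta_{r+1}}^{\eta_r}$ of Definition~\ref{Def_AdeleBoundaryMapHochschildVersion} is \emph{defined} by transporting $\boldsymbol{\partial}_{\eta_{r+1}}^{\eta_r}$ through HKR with supports, so the square commutes tautologically provided the transport is legitimate, i.e. provided Proposition~\ref{marker_Prop_HKRWithSupport}(1)--(2) is available for the ad\`ele rings $L_j,C_j$. This is where a genuine point must be addressed: these rings are Noetherian, regular and Cohen--Macaulay (Lemma~\ref{linkTATE_StructureOfLocalAdelesLem} together with the smoothness of $X$), but they are not essentially of finite type over $k$, so Proposition~\ref{marker_Prop_HKRWithSupport} does not apply verbatim. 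I would reduce the completion steps away --- via Lemma~\ref{lemma_CompletionPreservesLocalCohomology} on the local-cohomology side, transported through HKR with supports on the Hochschild side, which is exactly the ``$L_r\leftrightarrow C_r$ swap'' in Definition~\ref{Def_AdeleBoundaryMapHochschildVersion} --- and handle the localization steps by the flat base-change functoriality of Propositions~\ref{Prop_FlatPullbacks} and~\ref{marker_Prop_HKRWithSupport} along the chain of flat maps in~\ref{lCOA_35}, bootstrapping from the essentially smooth local ring $\mathcal{O}_{X,\eta_n}$; alternatively one simply invokes Keller's framework~\cite{MR1647519}, which produces these comparison isomorphisms and their compatibility with localization sequences with no finite-type hypothesis.

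\textbf{Third square (columns 3--4), the main obstacle.} Here one must show that the ad\`ele-realization functors carry the Keller localization sequence attached to one ``localize-then-complete'' step of the ad\`ele tower, $C_{r+1}\rightsquigarrow L_r=C_{r+1}[S^{-1}]\rightsquigarrow C_r$, to one level of the Tate tower --- the localization sequences induced by $\mathsf{Pro}_{\aleph_0}^{a}(\mathcal{C})\hookrightarrow\mathsf{Tate}_{\aleph_0}^{el}(\mathcal{C})$ and $\mathcal{C}\hookrightarrow\mathsf{Pro}_{\aleph_0}^{a}(\mathcal{C})$ with $\mathcal{C}=(n-r-1)\text{-}\mathsf{Tate}_{\aleph_0}(\mathsf{Vect}_{f}(k))$, as in line~\ref{lmai_30}. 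Granting this, $^{HH}\boldsymbol{\partial}_{\eta_{r+1}}^{\eta_r}$ --- which by the second square and Proposition~\ref{marker_Prop_HKRWithSupport}(2) is, up to the excision/completion isomorphisms, the boundary of Keller's localization sequence for the corresponding open--closed pair in $\operatorname{Spec}C_{r+1}$ --- becomes the boundary map of line~\ref{lmai_30}, and Theorem~\ref{Thm_DeloopingBoundaryAgreesWithDMap} (whose proof, through Propositions~\ref{TMT_PropTateQuotientsToTateModInd}, \ref{Prop_ModuleInterpretationProToProModC}, \ref{Prop_LAMBDAFromLocalization}, identifies that boundary with $d=\delta\circ\Lambda$, i.e. the delooping map) closes the square. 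The bookkeeping burden --- and the real difficulty --- lies in (i) matching supports, i.e. checking that $\operatorname{Perf}_{\eta_r}(L_r)$ lands in the subcategory of the $(n-r)$-Tate category cut out by the appropriate lattice/quotient condition, and (ii) checking that the ad\`ele realization is compatible with passage to derived/perfect categories, so that Schlichting's and Keller's localization theorems on the two sides describe one and the same fiber sequence. I expect this compatibility of the ad\`ele realization with localization sequences to be the crux; everything else is either definitional (squares 1--2) or already packaged in Theorem~\ref{Thm_DeloopingBoundaryAgreesWithDMap}.
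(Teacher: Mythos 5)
Your decomposition and the way you settle each square coincide with the paper's own proof: the left square by completion-invariance of local cohomology (Lemma \ref{lemma_CompletionPreservesLocalCohomology}, i.e.\ the staircase in Diagram \ref{l_COT_1}), the middle square by the boundary-compatibility of HKR with supports in Prop.\ \ref{marker_Prop_HKRWithSupport} (the paper notes one may equally cite Theorem \ref{marker_Thm_ComparisonOfRowsOnEOnePage}), and the right square by the fact that the realization functors of lines \ref{lau1}--\ref{lau2} carry the localization-sequence boundary to the Tate delooping. The compatibility you single out as the remaining crux of the third square is precisely what the paper delegates to Appendix \ref{section_Appendix_BoundaryAndRealization} and \cite{bgwRelativeTateObjects} (the Pro-realization functor $\mathbf{C}_{Z}$ and Diagram \ref{lVia1}), and your caveat about invoking Prop.\ \ref{marker_Prop_HKRWithSupport} for the non-finite-type rings $L_{j},C_{j}$ flags a genuine subtlety that the paper's proof passes over without comment.
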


\begin{proof}
\textit{(Left square)} In the left-most column we consider the ad\`{e}le
boundary map as constructed in Definition
\ref{Def_AdeleBoundaryMapCousinVersion}. The relevant local cohomology groups
are invariant under the last completion (so this is Lemma
\ref{lemma_CompletionPreservesLocalCohomology}, or see Diagram \ref{l_COT_1}).
This implies the commutativity of the left-most square.\newline\textit{(Middle
square)} We use the HKR\ isomorphism with supports both on the left and on the
right and the fact that this transforms the boundary map in local cohomology
into the boundary map of the Hochschild homology localization sequence, Prop.
\ref{marker_Prop_HKRWithSupport}. As these maps are also differentials on the
$E_{1}$-page of the coherent Cousin vs. coniveau spectral sequence, we may
also directly cite Theorem \ref{marker_Thm_ComparisonOfRowsOnEOnePage}, but
unravelling its proof both results reduce to the same core.\newline%
\textit{(Right square)} We use the realization functors with values in the
relevant higher Tate categories as in lines \ref{lau1}-\ref{lau2}. Thus, the
commutativity of this square is equivalent to the fact that these realization
functors transform the localization sequence boundary map into the delooping
map of Hochschild homology. We discuss this at length in
\cite{bgwRelativeTateObjects}, but see Appendix
\ref{section_Appendix_BoundaryAndRealization} for a quick overview.
\end{proof}

\begin{proposition}
\label{prop_bigcomdiag2}Pick $\mathcal{C}:=\mathsf{Vect}_{f}(k)$ and let
$A_{i}$ denote the Beilinson cubical algebra as provided by Theorem
\ref{Thm_NTateIsModuleCatWithGoodCubicalEndoAlgebra} for this choice of
$\mathcal{C}$. Then there is a canonical commutative square%
\[%
\bfig\Square(0,0)[{HH_{n-r}^{\eta_{r}}(L_{r})}`{HH_{n-r}(A_{n-r})}%
`{HH_{n-r-1}^{\eta_{r+1}}(L_{r+1})}`{ HH_{n-r-1}(A_{n-r-1})},;``d`]
\efig
\]
where the left downward arrow is as in Prop. \ref{prop_bigcomdiag1}, the right
downward arrow is the map $d$ of Definition \ref{marker_Def_map_d}.
\end{proposition}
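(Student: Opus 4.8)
The plan is to build the square of Proposition~\ref{prop_bigcomdiag2} from two ingredients already at hand: the right-hand half of the large diagram in Proposition~\ref{prop_bigcomdiag1}, and the Morita identification of the Hochschild homology of a higher Tate category with that of its cubical endomorphism algebra. First I would fix the two horizontal arrows. Apply Theorem~\ref{Thm_NTateIsModuleCatWithGoodCubicalEndoAlgebra} with $\mathcal{C}:=\mathsf{Vect}_{f}(k)$ and generator $S:=k$; since $\mathsf{Vect}_{f}(k)$ is split exact, idempotent complete and generated by $k$, Equation~\ref{lmai_28} gives exact equivalences $j\text{-}\mathsf{Tate}_{\aleph_{0}}(\mathsf{Vect}_{f}(k))\overset{\sim}{\longrightarrow}P_{f}(A_{j})$, hence canonical isomorphisms $HH_{\ast}(j\text{-}\mathsf{Tate}_{\aleph_{0}}(\mathsf{Vect}_{f}(k)))\overset{\sim}{\longrightarrow}HH_{\ast}(A_{j})$. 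I then define the horizontal arrows of Proposition~\ref{prop_bigcomdiag2} as the composites of the rightmost horizontal arrows of Proposition~\ref{prop_bigcomdiag1}, namely the maps $HH_{n-r}^{\eta_{r}}(L_{r})\rightarrow HH_{n-r}(\,(n-r)\text{-}\mathsf{Tate}_{\aleph_{0}}(\mathsf{Vect}_{f}(k))\,)$ induced by the realization functors of lines~\ref{lau1}--\ref{lau2}, with these Morita isomorphisms, taken for $j=n-r$ and $j=n-r-1$.

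Next I would isolate from Proposition~\ref{prop_bigcomdiag1} the commutative square formed by its third and fourth columns. Its left edge is the Hochschild adèle boundary map of Definition~\ref{Def_AdeleBoundaryMapHochschildVersion} (up to the canonical isomorphism swapping $L_{r}$ with $C_{r}$), which is exactly the left downward arrow demanded in Proposition~\ref{prop_bigcomdiag2}; its right edge is the delooping map $HH(\,(n-r)\text{-}\mathsf{Tate}_{\aleph_{0}}(-)\,)\overset{\sim}{\longrightarrow}\Sigma HH(\,(n-r-1)\text{-}\mathsf{Tate}_{\aleph_{0}}(-)\,)$. Transporting this right edge along the Morita isomorphisms fixed above, the only thing left to prove is that the delooping map is identified with the map $d$ of Definition~\ref{marker_Def_map_d} from $HH(A_{n-r})$ to $\Sigma HH(A_{n-r-1})$. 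This is precisely Theorem~\ref{Thm_DeloopingBoundaryAgreesWithDMap}, applied to $\mathcal{C}=\mathsf{Vect}_{f}(k)$ with its running index $n$ set to $n-r-1$, so that $n+1$ becomes $n-r$, $\mathcal{C}_{n}=(n-r-1)\text{-}\mathsf{Tate}_{\aleph_{0}}(\mathsf{Vect}_{f}(k))$, $A_{n+1}=A_{n-r}$ and $A_{n}=A_{n-r-1}$, the vertical maps of that theorem being the very Morita isomorphisms we are using. Stacking this identification onto the extracted square produces the commutative square of Proposition~\ref{prop_bigcomdiag2}.

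The one step requiring genuine care, and which I expect to be the main obstacle, is reconciling the two descriptions of the delooping map. In Proposition~\ref{prop_bigcomdiag1} the fourth downward arrow is pinned down via the realization functors of lines~\ref{lau1}--\ref{lau2}, which carry the localization-sequence boundary map into the delooping equivalence (this is the content of \cite{bgwRelativeTateObjects} and of Appendix~\ref{section_Appendix_BoundaryAndRealization}); in Theorem~\ref{Thm_DeloopingBoundaryAgreesWithDMap} the delooping map arises as the boundary of the fiber sequence attached to the $s$-filtering inclusions $\mathcal{C}_{n}\hookrightarrow\mathsf{Pro}_{\aleph_{0}}^{a}(\mathcal{C}_{n})\hookrightarrow\mathsf{Tate}_{\aleph_{0}}^{el}(\mathcal{C}_{n})$ together with Saito's bi-Cartesian square argument. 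I would settle this by observing that in both places the operative fact is the same: the delooping equivalence $HH(j\text{-}\mathsf{Tate}_{\aleph_{0}}(\mathcal{C}))\simeq\Sigma HH((j-1)\text{-}\mathsf{Tate}_{\aleph_{0}}(\mathcal{C}))$ is intrinsic, being the boundary map of the fiber sequence obtained by combining Keller's localization Theorem~\ref{Thm_KellersLocalizationTheorem} with the exact equivalence $\mathsf{Pro}_{\aleph_{0}}^{a}(\mathcal{C}_{j-1})/\mathcal{C}_{j-1}\overset{\sim}{\longrightarrow}\mathsf{Tate}_{\aleph_{0}}^{el}(\mathcal{C}_{j-1})/\mathsf{Ind}_{\aleph_{0}}^{a}(\mathcal{C}_{j-1})$ of Proposition~\ref{TMT_PropTateModIndEqualsProModC}, and this is exactly the map computed through the realization functors. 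Once this matching is verified, the argument is purely formal: every remaining commutativity is one of Proposition~\ref{prop_bigcomdiag1}, Theorem~\ref{Thm_NTateIsModuleCatWithGoodCubicalEndoAlgebra} or Theorem~\ref{Thm_DeloopingBoundaryAgreesWithDMap}, and no new computation is needed.
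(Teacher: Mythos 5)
Your proposal is correct and follows essentially the same route as the paper's proof: fix $\mathcal{C}=\mathsf{Vect}_f(k)$ with generator $k$, use the Morita equivalence of Theorem \ref{Thm_NTateIsModuleCatWithGoodCubicalEndoAlgebra} (Equation \ref{lmai_28}) to replace the right-hand column of Prop.\ \ref{prop_bigcomdiag1} by $HH(A_{n-r})$ and $HH(A_{n-r-1})$, and then invoke Theorem \ref{Thm_DeloopingBoundaryAgreesWithDMap} (with the index shift you indicate) to identify the transported delooping map with $d$. Your extra care in checking that the delooping map appearing in Prop.\ \ref{prop_bigcomdiag1} (via the realization functors and Appendix \ref{section_Appendix_BoundaryAndRealization}) and the one in Theorem \ref{Thm_DeloopingBoundaryAgreesWithDMap} are the same canonical boundary map, coming from Keller's localization theorem together with Prop.\ \ref{TMT_PropTateModIndEqualsProModC}, is a correct filling-in of a point the paper leaves implicit, not a deviation from its argument.
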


\begin{proof}
We pick $\mathcal{C}:=\mathsf{Vect}_{f}(k)$, which is a split exact idempotent
complete abelian category with the generator $k$ (viewed as a one-dimensional
$k$-vector space). Hence, we may use our version of Morita theory and apply
Theorem \ref{Thm_NTateIsModuleCatWithGoodCubicalEndoAlgebra}. We obtain the
cubical algebras $A_{n-r}$ and $A_{n-r-1}$. By the cited theorem, there is an
exact equivalence%
\begin{equation}
\left.  n\text{-}\mathsf{Tate}_{\aleph_{0}}(\mathcal{C})\right.  \overset
{\sim}{\longrightarrow}P_{f}(A_{n})\text{,}\label{lwwe1}%
\end{equation}
inducing isomorphisms between the respective Hochschild homology groups. Next,
we use Prop. \ref{prop_bigcomdiag1} and using the isomorphisms of line
\ref{lwwe1}, we may replace the objects in the right-most column by the
Hochschild homology of the $A_{i}$. Thanks to Theorem
\ref{Thm_DeloopingBoundaryAgreesWithDMap} our diagram remains commutative if
we replace the downward arrow between these objects by the map $d$. This
results precisely in our claim.
\end{proof}

\subsection{\label{subsect_ComparisonTateBeilResidue}Comparison with the
Tate--Beilinson residue in\ Lie homology}

For every $n$-fold cubical algebra $A$ over a field $k$, Beilinson constructs
a canonical map%
\[
\phi_{Beil}:H_{n+1}^{\operatorname*{Lie}}(A_{Lie},k)\longrightarrow k\text{,}%
\]
where $A_{Lie}$ denotes the\ Lie algebra of the associative algebra $A$, i.e.
$[x,y]:=xy-yx$. This is \cite[\S 1, Lemma, (a)]{MR565095}. For $n=1$ this
functional describes a class in $H^{2}(A_{Lie},k)$, and thus a central
extension known as Tate's central extension. Although not spelled out
explicitly, it was originally constructed by Tate to have a
coordinate-independent defnition of the residue on curves in his paper
\cite{MR0227171}. See \cite{olliTateRes} for a detailed review. To connect Lie
homology with differential forms, use the square%
\begin{equation}%
\xymatrix{H_{n}({A_{Lie}},{A_{Lie}}) \ar[r]^-{\varepsilon} \ar[d]_{I^{\prime}}
& HH_{n}(A) \ar[d]^{\phi_{HH}} \\ H_{n+1}({A_{Lie}},k) \ar[r]_-{\phi_{Beil}}
& k,}%
\label{vva1}%
\end{equation}
of \cite{olliTateRes}. The map $I^{\prime}$ is a\ Lie analogue of the map $I $
in the SBI sequence of Hochschild homology, see loc. cit. Any element coming
from any commutative sub-algebra of $A$ can be lifted to the upper left
corner, showing that Beilinson's map and the abstract Hochschild symbol agree
on such elements. See loc. cit. One slogan of the present paper might
be:\medskip\newline\textsl{We show that both Tate's and Beilinson's
constructions essentially encode an iterated boundary map in Keller's
localization sequence for Hochschild homology, after iteratively cutting out
the divisors defining a saturated flag in the scheme.\medskip}

Let us turn this into a precise statement. The agreement of the Beilinson
residue with some other residue notions is already known in part, because one
can work with explicit formulae and compare them. See \cite{MR3207578},
\cite{olliTateRes}. The novelty here is the interpretation in terms,
essentially, of differentials in the ad\`{e}lic variant of the
Hochschild--Cousin complex or equivalently coherent Cousin complex:

\begin{theorem}
[Ad\`{e}le Cousin differentials via abstract Hochschild symbol]%
\label{thm_AdeleCousinDiffsViaAbstractHHSymbol}There is a commutative diagram%
\[%
\bfig\Square(0,0)[{H_{\eta_{0}}^{0}(C_{0})}`{HH_{n}(A_{n})}`{H_{\eta_{n}}%
^{n}(C_{n})}`{k,};\chi`\rho`{\phi}_{HH}`\xi]
\efig
\]
where

\begin{enumerate}
\item $\chi$ is the composition of all right-ward arrows in the top rows of
Prop. \ref{prop_bigcomdiag1} and then Prop. \ref{prop_bigcomdiag2},

\item $\rho$ is the composition of the downward arrows in the same
propositions, concatenated for $n,n-1,\ldots$ down to $0$,

\item $\xi$ is the trace of the local cohomology group of a closed point down
to $k$ (= literally the trace of an endomorphism of a finite-dimensional
$k$-vector space)

\item $\phi_{HH}$ is the abstract Hochschild symbol of Definition
\ref{Def_AbstractHochschildResidueSymbol}.
\end{enumerate}
\end{theorem}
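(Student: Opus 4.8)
The plan is to assemble the commutative diagram of Theorem~\ref{thm_AdeleCousinDiffsViaAbstractHHSymbol} from the building blocks already established, by stacking the squares of Propositions~\ref{prop_bigcomdiag1} and~\ref{prop_bigcomdiag2} on top of each other, one codimension step at a time, and then reading off what the two edge-composites $\chi\circ(\text{top of stack})$ and $\xi\circ\rho$ compute. More precisely, for each $r=0,1,\dots,n-1$ the right square of Prop.~\ref{prop_bigcomdiag1} and the square of Prop.~\ref{prop_bigcomdiag2} together give a commutative ladder relating the ad\`{e}le boundary map $\boldsymbol{\partial}_{\eta_{r+1}}^{\eta_r}$ on the local-cohomology side, the Hochschild ad\`ele boundary map $\left.^{HH}\boldsymbol{\partial}\right._{\eta_{r+1}}^{\eta_r}$ in the middle, the delooping map $HH(j\text{-}\mathsf{Tate})\to\Sigma HH((j-1)\text{-}\mathsf{Tate})$, and finally the map $d$ of Definition~\ref{marker_Def_map_d} on the cubical-algebra side. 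Vertically composing these ladders for $r=n-1$ down to $r=0$ identifies $\rho$ (the composite of all the left downward arrows) with the $n$-fold iterate $d\circ\cdots\circ d$ transported through the Morita equivalences of Equation~\ref{lmai_28}. This is exactly the operator appearing in the definition of $\phi_{HH}$ in Equation~\ref{lmai_37}.

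The next step is to check the two ``ends'' of the diagram. At the top, $\chi$ is by hypothesis the composite of all the rightward arrows: starting from $H_{\eta_0}^{0}(C_0)$, the HKR isomorphisms with supports (Prop.~\ref{marker_Prop_HKRWithSupport}) and the realization functors of lines~\ref{lau1}--\ref{lau2} carry this into $HH_n(A_n)$ via the chain $H_{\eta_0}^{0}(C_0)\xrightarrow{\sim}H_{\eta_0}^{0}(L_0,\Omega^n)\xrightarrow{\sim}HH_n^{\eta_0}(L_0)\to HH_n(n\text{-}\mathsf{Tate})\xrightarrow{\sim}HH_n(A_n)$; commutativity here is precisely the top rows of Props.~\ref{prop_bigcomdiag1} and~\ref{prop_bigcomdiag2}, with the last isomorphism being Equation~\ref{lwwe1}. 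At the bottom, I must check that the composite $HH_0(I_{tr})\to k$ appearing in $\phi_{HH}$ (Equation~\ref{lmai_37}), which is $\tau$ applied after the $n$-fold $d$, is the same as $\xi$, the honest trace of a finite-dimensional $k$-vector space endomorphism, once we travel along the last realization isomorphism from $H_{\eta_n}^{n}(C_n)$. By Lemma~\ref{linkTATE_StructureOfLocalAdelesLem}(3) the ring $C_n$ is a finite product of $0$-dimensional local rings, so $H_{\eta_n}^n(C_n)$ is just a finite-dimensional $k$-vector space and the HKR identification (the $n=0$ case, the classical HKR isomorphism) together with the identification of trace-class operators in the cubical algebra $A_0 = \operatorname{End}(S((t_1))\cdots((t_n)))$ with $\mathsf{Vect}_f(k)$-valued endomorphisms makes $\tau$ literally the vector-space trace. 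This is the content that links the trace $\tau$ on $I_{tr}(A_n) = I_{tr}(A_0)$ (Lemma~\ref{RES_Lemma_DerivedCubicalAlgebra}(1), whose trace-class operators are the same at every level) with $\xi$.

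Concretely, I would phrase the proof as: glue the two propositions along their common middle column for each $r$; observe that the left column composite (from $H^0_{\eta_0}(C_0)$ to $H^n_{\eta_n}(C_n)$, factoring through all the $C_r$) is $\rho$ by Definition~\ref{Def_AdeleBoundaryMapCousinVersion}; observe that the right column composite, after applying the Morita equivalences, is $d\circ\cdots\circ d\colon HH_n(A_n)\to HH_0(A_0)$ by Theorem~\ref{Thm_DeloopingBoundaryAgreesWithDMap} applied $n$ times; then post-compose with the traces $\xi$ resp.\ $\tau$ and invoke the identification of the two traces in the essentially-$0$-dimensional situation at the bottom (using Lemma~\ref{linkTATE_StructureOfLocalAdelesLem} and the classical HKR isomorphism). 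Finally, commutativity of the outer square follows formally because every inner square commutes and the top and bottom horizontal composites are exactly $\chi$ and $\xi$ by definition.

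The main obstacle I expect is not any single square --- each has already been proven --- but rather bookkeeping the identifications carefully enough that the iterated composite really matches the $n$-fold $d$ \emph{including all Hodge/degree shifts and signs}: the delooping maps $HH(j\text{-}\mathsf{Tate})\to\Sigma HH((j-1)\text{-}\mathsf{Tate})$ each drop homological degree by one, and one has to confirm that the degree-$(n-r)$ Hochschild class at level $r$ matches the degree-$n$ class at level $0$ under the realization functors and that this is compatible with the local-cohomology degree $H^r_{\eta_r}$ via Prop.~\ref{marker_Prop_HKRWithSupport}. The cleanest route is to keep everything organized as a single ``stacked'' diagram with Props.~\ref{prop_bigcomdiag1}, \ref{prop_bigcomdiag2} as its rows and let the claim drop out by commutativity, so that no new computation is needed beyond verifying that the trace at the bottom of the stack agrees with $\xi$, which is where the hypotheses (smooth $X$ of pure dimension $n$, $C_n$ a product of Artinian local $k$-algebras) are actually used.
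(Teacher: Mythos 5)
Your proposal is correct and follows essentially the same route as the paper's own proof: stack the commutative squares of Propositions \ref{prop_bigcomdiag1} and \ref{prop_bigcomdiag2} codimension step by codimension step, identify the composite of the left-hand downward arrows with $\rho$ and the composite of the right-hand arrows (through the Morita equivalences and Theorem \ref{Thm_DeloopingBoundaryAgreesWithDMap}) with the $n$-fold iterate of $d$, and then invoke the definition of $\phi_{HH}$ as $\tau\circ d^{\circ n}$. Your extra care in matching $\tau$ with $\xi$ at the bottom of the stack is a welcome elaboration of a point the paper leaves implicit, but it does not change the argument.
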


\begin{proof}
The left downward arrow is a composition of ad\`{e}le boundary maps in the
Cousin version, Definition \ref{Def_AdeleBoundaryMapCousinVersion}. Thanks to
the HKR isomorphism with supports, in the concrete guise of Prop.
\ref{prop_bigcomdiag1}, we may isomorphically work with Hochschild homology
with supports instead, as on the left-hand side in the diagram in Prop.
\ref{prop_bigcomdiag2}. Moreover, thanks to this proposition, we might again
isomorphically replace these by maps $d$ between the Hochschild homology
groups of cubical algebras. Next, by the very definition of the abstract
Hochschild symbol (Definition \ref{Def_AbstractHochschildResidueSymbol}; or
see \cite{olliTateRes}) as a composition of all these maps $d$, we learn that
by composing the isomorphisms that we have just discussed, the left downward
arrow can be identified with the Hochschild symbol $HH_{n}(A_{n})\rightarrow
HH_{0}(A_{0})\overset{\tau}{\rightarrow}k$.
\end{proof}

\begin{theorem}
[Agreement with Tate--Beilinson Lie map]\label{thm_MainOfLastPart}Suppose
$X/k$ is a separated, finite type scheme of pure dimension $n$. Fix a flag
$\triangle=(\eta_{0}>\cdots>\eta_{n})$ with $\operatorname*{codim}%
\nolimits_{X}\overline{\{\eta_{i}\}}=i$. The Tate--Beilinson Lie homology
residue symbol%
\[
\Omega_{\operatorname*{Frac}L_{n}/k}^{n}\longrightarrow H_{n+1}((A_{n}%
)_{Lie},k)\overset{\phi_{Beil}}{\longrightarrow}k
\]
(as defined \cite[\S 1, Lemma, (b)]{MR565095}) also agrees with%
\[
\Omega_{\operatorname*{Frac}L_{n}/k}^{n}\longrightarrow HH_{n}^{\eta_{0}%
}(L_{n})\longrightarrow HH_{n}^{\eta_{0}}(C_{0})\longrightarrow HH_{n}%
(A_{n})\overset{\phi_{HH}}{\longrightarrow}k\text{.}%
\]
Here $L_{(-)},C_{(-)}$ are as in Definition \ref{Def_RingsAi}, in particular
they depend on $\triangle$.
\end{theorem}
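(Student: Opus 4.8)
The plan is to route both composites through the abstract Hochschild symbol $\phi_{HH}$ of Definition~\ref{Def_AbstractHochschildResidueSymbol} and the square~\eqref{vva1} relating Lie and Hochschild homology. Theorem~\ref{thm_AdeleCousinDiffsViaAbstractHHSymbol} will handle the second (Hochschild) composite, while Beilinson's construction together with~\eqref{vva1} handles the first (Lie-theoretic) one. Throughout I use that $X$ has pure dimension $n$, so that $\eta_0$ is the generic point and $\operatorname{Frac}L_n$ is the (total) function field of $X$, and that $\omega = f_0\,\mathrm{d}f_1\wedge\cdots\wedge\mathrm{d}f_n$ may be written with all $f_i$ inside the commutative subalgebra $\operatorname{Frac}L_n \hookrightarrow C_0 \hookrightarrow A_n$, acting by multiplication operators via the realization functors of lines~\eqref{lsimv1}--\eqref{lau2}.

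First I would reduce the right-hand composite of the theorem. The map $\Omega^n_{\operatorname{Frac}L_n/k}\to HH_n^{\eta_0}(L_n)$ is the HKR map of Equation~\eqref{lCOA_30c} in its version with support at $\eta_0$, and $HH_n^{\eta_0}(L_n)\to HH_n^{\eta_0}(C_0)\to HH_n(A_n)$ is the chain of flat pullbacks along the tower~\eqref{lCOA_35} followed by the $n$-Tate realization functor. By the HKR isomorphism with supports (Proposition~\ref{marker_Prop_HKRWithSupport}) and its compatibility with the adèle boundary maps (Proposition~\ref{prop_bigcomdiag1}), the delooping identification of Theorem~\ref{Thm_DeloopingBoundaryAgreesWithDMap} and Proposition~\ref{prop_bigcomdiag2}, and finally Theorem~\ref{thm_AdeleCousinDiffsViaAbstractHHSymbol}, this composite is carried isomorphically onto the iterated adèle Cousin differential followed by the finite $k$-linear trace,
\[
\Omega^n_{\operatorname{Frac}L_n/k}\longrightarrow H^0_{\eta_0}(C_0)\xrightarrow{\ \rho\ }H^n_{\eta_n}(C_n)\xrightarrow{\ \xi\ }k .
\]
Writing $\iota\colon\Omega^n_{\operatorname{Frac}L_n/k}\to HH_n(A_n)$ for the map preceding $\phi_{HH}$ on the right-hand side of the theorem, this step identifies the right-hand composite with $\phi_{HH}\circ\iota$, so it only remains to match $\phi_{HH}\circ\iota$ with Beilinson's Lie-theoretic residue.

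Next I would invoke the square~\eqref{vva1}. Both $\iota(\omega)\in HH_n(A_n)$ and Beilinson's class in $H_{n+1}((A_n)_{Lie},k)$ arise from a single element of $H_n((A_n)_{Lie},(A_n)_{Lie})$: for $\omega=f_0\,\mathrm{d}f_1\wedge\cdots\wedge\mathrm{d}f_n$ the Lie chain $\sum_{\pi\in S_{n+1}}\operatorname{sgn}(\pi)\,f_{\pi(0)}\wedge\cdots\wedge f_{\pi(n)}$ lifts $\omega$; its image under $\varepsilon$ is the HKR class (Equation~\eqref{lCOA_30c}), hence equals $\iota(\omega)$, while its image under $I'$ is Beilinson's class of \cite[\S1, Lemma, (b)]{MR565095}. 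This last point is the ``commutative subalgebra'' observation recalled just after~\eqref{vva1}. Commutativity of~\eqref{vva1} then gives
\[
\phi_{HH}\bigl(\varepsilon(\mathrm{lift})\bigr)=\phi_{Beil}\bigl(I'(\mathrm{lift})\bigr),
\]
which is exactly the asserted equality of the two residue symbols.

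The main obstacle is the first step: verifying that the HKR class survives unchanged through the localization--completion tower~\eqref{lCOA_35} and the realization functors~\eqref{lau1}--\eqref{lau2}, and that its image in $HH_n(A_n)$ is the $\varepsilon$-image of the Lie chain built from the commutative subalgebra $\operatorname{Frac}L_n\subset A_n$. Each flat pullback in~\eqref{lCOA_35} is handled by excision (Lemmas~\ref{COA_Lemma_LocalCohomExcision} and~\ref{lemma_CompletionPreservesLocalCohomology}), which is compatible with HKR by Proposition~\ref{marker_Prop_HKRWithSupport}, and one must check that the realization functor of line~\eqref{lsimv1} carries the multiplication action of $\operatorname{Frac}L_n$ on itself to the corresponding commutative subalgebra of $A_n$. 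Most of this is already packaged in Theorem~\ref{thm_AdeleCousinDiffsViaAbstractHHSymbol} and in the compatibility statements of \cite{olliTateRes} and \cite{bgwRelativeTateObjects}; granting it, the remaining content of the theorem is the formal diagram chase in~\eqref{vva1}.
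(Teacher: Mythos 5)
Your proposal is correct and takes essentially the same route as the paper: lift $\omega$ to the symmetrized Chevalley--Eilenberg cycle over the commutative subalgebra $\operatorname{Frac}L_{n}$, push it into $(A_{n})_{Lie}$, and conclude from the commutativity of Diagram \ref{vva1} together with Theorem \ref{thm_AdeleCousinDiffsViaAbstractHHSymbol} (whose ingredients, Propositions \ref{prop_bigcomdiag1} and \ref{prop_bigcomdiag2}, you re-derive in your first paragraph). The differences are only presentational: the opening identification with $\xi\circ\rho$ is not needed since the right-hand composite is $\phi_{HH}\circ\iota$ by definition, and the compatibility you flag as the ``main obstacle'' is precisely what the paper dispatches by functoriality of the realization functors and the references to \cite{olliTateRes}.
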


\begin{proof}
This is very easy now. As $R:=\operatorname*{Frac}L_{n}$ is commutative, any
differential form $f_{0}\,\mathrm{d}f_{1}\wedge\cdots\wedge\mathrm{d}f_{n} $
lifts to its symmetrization $\sum(-1)^{\pi}f_{\pi(0)}\otimes f_{\pi(1)}%
\wedge\cdots\wedge f_{\pi(n)}$ in the Chevalley--Eilenberg complex describing
the Lie homology group $H_{n}(R_{Lie},R_{Lie})$. One checks that the
Chevalley--Eilenberg differential vanishes on commuting elements (this is
trivial since the latter is a linear combination of terms each of which
contains at least one commutator). Thus, by functoriality, even after mapping
$R$ into the non-commutative algebra $A_{n}$, we still have a Lie homology
cycle. Thus, we have a lift to the upper left corner in Diagram \ref{vva1} and
along with Theorem \ref{thm_AdeleCousinDiffsViaAbstractHHSymbol} this implies
the claim.
\end{proof}

\appendix

\section{\label{section_Appendix_BoundaryAndRealization}Boundary map under
localization}

We recall the following basic construction:

\begin{proposition}
[Pro Realization]\label{TMT_PropProRealization}Let $X$ be a Noetherian scheme,
$Z$ a closed subset and $U:=X\setminus Z$ the open complement. Define%
\begin{equation}
\mathbf{C}_{Z}:\operatorname*{Coh}(X)\longrightarrow\mathsf{Pro}_{\aleph_{0}%
}^{a}(\operatorname*{Coh}\nolimits_{Z}(X))\text{,}\qquad\mathcal{F}%
\longmapsto\underset{r}{\underleftarrow{\lim}}\,j_{r,\ast}j_{r}^{\ast
}\mathcal{F}\text{,}\label{lmai_38}%
\end{equation}
where

\begin{itemize}
\item $\mathcal{F}$ is an arbitrary coherent sheaf on $X$,

\item $j_{r}:Z^{(r)}\hookrightarrow X$ the closed immersion of the $r$-th
infinitesimal neighbourhood\footnote{If $\mathcal{I}_{Z}$ is the radical ideal
sheaf such that $\mathcal{O}_{X}/\mathcal{I}_{Z}$ is reduced and has support
$Z$, then $Z^{(r)}$ is the closed subscheme determined by the ideal sheaf
$\mathcal{I}_{Z}^{r}$. These sheaves also have support in $Z$.} of $Z$ as a
closed subscheme with the reduced subscheme structure, and

\item the limit $\underleftarrow{\lim}_{r}$ is understood as the admissible
Pro-diagram $\mathbf{N}\rightarrow j_{r,\ast}j_{r}^{\ast}\mathcal{F}$.
\end{itemize}

Then this defines an exact functor and it sits in the commutative diagram of
exact categories and exact functors:
\begin{equation}%
\xymatrix{ {\operatorname{Coh}\nolimits_{Z}(X)} \ar[rr] \ar
[d]^1 && {\operatorname{Coh}(X)} \ar[rr] \ar[d]^{\mathbf{C}_Z}
&& {\operatorname{Coh}(U)} \ar[d] \\ {\operatorname{Coh}\nolimits_{Z}(X)}
\ar[rr] && {{\mathsf{Pro}_{\aleph_{0}}^{a}}( \operatorname{Coh}\nolimits
_{Z}(X))} \ar[rr] && {{\mathsf{Pro}_{\aleph_{0}}^{a}} ( \operatorname
{Coh}\nolimits_{Z}(X) )}/{\operatorname{Coh}\nolimits_{Z}(X)}  }%
\label{lVia1}%
\end{equation}
The right downward arrow is induced from $\mathbf{C}_{Z}$ to the quotient
categories, in view of the natural exact equivalence $\operatorname*{Coh}%
(U)\cong\operatorname*{Coh}(X)/\operatorname*{Coh}\nolimits_{Z}(X)$.
\end{proposition}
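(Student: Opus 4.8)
The final statement is Proposition~\ref{TMT_PropProRealization} (``Pro Realization''). Here is how I would approach it.

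\medskip

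The plan is to verify the three assertions of the Proposition in order: (i) that $\mathbf{C}_Z$ lands in admissible Pro-objects, (ii) that it is exact, and (iii) that the square in Diagram~\ref{lVia1} commutes with the stated description of the right-hand downward arrow. For (i), I would first observe that each $j_{r,\ast}j_r^{\ast}\mathcal{F}=\mathcal{F}/\mathcal{I}_Z^{r}\mathcal{F}$ is a coherent sheaf set-theoretically supported on $Z$, hence lies in $\operatorname{Coh}_Z(X)$, and that the transition maps $\mathcal{F}/\mathcal{I}_Z^{r+1}\mathcal{F}\twoheadrightarrow\mathcal{F}/\mathcal{I}_Z^{r}\mathcal{F}$ are \emph{epimorphisms}. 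An admissible Pro-diagram, by the definition recalled in \S\ref{subsect_TateCatsViaIndDiagrams}, is a functor from a codirected poset to $\operatorname{Coh}_Z(X)$ sending arrows to admissible \emph{epics} (this is the $\mathsf{Pro}$-dual of the admissible-monic condition for Ind-diagrams). In an abelian category every epimorphism is admissible, so the tower $\mathbf{N}^{op}\to\operatorname{Coh}_Z(X)$, $r\mapsto\mathcal{F}/\mathcal{I}_Z^{r}\mathcal{F}$, is an admissible Pro-diagram and $\mathbf{C}_Z(\mathcal{F})$ is by construction its image in $\mathsf{Pro}_{\aleph_0}^{a}(\operatorname{Coh}_Z(X))$.

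\medskip

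For (ii), exactness: given a short exact sequence $0\to\mathcal{F}'\to\mathcal{F}\to\mathcal{F}''\to0$ of coherent sheaves on $X$, I would apply $\otimes_{\mathcal{O}_X}\mathcal{O}_X/\mathcal{I}_Z^{r}$ and note that, while right-exactness is automatic, left-exactness fails at each finite level; the point is that the \emph{tower} $\{\ker(\mathcal{F}'/\mathcal{I}_Z^{r}\mathcal{F}'\to\mathcal{F}/\mathcal{I}_Z^{r}\mathcal{F})\}_r$ is pro-zero, which is exactly the content of the Artin--Rees lemma. This is the standard argument; the paper itself flags it in the Remark preceding the Proposition (``for the projective systems defining the Pro-objects one uses the Artin--Rees lemma'') and cites \cite[\S7.2]{TateObjectsExactCats} and \cite[Prop.~3.25]{bgwRelativeTateObjects}. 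So I would cite Artin--Rees to conclude that the image tower of $\mathcal{F}'$ is an admissible sub-object of the image tower of $\mathcal{F}$ with the image tower of $\mathcal{F}''$ as quotient, i.e.\ that $0\to\mathbf{C}_Z(\mathcal{F}')\to\mathbf{C}_Z(\mathcal{F})\to\mathbf{C}_Z(\mathcal{F}'')\to0$ is exact in $\mathsf{Pro}_{\aleph_0}^{a}(\operatorname{Coh}_Z(X))$.

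\medskip

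For (iii), the square: the left square commutes on the nose because for $\mathcal{F}\in\operatorname{Coh}_Z(X)$ the tower $\mathbf{C}_Z(\mathcal{F})$ is eventually constant equal to $\mathcal{F}$ (since $\mathcal{I}_Z$ kills $\mathcal{F}$ up to nilpotents; more precisely the image in the Pro-category of a constant-up-to-pro-iso tower with target in $\operatorname{Coh}_Z(X)$ is just the object itself), so the composite $\operatorname{Coh}_Z(X)\to\mathsf{Pro}^a_{\aleph_0}(\operatorname{Coh}_Z(X))$ agrees with the canonical inclusion, matching the left-hand copy of $\operatorname{Coh}_Z(X)$. For the right square: $\operatorname{Coh}_Z(X)\hookrightarrow\operatorname{Coh}(X)$ is a Serre subcategory with $\operatorname{Coh}(X)/\operatorname{Coh}_Z(X)\cong\operatorname{Coh}(U)$ via $j^{\ast}$ (a standard fact, e.g.\ \cite[\S3]{MR1106918}), and $\operatorname{Coh}_Z(X)\hookrightarrow\mathsf{Pro}^a_{\aleph_0}(\operatorname{Coh}_Z(X))$ is left $s$-filtering by Prop.~\ref{TMT_PropFiltering}(2)(applied to $\mathcal{C}=\operatorname{Coh}_Z(X)$, noting the $\mathsf{Pro}$-variant is right $s$-filtering and one passes to the appropriate side), so the bottom quotient exists. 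The functor $\mathbf{C}_Z$ sends objects of $\operatorname{Coh}_Z(X)$ into $\operatorname{Coh}_Z(X)\subseteq\mathsf{Pro}^a_{\aleph_0}(\operatorname{Coh}_Z(X))$, hence by the universal property of the Serre/$s$-filtering quotient (the calculus of fractions spelled out in \S\ref{subsect_QuotientExactCategories}) it descends to a functor on the quotient categories, giving the right-hand downward arrow; commutativity of the right square is then immediate from the construction since both composites are $\mathbf{C}_Z$ followed by the localization functor.

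\medskip

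The main obstacle is (ii): getting the exactness right requires care, because at each finite level the functor is only right exact, and one must phrase the Artin--Rees input correctly as a pro-isomorphism statement in order to conclude admissibility of the relevant monic in the Pro-category; the combinatorics of checking that the kernel tower is ``admissible zero'' (rather than merely having vanishing limit) is the delicate point, and I would lean on \cite[\S3]{bgwRelativeTateObjects} and \cite[\S7.2]{TateObjectsExactCats} for the technical bookkeeping rather than reprove it. Everything else is formal manipulation of the definitions recalled earlier in the paper.
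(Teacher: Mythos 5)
Your proposal is correct, and since the paper's own ``proof'' of this proposition is only a pointer to \cite{bgwRelativeTateObjects} and \cite{bgwCCSymbol}, your sketch is exactly the standard argument those references (and the paper's earlier remark invoking Artin--Rees before Prop.~\ref{prop_bigcomdiag1}) rely on: the towers $\mathcal{F}/\mathcal{I}_Z^{r}\mathcal{F}$ have admissible epic transition maps and hence define admissible Pro-diagrams, Artin--Rees upgrades levelwise right-exactness to exactness in $\mathsf{Pro}^{a}_{\aleph_0}(\operatorname{Coh}_Z(X))$ after a pro-isomorphic replacement, and the right-hand square descends by the universal property of the $s$-filtering quotient. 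The only slip is cosmetic: by Prop.~\ref{TMT_PropFiltering} the inclusion $\operatorname{Coh}_Z(X)\hookrightarrow\mathsf{Pro}^{a}_{\aleph_0}(\operatorname{Coh}_Z(X))$ is \emph{right} (not left) $s$-filtering, as your own parenthetical concedes; this does not affect the argument, since the quotient category and the induced functor exist either way.
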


\begin{proof}
See for example \cite[Prop. 3.23]{bgwRelativeTateObjects} and its discussion.
Alternatively, a number of statements of this type are discussed in
\cite{bgwCCSymbol}.
\end{proof}

The rows in Diagram \ref{lVia1} are exact sequences of exact categories, i.e.
on the left-hand side we have fully exact sub-categories that are left- resp.
right $s$-filtering in the middle exact categories and the right-hand side
arrows are the quotient functors to the quotient exact categories. Thus, we
obtain the commutative square%
\[%
\bfig\Square(0,0)[{HH\operatorname*{Coh}(U)}`{\Sigma HH\operatorname
*{Coh}\nolimits_{Z}(X)}`{HH\left(  \mathsf{Pro}_{\aleph_{0}}^{a}%
\left(  \operatorname*{Coh}\nolimits_{Z}(X)\right)  /\operatorname
*{Coh}\nolimits_{Z}(X)\right)}`{\Sigma HH\operatorname*{Coh}\nolimits_{Z}%
(X)};``1`]
\efig
\]
from Keller's localization sequence. Using the equivalence%
\[
HH\left(  \mathsf{Pro}_{\aleph_{0}}^{a}\left(  \operatorname*{Coh}%
\nolimits_{Z}(X)\right)  /\operatorname*{Coh}\nolimits_{Z}(X)\right)
\overset{\sim}{\longrightarrow}HH\mathsf{Tate}_{\aleph_{0}}\left(
\operatorname*{Coh}\nolimits_{Z}(X)\right)
\]
this may be rephrased as%
\[%
\bfig\Square(0,0)[{HH\operatorname*{Coh}(U)}`{\Sigma HH\operatorname
*{Coh}\nolimits_{Z}(X)}`{HH\mathsf{Tate}_{\aleph_{0}}\left(  \operatorname
*{Coh}\nolimits_{Z}X\right)}`{\Sigma HH\operatorname*{Coh}\nolimits_{Z}%
(X)}.;``1`]
\efig
\]
As a result, the boundary map of the localization sequence for Hochschild
homology of a closed-open complement $Z\hookrightarrow X\hookleftarrow U$ is
compatible with a delooping boundary coming from the delooping property of the
Tate category. From this fact, one also obtains that the differentials on the
$E_{1}$-page of the Hochschild coniveau spectral sequence are compatible with
the functor to a Tate category. This is the same argument as in the proof of
Theorem \ref{marker_Thm_ComparisonOfRowsOnEOnePage}, and simply based on the
fact that these $E_{1}$-differentials can be realized as colimits of boundary
maps of the ordinary localization sequence, see loc. cit.

\bibliographystyle{amsalpha}
\bibliography{ollinewbib}

\end{document}